\theoremstyle{plain}
\newtheorem{theorem}{\bf Theorem}[section]
\newtheorem{proposition}[theorem]{\bf Proposition}
\newtheorem{lemma}[theorem]{\bf Lemma}
\newtheorem{corollary}[theorem]{\bf Corollary}
\theoremstyle{definition}
\newtheorem{example}[theorem]{\bf Example}
\newtheorem{examples}[theorem]{\bf Examples}
\newtheorem{definition}[theorem]{\bf Definition}
\newcommand{\N}{\mathbb N}
\newcommand{\Z}{\mathbb Z}
\newcommand{\R}{\mathbb R}
\newcommand{\Q}{\mathbb Q}
\newcommand{\Fc}{\mathcal F}
\newcommand{\vp}{\mathsf v}
\newcommand{\ZZ}{\mathsf Z}
\newcommand{\und}{\;\mbox{ and } \;}
\newcommand{\be}{\begin{equation}}
\newcommand{\ee}{\end{equation}}
\newcommand{\ber}{\begin{eqnarray}}
\newcommand{\eer}{\end{eqnarray}}
\newcommand{\nn}{\nonumber}
\newcommand{\B}{\mathcal B}
\newcommand{\dd}{\mathsf d}
\newcommand{\red}{{\text{\rm red}}}
\newcommand{\mon}{{\text{\rm mon}}}
 \DeclareMathOperator{\lcm}{lcm}
 \DeclareMathOperator{\supp}{supp}
\newcommand{\BF}{\text{\rm BF}}
\newcommand{\DP}{\negthinspace : \negthinspace}
\renewcommand{\t}{\, | \,}
\renewcommand{\time}{\negthinspace \times \negthinspace}
\numberwithin{equation}{section}
\begin{document}

\title[On the arithmetic of Krull monoids with infinite cyclic class group]
{On the arithmetic of Krull monoids \\ with infinite cyclic class group}

\address{Institut f\"ur Mathematik und Wissenschaftliches Rechnen\\
Karl--Fran\-zens--Universit\"at Graz\\
Heinrichstra{\ss}e 36\\
8010 Graz, Austria}

\email{alfred.geroldinger@uni-graz.at, diambri@hotmail.com,
wolfgang.schmid@uni-graz.at}

\address{University of California at Berkeley, Department of Mathematics,
Berkeley, California 94720, USA}

\email{gschaeff@math.berkeley.edu}

\author{A.~Geroldinger \and D.~J.~Grynkiewicz \and  G.~J.~Schaeffer \and W.~A.~Schmid}

\thanks{This work was supported by the Austrian Science Fund FWF
(Project Numbers )}

\keywords{non-unique factorizations,  Krull monoids}

\subjclass[2000]{13F05, 13A05, 20M14}

\begin{abstract}
Let $H$ be a Krull monoid with infinite cyclic class group $G$ and
let $G_P \subset G$ denote the set of classes containing prime
divisors. We study under which conditions on $G_P$  some of the
main finiteness properties of factorization theory---such as local
tameness, the finiteness and rationality of the elasticity, the structure theorem for sets of lengths, the finiteness of the
catenary degree, and the existence of monotone and of near monotone chains of factorizations---hold in
$H$. In many cases, we derive explicit characterizations.
\end{abstract}

\maketitle


\section{Introduction}

By an atomic monoid, we mean a commutative cancellative semigroup
with unit element such that every non-unit has a factorization as a
finite product of atoms (irreducible elements). The multiplicative
monoid consisting of the nonzero elements from a noetherian domain
is such a monoid. Let $H$ be an atomic monoid. Then $H$ is factorial
(that is, every non-unit has a unique factorization into atoms) if
and only if $H$ is a Krull monoid with trivial class group. The
first objective of factorization theory is to describe the various
phenomena related to the non-uniqueness of factorizations. This is
done by a variety of arithmetical invariants such as sets of lengths
(including all invariants derived from them, such as the elasticity
and the set of distances) and by the catenary and tame degrees of
the monoids. The second main objective is to then characterize the
finiteness (or even to find the precise value) of these arithmetical
invariants in terms of classical algebraic invariants of the objects
under investigation. To illustrate this, we mention some results of
this type (a few classical ones and some very recent). The following
result by Carlitz (achieved in 1960) is considered as a starting
point of factorization theory: the ring of integers $\mathfrak o_K$
of an algebraic number field has elasticity $\rho (\mathfrak o_K) =
1$ if and only if its class group has at most two elements (recall
that, by definition, $H$ is half-factorial if and only if its
elasticity $\rho (H) = 1$). A non-principal order $\mathfrak o$ in
an algebraic number field has finite elasticity if and only if, for
every prime ideal $\mathfrak p$ containing the conductor, there is
precisely one prime ideal $\overline{\mathfrak p}$ in the principal
order $\overline{ \mathfrak o}$ such that $\overline{\mathfrak p}
\cap \mathfrak o = \mathfrak p$. This result (achieved by
Halter-Koch in 1995) has far reaching generalizations (achieved by
Kainrath) to finitely generated domains and to various classes of
Mori domains satisfying natural finiteness conditions (for all this,
see \cite{Ca60, HK95b, Ka05a,Ka10a}).

An integral domain is a Krull domain if and only if its
multiplicative monoid of nonzero elements is a Krull monoid, and a
noetherian domain is Krull if and only if it is integrally closed. A
reduced Krull monoid is uniquely determined by its class group and
by the distribution of prime divisors in the classes (see Lemma
\ref{3.3} for a precise statement). Suppose $H$ is a Krull monoid
with class group $G$ and let $G_P \subset G$ denote the set of
classes containing prime divisors. Suppose that $G_P = G$. In that
case, it is comparatively easy to show that any of  the arithmetical
invariants under discussion is finite if and only if $G$ is finite
(the precise values of arithmetical invariants---when $G$
is finite---are studied by methods of Additive and Combinatorial
Number Theory; see \cite[Chapter 6]{Ge-HK06a} or \cite{Ge09a} for a
survey on this direction). However, only very little is known so far
on the arithmetic of $H$ when $G$ is infinite and $G_P$ is
a proper subset of $G$.

The present paper provides an in-depth study of the arithmetic of
Krull monoids having an infinite cyclic class group. This situation
was studied first by Anderson, Chapman and Smith in 1994
\cite{An-Ch-Sm94c}, then by Hassler \cite{Ha02c}, and the most
recent progress (again due to Chapman et al.) was achieved in
\cite{B-C-R-S-S10}. We continue this work. The arithmetical
properties under investigation are discussed in Section \ref{2} and
at the beginning of Section \ref{5}. The required material on Krull
monoids, together with a list of relevant examples, is summarized in
Section \ref{3}. Our main results are Theorems \ref{main-theorem-I},
\ref{main-theorem-II}, \ref{STSL_thm} and Corollary \ref{final-cor}.
Along the way, we introduce new methods (see the proofs of
Proposition \ref{rel_prop} and of Theorem \ref{prop-chain}) and
solve an old problem proposed in 1994 (see the equivalence of $(a)$
and $(e)$ in Theorem \ref{main-theorem-I}). A more detailed
discussion of the main results is shifted to the relevant sections
where we have the required terminology at our disposal.

\section{Preliminaries} \label{2}

Our notation and terminology are consistent with \cite{Ge-HK06a}. We
briefly gather some key notions. We denote by $\N$ the set of
positive integers, and we put $\N_0 = \N \cup \{0\}$. For real
numbers $a, b \in \R$, we set $[a, b] = \{ x \in \Z \mid a \le x \le
b \}$. For a subset $X$ of (possibly negative) integers, we use $\gcd X$ and $\lcm X$ to denote the greatest common divisor and least common multiple, respectively, and their values are always chosen to be nonnegative regardless of the sign of the input.

\medskip
Let $L, L' \subset \Z$. We set $-L = \{-a \mid a \in L \}$, $L^+ = L
\cap  \N$ and $L^- = L \cap (-\N)$. We denote by $L+L' = \{a+b \mid
a \in L,\, b \in L' \}$ their {\it sumset}. If $\emptyset \ne L
\subset \N$, we call
\[
\rho (L) = \sup \Bigl\{ \frac{m}{n} \; \Bigm| \; m, n \in L
\Bigr\} = \frac{\sup L}{\min L} \, \in \Q_{\ge 1} \cup \{ \infty
\}
\]
the \ {\it elasticity} \ of $L$, and we set
$\rho (\{0\}) = 1$.
Distinct elements $k, l \in L$ are called \emph{adjacent} if $L \cap [\min\{k,l\},\max\{k,l\}]=\{k,l\}$.
A positive integer $d \in \N$ is called a \ {\it distance} \ of $L$ \ if there exist adjacent elements $k,l \in L$ with $d=|k-l|$.  We denote by \ $\Delta (L)$ \ the {\it set of
      distances} of $L$. Note that $\Delta (L) = \emptyset$ if and only if $|L| \le 1$,
and that $L$ is an arithmetical progression with difference $d \in \N$ if and only if $\Delta (L)
\subset \{d\}$. We need the following generalization of an arithmetical progression.

Let $d \in \N$, \ $M \in \N_0$ \ and \ $\{0,d\} \subset \mathcal D
\subset [0,d]$. Then $L$ is called an {\it almost
arithmetical multiprogression} \ ({\rm AAMP} \ for
      short) \ with \ {\it difference} \ $d$, \ {\it period} \ $\mathcal D$,
      \  and \ {\it bound} \ $M$, \ if
\[
L = y + (L' \cup L^* \cup L'') \, \subset \, y + \mathcal D + d \Z
\]
where
\begin{itemize}
\item  $L^*$ is finite and nonempty with $\min L^* = 0$ and $L^* =
       (\mathcal D + d \Z) \cap [0, \max L^*]$

\item  $L' \subset [-M, -1]$ \ and \ $L'' \subset \max L^* + [1,M]$
\item      $y \in \Z$.
\end{itemize}
Note that an AAMP is finite and nonempty. An AAMP with period $\{0,d\}$ is called an \emph{almost arithmetical progression} (AAP for short).

\medskip
By a {\it monoid}, we mean a commutative, cancellative semigroup with
unit element; we denote the unit element by $\boldsymbol{1}$. Let $H$ be a monoid. We denote by $\mathcal A (H)$ the
set of atoms (irreducible elements) of $H$, by $H^{\times}$ the
group of invertible elements, and by $H_{\red} = \{ a H^{\times} \mid
a \in H \}$ the associated reduced monoid of $H$. We call elements
$a,b \in H$ associated (in symbols $a\simeq b$) if
$aH^{\times}=bH^{\times}$. We say that $H$ is reduced if
$|H^{\times}| = 1$. We denote by $\mathsf q (H)$ a quotient group of
$H$ with $H \subset \mathsf q (H)$, and for a prime element $p \in
H$, let $\mathsf v_p \colon \mathsf q (H) \to \Z$ be the $p$-adic
valuation. For a subset $H_0 \subset H$, we denote by $[H_0] \subset
H$ the submonoid generated by $H_0$ and by  $\langle H_0 \rangle
\subset \mathsf q (H)$ the subgroup generated by $H_0$.
For elements $a, b \in H$, we frequently use, in case  $a \t b$, the notation $a^{-1}b$ to denote
the element $c \in H$ with $ac=b$;
yet, we mention explicitly if we shift our investigations from $H$ to the quotient group of $H$.

For a set $P$, we denote by $\mathcal F (P)$ the \ {\it free
$($abelian$)$ monoid} \ with basis $P$. Then every $a \in \mathcal F
(P)$ has a unique representation in the form
\[
a = \prod_{p \in P} p^{\mathsf v_p(a) } \quad \text{with} \quad
\mathsf v_p(a) \in \N_0 \ \text{ and } \ \mathsf v_p(a) = 0 \ \text{
for almost all } \ p \in P \,.
\]
We call $|a|= \sum_{p \in P}\mathsf v_p(a)$ the \emph{length} of $a$.

The free monoid \ $\mathsf Z (H) = \mathcal F \bigl( \mathcal
A(H_\red)\bigr)$ \ is called the \ {\it factorization monoid} \ of
$H$, and the unique homomorphism
\[
\pi \colon \mathsf Z (H) \to H_{\red} \quad \text{satisfying} \quad
\pi (u) = u \quad \text{for each} \quad u \in \mathcal A(H_\red)
\]
is called the \ {\it factorization homomorphism} \ of $H$. For $a
\in H$ and $k \in \N$, the set
\[
\begin{aligned}
\mathsf Z_H (a) = \mathsf Z (a)  & = \pi^{-1} (aH^\times) \subset
\mathsf Z (H) \quad
\text{is the \ {\it set of factorizations} \ of \ $a$} \,, \\
\mathsf Z_k (a) & = \{ z \in \mathsf Z (a) \mid |z| = k \} \quad
\text{is the \ {\it set of factorizations} \ of \ $a$ of length \
$k$}, \quad \text{and}
\\
\mathsf L_H (a) = \mathsf L (a) & = \bigl\{ |z| \, \bigm| \, z \in
\mathsf Z (a) \bigr\} \subset \N_0 \quad \text{is the \ {\it set of
lengths} \ of $a$}  \,.
\end{aligned}
\]
By definition, we have \ $\mathsf Z(a) = \{\boldsymbol{1}\}$ and $\mathsf L (a) =
\{0\}$ for all $a \in H^\times$. The monoid $H$ is called
\begin{itemize}
\item  {\it atomic} \  if  \ $\mathsf Z(a) \ne \emptyset$ \
      for all \ $a \in H$.

\item  a {\it \BF-monoid} (a bounded factorization monoid) \ if \
      $\mathsf L (a)$ is finite and nonempty for all \ $a \in H$.

\item {\it half-factorial} \ if \ $|\mathsf L (a)| = 1$ \ for all $a \in H$.
\end{itemize}

\smallskip
We repeat the arithmetical concepts which are used throughout the
whole paper. Some more specific notions will be recalled at the
beginning of Section \ref{5}. Let $H$ be atomic and $a \in H$. Then
\ $\rho (a) = \rho \bigl( \mathsf L (a) \bigr)$ \ is called the {\it
elasticity} of $a$, and the {\it elasticity} of $H$ is defined as
\[
\rho (H) = \sup \{ \rho (b) \mid b \in H \} \in \R_{\ge
1} \cup \{\infty\} \,.
\]
We say that $H$ has \ {\it accepted elasticity } \ if there exists some $b \in H$ with
$\rho(b)=\rho(H)$.

Let $k \in \mathbb N$. If $H \ne H^{\times}$, then
\[
      \mathcal V_k (H) \ = \ \bigcup_{k \in \mathsf{L}(a),  a \in H}
      \mathsf{L}(a)
\]
is the union of all sets of lengths containing $k$. When $
H^\times=H$, we set $\mathcal V_k (H) =\{k\}$. In both cases, we define
$\rho_k (H) = \sup \mathcal V_k (H)$  and $\lambda_k (H)= \min
\mathcal V_k (H)$. Clearly, we have $\mathcal V_1 (H) =\{1\}$ and $k
\in \mathcal V_k (H)$. By its definition, $H$ is half-factorial if and
only if $\mathcal V_k (H) = \{k\}$ for each $k \in \N$.

We denote by
\[
\Delta (H) \ = \ \bigcup_{b \in H } \Delta \bigl( \mathsf L (b) \bigr) \ \subset \N
\]
the {\it set of distances} of $H$, and by
\(
\mathcal{L}(H)  = \{\mathsf{L} (b) \mid b \in H \}
\)
the {\it system of sets of lengths} of $H$.

\smallskip
Let $z,\, z' \in \mathsf Z (H)$. Then we can write
\[
z = u_1 \cdot \ldots \cdot u_lv_1 \cdot \ldots \cdot v_m \quad
\text{and} \quad z' = u_1 \cdot \ldots \cdot u_lw_1 \cdot \ldots
\cdot w_n\,,
\]
where  $l,\,m,\, n\in \N_0$ and $u_1, \ldots, u_l,\,v_1, \ldots,v_m,\,
w_1, \ldots, w_n \in \mathcal A(H_\red)$ are such that
\[
\{v_1 ,\ldots, v_m \} \cap \{w_1, \ldots, w_n \} = \emptyset\,.
\]
Then $\gcd(z,z')=u_1\cdot\ldots\cdot u_l$, and we call
\[
\mathsf d (z, z') = \max \{m,\, n\} = \max \{ |z \gcd (z, z')^{-1}|,
|z' \gcd (z, z')^{-1}| \} \in \N_0
\]
the {\it distance} between $z$ and $z'$. If $\pi (z) = \pi (z')$ and $z
\ne z'$, then
\begin{equation}\label{E:Dist}
2 +
      \bigl| |z |-|z'| \bigr| \le \mathsf d (z, z')
\end{equation}
by \cite[Lemma 1.6.2]{Ge-HK06a}. For subsets $X, Y \subset \mathsf Z (H)$,
we set
\[
\mathsf d (X, Y) = \min \{ \mathsf d (x, y ) \mid x \in X, \, y \in
Y \} \,,
\]
and thus $X \cap Y \ne \emptyset$ if and only if $\mathsf d (X, Y) =
0$.

\medskip
We recall the concepts of the (monotone) catenary and tame degrees
(see also the beginning of Section \ref{7}). The \ {\it
 catenary degree} \ $\mathsf c(a)$ \ of the element
$a$ is the smallest $N \in \N_0 \cup \{ \infty\}$ \ such that, for
      any two factorizations \ $z,\, z'$ \ of \ $a$,  there exists a finite
sequence \ $z = z_0\,, \, z_1\,, \ldots, z_k = z'$ \ of
factorizations of \ $a$ \ such that \ $\mathsf d (z_{i-1}, z_i) \le
N \quad \text{for all} \quad i \in [1,k]$. The \ {\it monotone
catenary degree} \ $\mathsf{c}_{\mon} (a)$ is defined in the same
way with the additional restriction  that $|z_0| \le \ldots \le
|z_k|$ or $|z_0| \ge \ldots \ge |z_k|$. We say that the two
factorizations $z$ and $z'$ can be concatenated by a (monotone)
$N$-chain if a sequence fulfilling the above conditions exists.
Moreover,
\[
\mathsf c(H) = \sup \{ \mathsf c(b) \mid b \in H\} \in \N_0 \cup
\{\infty\} \quad \text{and} \quad \mathsf c_{\mon} (H) = \sup \{
\mathsf c_{\mon} (b) \mid b \in H\} \in \N_0 \cup \{\infty\} \quad
\,
\]
denote  the \ {\it catenary degree} \ and the \ {\it monotone
catenary degree} of $H$. Clearly, we have $\mathsf c (a) \le \mathsf
c_{\mon} (a)$ for all $a \in H$, as well as $\mathsf c (H) \le
\mathsf c_{\mon} (H)$, and \eqref{E:Dist} implies that $2 + \sup
\Delta (H) \le \mathsf c (H)$.

For $x \in \mathsf Z (H)$, let  $\mathsf t
      (a,x) \in \N_0 \cup \{\infty\}$ denote the
      smallest $N\in \N_0 \cup \{\infty\}$ with the following property{\rm \,:}
      \begin{enumerate}
      \smallskip
      \item[] If  $\mathsf Z(a) \cap x\mathsf Z(H) \ne \emptyset$ and
              $z \in \mathsf Z(a)$, then there exists $z' \in
              \mathsf Z(a) \cap x\mathsf Z(H)$ such that  $\mathsf d (z,z') \le
              N$.
      \end{enumerate}
      For  subsets $H' \subset H$ and $X \subset \mathsf Z(H)$, we
      define
      \[
      \mathsf t (H',X) = \sup \big\{ \mathsf t (b,x) \, \big| \, b \in H',  x \in
      X\big\} \in \N_0 \cup \{\infty\}\,.
      \]
      $H$ is called \ {\it locally tame} \ if  \ $\mathsf t (H,u) <
      \infty$ \ for all $u \in \mathcal A(H_{\red})$ (see the
      beginning of Section \ref{4} and Definition \ref{def-tamely-gen}).

\section{Krull monoids: Basic Properties and Examples} \label{3}

The theory of Krull monoids is presented in detail in the monographs
\cite{HK98, Gr01, Ge-HK06a}. Here we first gather the required
terminology. After that, we  recall some facts concerning transfer
homomorphisms, since the arithmetic of Krull monoids is studied via
such homomorphisms. In particular, we  deal with  block
homomorphisms (which are transfer homomorphisms) from Krull monoids
into the associated block monoids. At the end of this section, we
discuss examples of Krull monoids with infinite cyclic class group.

\smallskip
\noindent {\bf Krull monoids.} Let \ $H$ \ and \ $D$ \ be monoids. A
monoid homomorphism $\varphi \colon H \to D$ \  is called
\begin{itemize}
\smallskip
\item a  {\it divisor homomorphism} if $\varphi(a)\mid\varphi(b)$ implies that $a \t b$  for all $a,b \in H$.

\smallskip
\item  {\it cofinal} \ if for every $a \in D$ there exists some $u
      \in H$ such that $a \t \varphi(u)$.

\smallskip
\item  a {\it divisor theory} (for $H$) if $D = \mathcal F (P)$
for some set $P$, $\varphi$ is a divisor homomorphism, and for every
$p \in P$ (equivalently for every $a \in \mathcal{F}(P)$), there exists a finite
subset $\emptyset \ne X \subset H$ satisfying $p = \gcd \bigl(
\varphi(X) \bigr)$.
\end{itemize}
Note that, by definition, every divisor theory is cofinal. We call
$\mathcal{C}(\varphi)=\mathsf q (D)/ \mathsf q (\varphi(H))$ the
class group of $\varphi $ and use additive notation for this group.
For \ $a \in \mathsf q(D)$, we denote by \ $[a] = [a]_{\varphi} = a
\,\mathsf q(\varphi(H)) \in \mathsf q (D)/ \mathsf q (\varphi(H))$ \
the class containing \ $a$. We recall that $\varphi$ is cofinal if
and only if $\mathcal{C}(\varphi) = \{[a]\mid a \in D \}$, and if
$\varphi$ is a divisor homomorphism, then $\varphi(H)= \{a \in D
\mid [a]=[1]\}$. If $\varphi \colon H \to \mathcal F (P)$ is a
cofinal divisor homomorphism, then
\[
G_P = \{[p] = p \mathsf q (\varphi(H)) \mid p \in P \} \subset
\mathcal{C}(\varphi)
\]
is called the \ {\it  set of classes containing prime divisors}, and
we have $[G_P] = \mathcal{C}(\varphi)$ (for a converse, see
Lemma \ref{lem_char}). If $H \subset D$ is a submonoid, then $H$ is called
\emph{cofinal} (\emph{saturated}, resp.) in $D$ if the imbedding $H
\hookrightarrow D$ is cofinal (a divisor homomorphism, resp.).

The monoid $H$ is called a {\it Krull monoid} if it satisfies one of
the following equivalent conditions (\cite[Theorem 2.4.8]{Ge-HK06a};
see \cite{Ki-Ki-Pa07a} for recent progress){\rm \,:}
\begin{itemize}
\item $H$ is $v$-noetherian and completely integrally closed.

\smallskip
\item $H$ has a divisor theory.

\smallskip
\item $H_{\red}$ is a saturated submonoid of a free monoid.
\end{itemize}
In particular, $H$ is a Krull monoid if and only if $H_{\red}$ is a
Krull monoid. Let $H$ be a Krull monoid. Then a divisor theory
$\varphi \colon H \to \mathcal F (P)$ is unique up to unique
isomorphism. In particular, the class group $\mathcal C ( \varphi)$ defined via a divisor theory of $H$
and the subset of classes containing prime divisors depend only on
$H$. Thus it is called the {\it class group} of $H$ and is denoted
by $\mathcal C (H)$. In fact, for every Krull monoid the map, defined via assigning to each $a\in H$ the principal ideal it generates, from $H$ to $\mathcal{I}_v^{*}(H)$---the monoid of $v$-invertible $v$-ideals of $H$, which is a free monoid with basis $\mathfrak X (H)$---is a divisor theory,
and thus $\mathcal{C}(H)$ is the $v$-class group of $H$ (up to isomorphism).

\smallskip
\noindent {\bf Transfer homomorphisms.} We recall some of the main
properties which are needed in the sequel (details can be found in
\cite[Section 3.2]{Ge-HK06a}).

\begin{definition} \label{3.1}
A monoid homomorphism \ $\theta \colon H \to B$ is called a \ {\it
transfer homomorphism} \ if it has the following properties:

\smallskip

\begin{enumerate}
\item[]
\begin{enumerate}
\item[{\bf (T\,1)\,}] $B = \theta(H) B^\times$ \ and \ $\theta
^{-1} (B^\times) = H^\times$.

\smallskip

\item[{\bf (T\,2)\,}] If $u \in H$, \ $b,\,c \in B$ \ and \ $\theta
(u) = bc$, then there exist \ $v,\,w \in H$ \ such that \ $u = vw$,
\ $\theta (v) \simeq b$ \ and \ $\theta (w) \simeq c$.
\end{enumerate}\end{enumerate}
\end{definition}

\medskip
\noindent
Every transfer homomorphism $\theta$ gives rise to a
unique extension $\overline \theta \colon \mathsf Z(H) \to \mathsf
Z(B)$ satisfying
\[\qquad \quad
\overline \theta (uH^\times) = \theta (u)B^\times \quad \text{for
each} \quad u \in \mathcal A(H)\,.
\]
For $a \in H$, we denote by \ $\mathsf c (a, \theta)$ \ the smallest
$N \in \N_0 \cup \{\infty\}$ with the following property:

\smallskip

\begin{enumerate}
\item[]
If $z,\, z' \in \mathsf Z_H (a)$ and $\overline \theta (z) =
\overline \theta (z')$, then there exist some $k \in \N_0$ and
factorizations $z=z_0, \ldots, z_k=z' \in \mathsf Z_H (a)$ such that
\ $\overline \theta (z_i) = \overline \theta (z)$ and \ $\mathsf d
(z_{i-1}, z_i) \le N$ for all $i \in [1,k]$ \ (that is, $z$ and $z'$
can be concatenated by an $N$-chain in the fiber \ $\mathsf Z_H (a)
\cap \overline \theta ^{-1} (\overline \theta (z)$)\,).
\end{enumerate}

\smallskip\noindent
Then
\[
\mathsf c (H, \theta) = \sup \{\mathsf c (a, \theta) \mid a \in H \}
\in \N_0 \cup \{\infty\}
\]
denotes the {\it catenary degree in the fibres}.

\begin{lemma} \label{3.2}
Let $\theta \colon H \to B$ and $\theta' \colon B \to B'$ be
transfer homomorphisms of atomic monoids.
\begin{enumerate}
\item For every $a \in H$, we have $\overline \theta(\mathsf Z_H(a)) = \mathsf Z_B(\theta(a))$
and \ $\mathsf L_H(a) = \mathsf L_B(\theta(a))$.

\smallskip
\item $\mathsf c (B) \le \mathsf c (H) \le \max \{ \mathsf c (B),
      \mathsf c (H, \theta) \}$, \ $\mathsf c_{\mon} (B) \le \mathsf c_{\mon} (H) \le \max \{ \mathsf c_{\mon} (B),
      \mathsf c (H, \theta) \}$ and $\delta (B) = \delta (H)$.

\smallskip
\item For every \ $a \in H$ \ and all \ $k, l \in \mathsf L (a)$, \ we have \
      $\mathsf d \bigl( \mathsf Z_k (a), \mathsf Z_l (a) \bigr) =
      \mathsf d \bigl( \mathsf Z_k \bigl( \theta(a)\bigr), \mathsf Z_l \bigl( \theta (a) \bigr)
      \bigr)$.

\smallskip
\item For every $a\in H$, we have $\mathsf{c}(a, \theta' \circ \theta) \le
 \max\{\mathsf{c}(a,\theta), \mathsf{c}(\theta(a),\theta')\}$.
 \newline
In particular,  $\mathsf{c}(H, \theta' \circ \theta) \le
\max\{\mathsf{c}(H, \theta), \mathsf{c}(B, \theta')\}$.
\end{enumerate}
\end{lemma}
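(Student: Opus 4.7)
My plan for part (1) is to observe first that $\overline\theta$ maps $\mathsf Z_H(a)$ into $\mathsf Z_B(\theta(a))$ and preserves lengths by construction, so it suffices to prove surjectivity of $\overline\theta$ on fibers. For this I would induct on the length of a factorization $b_1 \cdots b_k \in \mathsf Z_B(\theta(a))$, applying \textbf{(T\,2)} to the splitting $\theta(a) \simeq b_1 (b_2 \cdots b_k)$ to extract $a = v_1 w$ with $\theta(v_1) \simeq b_1$ in $H$, and then iterating on $w$. Property \textbf{(T\,1)} ensures that each lifted $v_i$ is in fact an atom (otherwise $b_i$ would split non-trivially in $B$), so one obtains both $\overline\theta(\mathsf Z_H(a)) = \mathsf Z_B(\theta(a))$ and $\mathsf L_H(a) = \mathsf L_B(\theta(a))$.

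For part (2), the equality $\mathcal L(H) = \mathcal L(B)$ is immediate from (1) together with $B = \theta(H) B^\times$, and $\delta(H) = \delta(B)$ then follows provided $\delta$ is an invariant of the system of sets of lengths (as one expects in this setting). The catenary inequalities are an interleaving argument: given $z, z' \in \mathsf Z_H(a)$ I would push them down by $\overline\theta$, connect $\overline\theta(z)$ to $\overline\theta(z')$ by a $\mathsf c(B)$-chain in $\mathsf Z_B(\theta(a))$, lift each intermediate factorization back to $\mathsf Z_H(a)$ via (1), and then bridge between consecutive lifts (which lie in a common $\overline\theta$-fiber) by $\mathsf c(H,\theta)$-chains inside that fiber. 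The monotone case is identical since $\overline\theta$ preserves lengths.

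Part (3) is the technical heart. The direction $\ge$ is easy: $\overline\theta$ preserves length, and since distinct atoms of $H_\red$ may map to the same atom of $B_\red$ one has $\overline\theta(\gcd(z,z')) \mid \gcd(\overline\theta(z),\overline\theta(z'))$, whence $\mathsf d(\overline\theta(z),\overline\theta(z')) \le \mathsf d(z,z')$. For $\le$ I take $y, y' \in \mathsf Z_B(\theta(a))$ of lengths $k, l$ realizing the minimum distance and write $y = g v_1 \cdots v_m$, $y' = g w_1 \cdots w_n$ with $\{v_i\} \cap \{w_j\} = \emptyset$, so $\mathsf d(y,y') = \max\{m,n\}$. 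Applying \textbf{(T\,2)} to the splitting $\theta(a) \simeq \pi_B(g) \cdot \pi_B(v_1 \cdots v_m)$ I obtain $a = c c'$ with $\theta(c) \simeq \pi_B(g)$, and then use (1) to lift $g$ over $c$ to some $\tilde g$ and both unique parts $v_1 \cdots v_m$ and $w_1 \cdots w_n$ over $c'$ to $\tilde v_1 \cdots \tilde v_m$ and $\tilde w_1 \cdots \tilde w_n$. Because $\theta$ is compatible with the associate relation, $v_i \not\simeq w_j$ forces $\tilde v_i \not\simeq \tilde w_j$, so the unique parts remain disjoint in $H_\red$, and an atom-by-atom count of $\mathsf v_\alpha$-multiplicities confirms that $\mathsf d(\tilde g \tilde v_1 \cdots \tilde v_m,\; \tilde g \tilde w_1 \cdots \tilde w_n) = \max\{m,n\}$.

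Finally, for part (4), given $z, z' \in \mathsf Z_H(a)$ with $\overline{\theta' \circ \theta}(z) = \overline{\theta' \circ \theta}(z')$ I would first connect $\overline\theta(z)$ to $\overline\theta(z')$ by a chain $y_0, \ldots, y_j$ in the $\overline{\theta'}$-fiber inside $\mathsf Z_B(\theta(a))$ with step sizes at most $\mathsf c(\theta(a), \theta')$, and then lift this chain inductively by the distance-preserving construction of part (3): from a current lift $\tilde y_{i-1}$ of $y_{i-1}$, reuse the common $\gcd$-part of $y_{i-1}$ and $y_i$ as it sits inside $\tilde y_{i-1}$ and lift only the new remainder of $y_i$, producing $\tilde y_i \in \mathsf Z_H(a)$ with $\overline\theta(\tilde y_i) = y_i$ and $\mathsf d(\tilde y_{i-1}, \tilde y_i) \le \mathsf d(y_{i-1}, y_i)$. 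The terminal lift $\tilde y_j$ and $z'$ then sit in the same $\overline\theta$-fiber over $\overline\theta(z')$, so a $\mathsf c(a,\theta)$-chain closes the path; all intermediate factorizations remain in the $\overline{\theta'\circ\theta}$-fiber, and every step is bounded by $\max\{\mathsf c(a,\theta),\mathsf c(\theta(a),\theta')\}$, giving the bound on $\mathsf c(a,\theta' \circ \theta)$. The principal obstacle is precisely the distance-preserving lifting of part (3), on which both the catenary bounds of (2) and the interleaving in (4) ultimately rely.
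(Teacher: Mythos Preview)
Your sketches for parts (1), (3), and (4) are essentially correct and match the paper's approach; the paper simply cites \cite[Proposition 3.2.3]{Ge-HK06a} for (1) and the distance-preserving lift (your part (3) construction, packaged there as Proposition 3.2.3.3.(c)), and then uses that lift exactly as you do in (4).

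There are, however, two genuine problems in your treatment of part (2).

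First, your justification for $\delta(B) = \delta(H)$ is wrong. The successive distance $\delta$ is \emph{not} an invariant of the system of sets of lengths: by definition $\delta(z)$ bounds $\mathsf d\bigl(z, \mathsf Z_k(\pi(z))\bigr)$ for $k$ adjacent to $|z|$, so it depends on distances between factorizations, not just on $\mathcal L(H)$. The correct argument uses the pointwise form of your part (3) lifting: for a \emph{fixed} $z \in \mathsf Z_H(a)$ and any $\overline y \in \mathsf Z_k(\theta(a))$, produce $y \in \mathsf Z_k(a)$ with $\overline\theta(y) = \overline y$ and $\mathsf d(z,y) = \mathsf d(\overline\theta(z), \overline y)$. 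This gives $\mathsf d(z,\mathsf Z_k(a)) = \mathsf d(\overline\theta(z),\mathsf Z_k(\theta(a)))$ and hence $\delta(H) = \delta(B)$. You have the tool; you just invoked the wrong reason.

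Second, your catenary interleaving as written does not work. If you lift each $\overline z_i$ in the $\mathsf c(B)$-chain to an arbitrary $z_i \in \mathsf Z_H(a)$ via (1), then consecutive lifts $z_i$ and $z_{i+1}$ lie over $\overline z_i$ and $\overline z_{i+1}$ respectively, hence \emph{not} in a common $\overline\theta$-fiber, and no $\mathsf c(H,\theta)$-chain bridges them. The correct procedure is exactly what you do in part (4): set $z_0 = z$, and inductively, given $z_i$ over $\overline z_i$, use the distance-preserving lift to obtain $z_{i+1}$ over $\overline z_{i+1}$ with $\mathsf d(z_i,z_{i+1}) = \mathsf d(\overline z_i,\overline z_{i+1}) \le \mathsf c(B)$; only at the final step do $z_k$ and $z'$ share the fiber over $\overline\theta(z')$, and there a $\mathsf c(H,\theta)$-chain closes the path. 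The monotone version is identical because the lift preserves lengths.
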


\begin{proof}
1.  This follows from \cite[Proposition 3.2.3]{Ge-HK06a}.

\smallskip
2. The first statement follows from Theorem 3.2.5.4, the second from
Lemma 3.2.6 in \cite{Ge-HK06a}, and the third from \cite[Theorem
3.14]{Ge-Gr09b}.

\smallskip
3. Let $a \in H$ and $k, l \in \mathsf L (a)$. If $z, z' \in \mathsf
Z (a)$ with $|z| = k$ and $|z'| = l$, then $|\overline \theta (z)| =
k$, $|\overline \theta (z')| = l$ and $\mathsf d \bigl(\overline
\theta (z),\, \overline \theta(z') \bigr) \le \mathsf d(z,z')$, which
implies that $\mathsf d \bigl( \mathsf Z_k \bigl( \theta(a)\bigr),
\mathsf Z_l \bigl( \theta (a) \bigr)
      \bigr) \le \mathsf d \bigl( \mathsf Z_k (a), \mathsf Z_l (a)
      \bigr)$. To verify the reverse inequality, let
      $\overline z_1, \overline z_2 \in \mathsf Z ( \theta (a))$
      be given. We pick any $z_1 \in \mathsf Z (a)$ with $\overline
      \theta (z_1) = \overline z_1$. By \cite[Proposition
      3.2.3.3.(c)]{Ge-HK06a}, there exists a factorization $z_2 \in
      \mathsf Z (a)$ such that $\overline{\theta} (z_2) =
      \overline z_2$ and $\mathsf d (z_1, z_2) = \mathsf d (
      \overline z_1, \overline z_2)$. Since $|z_i| = |\overline
      z_i|$ for $i \in \{1,2\}$, it follows that $\mathsf d
\bigl( \mathsf Z_k (a), \mathsf Z_l (a) \bigr) \le
      \mathsf d \bigl( \mathsf Z_k \bigl( \theta(a)\bigr), \mathsf Z_l \bigl( \theta (a) \bigr)
      \bigr)$.

\smallskip
4. We recall that $\theta' \circ \theta$ is a transfer homomorphism
(see the paragraph after \cite[Definition 3.2.1]{Ge-HK06a}). Let
$a\in H$. Let $z,z'\in \mathsf{Z}_H(a)$ with $\overline{\theta'
\circ \theta}(z)=\overline{\theta' \circ \theta}(z')$. Let
$\overline{z}= \overline{\theta}(z)$ and $\overline{z'}=
\overline{\theta}(z')$. We have $\overline{z},\overline{z'}\in
\mathsf{Z}_{B}(\theta(a))$ and
$\overline{\theta'}(\overline{z})=\overline{\theta'}(\overline{z'})$.
Thus, by the definition of $\mathsf{c}(\theta(a),\theta')$, there
exist some $k\in \mathbb{N}_0$ and $\overline{z}=\overline{z_0},
\ldots, \overline{z_k}=\overline{z'} \in \mathsf Z_B (\theta(a))$
such that \ $\overline{\theta'} (\overline{z_i}) =
\overline{\theta'} (\overline{z})$ and \ $\mathsf d
(\overline{z_{i-1}}, \overline{z_i}) \le
\mathsf{c}(\theta(a),\theta')$ for each $i \in [1,k]$. Let $z_0=z$.
Again, by \cite[Proposition 3.2.3.3.(c)]{Ge-HK06a}, for each $i<k$,
there exists some factorization $z_{i+1} \in \mathsf{Z}_H (a)$ such
that $\overline{\theta} (z_{i+1}) = \overline{z_{i+1}}$ and $\mathsf
d (z_i, z_{i+1}) = \mathsf d (\overline{z_{i}},
\overline{z_{i+1}})$.

Now, we have $\overline{\theta}(z_k)= \overline{z'}=
\overline{\theta}(z')$. Thus, by the definition of $\mathsf{c}(a,
\theta)$, there exist some $l \in \mathbb{N}_0$ and $z_k=y_0,
\ldots, y_l=z' \in \mathsf Z_H (a)$ such that \ $\overline \theta
(y_i) = \overline \theta (z')$ and \ $\mathsf d (y_{i-1}, y_i) \le
\mathsf{c}(a, \theta)$ for each $i \in [1,l]$. Since $\overline
\theta (y_i) = \overline \theta (z')$ clearly implies
$\overline{\theta'\circ \theta} (y_i) = \overline{\theta'\circ
\theta} (z')$, we get that the $\max \{\mathsf{c}(\theta(a),
\theta'), \mathsf{c}(a, \theta)\}$-chain $z=z_0, \dots, z_k=y_0,
\dots, y_l=z'$ has the required properties.
\end{proof}

\smallskip
\noindent {\bf Monoids of zero-sum sequences.} Let \ $G$ \ be an
additive abelian group, \ $G_0 \subset G$ \ a subset and $\mathcal F
(G_0)$ the free monoid with basis $G_0$. According to the tradition
of combinatorial number theory,  the elements of $\mathcal F(G_0)$
are called \ {\it sequences } over \ $G_0$. Thus a sequence $S \in
\mathcal F (G_0)$ will be written in the form
\[
S = g_1 \cdot \ldots \cdot g_l = \prod_{g \in G_0} g^{\mathsf v_g
(S)} \,,
\]
and we use all the notions (such as the length) as in general free
monoids. Again using traditional language,
 we refer to \ $\mathsf
v_g(S)$ \ as the {\it multiplicity} of $g$ in $S$ and refer to a
divisor of $S$ as a {\it subsequence}.  If $T|S$, then $T^{-1}S$
denotes the subsequence of $S$ obtained by removing the terms of
$T$. We call the set \ $\supp (S) = \{ g_1, \ldots, g_l\}\subset
G_0$ \ the \ {\it support} \ of $S$, \ $\sigma (S) = g_1+ \ldots +
g_l\in G$ \ the \ {\it sum} \ of $S$, \ and define \ber\nn \Sigma
(S) &=& \Bigl\{ \sum_{i \in I} g_i \mid \emptyset \ne I \subset
[1,l] \Bigr\} \ \subset \ G \quad \mbox { and, for } \mbox k \in \N
\,,
 \\ \nn \Sigma_k(S) &=& \Bigl\{ \sum_{i \in I} g_i \mid I \subset
[1,l] ,\,|I|=k\Bigr\} \ \subset \ G.\eer We set \ $-S = (-g_1) \cdot
\ldots \cdot (-g_l)$. If $G = \Z$, then we  define
\[
S^+ = \prod_{g \in G_0^+} g^{\mathsf v_g (S)} \quad \text{and} \quad
S^- = \prod_{g \in G_0^-} g^{\mathsf v_g (S)} \,,
\]
and thus we have $S = S^+ S^- 0^{\mathsf v_0 (S)}$. The monoid
\[
\mathcal B(G_0) = \{ S \in \mathcal F(G_0) \mid \sigma (S) =0\}
\]
is called the \  {\it monoid of zero-sum sequences} \ over \ $G_0$,
and its elements are called \ {\it zero-sum sequences} \ over \
$G_0$. A sequence $S\in \mathcal F(G_0)$ is zero-sum free if it has no proper, nontrivial zero-sum subsequence (note the trivial/empty sequence is defined to have sum zero). For every arithmetical invariant \ $*(H)$ \ defined for a
monoid $H$, we write $*(G_0)$ instead of $*(\mathcal B(G_0))$. In
particular, we set \ $\mathcal A (G_0) = \mathcal A (\mathcal B
(G_0))$. We define the \ {\it Davenport constant} \ of $G_0$ by
\[
\mathsf  D (G_0) = \sup \bigl\{ |U| \, \bigm| \; U \in \mathcal A
(G_0) \bigr\} \in \N_0 \cup \{\infty\} \,,
\]
which is a central invariant in zero-sum theory (see
\cite{Ga-Ge06b}, and also \cite{Ge09a} for its relevance in
factorization theory).

Clearly, $\mathcal B (G_0) \subset \mathcal F (G_0)$ is saturated,
and hence $\mathcal B (G_0)$ is a Krull monoid.
We note that $\mathcal B (G_0) \subset \mathcal F (G_0)$ is cofinal if and only if
for each $g \in G_0$ there is a $B \in \mathcal B (G_0)$ with $\mathsf v_g
(B) > 0$ (see \cite[Proposition 2.5.6]{Ge-HK06a}); if this is the case, then the set $G_0$ is called \emph{condensed}.
For a condensed set $G_0$, the class group of $\mathcal{B}(G_0) \hookrightarrow \mathcal{F}(G_0)$ is $\langle G_0 \rangle$, and the subset of classes containing prime divisors is $G_0$.

For $G_0 \subset \Z$, we have that $G_0$ is condensed if and only if either
 $G_0^+ \ne \emptyset$ and $G_0^- \ne \emptyset$ or $G_0 \subset \{0\}$.
The latter case, which in our context can be disregarded (see Lemma \ref{3.3}), is frequently automatically excluded by some of the conditions
we impose in our results; if not, we impose the extra condition $|G_0| \ge 2$ to this end.

\smallskip
\noindent {\bf Block monoids associated to Krull monoids.} We will
make substantial use of the following result (\cite[Section
3.4]{Ge-HK06a}).

\begin{lemma} \label{3.3}
Let $H$ be a Krull monoid, $\varphi \colon H \to F = \mathcal F (P)$
a cofinal divisor homomorphism, $G = \mathcal C (\varphi)$ its class
group, and $G_P \subset G$ the set of classes containing prime
divisors. Let $\widetilde{\boldsymbol \beta} \colon F \to \mathcal F
(G_P)$ denoted the unique homomorphism defined by
$\widetilde{\boldsymbol \beta} (p) = [p]$ for all $p \in P$.
\begin{enumerate}
\item The homomorphism $\boldsymbol \beta = \widetilde{\boldsymbol \beta} \circ \varphi \colon H \to \mathcal B
      (G_P)$ is a transfer homomorphism with $\mathsf c (H,
      \boldsymbol \beta) \le 2$. In particular, it has all the
      properties mentioned in Lemma \ref{3.2}.

\item $\mathcal B (G_P) \subset \mathcal F (G_P)$ is saturated and
      cofinal. If $G$ is infinite cyclic, then $G_P \subset G$ is a
      condensed set and $|G_P|\ge 2$.
\end{enumerate}
\end{lemma}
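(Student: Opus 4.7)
The plan for part (1) is to show that $\boldsymbol \beta$ maps into $\mathcal B(G_P)$, verify axioms (T\,1) and (T\,2), and bound the catenary degree in the fibres. The key observation is the identity $\sigma \circ \widetilde{\boldsymbol \beta} = [\,\cdot\,]$ on $F$, combined with the characterization $\varphi(H) = \{a \in F \mid [a] = 0\}$ of the image of a cofinal divisor homomorphism: for $u \in H$, $\sigma(\boldsymbol \beta(u)) = [\varphi(u)] = 0$, so $\boldsymbol \beta(u) \in \mathcal B(G_P)$. Since $\mathcal B(G_P)$ is reduced, axiom (T\,1) reduces to surjectivity of $\boldsymbol \beta$ together with $\boldsymbol \beta^{-1}(\{1\}) = H^\times$. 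For surjectivity, given $S \in \mathcal B(G_P)$, I would choose, for each entry $g$ of $S$, some $p_g \in P$ with $[p_g] = g$ (possible since $g \in G_P$) and form $b \in F$ realising $\widetilde{\boldsymbol \beta}(b) = S$; then $[b] = \sigma(S) = 0$ forces $b \in \varphi(H)$, so $b = \varphi(u)$ and $\boldsymbol \beta(u) = S$. The identity $\boldsymbol \beta^{-1}(\{1\}) = H^\times$ is immediate from $\varphi(u) = 1$ implying $u \in H^\times$. For (T\,2), given $\boldsymbol \beta(u) = ST$, I would distribute the primes in $\varphi(u)$ between factors $b, c \in F$ with $\widetilde{\boldsymbol \beta}(b) = S$ and $\widetilde{\boldsymbol \beta}(c) = T$; each has class zero, so both lie in $\varphi(H)$, and a short unit adjustment yields $u = vw$ with $\boldsymbol \beta(v) = S$ and $\boldsymbol \beta(w) = T$.

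The bound $\mathsf c(H, \boldsymbol \beta) \le 2$ is the main obstacle. Given $z = u_1 \cdots u_k$ and $z' = u_1' \cdots u_k'$ in $\mathsf Z_H(a)$ with $\overline{\boldsymbol \beta}(z) = \overline{\boldsymbol \beta}(z')$, I reorder so that $\boldsymbol \beta(u_i) = \boldsymbol \beta(u_i')$ for each $i$. In $F$, both atomic decompositions refine to the unique prime factorization of $\varphi(a)$, so the difference between $z$ and $z'$ is a relabelling of which primes in $\varphi(a)$ belong to which atom, within each fixed class. The elementary move is a swap: if distinct atoms $u, u''$ in the current factorization have $q \mid \varphi(u)$ and $p \mid \varphi(u'')$ with $[p] = [q]$, then $\varphi(u) q^{-1} p$ and $\varphi(u'') p^{-1} q$ have class zero, hence lie in $\varphi(H)$, and their preimages in $H$ remain atoms (their $\boldsymbol \beta$-images equal the atoms $\boldsymbol \beta(u)$ and $\boldsymbol \beta(u'')$, so no nontrivial factoring is possible). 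Each swap changes at most two atoms, so has distance at most $2$; iterating such swaps aligns $z$ with $z'$ entirely within the fibre of $\overline{\boldsymbol \beta}$.

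For part (2), saturation is direct: $S, T \in \mathcal B(G_P)$ with $S \mid T$ in $\mathcal F(G_P)$ gives $T S^{-1} \in \mathcal F(G_P)$ with $\sigma(T S^{-1}) = 0$, so $T S^{-1} \in \mathcal B(G_P)$. For cofinality, I apply the criterion recalled just before the lemma: for each $g \in G_P$ pick $p \in P$ with $[p] = g$, and by cofinality of $\varphi$ find $u \in H$ with $p \mid \varphi(u)$, giving $\mathsf v_g(\boldsymbol \beta(u)) \ge 1$. When $G$ is infinite cyclic, this cofinality combined with the equivalence ``cofinal $\Longleftrightarrow$ condensed'' forces $G_P$ to be condensed; since $[G_P] = G \ne \{0\}$ excludes $G_P \subset \{0\}$, the characterization of condensedness in $\Z$ gives both $G_P^+$ and $G_P^-$ nonempty, so $|G_P| \ge 2$.
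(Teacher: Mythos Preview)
The paper does not give a proof of this lemma; it simply records it as a known result and cites \cite[Section~3.4]{Ge-HK06a} (the relevant statement there is Theorem~3.4.10). Your argument is exactly the standard proof found in that reference and is correct. The only spot that would benefit from one more sentence is the phrase ``iterating such swaps aligns $z$ with $z'$'': concretely, since $\widetilde{\boldsymbol\beta}(\varphi(u_i)) = \widetilde{\boldsymbol\beta}(\varphi(u_i'))$ for each $i$, the primes of any fixed class $g \in G_P$ occurring in $\varphi(a)$ are distributed among the positions $1,\ldots,k$ in the same multiset pattern for both $z$ and $z'$, so passing from one assignment to the other is a permutation of these primes, hence a product of transpositions, each of which is precisely one of your swaps.
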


The homomorphism $\boldsymbol \beta$ is called the {\it block
homomorphism}, and $\mathcal B (G_P)$ is called the {\it block
monoid}  associated  to $\varphi$. If $\varphi$ is a divisor theory,
then $\mathcal B (G_P)$ is called the block monoid associated to
$H$.

\smallskip
\noindent {\bf One more  theorem and examples}. The following lemma
highlights the strong connection between the algebraic structure of
a Krull monoid and its class group and provides a realization
result (see \cite[Theorem 2.5.4]{Ge-HK06a}). Let $G$ be an abelian
group and $(m_g)_{g \in G}$ a family of cardinal numbers. We say $H$
has \ {\it characteristic \ $( G, (m_g)_{g \in G} )$} \ if there is
a group isomorphism \ $\Phi \colon G\tilde{\to}  \mathcal C(H)$ \
such that ${\rm card} ( P \cap \Phi(g)) = m_g$ for every $g \in G$.

\begin{lemma} \label{lem_char}
Let $G$ be an abelian group, $(m_g)_{g \in G}$ a
family of cardinal numbers and $G_0 = \{g \in G \mid m_g \ne 0\}$.
\begin{enumerate}
\item The following statements are equivalent{\rm \,:}
      \begin{enumerate}
      \item[(a)] There exists a Krull monoid $H$ and a group isomorphism \
                 $\Phi \colon G \to \mathcal C(H)$ \ such that \newline ${\rm card} ( P \cap \Phi(g)) = m_g$
                 for every $g \in G$.

      \item[(b)] $G= [G_0]$, and  $G = [G_0 \setminus \{g\}]$ for every $g
                \in G_0$ with $m_g =1$.
      \end{enumerate}

\smallskip
\item Two Krull monoids \ $H$ \ and \ $H'$ \ have the same characteristic if and only if \ $H_{\red} \cong
      H_{\red}'$.
\end{enumerate}
\end{lemma}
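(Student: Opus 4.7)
The plan is to prove Part 1 in two directions and then to deduce Part 2 from Part 1 together with the uniqueness of divisor theories recalled before the lemma.

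For the direction $(a) \Rightarrow (b)$, let $\varphi \colon H \to \mathcal F(P)$ be a divisor theory realizing the given characteristic via $\Phi \colon G \to \mathcal C(H)$. Cofinality of $\varphi$ writes every class as a non-negative combination of elements of $G_P = \Phi(G_0)$, giving $G = [G_0]$. For the second condition, fix $g \in G_0$ with $m_g = 1$ and let $p$ be the unique prime in $\Phi(g)$. The defining gcd property of a divisor theory yields a finite $X \subset H$ with $p = \gcd \varphi(X)$; consequently some $x_0 \in X$ satisfies $\mathsf v_p(\varphi(x_0)) = 1$, so $\varphi(x_0) = p \cdot s_0$ with $s_0 \in \mathcal F(P \setminus \{p\})$ and $[s_0] = -\Phi(g)$, and for each $q \in \supp(s_0)$ some $x_q \in X$ satisfies $\mathsf v_q(\varphi(x_q)) = 0$, contributing $p$-free parts in $\mathcal F(P \setminus \{p, q\})$. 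Using $m_g = 1$ to ensure that the classes appearing in these $p$-free parts avoid $g$, I would combine the resulting class relations to derive $g \in [G_0 \setminus \{g\}]$, which is equivalent to $G = [G_0 \setminus \{g\}]$ given $G = [G_0]$.

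For $(b) \Rightarrow (a)$, the plan is an explicit construction. Take $P = \bigsqcup_{g \in G_0} P_g$ with $|P_g| = m_g$, let $\pi \colon \mathcal F(P) \to G$ be the monoid homomorphism sending $p \in P_g$ to $g$, and set $H = \pi^{-1}(0)$. Since $H$ is saturated in $\mathcal F(P)$, it is a reduced Krull monoid and the inclusion is a divisor homomorphism; the hypothesis $G = [G_0]$ gives cofinality and produces a canonical isomorphism $\mathsf q(\mathcal F(P))/\mathsf q(H) \ito G$ with the prescribed prime distribution. The key remaining task is to verify that the inclusion is a divisor theory. For $p \in P_g$ with $m_g \geq 2$, a second prime $p' \in P_g$ yields a suitable $X \subset H$; for $p \in P_g$ with $m_g = 1$, condition (b) supplies an element $a_0 = p \cdot s \in H$ with $\mathsf v_p(s) = 0$, and for each $q \in \supp(s)$ one produces an additional $a_q \in H$ with $\mathsf v_p(a_q) \geq 1$ and $\mathsf v_q(a_q) = 0$ by invoking condition (b) for $[q]$ when $m_{[q]} = 1$ and iteratively substituting to avoid both the classes $g$ and $[q]$, possibly at the cost of a larger $\mathsf v_p(a_q)$.

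Part 2 then follows: if $H$ and $H'$ have the same characteristic, both reduced monoids are canonically isomorphic (by uniqueness of the divisor theory) to the same $\pi^{-1}(0)$ associated to the common data, forcing $H_{\red} \cong H_{\red}'$; the converse is immediate, as the class group and the distribution of prime divisors depend only on the reduced monoid. The main technical obstacle lies in the second half of $(a) \Rightarrow (b)$: a single witness $x_0$ with $\mathsf v_p(\varphi(x_0)) = 1$ naively yields only $-g \in [G_0 \setminus \{g\}]$, and upgrading to the full $g \in [G_0 \setminus \{g\}]$ requires threading the additional gcd witnesses $x_q$ (in particular their effect on classes with $m_{[q]} = 1$) through the class-group arithmetic to produce a genuinely non-negative representation of $g$---a step that is especially delicate in the torsion-free setting.
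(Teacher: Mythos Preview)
The paper does not prove this lemma itself; it is quoted from \cite[Theorem 2.5.4]{Ge-HK06a}. Your overall architecture (two directions for Part~1, then Part~2 via uniqueness of divisor theories) is exactly the standard one, and your construction for $(b)\Rightarrow(a)$ and your treatment of Part~2 are correct. The substantive issue is precisely the one you flag in $(a)\Rightarrow(b)$, and your proposed resolution does not work.

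Every relation you extract by applying the gcd property \emph{to $p$} is of the shape ``a negative multiple of $g$ lies in $[G_0\setminus\{g\}]$'': from $\varphi(x_0)=p\,s_0$ you get $-g=[s_0]\in[G_0\setminus\{g\}]$, and from $\varphi(x_q)=p^{k_q}t_q$ with $t_q\in\mathcal F(P\setminus\{p,q\})$ you get $-k_qg=[t_q]\in[G_0\setminus\{g\}]$. Threading these only produces further relations of the same kind; no combination of them yields a \emph{non-negative} representation of $g$. (Concretely, in a torsion-free $G$ the submonoid of $[G_0\setminus\{g\}]$ generated by these witnesses can lie entirely on the ``wrong side'' of $0$.)

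The missing idea is to apply the gcd property to primes \emph{other} than $p$. For an arbitrary $q\in P\setminus\{p\}$, write $q=\gcd(\varphi(Y))$ and choose $y\in Y$ with $\mathsf v_p(\varphi(y))=0$; then $\varphi(y)\in\mathcal F(P\setminus\{p\})$ and $q\mid\varphi(y)$, so $0=[\varphi(y)]=[q]+[\,q^{-1}\varphi(y)\,]$ with $q^{-1}\varphi(y)\in\mathcal F(P\setminus\{p\})$. Hence $-[q]\in[G_0\setminus\{g\}]$ for every $q\neq p$. Since $m_g=1$, this says $-h\in[G_0\setminus\{g\}]$ for every $h\in G_0\setminus\{g\}$, so $[G_0\setminus\{g\}]$ is in fact the subgroup $\langle G_0\setminus\{g\}\rangle$. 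Now the single relation $-g\in[G_0\setminus\{g\}]$ you already have gives $g\in[G_0\setminus\{g\}]$, and together with $G=[G_0]$ you conclude $G=[G_0\setminus\{g\}]$. With this step in place, the rest of your outline goes through; in particular, no iterative substitution is needed in the $(b)\Rightarrow(a)$ verification---one replacement per prime $q\in\supp(s_0)$ suffices, using condition~(b) once for each $q$ with $m_{[q]}=1$.
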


\medskip

\begin{examples}~

\smallskip
{\bf 1. Domains.} A domain $R$ is a Krull domain if and only if its
multiplicative monoid of nonzero elements is a Krull monoid. As a
special case of Claborn's Realization Theorem, there is the
following result: For every subset $G_0 \subset \Z$ with $[G_0] =
\Z$, there is a Dedekind domain $R$ and an isomorphism $\Phi \colon G
\to \mathcal C (R)$ such that $\Phi (G_0) = \{ g \in \mathcal C (R)
\mid g \cap \mathfrak X (R) \ne \emptyset \}$ (\cite[Theorem
3.7.8]{Ge-HK06a}. More results of this flavor are discussed in
\cite[Section 3.7]{Ge-HK06a} and \cite[Section 5]{Ge-HK92a}.

Let $R$ be a domain and $H$ a monoid such that $R[H]$ is a Krull
domain. There are a variety of results on the class group of $R[H]$,
which provide many explicit monoid domains having infinite cyclic
class group (\cite[\S 16]{Gi84}, see also  \cite{Ki01}). Generalized
power series domains that are Krull are studied in \cite{Ki-Pa01}.

\medskip
{\bf 2. Zero-sum sequences.} Let $G_0 \subset \Z$ be a subset such
that $[G_0 \setminus \{g\}] = \Z$ for all $g \in G_0$. Then the
monoid of zero-sum sequences $\mathcal B (G_0)$ is a Krull monoid
with class group isomorphic to $\Z$, and $G_0$ corresponds to the set
of classes containing prime divisors (\cite[Proposition
2.5.6]{Ge-HK06a}).

\medskip
{\bf 3. Module theory.} Let $R$ be a (not necessarily commutative)
ring and $\mathcal C$  a class of (right) $R$-modules---closed under
finite direct sums, direct summands and isomorphisms---such that
$\mathcal C$ has a set $V ( \mathcal C)$ of representatives (that
is, every module $M \in \mathcal C$ is isomorphic to a unique $[M]
\in V( \mathcal C))$.  Then $V ( \mathcal C)$ becomes a commutative
semigroup under the operation $[M] + [N] = [M \oplus N]$, which
carries detailed information about the direct-sum behavior of
modules in $\mathcal C$, e.g., whether or not the
Krull--Remak--Azumaya--Schmidt Theorem holds, and, when it does not,
how badly it fails. If every module $M \in \mathcal C$ has a
semilocal endomorphism ring, then $\mathcal V (C)$ is a Krull monoid
(\cite{Fa02}). For situations where this condition is satisfied and
when the class group of $\mathcal V (C)$ is cyclic, we refer to
recent work of Facchini, Hassler, Wiegand et al. (see, for example,
\cite{Wi01, F-H-K-W06, Fa06a, Fa-He06a}).

\medskip
{\bf 4. Diophantine monoids.} A Diophantine monoid is a monoid which
consists of the set of solutions in nonnegative integers to a system
of linear Diophantine equations. In more technical terms, if $m, n
\in \N$ and $A \in M_{m,n} ( \Z)$, then $H = \{ \boldsymbol x \in
\N_0^n \mid A \boldsymbol x = \boldsymbol 0 \}$ is a Diophantine
monoid. Moreover, $H$ is a Krull monoid, and if $m = 1$, then its
class group is cyclic and there is a characterization of when it is
infinite (\cite[Theorem 1.3]{Ch-Kr-Oe00}, \cite[Proposition
4.3]{Ch-Kr-Oe02}; see also \cite[Theorem 2.7.14]{Ge-HK06a} and
\cite[Chapter II.8]{Gr01}).
\end{examples}

\section{Arithmetical Properties Equivalent to the Finiteness of $G_P^+$ or $G_P^-$} \label{4}

Before we formulate our main characterization result, Theorem
\ref{main-theorem-I}, we recall a recent characterization of
tameness, which is in contrast with our present results. Let
$H$ be an atomic monoid. For an element $b \in H $, let \ $\omega
(H, b)$ \ denote the
      smallest \ $N \in \N_0 \cup \{\infty\} $ \ with the following
      property{\rm \,:}
      \begin{enumerate}
      \smallskip
      \item[] For all $n \in \N $ and $a_1, \ldots, a_n \in H $, if
              $b \t a_1 \cdot \ldots \cdot a_n $, then there exists a
              subset $\Omega \subset [1,n] $ such that $|\Omega | \le N $ and
              \[
              b \Bigm| \, \prod_{\nu \in \Omega} a_\nu \,.
              \]
      \end{enumerate}
Clearly, $b \in H$ is a prime if and only if $\omega (H, b) = 1$,
and so the $\omega (H, \cdot)$ values measure how far away atoms are
from primes. They are closely related to the local tame degrees
$\mathsf t (H, \cdot)$. A detailed study of their relationship can
be found in \cite[Section 3]{Ge-Ha08a}, but here we mention only two
simple facts (to simplify the formulation,  we suppose that $H$ is
reduced):
\begin{itemize}
\item $\omega (H, u) \le \mathsf t (H, u)$ for all
      $\boldsymbol{1} \ne u \in H$ which are not prime (this follows from the
      definition).

\item $\sup \{ \mathsf t (H, u) \mid u \in \mathcal A (H) \} <
      \infty$ if and only if $\sup \{ \omega (H, u) \mid u \in \mathcal A
      (H) \} < \infty$ (\cite[Proposition 3.5]{Ge-Ka10a}).
\end{itemize}
The monoid $H$ is said to be \ {\it tame} \ if the above suprema are
finite. Note that the finiteness in Proposition \ref{tame-charact}.1 holds
without any assumption on $G_P$ (indeed, it holds for all
$v$-noetherian monoids \cite[Theorem 4.2]{Ge-Ha08a}). In particular,
one should compare Propositions \ref{tame-charact}.1 and \ref{tame-charact}.2.(c) and Theorem
\ref{main-theorem-I}.(b).

\smallskip
\begin{proposition} \label{tame-charact}
Let $H$ be a Krull monoid and $\varphi \colon H\to \mathcal{F}(P)$ a
cofinal divisor homomorphism into a free monoid such that the class
group $G= \mathcal{C}(\varphi)$ is an  infinite cyclic group that we
identify with $\mathbb Z$. Let $G_P \subset G$ denote the set of
classes containing prime divisors.
\begin{enumerate}
\item $\omega (H, u) < \infty$ for all $u \in \mathcal A (H)$.

\smallskip
\item If $\varphi$ is a divisor theory, then the following statements are
      equivalent{\rm \,:}
      \begin{enumerate}
      \item[(a)] $G_P$ \ is finite.

      \item[(b)] $\mathsf D (G_P) < \infty$.

      \item[(c)] $H$ is tame.
      \end{enumerate}
\end{enumerate}
\end{proposition}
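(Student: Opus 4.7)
Part 1 is essentially a citation. One of the equivalent definitions of a Krull monoid recalled in Section \ref{3} is that it is $v$-noetherian, and \cite[Theorem 4.2]{Ge-Ha08a} shows that $\omega(H,b) < \infty$ for every element $b$ of any $v$-noetherian monoid. Restricting to $b \in \mathcal A(H)$ yields the claim.

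\smallskip

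For Part 2, I would prove $(a)\Leftrightarrow(b)$ first and then close the loop via $(a)\Rightarrow(c)\Rightarrow(a)$. The equivalence $(a)\Leftrightarrow(b)$ is purely combinatorial. If $G_P\subset \Z$ is finite, then $\mathcal B(G_P)$ is the Diophantine monoid of nonnegative integer solutions to the single homogeneous linear equation $\sigma(S)=0$ in finitely many variables; such a monoid is affine (finitely generated), so $\mathcal A(\mathcal B(G_P))$ is finite and $\mathsf D(G_P)<\infty$. Conversely, suppose $G_P^+$ is infinite (the case of $G_P^-$ being symmetric). Condensedness (Lemma \ref{3.3}.2) supplies some $-b \in G_P^-$, and for each $a \in G_P^+$ with $d=\gcd(a,b)$ the sequence $U_a = a^{b/d}(-b)^{a/d}$ has zero sum; moreover, a short check using $\gcd(a/d,b/d)=1$ shows its only proper zero-sum subsequence is the trivial one, so $U_a$ is an atom of length $(a+b)/d \ge (a+b)/b$, unbounded as $a\to\infty$. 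Hence $\mathsf D(G_P)=\infty$. For $(a)\Rightarrow(c)$, finiteness of $G_P$ makes $\mathcal B(G_P)$ a finitely generated Krull monoid and hence tame by general theory; tameness then transfers to $H$ along the block homomorphism $\boldsymbol\beta \colon H \to \mathcal B(G_P)$ from Lemma \ref{3.3}.1, using $\mathsf c(H,\boldsymbol\beta)\le 2$ together with the equivalence between uniform boundedness of $\omega$ and of $\mathsf t$ on atoms recalled just before the statement.

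\smallskip

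The implication $(c)\Rightarrow(a)$ is the delicate one, and I would prove its contrapositive. Assume $G_P^+$ is infinite (again the symmetric case is handled identically), and recycle the fixed element $-b \in G_P^-$ together with the infinite family $\{U_a\}_{a\in G_P^+}$ constructed above. The plan is to select a fixed atom $u \in H$ whose block image $\boldsymbol\beta(u)$ contains many copies of $-b$, and then, for each $N$, to assemble a product of lifts of pairwise distinct $U_a$'s whose product is divisible by $u$ but admits no sub-product of at most $N$ atoms still divisible by $u$. This would force $\omega(H,u)=\infty$, hence $\mathsf t(H,u)=\infty$, contradicting tameness.

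\smallskip

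The main obstacle is precisely this last construction: because the positive support of each $U_a$ is the single letter $\{a\}$ and these supports are pairwise disjoint across the family, the combinatorial argument must exploit the disjointness to show that any subfamily collecting enough multiplicities of $-b$ to cover $u$ is forced to contain at least $N$ distinct members. Choosing $u$ so that its image contains roughly $N\cdot b$ copies of $-b$ and no positive letter forcing collapse seems to be the correct design, but the details require careful lifting through $\boldsymbol\beta$ and precise bookkeeping of multiplicities. Beyond this crux, all remaining steps reduce to standard transfer-theoretic machinery encoded in Lemmas \ref{3.2} and \ref{3.3}.
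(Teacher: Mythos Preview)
Your treatment of Part 1 and of $(a)\Leftrightarrow(b)$ is fine and more explicit than the paper, which simply records the whole of Part 2 as a special case of \cite[Theorem 4.2]{Ge-Ka10a}. The sketch for $(a)\Rightarrow(c)$ is also reasonable, though the transfer of (global) tameness along $\boldsymbol\beta$ is not quite the statement recalled before the proposition and needs its own reference.

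The real problem is your plan for $(c)\Rightarrow(a)$. You propose to fix an atom $u\in H$ and, for every $N$, produce a product of atoms divisible by $u$ with no sub-product of at most $N$ factors divisible by $u$, concluding $\omega(H,u)=\infty$. But this is exactly what Part 1 of the very proposition you are proving rules out: $\omega(H,u)<\infty$ for \emph{every} atom $u$, unconditionally. So the scheme cannot succeed as stated. The sentence ``choosing $u$ so that its image contains roughly $N\cdot b$ copies of $-b$'' already betrays the issue---your $u$ is silently drifting with $N$. If you intend the atom to depend on $N$, say so and aim only for $\sup_{u\in\mathcal A(H)}\omega(H,u)=\infty$; that is the correct target, since tameness is a statement about this supremum, not about any individual $\omega(H,u)$.

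Even after that fix, you must confront the point the paper flags immediately after the proposition: the hypothesis that $\varphi$ is a \emph{divisor theory} is essential for $(c)\Rightarrow(a)$, and your construction so far lives entirely in $\mathcal B(G_P)$, where it can fail (e.g.\ $G_P=\{-1\}\cup\N$ has $\mathsf D(G_P)=\infty$ while $\mathcal B(G_P)$ is factorial, hence tame). A correct argument has to explain where the divisor-theory assumption enters when lifting from $\mathcal B(G_P)$ to $H$; your current outline does not identify this step. The paper sidesteps all of this by citing \cite[Theorem 4.2]{Ge-Ka10a}, where the equivalence is proved in greater generality.
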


\smallskip
The equivalence of the three properties is a special case of
\cite[Theorem 4.2]{Ge-Ka10a}. It is essential that the imbedding is
a divisor theory and not only a cofinal divisor homomorphism.
Indeed, if $G_0 = \{-1\} \cup \N$, then $\mathcal B (G_0)
\hookrightarrow \mathcal F (G_0)$ is a cofinal divisor homomorphism,
$\mathsf D (G_0) = \infty$, but $\mathcal B (G_0)$ is factorial and
hence tame (see also Lemmas \ref{lem_char} and
\ref{transfer-to-finite}).

\medskip
\begin{theorem} \label{main-theorem-I}
Let $H$ be a Krull monoid and $\varphi \colon H\to \mathcal{F}(P)$ a
cofinal divisor homomorphism into a free monoid such that the class
group $G= \mathcal{C}(\varphi)$ is an  infinite cyclic group that we
identify with $\mathbb Z$. Let $G_P \subset G$ denote the set of
classes containing prime divisors. The following statements are
equivalent{\rm \,:}
\begin{enumerate}
\item[(a)] $G_P^+$ \ or \ $G_P^-$ \ is finite.

\smallskip
\item[(b)] $H$ is locally tame, i.e., $\mathsf t (H, u) < \infty$
            for all $u \in \mathcal A (H_{\red})$.

\smallskip
\item[(c)] The catenary degree $\mathsf c (H)$ is finite.

\smallskip
\item[(d)] The set of distances $\Delta (H)$ is finite.

\smallskip
\item[(e)] The elasticity $\rho(H)$ is a rational number.

\smallskip
\item[(f)] $\rho_2 (H)$ is finite.

\smallskip
\item[(g)] There exists some $M \in \mathbb{N}$ such that, for each $k \in \mathbb{N}$,
           we have $\rho_{k+1}(H) - \rho_k(H) \le M$.

\smallskip
\item[(h)] There exists some $M \in \mathbb{N}$ such that, for each $k \in \mathbb{N}$, the set $\mathcal{V}_k(H)$ is an
           {\rm AAP} with difference $\min \Delta(H)$ and bound $M$.
\end{enumerate}
\end{theorem}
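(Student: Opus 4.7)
The plan is first to reduce to the block monoid via the transfer homomorphism $\boldsymbol{\beta} \colon H \to \mathcal{B}(G_P)$ of Lemma \ref{3.3}. Since $\mathsf{L}_H(a) = \mathsf{L}_{\mathcal{B}(G_P)}(\boldsymbol{\beta}(a))$ by Lemma \ref{3.2}(1), conditions (d), (e), (f), (g), (h) depend only on $\mathcal{B}(G_P)$; and $\mathsf{c}(H, \boldsymbol{\beta}) \le 2$ with Lemma \ref{3.2}(2) transfers (c) up to a bounded constant. Local tameness (b) also passes between $H$ and $\mathcal{B}(G_P)$ by standard $\mathsf{t}$-degree arguments for block monoids. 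Thus it suffices to prove the theorem for $H = \mathcal{B}(G_P)$ with $G_P \subset \Z$ condensed and $|G_P|\ge 2$, which I henceforth assume.

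\textbf{Soft links among the arithmetic conditions.} Next I would record the routine implications: (c) $\Rightarrow$ (d) from $2 + \sup \Delta(H) \le \mathsf{c}(H)$; (h) $\Rightarrow$ (d), (g), (f) from the defining structure of AAPs with uniform bound $M$ and fixed difference $d = \min \Delta(H)$ (asymptotic analysis of $\rho_k$ under uniform AAP structure is needed for (g)); (g) $\Rightarrow$ (f) from $\rho_1(H) = 1$; and $\rho(H) \le \rho_2(H)/2$ via Fekete-type subadditivity of $\rho_k(H)$, so (f) gives $\rho(H) < \infty$. Rationality of a finite $\rho(H)$ in this setting is subtler and is derived from an ``accepted elasticity'' statement for $\mathcal{B}(G_P)$, which produces a specific atom-length ratio witnessing the supremum; alternatively (e) may be obtained by routing through (a) once the reverse direction is in hand.

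\textbf{Core direction: (a) implies (b), (c), (h), and hence (b)--(h).} Assume $G_P^-$ is finite (the case $G_P^+$ finite is symmetric), and write $G_P^- = \{-a_1, \ldots, -a_s\}$ with $\ell = \max a_i$. For every zero-sum sequence $B$, $\sigma(B^+) = -\sigma(B^-) \le \ell |B^-|$, so the negative part governs factorization bookkeeping. My approach would be to classify atoms $U \in \mathcal{A}(\mathcal{B}(G_P))$ by their negative part $U^-$ (a multiset over the finite set $G_P^-$); for each such $U^-$, the admissible positive parts completing $U$ to an atom form a bounded collection, by minimality and the constraint $\sigma(U^+) = -\sigma(U^-)$. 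This uniform ``atom structure relative to $G_P^-$'' yields: bounded $\omega$-values and $\mathsf{t}$-degrees (hence (b)); finite catenary degree (c) via bounded elementary exchanges between factorizations; and the AAP structure (h) of $\mathcal{V}_k(H)$ with difference $\min \Delta(H)$ and uniform bound $M$, through the structure theorem for sets of lengths applied in this controlled setting. The soft links then supply (d)--(g) as well.

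\textbf{Reverse direction and main obstacle.} For each of (b), (c), (d), (f), (g), (h) $\Rightarrow$ (a), I would argue contrapositively: if both $G_P^+$ and $G_P^-$ are infinite, then choosing $g_n^+ \in G_P^+$ and $-g_n^- \in G_P^-$ with $g_n^\pm \to \infty$, one builds zero-sum sequences $B_n$ (using atoms of the form $(g_n^+)^{|g_n^-|/d_n} (-g_n^-)^{g_n^+/d_n}$, with $d_n = \gcd(g_n^+, g_n^-)$, combined with shorter pair-type atoms whenever available) whose sets of lengths exhibit arbitrarily large distances, gaps, $\rho_2$-values, and local tame degrees, contradicting the respective condition. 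The \textbf{main obstacle} is (e) $\Rightarrow$ (a), the old problem from 1994 resolved in this paper. My strategy would be a Diophantine-style argument: supposing $\rho(H) = p/q \in \Q$, I would use the infinite supply of primes on both sides to construct atoms whose length ratios approximate real numbers outside $\{r/s : r, s \in \N, s \le q\}$, forcing either $\rho(H) > p/q$ or $\rho(H) \notin \Q$. Pinning down the precise density statement about achievable atom-length ratios over an infinite $G_P \subset \Z$, and combining it with the structure of zero-sum sequences, is the step where I expect the genuinely new combinatorial input to be required.
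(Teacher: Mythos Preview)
Your overall architecture—transfer to $\mathcal{B}(G_P)$, prove that (a) implies the arithmetic finiteness statements, and handle the converses by contrapositive—matches the paper. However, you misidentify where the real difficulty lies, and two of your proposed arguments would actually fail.

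The implication (e) $\Rightarrow$ (a) is \emph{not} the obstacle: a rational elasticity is in particular finite, so $\rho_2(H) < \infty$, and your own contrapositive construction (both $G_P^{\pm}$ infinite forces $\rho_2 = \infty$) already gives (f) $\Rightarrow$ (a). The 1994 question was the opposite: given (a)—equivalently, $\rho(H)$ finite—is $\rho(H)$ necessarily rational? Your Diophantine sketch is aimed at the wrong direction. The hard implication is (a) $\Rightarrow$ (e), and your proposed route via accepted elasticity for $\mathcal{B}(G_P)$ does not work: when $G_P$ is infinite but (a) holds, the elasticity is in general \emph{not} accepted (Corollary \ref{STSL_cor}). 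The paper's argument (Proposition \ref{rel_prop}) instead introduces an equivalence relation $\kappa$ on $G_P^+$ (congruence modulo $\lcm(G_P^-)$ above a threshold), under which $\mathcal{A}(G_P)^+$ collapses to a finite set; one then shows that $\rho(G_P)$ equals the generalized elasticity of an auxiliary finitely generated monoid relative to this finite generating set (Lemmas \ref{rel_lem_taurat}--\ref{rel_lem_eq}). This reduction, not any acceptance statement for $\mathcal{B}(G_P)$ itself, is the new combinatorial input.

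Your route to (a) $\Rightarrow$ (h) via the Structure Theorem for Sets of Lengths also fails: under (a) alone the Structure Theorem need not hold—much of Section \ref{6} is devoted to exhibiting precisely this failure (Theorem \ref{STSL_thm}). The paper obtains (h) by first establishing (d) and (g) independently (the latter via Lemma \ref{lem_rhok}) and then invoking a general theorem on unions of sets of lengths that requires only finite $\Delta(H)$ together with a uniform bound on $\rho_{k+1}(H) - \rho_k(H)$.
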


\medskip
We point out the crucial implications in the above result. Suppose
that $(a)$ holds. Then $(b)$, $(c)$, $(e)$, $(g)$ and $(h)$ are
strong statements on the arithmetic of $H$. The conditions $(d)$ and
$(f)$ are very weak arithmetical statements (indeed, the
implications $(e) \Rightarrow (f)$, $(g) \Rightarrow (f)$ and $(h)
\Rightarrow (f)$ hold trivially in any atomic monoid). The crucial
point is that $(d)$ and $(f)$ both imply $(a)$. In
\cite{An-Ch-Sm94c}, it was first proved that (in the setting of
Krull domains) $(a)$ is equivalent to the finiteness of the
elasticity $\rho (H)$, and the problem was put forward whether or
not $\rho (H)$ would always be rational; part (e) shows that this is
indeed so. In \cite{B-C-R-S-S10}, it was recently shown that $(a)$
is equivalent to $(c)$ as well as to $(d)$ (also in the setting of
Krull monoids). We will give a complete proof of all
implications, not only because our setting is slightly more
general---being valid for any divisor \emph{homomorphism} rather
than divisor \emph{theory} (recall, as noted earlier, that
Proposition \ref{tame-charact}.2 does not hold in this slightly more
general setting, and so there is indeed sometimes a difference
between a divisor theory and homomorphism)---but also because we
need all the required tools regardless (in particular, for the
monotone catenary degree in Section \ref{5}), and thus little could
be saved by not doing so.

Note, if the equivalent conditions of Theorem \ref{main-theorem-I}
hold, then \cite[Theorem 4.2]{Ga-Ge09b} implies that
      \[
      \lim_{k \to \infty} \frac{|\mathcal V_k (H)|}{k} = \frac{1}{\min \Delta(H)}
      \Bigl( \rho (H) - \frac{1}{\rho (H)} \Bigr) \,.
      \]
Under a certain additional assumption, the sets $\mathcal V_k (H)$
are even arithmetical progressions and not only AAPs (\cite[Theorem
3.1]{Fr-Ge08}; for more on the sets $\mathcal V_k (H)$, see
(\cite[Theorem 3.1.3]{Ge09a}).

As mentioned in the introduction, there are characterizations of
arithmetical properties in various algebraic settings. In most of
them, the finiteness of the elasticity is equivalent to the
finiteness of all $\rho_k (H)$  (though this does not hold in all atomic monoids). But in none of these
settings is the finiteness of the elasticity equivalent to the
finiteness of the catenary degree. The reader may want to compare Proposition
\ref{tame-charact} and Theorem \ref{main-theorem-I} with \cite[Corollary
3.7.2]{Ge-HK06a}, \cite[Theorem 4.5]{Ka10a} or \cite[Theorem
4.4]{Ge-Ha08a}.

\smallskip
The remainder of this section is devoted to the proof of Theorem
\ref{main-theorem-I}. We start with the necessary preparations.

\begin{lemma} \label{Lambert}
Let $G_0 \subset \Z$ be a condensed subset. Then
\[
|U^+| \le |\inf G_0|  \qquad \text{for each atom} \qquad U \in
\mathcal A (G_0) \,.
\]
If in particular $G_0$ is finite, then $\mathsf{D}(G_0) \le \max G_0
+ |\min G_0|$.
\end{lemma}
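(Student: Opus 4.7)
The plan is to prove $|U^+| \le m := |\inf G_0|$ by contradiction: if $s := |U^+| \ge m+1$ I will construct a proper, nontrivial zero-sum subsequence of $U$, contradicting irreducibility. The Davenport bound will then follow by applying this first inequality symmetrically to $-U \in \mathcal A(-G_0)$.

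First I reduce to the case $m < \infty$, so that $-m = \min G_0 \in G_0$ and every negative element of $G_0$ lies in $[-m, -1]$. For a nontrivial atom $U$ we have $\mathsf v_0(U) = 0$ (otherwise $0$ is itself a proper zero-sum subsequence of $U$), so I may write $U = h_1 \cdots h_s \cdot n_1 \cdots n_t$ with $h_i > 0$ and $n_j \in [-m, -1]$. Setting $N := \sigma(U^+) = -\sigma(U^-)$, I form the strictly increasing partial sums
\[
T_j = h_1 + \cdots + h_j \quad (j = 0, \ldots, s), \qquad P_i = |n_1| + \cdots + |n_i| \quad (i = 0, \ldots, t),
\]
both of which run from $0$ to $N$. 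For each $j$ I let $i(j) := \max\{i \in [0,t] : P_i \le T_j\}$ and define the \emph{defect} $f(j) := T_j - P_{i(j)}$.

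The key observation is that $f(j) \in [0, m-1]$ for every $j \in [0, s-1]$: when $i(j) < t$, the upper bound follows from $P_{i(j)+1} - P_{i(j)} = |n_{i(j)+1}| \le m$, while $i(j) = t$ forces $T_j = N$ and hence $j = s$. Now assuming $s \ge m+1$, I split into two cases. If $f(j) = 0$ for some $j \in [1, s-1]$, then $T_j = P_{i(j)}$ and $h_1 \cdots h_j \cdot n_1 \cdots n_{i(j)}$ is a zero-sum subsequence that is nontrivial (it contains $h_1$) and proper (it omits $h_s$). Otherwise the $s-1 \ge m$ values $f(1), \ldots, f(s-1)$ all lie in the $(m-1)$-element set $[1, m-1]$, so by the pigeonhole principle $f(j_1) = f(j_2)$ for some $1 \le j_1 < j_2 \le s-1$; the identity $T_{j_2} - T_{j_1} = P_{i(j_2)} - P_{i(j_1)}$ together with $i(j_1) < i(j_2)$ (forced by $T_{j_1} < T_{j_2}$) produces the proper, nontrivial zero-sum subsequence $h_{j_1+1} \cdots h_{j_2} \cdot n_{i(j_1)+1} \cdots n_{i(j_2)}$ (it omits $h_1$). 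Either outcome contradicts the atomicity of $U$.

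For the second assertion, with $G_0$ finite I apply the first inequality to $-U \in \mathcal A(-G_0)$ to obtain $|U^-| = |(-U)^+| \le |\inf(-G_0)| = \max G_0$. Combined with $|U^+| \le |\min G_0|$ and $\mathsf v_0(U) = 0$ for atoms of length at least $2$ (the case $U = 0$ being handled separately), this yields $\mathsf D(G_0) \le \max G_0 + |\min G_0|$. The main obstacle in the argument is the careful bookkeeping required to ensure the zero-sum subsequence produced is simultaneously nontrivial and proper: restricting $j$ to $[1, s-1]$ guarantees that either $h_1$ or $h_s$ is omitted, and verifying $i(j_2) > i(j_1)$ ensures that the negative part of the constructed subsequence is nonempty in the pigeonhole case.
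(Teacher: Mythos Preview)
Your proof is correct. The paper does not actually supply an argument for this lemma; it simply cites Lambert's original note and \cite[Theorem~3.2]{B-C-R-S-S10}. Your self-contained partial-sum/pigeonhole argument is precisely the classical approach attributed to Lambert: the partial sums $T_j$ of the positive part and $P_i$ of the (absolute values of the) negative part both climb from $0$ to $N$, with the $P_i$ taking steps of size at most $m$, so the defects $f(j)=T_j-P_{i(j)}$ are trapped in $[0,m-1]$; either one of them vanishes or two coincide, and in either case you peel off a proper nontrivial zero-sum subsequence. The symmetry trick $U\mapsto -U$ for the Davenport bound is likewise standard.

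One cosmetic remark: the second assertion of the lemma, as stated, fails in the degenerate case $G_0=\{0\}$ (where $\mathsf D(G_0)=1$ but $\max G_0+|\min G_0|=0$). This is an artifact of the statement rather than your proof, and the paper implicitly works under $|G_0|\ge 2$ throughout; you might just note that the bound is asserted only for condensed $G_0$ with $G_0^\pm\neq\emptyset$, where $\max G_0+|\min G_0|\ge 2$ and the atom $U=0$ causes no trouble.
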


\begin{proof}
This is due to Lambert (\cite{La87a}); for a proof in the present
terminology, see  \cite[Theorem 3.2]{B-C-R-S-S10}.
\end{proof}

\begin{lemma} \label{ex_of_atom}
Let $G_0 \subset \Z$ be a condensed subset such that $G_0^+$ is
infinite. For each $S \in \mathcal{F}(G_0^-)$, there exists some
$U\in \mathcal{A}(G_0)$ with $S \mid U$.
\end{lemma}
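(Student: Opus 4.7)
The plan is to construct an atom $U$ containing $S$ in the form $U = S \cdot m^k \cdot X$, where $m \in G_0^+$ is a large positive element, $k \ge 1$ an integer chosen for divisibility reasons, and $X \in \mathcal{F}(G_0^-)$ a balancing negative tail. If $S = \boldsymbol{1}$, condensation combined with $G_0^+$ being infinite forces $G_0^- \ne \emptyset$, so $\mathcal{B}(G_0)$ contains atoms and any of them divides $\boldsymbol{1}$; assume henceforth $n := -\sigma(S) \ge 1$. Set $N = \{b \in \N : -b \in G_0^-\}$ and $d := \gcd N$; since $n$ is a finite sum of elements of $N$, $d \mid n$.

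A standard numerical-semigroup argument supplies a threshold $F = F(N)$ such that every multiple of $d$ exceeding $F$ is representable as a sum of elements of $N$. I pick $m \in G_0^+$ with $m \ge n + F$ (possible since $G_0^+$ is infinite), set $g := \gcd(m,d)$ and $k := d/g$. Then $km \equiv 0 \pmod{d}$ and $km - n \ge m - n \ge F$, so $km - n$ is a multiple of $d$ large enough to be so representable; choose $X \in \mathcal{F}(G_0^-)$ with $-\sigma(X) = km - n$ and set $U := S \cdot m^k \cdot X$, for which $\sigma(U) = 0$ and $S \mid U$.

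The key step is verifying $U \in \mathcal{A}(G_0)$. Let $V = S' \cdot m^i \cdot X'$ be any zero-sum subsequence of $U$ with $S' \mid S$, $0 \le i \le k$, $X' \mid X$; write $\alpha = -\sigma(S')$ and $\beta = -\sigma(X')$, so that $\alpha + \beta = im$. Because $\alpha$ and $\beta$ are sums of elements of $N$, they are multiples of $d$, so $d \mid im$, and hence $k = d/g$ divides $i$, forcing $i \in \{0,k\}$. If $i = 0$, then $\alpha = \beta = 0$ gives $V = \boldsymbol{1}$; if $i = k$, the bounds $\alpha \le n$ and $\beta \le km - n$ sum to exactly $km = \alpha + \beta$, so both must saturate, and the uniqueness of subsequences attaining these extrema (forced by the strict negativity of the terms of $S$ and $X$) yields $S' = S$ and $X' = X$, hence $V = U$. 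Thus $V \in \{\boldsymbol{1}, U\}$, and $U$ is an atom containing $S$.

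The main technical obstacle is coupling two conditions on $(m,k)$ simultaneously: the congruence $d \mid km$ (which is needed both for the existence of $X$ and for the divisibility step in the atom proof) and the size condition $km - n \ge F$ (which makes $km - n$ representable over $N$). The choice $k = d/\gcd(m,d)$ handles the first, while the infinitude of $G_0^+$ absorbs the second.
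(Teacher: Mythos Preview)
Your proof is correct and follows essentially the same strategy as the paper's: pick a large positive element $m \in G_0^+$, let $k$ be the minimal positive integer with $d \mid km$ where $d = \gcd(G_0^-)$, use a Frobenius-type threshold to balance $km - n$ by a sequence $X \in \mathcal{F}(G_0^-)$, and then verify that $U = S \cdot m^k \cdot X$ is an atom via the minimality of $k$. The paper's proof is terser (it simply asserts ``by the minimality of $\beta$, it follows that $b^\beta S S'$ is an atom''), whereas you spell out the atom verification in full; one trivial slip is that in the $S = \boldsymbol{1}$ case you wrote ``any of them divides $\boldsymbol{1}$'' when you meant the reverse divisibility, but the intent is clear.
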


\begin{proof}
Let $d= \gcd (G_0^-)$. Then $[G_0^-]\subset -d \mathbb{N}$ and there exists some $g \in \mathbb{N}$ such that $-gd - d\mathbb{N}\subset [G_0^-]$. Since $G_0^+$ is infinite,
let $b \in G_0^+$ with $b > |\sigma(S)| + gd$, and let $\beta \in [1, d]$ be minimal such that $\beta b \in d \mathbb{N}$.
By the definition of $g$, there exists some $S'\in \mathcal{F}(G_0^-)$ such that $\sigma(S') = - (\beta b -|\sigma(S)|)=-(\beta b+\sigma(S))$.
Thus, $\sigma(b^{\beta}SS')=0$ and, by the minimality of $\beta$, it follows that $b^{\beta}SS'$ is an atom.
\end{proof}

The next lemma uses ideas from the proof of Theorem 3.1 in
\cite{B-C-R-S-S10}. It will be used for the investigation of the
catenary degree as well as for the monotone catenary degree
(Proposition \ref{6.4}).

\begin{lemma} \label{3.4}
Let $G_0 \subset \Z$ be a condensed subset such that $G_0^-$ is
finite and nonempty. Let $A \in \mathcal B (G_0)$ ne nontrivial  and $z,
\overline z \in \mathsf Z (A)$ with $|z| \le |\overline z|$. Then
there exists a $U \in \mathcal A (G_0)$ with $U \t \overline z$ and
a factorization $\widehat z \in \mathsf Z (A) \cap U \mathsf Z
(G_0)$ such that $\mathsf d (z, \widehat z) \le   \bigl(|\min G_0| +
|G_0^-|^2 \bigr) \ |\min G_0|$.
\end{lemma}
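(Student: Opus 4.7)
The plan is to select an atom $U \mid \overline z$ via an averaging argument whose multiplicities $\mathsf v_v(U)$ for $v \in G_0^-$ are controlled in terms of those of the atoms of $z$, then cover $U$ as a sequence by few atoms of $z$, and finally refactor the residue. Write $z = V_1 \cdots V_l$, $\overline z = W_1 \cdots W_m$ with $l \le m$, and put $d = |\min G_0|$, $D = |G_0^-|$. Any common atom of $z$ and $\overline z$ (including a zero atom, if $0 \in G_0$) yields $\widehat z = z$ at distance $0$, so we may and do seek a nonzero $U$.

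\emph{Choice of $U$.} For each $v \in G_0^-$ with $\mathsf v_v(A) > 0$ put $c_v = \mathsf v_v(A)/m$, and let $D' \le D$ be the number of such $v$. Since $\sum_j \sum_v \mathsf v_v(W_j)/c_v = \sum_v \mathsf v_v(A)/c_v = D' m$, some $j^*$ satisfies $\sum_v \mathsf v_v(W_{j^*})/c_v \le D'$. Each summand being non-negative, we get $\mathsf v_v(W_{j^*}) \le D' c_v = D' \mathsf v_v(A)/m \le D \mathsf v_v(A)/l$ for all $v \in G_0^-$, where the final inequality uses $l \le m$. Set $U = W_{j^*}$.

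\emph{Covering.} Construct $I \subset [1,l]$ as follows. First include indices of atoms of $z$ accounting for each copy of each positive term of $U$ (possible since $U \mid A$); by Lemma~\ref{Lambert}, $|U^+| \le d$, so at most $d$ indices suffice here. Next, for each $v \in G_0^-$ with $\mathsf v_v(U) > 0$, include the indices of the top $\min(D, l)$ atoms of $z$ sorted by decreasing $\mathsf v_v$-multiplicity. The top-$D$ average of a list of $l$ non-negative numbers is at least the overall average, so when $l \ge D$ the sum of $\mathsf v_v$-values over these atoms is at least $D \cdot \mathsf v_v(A)/l \ge \mathsf v_v(U)$; when $l < D$ the sum is all of $\mathsf v_v(A) \ge \mathsf v_v(U)$. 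Thus $\prod_{i \in I} V_i$ dominates $U$ as a sequence, and $|I| \le d + D \cdot D' \le d + D^2$ (zero atoms of $z$ may be discarded since they do not help cover the nonzero $U$).

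\emph{Refactoring and distance.} Let $B = \prod_{i \in I} V_i \cdot U^{-1} \in \mathcal B(G_0)$ and let $\tilde B \in \mathsf Z(B)$ be any atomic factorization. Since $U$ and the atoms $V_i$ with $i \in I$ are all nonzero, $B$ has no zero terms; every nonzero atom contains at least one positive element, so $|\tilde B| \le |B^+| = \sum_{i \in I} |V_i^+| - |U^+| \le |I| d - 1$. Put $\widehat z = U \cdot \tilde B \cdot \prod_{i \notin I} V_i \in \mathsf Z(A) \cap U \mathsf Z(G_0)$. Since $\gcd(z, \widehat z)$ contains $\prod_{i \notin I} V_i$, we get $|z \gcd(z, \widehat z)^{-1}| \le |I|$ and $|\widehat z \gcd(z, \widehat z)^{-1}| \le 1 + |\tilde B| \le |I| d$, giving $\mathsf d(z, \widehat z) \le |I| d \le (d + D^2) d = (|\min G_0| + |G_0^-|^2) |\min G_0|$, as required. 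The main obstacle is the averaging step coupled with the top-$D$ covering argument: both rely essentially on the assumption $l \le m$ to bound $\mathsf v_v(W_{j^*})$ against the average $\mathsf v_v$-multiplicity among the atoms of $z$.
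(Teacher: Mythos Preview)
Your proof is correct and follows essentially the same approach as the paper: an averaging argument over the atoms of $\overline z$ to find $U=W_{j^*}$ with $\mathsf v_v(U)\le |G_0^-|\,\mathsf v_v(A)/m\le |G_0^-|\,\mathsf v_v(A)/l$ for all $v\in G_0^-$, then a top-$|G_0^-|$ selection among the atoms of $z$ for each negative $v$ together with Lambert's bound for the positive part to get $|I|\le |\min G_0|+|G_0^-|^2$, and finally a refactoring of $\prod_{i\in I}V_i$ over $U$ with length bounded via $|(\cdot)^+|$. The only cosmetic differences are your explicit treatment of the cases $0\mid A$ and $l<D$, and your use of $D'=|\supp(A)^-|$ rather than $|G_0^-|$ in the averaging step.
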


\begin{proof}
Let $\overline z = U_1 \cdot \ldots \cdot U_m$ and $z = V_1 \cdot
\ldots \cdot V_l$ where $l, m \in \N$ and $U_1, \ldots, U_m, V_1,
\ldots, V_l \in \mathcal A (G_0)$. We proceed in two steps. Note we may assume $0\nmid A$, else the lemma is trivial taking $U=0$ and $\hat z=z$.

\medskip
1. We assert that there is an $i \in [1, m]$ and a set $I
\subset [1, l]$ such that
\[
|I| \le |\min G_0 | + |G_0^-|^2 \ \quad \text{and} \quad
U_i \Bigm| \prod_{\nu \in I} V_{\nu} \,.
\]
We assume $l > |G_0^-|$, since otherwise the claim is obvious.
Since
\[
\sum_{i=1}^m \max \Bigl\{ \frac{\mathsf v_g (U_i)}{\mathsf v_g (A)}
\mid g \in G_0^- \Bigr\} \le \sum_{i=1}^m \sum_{g \in G_0^-}
\frac{\mathsf v_g (U_i)}{\mathsf v_g (A)} = \sum_{g \in G_0^-}
\Bigl( \frac{1}{\mathsf v_g (A)} \sum_{i=1}^m \mathsf v_g (U_i)
\Bigr) = |G_0^-| \,,
\]
there exists an $i \in [1, m]$ such that
\be\label{toto}
\frac{\mathsf v_g (U_i)}{\mathsf v_g (A)} \le \frac{|G_0^-|}{m} \,.
\ee
For each $g \in G_0^-$, there is an $I_g \subset [1, l]$ with $|I_g|
= |G_0^-|$ such that
\[
\mathsf v_g \Bigl( \prod_{\nu \in I_g} V_{\nu} \Bigr) \ge
\frac{|G_0^-| \mathsf v_g (A)}{l} \,.
\]
Hence, since $l \le m$, it follows by \eqref{toto} that
\[
\mathsf v_g \Bigl( \prod_{\nu \in I_g} V_{\nu} \Bigr) \ge
\frac{|G_0^-| \mathsf v_g (A)}{l} \ge \frac{m \mathsf v_g
(U_i)}{\mathsf v_g (A)} \frac{\mathsf v_g (A)}{l} = \mathsf v_g
(U_i) \,.
\]
Since by Lemma \ref{Lambert} we have $|U_i^+| \le | \min G_0 |$,
there is an $I_0 \subset [1, l]$ with $|I_0| \le |\min G_0|$ such
that
\[
\mathsf v_g (U_i) \le \mathsf v_g \Bigr( \prod_{\nu \in I_0} V_{\nu}
\Bigr) \quad \text{for all} \quad g \in G_0^+ \,.
\]
Then, for $I = I_0 \cup \bigcup_{g \in G_0^-} I_g$, we get  $\mathsf
v_g (U_i) \le \mathsf v_g \Bigl( \prod_{\nu \in I} V_{\nu} \Bigr)$
for each $g \in G_0$, i.e., $U_i \t \prod_{\nu \in I} V_{\nu}$.
Noting that $|I| \le |\min G_0| + |G_0^-|^2$, the argument is
complete.

\smallskip
2. By part 1, we may suppose without restriction
that $U_1 \t \prod_{\nu=1}^k V_{\nu}$ with $k \le \bigl( | \min G_0|
+ |G_0^-|^2 \bigr)$. We consider a factorization $V_1 \cdot \ldots
\cdot V_k = W_1 W_2 \cdot \ldots \cdot W_n$, where $U_1 = W_1, W_2,
\ldots , W_n \in \mathcal A (G_0)$, and by Lemma \ref{Lambert},
\[
\begin{aligned}
n & \le |(W_1 \cdot \ldots \cdot W_n)^+|
= |(V_1 \cdot \ldots \cdot V_k)^+| \\
 & \le k \ |\min G_0|   \le \bigl( |\min G_0| + |G_0^-|^2
 \bigr) \ |\min G_0|   \,.
\end{aligned}
\]
Now we set $\widehat z = W_1 \cdot \ldots \cdot W_n V_{k+1} \cdot
\ldots \cdot V_l$ and get
\[
\mathsf d (z, \widehat z) \le \max \{k, n\} \le \bigl( |\min G_0| +
|G_0^-|^2
 \bigr) \ |\min G_0| \,. \qedhere
\]
\end{proof}

\begin{lemma}
\label{lem_rhok}
Let $G_0\subset \mathbb{Z}$ be a condensed set such that $G_0^-$ is finite and nonempty.
\begin{enumerate}
\item There exists some $M \in \mathbb{N}$ such that $\rho_{k+1}(G)\le 1+kM$ for each $k \in \mathbb{N}_0$. More precisely,
\begin{enumerate}
\item if $G_0$ is infinite, then for each $k \in \mathbb{N}$,
\[1\le \rho_{k+1} (G_0) - \rho_{k} (G_0) \le 2 \ |\min G_0|.\]
\item if $G_0$ is finite, then for each $k \in \mathbb{N}$,
\[
1\le \rho_{k+1} (G_0) - \rho_{k} (G_0) \le  \mathsf D (G_0)-1 \,.
\]
\end{enumerate}
\item  For each $k \in \mathbb{N}$,
\[
-1 \le \lambda_{k}(G_0) - \lambda_{k+1}(G_0) < \bigl( |\min G_0| +
|G_0^-|^2 \bigr) \ |\min G_0| \,.
\]
\end{enumerate}
\end{lemma}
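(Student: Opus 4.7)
My plan is to treat the lower bounds in both parts uniformly via multiplication by an atom, to derive the upper bound in Part 2 by direct application of Lemma \ref{3.4}, to obtain the Part 1(b) upper bound by removing an atom from the length-$(k+1)$ factorization (exploiting $\mathsf{D}(G_0) < \infty$), and to attack Part 1(a) by a refined version of the same removal. For the lower bounds, since $G_0$ is condensed with $G_0^- \ne \emptyset$, $\mathcal{B}(G_0)$ contains at least one atom $U$; if $A$ realizes $\{k, \rho_k(G_0)\} \subset \mathsf{L}(A)$ then $AU$ realizes $\{k+1, \rho_k+1\}$, giving $\rho_{k+1} - \rho_k \ge 1$, and the analogous construction starting from $A$ realizing $\{k, \lambda_k(G_0)\}$ yields $\lambda_{k+1} \le \lambda_k + 1$, i.e., $\lambda_k - \lambda_{k+1} \ge -1$.

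For the upper bound in Part 2, I would take $A$ realizing $\{k+1, \lambda_{k+1}\}$ and choose $z, \bar z \in \mathsf{Z}(A)$ of lengths $\lambda_{k+1}$ and $k+1$; since $\lambda_{k+1} \le k+1$, Lemma \ref{3.4} supplies an atom $U \mid \bar z$ and a factorization $\hat z \in \mathsf{Z}(A)$ with $U \mid \hat z$ and $\mathsf{d}(z, \hat z) \le C := (|\min G_0| + |G_0^-|^2)|\min G_0|$. The quotient $A/U \in \mathcal{B}(G_0)$ then admits factorizations of length $k$ (from $\bar z/U$) and of length $|\hat z| - 1 \le \lambda_{k+1} + C - 1$ (from $\hat z/U$), forcing $\lambda_k \le |\hat z| - 1 < \lambda_{k+1} + C$, which is the desired strict inequality.

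For the upper bound in Part 1(b), take $A$ realizing $\{k+1, \rho_{k+1}\}$ with a short factorization $U_1 \cdots U_{k+1}$ and a long factorization $W_1 \cdots W_\ell$, where $\ell = \rho_{k+1}$, and let $I \subset [1, \ell]$ be a minimal subset with $U_{k+1} \mid \prod_{i \in I} W_i$, so that $|I| \le |U_{k+1}| \le \mathsf{D}(G_0)$. Setting $T := \prod_{i \in I} W_i / U_{k+1} \in \mathcal{B}(G_0)$ and $t^* := \max \mathsf{L}(T)$, one has $t^* = 0$ exactly when $|I| = 1$; if $|I| \ge 2$ then $T$ must be nontrivial (otherwise $U_{k+1}$ would be a product of $|I| \ge 2$ atoms, contradicting atomicity), so $t^* \ge 1$. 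In both cases $|I| - t^* \le \mathsf{D}(G_0) - 1$. Since $A/U_{k+1}$ admits the length-$k$ factorization $U_1 \cdots U_k$ together with a factorization of length $t^* + \ell - |I| \ge \ell - (\mathsf{D}(G_0) - 1)$, we conclude $\rho_k \ge \rho_{k+1} - (\mathsf{D}(G_0) - 1)$.

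The hard part is the upper bound in Part 1(a), where $\mathsf{D}(G_0) = \infty$ and the Part 1(b) argument breaks down. My plan is to run the same atom-removal but estimate $|I|$ and $t^*$ using only positive parts of atoms, via the uniform bound $|V^+| \le |\min G_0|$ from Lemma \ref{Lambert} and the inequality $\min \mathsf{L}(T) \ge |T^+|/|\min G_0|$. The positive contribution to $|I|$ is at most $|\min G_0|$; the delicate issue is the negative contribution, since $|U_{k+1}^-|$ can be arbitrarily large when $G_0^+$ is infinite. I would select $U_{k+1}$ (or dually an atom $W_j$ of the long factorization) judiciously so that the relevant $|I|$ is controlled by $|\min G_0|$ alone, and pair this with a matching lower bound on $t^*$ to reach $|I| - t^* \le 2|\min G_0|$. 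The finiteness of $G_0^-$ (so that negatives inside atoms must repeat from a finite palette), together with Lemma \ref{ex_of_atom} (supplying atoms with prescribed negative support), will be the key tools; this refinement must go strictly beyond Lemma \ref{3.4}, whose constant $(|\min G_0| + |G_0^-|^2)|\min G_0|$ is far too large to match the target bound $2|\min G_0|$.
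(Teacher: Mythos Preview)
Your treatment of the lower bounds, of Part~2, and of Part~1(b) is essentially the paper's argument (for 1(b) the paper merely cites \cite[Proposition~3.6]{Ge-Ka10a}, and your direct $\omega$-style argument is precisely that proof). The gap is in Part~1(a).

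The atom-removal paradigm you propose --- remove $U_{k+1}$ from the short factorization of $A$, locate it inside a subset $I$ of the long factorization, and bound $|I|-t^*$ --- cannot be made to work with target bound $2|\min G_0|$. The obstruction you correctly identify, that $|U_{k+1}^-|$ is unbounded, is fatal for this scheme: no ``judicious'' choice of which atom to remove keeps $|I|$ controlled, because the minimal $I$ with $U_{k+1}\mid\prod_{i\in I}W_i$ is governed by the negative part of $U_{k+1}$, and every atom in the short factorization may have enormous negative part. Passing to the dual (removing some $W_j$) faces the symmetric problem on the long side.

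The paper abandons the idea of staying inside $A$ altogether. Setting $m=|\min G_0|$, one observes that $(A_kA_{k+1})^+$ has at most $2m$ terms by Lemma~\ref{Lambert}, so after reordering $(A_kA_{k+1})^+\mid U_1\cdots U_{2m}$, and hence $(U_{2m+1}\cdots U_l)^+\mid (A_1\cdots A_{k-1})^+$. Now one invokes Lemma~\ref{ex_of_atom} not to patch up $A$, but to manufacture a \emph{new} atom $A_k'$ containing $S:=(U_{2m+1}\cdots U_l)^-$ as a subsequence. The new block $B:=A_1\cdots A_{k-1}\,A_k'$ is a product of $k$ atoms (so $k\in\mathsf L(B)$), and by construction $U_{2m+1}\cdots U_l\mid B$ (the positive part lands in $A_1\cdots A_{k-1}$, the negative part in $A_k'$), giving $\max\mathsf L(B)\ge l-2m$. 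Thus $\rho_k(G_0)\ge\rho_{k+1}(G_0)-2m$. The essential idea you were missing is that $B$ is not a divisor of $A$; one exits $A$ entirely and builds a fresh witness for $\rho_k$.
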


\begin{proof}
1. We recall that $\rho_1(G_0)=1$. It thus suffices to establish the bounds on $\rho_{k+1} (G_0) - \rho_{k} (G_0) $.
By Lemma \ref{Lambert}, we know $\rho_k(G_0)\leq k\cdot |\min G_0^-|<\infty$.

\noindent 1.(a) The left inequality is trivial and it remains to
verify the right inequality. Let $m = |\min G_0|$. Let $l \in
\mathbb{N}$, and let $A_1, \dots, A_{k+1}, U_{1}, \dots, U_{l} \in
\mathcal{A}(G_0)$ be such that
\[A_1 \cdot\ldots\cdot A_{k+1}= U_1\cdot \ldots \cdot U_{l} \, .\]
We have to show that $l \le \rho_k(G_0) +2m$.
 By Lemma \ref{Lambert}, we know that $|A^+| \le m$ for each $A \in \mathcal{A}(G_0)$.
Thus, we may assume that $(A_{k}A_{k+1})^+ \mid U_1 \cdot \ldots
\cdot U_{2m}$. Then $(\prod_{j=2m+1}^{l}U_j)^+ \mid
\prod_{i=1}^{k-1} A_i$. Let $S= (\prod_{j=2m+1}^{l}U_j)^-$. By Lemma
\ref{ex_of_atom}, there exists some $A_{k}'\in \mathcal{A}(G_P)$
with $S \mid A_{k}'$. We consider $B=(\prod_{i=1}^{k-1} A_i)A_{k}'$,
which is a product of $k$ atoms. We observe that
$\prod_{j=2m+1}^{l}U_j\mid B$. Thus, $\max \mathsf{L}(B)\ge l - 2m$,
establishing the claim.

\smallskip
\noindent 1.(b) This follows from \cite[Proposition 3.6]{Ge-Ka10a}
(see also Lemma 4.3 in that paper and note that $\mathsf D (G_0)
\ge 2$).

\smallskip
\noindent
2.
The left inequality is trivial and it remains to verify the right
inequality. Let $s = \lambda_{k+1} (G_0)$ and let $U_1, \ldots,
U_s, A_1, \ldots, A_{k+1} \in \mathcal A (G_0)$ be such that
\[
U_1 \cdot \ldots \cdot U_s = A_1 \cdot \ldots \cdot A_{k+1} \,.
\]
After renumbering if necessary, Lemma \ref{3.4} implies that $A_1 \t
U_1 \cdot \ldots \cdot U_j$ and $U_1 \cdot \ldots \cdot U_j = A_1
W_2 \cdot \ldots \cdot W_i$ with $W_1, \ldots , W_i \in \mathcal A
(G_0)$ and  $i
\le  \bigl( |\min G_0| + |G_0^-|^2 \bigr)   |\min G_0| =M_2$ (note
that, in order to apply Lemma \ref{3.4}, we used that $s \le k+1$).
Then
\[
W_2 \cdot \ldots \cdot W_i U_{j+1} \cdot \ldots \cdot U_s = A_2
\cdot \ldots \cdot A_{k+1},
\]
and hence
\[
\begin{aligned}
\lambda_{k} (G_0) & \le \min \mathsf L (A_2 \cdot \ldots \cdot A_{k+1})
\le \min \mathsf L (U_{j+1} \cdot \ldots \cdot U_s) + \min \mathsf L
(W_2 \cdot \ldots \cdot W_i) \\
 & \le s - j + i-1 \le \lambda_{k+1} (G_0) + (M_2 - 1) \,. \qedhere
\end{aligned}
\]
\end{proof}

We continue with a lemma that is used when investigating the  sets of distances and local tameness.
To simplify the formulation, we introduce the following notation. For $a \in -\mathbb{N}$ and $b \in \mathbb{N}$, let
$V_{a,b}$ denote the unique atom with support $\{a,b\}$, that is $V_{a,b}= a^{\alpha}b^{\beta}$ with $\alpha= \lcm(a,b)/|a|$ and $\beta= \lcm(a,b)/b$.

\begin{lemma} \label{lem_gap}
Let $G_0 \subset \mathbb{Z}$ and let $v \in \mathbb{N}$. Suppose there exist distinct $a,a_2 \in G_0^-$ and $b,b_1
\in G_0^+$ that satisfy $b_1 \ge b|a|$ and   $|a_2| \ge  (v b_1 +b)|a|$.
For a given $z \in \mathsf{Z}((V_{a, b_1} V_{a_2,b})^v)$, let $z_0$ be the (unique) minimal divisor of $z$ such that $\mathsf{v}_{a_2}(\pi(z_0^{-1}z))=0$, and let $t(z)= \mathsf{v}_{b_1}(\pi(z_0))$.
Then,
\[|z| \in  \left[\frac{b_1}{\lcm(a,b)} \  t(z) - D ,\frac{b_1}{\lcm(a,b)} \  t(z)+ D \right] \quad \text{where} \quad D =  v (b+|a|) \gcd(a,b) \, .\]
Moreover, if $t(z)=0$, then $z = V_{a,b_1}^v\cdot V_{a_2,b}^v$.
\end{lemma}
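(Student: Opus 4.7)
The plan is to decompose $z = z_0 z_1$, where $z_0$ collects the atoms containing $a_2$ in their support (``type-II'' atoms) and $z_1$ the remaining (``type-I'') atoms, so that by definition $t(z) = \mathsf{v}_{b_1}(\pi(z_0))$. Set $d_1 = \gcd(|a|, b_1)$, $d_2 = \gcd(|a|, b)$, $g_2 = \gcd(|a_2|, b)$, and $m = \lcm(|a|, b) = |a|b/d_2$. Writing $x_0 = \mathsf{v}_a(\pi(z_0))$ and noting $\mathsf{v}_{a_2}(\pi(z_0)) = vb/g_2$, the zero-sum condition $\sigma(\pi(z_0)) = 0$ pins down $u_0 := \mathsf{v}_b(\pi(z_0))$, so that
\[
u_1 := \mathsf{v}_b(\pi(z_1)) = \mathsf{v}_b(A) - u_0 = \bigl(t(z) b_1 - x_0 |a|\bigr)/b.
\]

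The first step I would carry out is the structural bound $\mathsf{v}_a(U) \le b/d_2 - 1$ for every type-II atom $U$. Indeed, the zero-sum relation on $U$, together with $\mathsf{v}_{a_2}(U)|a_2| \ge |a_2| \ge (vb_1 + b)|a|$ and $\mathsf{v}_{b_1}(U) b_1 \le \mathsf{v}_{b_1}(A)\, b_1 = v|a| b_1/d_1 \le v|a|b_1$, yields $\mathsf{v}_b(U) \ge \mathsf{v}_a(U)|a|/b + |a|$. Hence $\mathsf{v}_a(U) \ge b/d_2$ would give $\mathsf{v}_b(U) \ge |a|/d_2$, making $V_{a,b}$ a proper divisor of $U$, contradicting atomicity. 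Since $s := |z_0| \le vb/g_2$ (each type-II atom has $\mathsf{v}_{a_2} \ge 1$ and $\sum_U \mathsf{v}_{a_2}(U) = vb/g_2$), summing gives $x_0 < sb/d_2$, i.e., $x_0 d_2/b < s \le vb$.

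Second, I would classify the atoms of $z_1$ (whose supports lie in $\{a, b, b_1\}$) as $V_{a,b}$'s, $V_{a, b_1}$'s, or mixed atoms of full support $\{a, b, b_1\}$, with counts $n_{12}$, $n_{13}$, $n_{13m}$. A standard ``divide out a pure atom'' argument shows $\mathsf{v}_b(W) \le |a|/d_2 - 1$ and $\mathsf{v}_{b_1}(W) \le |a|/d_1 - 1$ for each mixed atom $W$. Comparing multiplicities then yields
\[
u_1 d_2/|a| - n_{13m} \le n_{12} \le u_1 d_2/|a|, \qquad n_{13} + n_{13m} \le w_1 := v|a|/d_1 - t(z).
\]
Since $u_1 d_2/|a| = t(z) b_1/m - x_0 d_2/b$, the upper bound then reads
\[
|z| = n_{12} + n_{13} + n_{13m} + s \le t(z) b_1/m + w_1 + s \le t(z) b_1/m + v|a|/d_1 + vb/g_2 \le t(z) b_1/m + v(|a|+b) \le t(z) b_1/m + D,
\]
and the lower bound reads $|z| \ge n_{12} + n_{13m} \ge u_1 d_2/|a| \ge t(z) b_1/m - vb \ge t(z) b_1/m - D$, using that $D = v(b+|a|)\gcd(a, b) \ge v(|a|+b) \ge vb$ since $\gcd(a, b) \ge 1$.

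For the ``moreover'' statement: if $t(z) = 0$, then $u_1 = -x_0 |a|/b$, and nonnegativity of $u_1$ and $x_0$ forces $x_0 = u_1 = 0$. Then $\pi(z_0) = a_2^{vb/g_2} b^{v|a_2|/g_2}$ has $V_{a_2, b}$ as its only atomic divisor (being the unique atom with support contained in $\{a_2, b\}$), hence factors uniquely as $V_{a_2, b}^v$; similarly $\pi(z_1) = a^{vb_1/d_1} b_1^{v|a|/d_1}$ factors uniquely as $V_{a, b_1}^v$. Thus $z = V_{a, b_1}^v V_{a_2, b}^v$. The main obstacle is the structural bound $\mathsf{v}_a(U) \le b/d_2 - 1$ for type-II atoms, which is precisely where the hypothesis $|a_2| \ge (vb_1 + b)|a|$ enters: without control of $x_0$ the identity $n_{12} + n_{13m} \ge u_1 d_2/|a|$ would not yield a useful lower bound on $|z|$.
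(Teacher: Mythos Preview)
Your proposal is correct and follows essentially the same approach as the paper: both split $z$ into the atoms containing $a_2$ (your $z_0$) and the rest, establish the crucial bound $\mathsf v_a(U)\le \alpha-1$ for type-II atoms via the hypothesis $|a_2|\ge (vb_1+b)|a|$ (forcing $V_{a,b}\mid U$ otherwise), and then control $|z|$ through the $b$-multiplicity, using that atoms with $b_1$ in their support satisfy $\mathsf v_b\le \beta-1$. The only organizational difference is that the paper groups all non-$a_2$ atoms containing $b_1$ together while you separate pure $V_{a,b_1}$'s from mixed atoms; your stated bound $\mathsf v_{b_1}(W)\le |a|/d_1-1$ for mixed atoms is correct but in fact unused in your argument, since $n_{13}+n_{13m}\le w_1$ only needs $\mathsf v_{b_1}\ge 1$.
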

Since it is relevant in applications of this lemma, we point out that $D$ depends neither on $a_2$ nor on $b_1$.
\begin{proof}
To simplify notation without suppressing the information on the origin of certain quantities, we set $\alpha = \mathsf{v}_a(V_{a,b})$,
$\alpha_1= \mathsf{v}_a(V_{a,b_1})$, and $\alpha_2 = \mathsf{v}_{a_2}(V_{a_2,b}) $. Likewise, we set $\beta = \mathsf{v}_b(V_{a,b}) $,
$\beta_1= \mathsf{v}_{b_1}(V_{a,b_1})$, and $\beta_2 = \mathsf{v}_{b}(V_{a_2,b})$.

From the explicit description or applying Lemma \ref{Lambert}, we get $\beta , \beta_{1}\in [1,|a|]$ and  $\alpha, \alpha_2 \in [1, b]$.

Let $z = U_1 \cdot \ldots \cdot U_m$, where $U_1, \ldots, U_m \in \mathcal A (G_0)$, and $k, l \in [1,m]$ with $k \le l$ be such that
\begin{itemize}
\item $a_2 \t U_{\nu}$ \ for each $\nu \in [1, k]$,

\item $a_2 \nmid U_{\nu}$ and $b_1 \t U_{\nu}$ \ for each $\nu \in [k+1,
l]$, and

\item $a_2 \nmid U_{\nu}$ and $b_1 \nmid U_{\nu}$ \ for each $\nu \in
[l+1, m]$;
\end{itemize}
in particular, $z_0 =  U_{1}\cdot \ldots \cdot U_k \in \mathsf{Z}(G_0)$.
Also note that $U_{\nu}=V_{a,b}$ for each $\nu \in [l +1, m]$.

For $\nu \in [1, k]$, we have
\[
U_{\nu} = a_2^{ \alpha_{\nu, 2}} a^{ \alpha_{\nu, 1} }
b_1^{ \beta_{\nu,1} } b^{ \beta_{\nu, 2} } \,,
\]
where $ \alpha_{\nu,2}  \in \N$ and $ \alpha_{\nu,1} ,
 \beta_{\nu, 1} ,  \beta_{\nu, 2}  \in \N_0$.
By the assumption on $|a_2|$ and since $\beta, \beta_1 \in [1, |a|]$, we have  $|a_2| \ge v \beta_1 b_1 + \beta b$.
Thus, in view of $\vp_{b_1}(\pi(z))=\beta_1 v$, it follows that $\beta_{\nu, 2}  \ge \beta$. Hence $ \alpha_{\nu, 1}  \le \alpha-1$, since otherwise $V_{a,b} \mid U_{\nu}$, which is impossible (as $a_2|U_\nu$).

Let $\alpha_2' = \mathsf{v}_{a}( \pi(z_0))$ and $\beta_2' = \mathsf{v}_{b}( \pi(z_0))$.
In view of $ \alpha_{\nu, 1}  \le \alpha - 1$,  $k \le v \alpha_2$ and $\alpha, \alpha_2 \in [1, b]$, we have
$0 \le \alpha_2' \le   v b^2$.

We note that $\sigma( \pi(z_0)^-) = v \alpha_2 a_2 + \alpha_2'a$, and thus
\[t(z) b_1 +  \beta_2' b =  v \alpha_2 |a_2| + \alpha_2'|a|,\] i.e.,
$\beta_2' =  b^{-1}(v \alpha_2 |a_2| + \alpha_2'|a| - t(z)b_1)$.
In particular, note that if $t(z)=0$, then, since $$\sigma(b^{\vp_b(\pi(z))})=v\cdot \sigma(b^{\vp_b(V_{a_2,b})})=-v\cdot \sigma(a_2^{\vp_{a_2}(V_{a_2,b})})$$
implies $\mathsf{v}_b((V_{a,b_1}V_{a_2,b})^v) = b^{-1}(v \alpha_2 |a_2|)$, it follows that
$\alpha_2' = 0$ and $z_0= V_{a_2,b}^v$; this establishes the ``moreover''-statement.

Consequently,
\begin{equation}
\label{lem_gap_eq_1}
b^{-1}( v \alpha_2 |a_2|  - t(z) b_1 )\le  \beta_2' \le  b^{-1}(v \alpha_2 |a_2| + v b^2 |a| - t(z)b_1).
\end{equation}

For $\nu \in [k+1, l]$, we have
\[
U_{\nu} = b_1^{\beta_{\nu,1}''} b^{\beta_{\nu,2}''} a^{\alpha_{\nu,1}''} \,,
\]
with $\beta_{\nu,1}'' \in \N$ and $\alpha_{\nu,1}'',\beta_{\nu,2}'' \in \N_0$.
We have  $\alpha_{\nu,1}''|a| \ge b_1$.  Thus, by the assumption on $b_1$ and since $\alpha\in [1,b]$,
we get $\alpha_{\nu,1}'' \ge \alpha$, and hence $\beta_{\nu, 2}'' \le \beta-1$ (as otherwise $U_\nu=V_{a,b}$ with $b_1|U_\nu$ but $b_1\nmid V_{a,b}$, a contradiction).

Let $\beta_2'' = \mathsf{v}_{b}( \prod_{\nu= k+1}^l U_{\nu})$.
We  note that $l - k \le \mathsf v_{b_1} ((V_{a,b_1}V_{a_2,b})^v) - t(z)= v \beta_1 - t(z)\le v |a| - t(z) \le v|a|$.
Thus, we obtain that
\begin{equation}
\label{lem_gap_eq_2}
0 \le \beta_2'' \le  (l-k) (\beta-1) \le v|a| (\beta-1) \le  v |a|^2\,.
\end{equation}

Let $\beta_2''' = \mathsf{v}_b(\prod_{\nu = l +1}^mU_{\nu})$.
We have
\[\beta_2'''= \mathsf{v}_{b}((V_{a,b_1}V_{a_2,b})^v)- \beta_2' -\beta_2'' = v \beta_2 - \beta_2' - \beta_2''.\]
In combination with \eqref{lem_gap_eq_1} and \eqref{lem_gap_eq_2}, we get that
\[v \beta_2 -  b^{-1}\bigl(v \alpha_2 |a_2| + v b^2 |a| - t(z)b_1\bigr) - v |a|^2
   \le \beta_2'''
   \le  v \beta_2 - b^{-1}\bigl( v \alpha_2 |a_2|  - t(z) b_1 \bigr).\]
Thus, since  $\beta_2 = b^{-1} \alpha_2 |a_2|$ (in view of $V_{a_2,b}=a_2^{\alpha_2}b^{\beta_2}$), it follows that
\be\label{pillow}\beta_2''' \in   \frac{b_1}{b} t(z) + [ - vb|a| - v|a|^2 ,0 ].\ee

Since $U_{\nu}=V_{a,b}$ for each $\nu \in [l +1, m]$, it follows that $\beta_2''' = (m-l)\beta$.
Since $k \in [0, vb]$ and $l - k \in [0,v|a|]$, we get that
$m \in (m-l)+[0, v (b+|a|)]$.
Combining with $\beta_2''' = (m-l)\beta$ and \eqref{pillow} then yields
\[ m \in  \left[ \frac{b_1}{b\beta} t(z) - \frac{ vb|a| + v|a|^2}{\beta} , \frac{b_1}{b\beta} t(z) + v (b+|a|) \right]\, ,\]
and, since  $\beta \le |a|$, we have $v (b+|a|) \le  v (b+|a|)|a|/\beta$.
Substituting the explicit value of $\beta$, the claim follows.
\end{proof}

The following proposition is a major portion of Theorem \ref{main-theorem-I}.

\begin{proposition} \label{rel_prop}
Let $G_0 \subset \mathbb{Z}$ be a condensed set such that $G_0^-$ is finite and nonempty.
Then $\rho(G_0)$ is a rational number.
\end{proposition}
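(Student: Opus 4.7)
The plan is first to establish $\rho(G_0)<\infty$ using Lemma \ref{Lambert}, then to argue rationality by a structural analysis leveraging Lemma \ref{lem_gap} and the finiteness of $G_0^-$.

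\smallskip

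\textbf{Step 1 (Finiteness).} Set $m = |\min G_0|$, which is finite by hypothesis on $G_0^-$. By Lemma \ref{Lambert}, $|U^+|\le m$ for every $U\in\mathcal A(G_0)$. For any factorization equation $U_1\cdots U_k = V_1\cdots V_l$ of an element of $\mathcal B(G_0)$, since $|V_j^+|\ge 1$ and $|U_i^+|\le m$, we get
\[ l \,\le\, \sum_{j=1}^{l}|V_j^+| \,=\, \sum_{i=1}^{k}|U_i^+| \,\le\, km, \]
yielding $\rho(G_0)\le m<\infty$.

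\smallskip

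\textbf{Step 2 (Rationality strategy).} My goal is to exhibit a specific element $A^*\in\mathcal B(G_0)$ with $\rho(A^*) = \rho(G_0)$; rationality then follows immediately from $\rho(A^*) = \max\mathsf L(A^*)/\min\mathsf L(A^*) \in \Q$. Given a sequence $(A_n)\subset\mathcal B(G_0)$ with $\rho(A_n)\to\rho(G_0)$, I would argue that after suitable modifications we can extract a stable combinatorial pattern yielding such $A^*$. Since $G_0^-$ is finite, the negative parts of atoms live in the free monoid $\mathcal F(G_0^-)$, and each atom has positive-support size at most $m$; together these constrain atoms substantially. A key tool here is Lemma \ref{lem_gap}, which shows that the factorization lengths of certain products $(V_{a,b_1}V_{a_2,b})^v$ depend linearly (modulo bounded error) on a single parameter. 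I expect the proof to construct, for parameters approaching an extremal configuration, a family of elements whose elasticities converge to $\rho(G_0)$ and for which Lemma \ref{lem_gap}-style linearity forces the limit to be a ratio of two integers determined by data from $G_0^-$ and a bounded subset of $G_0^+$.

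\smallskip

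\textbf{Main obstacle.} The crux is identifying the optimal atom structure---balancing the potentially unbounded ``length'' of negative parts against the bounded-size-$\le m$ positive parts---that realizes $\rho(G_0)$, and ruling out the possibility that $\rho(G_0)$ is an irrational accumulation point of the rational set $\{\rho(A): A\in\mathcal B(G_0)\}$. This requires controlling the denominators of the achievable $\rho(A)$-values using the finiteness of $G_0^-$. An alternative route is a linear-programming formulation (atoms as lattice vectors in $\mathbb N_0^{G_0}$, the zero-sum condition as a linear constraint, elasticity as a linear-fractional objective), whose rationality would follow once the infinite-dimensional $G_0^+$-part is reduced to finite dimension via a replacement argument---possibly using Lemma \ref{ex_of_atom} to extend short negative sequences to atoms and thereby substitute ``large'' positive elements with combinations of smaller ones without decreasing the length-ratio.
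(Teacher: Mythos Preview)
Your Step~1 is essentially fine (modulo the trivial caveat that the atom $0$, if $0\in G_0$, has $|0^+|=0$; but $0$ is prime and cancels, so the bound $\rho(G_0)\le m$ survives).

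Your Step~2, however, is built on a false premise. You propose to exhibit an element $A^*$ with $\rho(A^*)=\rho(G_0)$, i.e., to show the elasticity is \emph{accepted}. It is not, in general: Corollary~\ref{STSL_cor} of this very paper exhibits Krull monoids satisfying the hypotheses of Proposition~\ref{rel_prop} whose elasticity equals $d$ but is never attained by any single element. So no compactness or extraction argument can produce such an $A^*$, and the approach collapses. Lemma~\ref{lem_gap} is also not the right tool here; it is designed to show that $\Delta(G_P)$ and $\rho_2(G_P)$ blow up when \emph{both} $G_P^+$ and $G_P^-$ are infinite, and it gives no handle on rationality of the elasticity.

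Your ``alternative route'' is the one that actually works, but the decisive idea is missing from your sketch: how to collapse the infinite set $G_0^+$ to something finite. The paper does this by observing (Lemma~\ref{rel_lemt}) that once a positive element $g$ in an atom exceeds a threshold depending only on $G_0^-$, one can replace it by any $g'\equiv g\pmod{\lcm(G_0^-)}$ without destroying atomicity. This induces an equivalence relation $\kappa$ on $G_0^+$ with finitely many classes, and $\kappa(\mathcal A(G_0)^+)$ becomes a \emph{finite} generating set of the monoid $\kappa(\mathcal B(G_0)^+)$. The paper then proves (Lemma~\ref{rel_lem_eq}) that $\rho(G_0)=\rho^{\kappa}(\kappa(\mathcal B(G_0)^+))$, the elasticity with respect to this finite generating set, which is rational by a standard finitely-generated-monoid argument (Lemma~\ref{rel_lem_taurat}). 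The subtlety is that equality in Lemma~\ref{rel_lem_eq} requires building, via Lemma~\ref{ex_of_atom}, a sequence of elements whose elasticities approach the $\kappa$-elasticity from below---precisely because the supremum need not be attained.
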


To prove this result, we need the concept of factorizations with
respect to a (not necessarily minimal) generating set. This idea is
also used in the recent paper \cite{C-K-D-H10}, where a generalized
set of distances is studied for numerical monoids.

Let $H$ be a monoid and $S \subset H_{\red}\setminus
\{\boldsymbol{1}\}$ a subset. We call $\mathsf{Z}^S (H) = \mathcal{F}
(S)$ the factorization monoid of $H$ with respect to $S$. The
homomorphism $\pi_{H}^S=\pi^S \colon \mathsf{Z}^S (H) \to H_{\red}$
defined by $\pi^{S}(z)= \prod_{u \in S} u^{\mathsf{v}_{u}(z)}$ is
called the factorization homomorphism of $H$ with respect to $S$.
For $a\in H$, we set $\mathsf{Z}^{S}_H(a)=\mathsf{Z}^{S}(a)=
(\pi^{S})^{-1}(aH^{\times})$; we call this the set of
factorizations in $S$ of $a$. The set $\mathsf{L}^{S}(a) = \{|z|
\mid z \in  \mathsf{Z}^{S}(a) \}$ is called the set of lengths of
$a$ with respect to $S$.

We note that $\mathsf{Z}^{S}(a)\neq \emptyset$ for each $a \in H$ if
and only if $S$ generates $H_{\red}$ (as a monoid). If $S$ generates
$H_{\red}$, then $\mathcal A (H_{\red}) \subset S$ by
\cite[Proposition 1.1.7]{Ge-HK06a}. If $S= \mathcal{A}(H_{\red})$,
then $\mathsf Z^S (a) = \mathsf Z (a)$, and all other notions
coincide with the usual ones. Suppose that  $S \subset H_{\red}$ is
a generating set. For $a \in H$, let $\rho^S(a)= \rho
(\mathsf{L}^S(a))$ denote the elasticity of $a$ with respect to $S$,
and $\rho^S(H)= \sup \{\rho^S (a) \mid a \in H\}$ the elasticity of
$H$ with respect to $S$; note that $0 \in \mathsf{L}^{S}(a)$ if and
only if $\mathsf{L}^{S}(a)=\{0\}$, i.e., $a \in H^{\times}$. We say
that the elasticity of $H$ with respect to $S$ is accepted if there
exists some $a \in H $ with $\rho^{S}(a)= \rho^{S}(H)$.

The proof of the following result is a direct modification of the
one for the (usual) elasticity of finitely generated monoids
(\cite[Theorem 3.1.4]{Ge-HK06a}) and contains it as the special case
$S= \mathcal{A}(H_{\red})$.

\begin{lemma} \label{rel_lem_taurat}
Let $H$ be a monoid and  $S \subset H_{\red}\setminus
\{\boldsymbol{1}\}$  a finite generating set of $H_{\red}$. Then
$\rho^S(H)$ is finite and accepted, in particular,
 rational.
\end{lemma}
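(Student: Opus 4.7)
The plan is to adapt the standard argument that finitely generated monoids have finite and accepted (hence rational) elasticity, substituting $S$ for $\mathcal{A}(H_{\red})$. The core tool will be Dickson's Lemma applied to a congruence submonoid of the free abelian monoid $\mathcal{F}(S)\times\mathcal{F}(S)$.

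First I would form
\[
\widetilde H \,=\, \bigl\{(z,z')\in \mathcal{F}(S)\times\mathcal{F}(S) \,\bigm|\, \pi^S(z)=\pi^S(z')\bigr\},
\]
which is a saturated submonoid of the finitely generated free commutative monoid $\mathcal{F}(S)\times\mathcal{F}(S)\cong \N_0^{2|S|}$. By Dickson's Lemma, the componentwise-minimal nonidentity elements of $\widetilde H$ are finite in number and generate $\widetilde H$; call them $(z_1,z_1'),\ldots,(z_k,z_k')$. I would then check that each component of every such generator is nontrivial: if $z_i'=\boldsymbol{1}$, then $\pi^S(z_i)=\boldsymbol{1}$ in $H_{\red}$, but $H_{\red}^\times=\{\boldsymbol{1}\}$ together with $\boldsymbol{1}\notin S$ force $z_i=\boldsymbol{1}$, contradicting that $(z_i,z_i')$ is nonidentity.

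The bound on the elasticity then follows easily. For any $a\in H\setminus H^\times$ and any $z,z'\in \mathsf Z^S(a)$, the pair $(z,z')\in \widetilde H$ admits a decomposition $(z,z')=\prod_{i=1}^k (z_i,z_i')^{n_i}$ with $n_i\in\N_0$. Summing lengths componentwise gives $|z|=\sum_{i} n_i|z_i|$ and $|z'|=\sum_{i} n_i|z_i'|$, so
\[
\frac{|z|}{|z'|}\,\le\, \max_{1\le i\le k}\frac{|z_i|}{|z_i'|} \,=:\, M,
\]
which is a finite rational number independent of $a,z,z'$. Combined with the trivial value $\rho^S(a)=1$ for $a\in H^\times$ (since then $\mathsf L^S(a)=\{0\}$ and $\rho(\{0\})=1$ by convention), this yields $\rho^S(H)\le M<\infty$ with $M\in\Q_{\ge 1}$.

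For acceptedness, I would pick an index $i^*$ achieving $M=|z_{i^*}|/|z_{i^*}'|$ and set $a^*=\pi^S(z_{i^*})$; then $|z_{i^*}|,|z_{i^*}'|\in \mathsf L^S(a^*)$ gives $\rho^S(a^*)\ge M$, forcing $\rho^S(a^*)=\rho^S(H)=M$. I foresee no serious obstacle: the substantive input is Dickson's Lemma, which applies because $|S|<\infty$, while reducedness of $H_{\red}$ together with $\boldsymbol{1}\notin S$ is precisely what prevents degenerate atoms of $\widetilde H$ with a trivial component.
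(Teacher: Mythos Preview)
Your proof is correct and takes essentially the same approach as the paper's: form the congruence monoid $\widetilde H\subset\mathcal F(S)\times\mathcal F(S)$, use finite generation (you via Dickson's Lemma plus saturatedness, the paper via \cite[Proposition 2.7.5]{Ge-HK06a}) to reduce to finitely many atoms with both components nontrivial, then apply the mediant inequality.
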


\begin{proof}
By construction,   $\mathsf{Z}^S(H) \time \mathsf{Z}^S(H)$ is a
finitely generated free monoid. Obviously,  $Z=\{(x,y) \in
\mathsf{Z}^S(H) \time \mathsf{Z}^S(H) \mid \pi^S(x)=\pi^S(y)\}$ is a
saturated submonoid, thus finitely generated by
\cite[Proposition 2.7.5]{Ge-HK06a}. Let $Z^{\bullet}= Z \setminus
Z^{\times}$; clearly $|Z^{\times}|=1$ and, for each $(x,y)\in
Z^{\bullet}$, we have that both $|x|\neq 0$ and $|y|\neq 0$. We note
that $\rho^S(H) = \sup \{|x|/|y| \mid (x,y)\in Z^{\bullet}\}$. We
assert that $\sup \{|x|/|y| \mid (x,y)\in Z^{\bullet}\}= \sup
\{|x|/|y| \mid (x,y)\in \mathcal{A}(Z)\}$. Since $\mathcal{A}(Z)$ is
finite, this implies the result.

Let $s=(x_s,y_s)\in Z^{\bullet}$ and let $s= t_1\cdot\ldots\cdot t_l$ with $t_i=(x_i,y_i) \in \mathcal{A}(Z)$ be a factorization of $s$ in the monoid $Z$.
We have, using the standard inequality for the mediant,
\[\frac{|x_s|}{|y_s|}= \frac{\sum_{i=1}^{l}|x_i|}{\sum_{i=1}^{l}|y_i|}\le \max
\left\{\frac{|x_i|}{|y_i|} \mid i \in [1,l] \right\},\] showing that
$\sup \{|x|/|y| \mid (x,y)\in Z^{\bullet}\}\le \sup \{|x|/|y| \mid
(x,y)\in \mathcal{A}(Z)\}$. The other inequality being trivial, the
claim follows.
\end{proof}

For a condensed set $G_0 \subset \mathbb{Z}$ with $|G_0| \ge 2$, we define
\[
\mathcal{B}(G_0)^+ =\{B^+\mid B \in \mathcal{B}(G_0)\} \ \text{ and}
\ \mathcal{A}(G_0)^+ =\{A^+\mid A \in \mathcal{A}(G_0)\} \,.
\]

\begin{lemma} \label{rel_lembas}
Let $G_0 \subset \mathbb{Z}$ be a condensed set with $|G_0|\ge 2$.
\begin{enumerate}
\item $\mathcal{B}(G_0)^+ \subset \mathcal{F}(G_0^+)$ is a submonoid.

\smallskip
\item $\mathcal{A}(G_0)^+$ is a generating set of $\mathcal{B}(G_0)^+$.

\smallskip
\item $|F| \le |\inf G_0^-|$ for each $F \in \mathcal{A}(G_0)^+$.
\end{enumerate}
\end{lemma}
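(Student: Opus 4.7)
The plan is to verify the three claims in order, each being a relatively short consequence of the definition of the map $B \mapsto B^+$ together with already-established facts.

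For part (1), I would first observe that $\mathcal{B}(G_0)^+ \subset \mathcal{F}(G_0^+)$ holds by definition, since $B^+$ records only terms of $B$ lying in $G_0^+$. The key identity is
\[
(B_1 B_2)^+ = B_1^+ B_2^+ \qquad \text{for all } B_1, B_2 \in \mathcal{F}(G_0),
\]
which is immediate from the definition of $B^+ = \prod_{g \in G_0^+} g^{\mathsf v_g(B)}$ together with $\mathsf v_g(B_1 B_2) = \mathsf v_g(B_1) + \mathsf v_g(B_2)$. Since $B_1, B_2 \in \mathcal{B}(G_0)$ implies $B_1 B_2 \in \mathcal{B}(G_0)$ (sums of zero-sum sequences have sum zero), closure under multiplication follows. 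The identity $\boldsymbol{1}$ trivially lies in $\mathcal{B}(G_0)^+$ via $\boldsymbol{1} = \boldsymbol{1}^+$.

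For part (2), let $F \in \mathcal{B}(G_0)^+$, so $F = B^+$ for some $B \in \mathcal{B}(G_0)$. Since $\mathcal{B}(G_0)$ is atomic (being a Krull monoid, as noted in Section \ref{3}), we may factor $B = A_1 \cdot \ldots \cdot A_k$ with $A_i \in \mathcal{A}(G_0)$ (or $k=0$ if $B = \boldsymbol{1}$). Applying the identity from part (1) iteratively gives
\[
F = B^+ = A_1^+ \cdot \ldots \cdot A_k^+ \in [\mathcal{A}(G_0)^+],
\]
so $\mathcal{A}(G_0)^+$ generates $\mathcal{B}(G_0)^+$.

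For part (3), this is essentially a restatement of Lemma \ref{Lambert}. If $F \in \mathcal{A}(G_0)^+$, then $F = U^+$ for some atom $U \in \mathcal{A}(G_0)$, and Lemma \ref{Lambert} yields $|F| = |U^+| \le |\inf G_0|$. Since $|G_0| \ge 2$ and $G_0$ is condensed, $G_0^-$ is nonempty, so $\inf G_0$ is realized by an element of $G_0^-$ and hence $\inf G_0 = \inf G_0^-$, giving $|F| \le |\inf G_0^-|$, as required.

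The argument is entirely routine; the only point that requires a moment's care is the observation $\inf G_0 = \inf G_0^-$ used in part (3), which depends on the standing hypotheses that $G_0$ is condensed and $|G_0| \ge 2$.
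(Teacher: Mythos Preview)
Your proof is correct and follows the same approach as the paper, which simply states that the first two claims are immediate and the third is a direct consequence of Lemma~\ref{Lambert}. You have merely spelled out the routine details that the paper leaves implicit.
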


\begin{proof}
The first two claims are immediate, and the last one is a direct
consequence of Lemma \ref{Lambert}.
\end{proof}

Clearly, $\mathcal{A}(G_0)^+$ contains $\mathcal{A}(\mathcal{B}(G_0)^+)$, the set of atoms of $\mathcal{B}(G_0)^+$, yet
it is in general not equal to this set; by definition, we have that $F \in \mathcal{A}(G_0)^+$ if and only if there  exists  some $A \in \mathcal{A}(G_0)$ such that $F=A^+$, yet $F \in \mathcal{A}(\mathcal{B}(G_0)^+)$ if and only if, for each $B \in \mathcal{B}(G_0)$ with $F=B^+$, we have $B\in \mathcal{A}(G_0)$.
Moreover, $\mathcal{B}(G_0)^+ $ is in general not a saturated submonoid of $\mathcal{F}(G_0^+)$.

The following technical result is used to partition $\mathcal{A}(G_0)$ into finitely many classes (cf.~below).

\begin{lemma}
\label{rel_lemt} Let $G_0 \subset \mathbb{Z}$ be a condensed set
such that $G_0^-$ is finite and nonempty. Let $F \in \mathcal{F}(G_0^+)$, $g \in
\supp (F)$ with $g \ge |G_0^-| \, |\min G_0^-| \, \lcm(G_0^-)$, and
$k \in \mathbb{N}$ with $g'=g+k\lcm (G_0^- ) \in G_0^+$.
 Then
\ $F \in \mathcal{A}(G_0)^+$ \ if and only if \ $g'g^{-1}F \in
\mathcal{A}(G_0)^+$.
\end{lemma}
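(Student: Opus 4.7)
Set $L=\lcm(G_0^-)$. The plan is to prove both directions by explicit construction, using a negative ``padding'' sequence $T\in\mathcal{F}(G_0^-)$ with $\sigma(T)=-kL$ together with Lemma~\ref{Lambert} and the lower bound on $g$ to force the appropriate divisibility relations.

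For the forward direction, assume $F\in\mathcal{A}(G_0)^+$, say $A=F\cdot S\in\mathcal{A}(G_0)$ with $S\in\mathcal{F}(G_0^-)$. Fix $h_0\in G_0^-$ and put $T=h_0^{kL/|h_0|}$, so $\sigma(T)=-kL$. The candidate atom witnessing $g'g^{-1}F\in\mathcal{A}(G_0)^+$ is
\[
A' \;=\; g'g^{-1}F\cdot S\cdot T,
\]
which clearly has positive part $g'g^{-1}F$ and zero sum. Suppose, for contradiction, that $A'=B_1B_2$ is a nontrivial factorization in $\mathcal{B}(G_0)$ with $g'\mid B_1$ (WLOG). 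If moreover $T\mid B_1$, then
\[
\hat B_1 \;=\; B_1\cdot g\cdot (g')^{-1}\cdot T^{-1} \;\in\;\mathcal{F}(G_0)
\]
satisfies $\sigma(\hat B_1)=g-g'+kL=0$; a routine multiplicity check against $A=FS$ shows $\hat B_1\mid A$, and then $\hat B_1\cdot B_2=A$ is a nontrivial factorization of $A$ in $\mathcal{B}(G_0)$ (nontrivial because $g\mid\hat B_1$ and $B_2$ is nontrivial by assumption), contradicting the atomicity of $A$.

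The main obstacle is to arrange $T\mid B_1$ by a suitable choice of $h_0$. The idea is that the atom $U_1$ in the atomic decomposition $B_1=U_1\cdots U_p$ carrying $g'$ must be heavy in negatives: by Lemma~\ref{Lambert} we have $|U_1^+|\le|\min G_0^-|$, and
\[
\sum_{h\in G_0^-}\mathsf{v}_h(U_1)|h|\;=\;\sigma(U_1^+)\;\ge\; g'\;\ge\; g\;\ge\;|G_0^-|\cdot|\min G_0^-|\cdot L,
\]
so pigeonhole produces some $h^*\in G_0^-$ with $\mathsf{v}_{h^*}(U_1)\ge L$. Running the construction with $h_0=h^*$ (the choice being made among the finitely many candidates in $G_0^-$ to match the factorization at hand) and iteratively reassigning atoms of $B_2$ that carry additional multiplicities of $h_0$ to $B_1$---a move that cannot empty $B_2$ since the Lambert bound caps the positive part of each individual atom---accumulates the required $kL/|h_0|$ copies of $h_0$ in $B_1$.

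The converse implication is symmetric. Given an atom $A'$ with $(A')^+=g'g^{-1}F$, write $A'=g'g^{-1}F\cdot S'$ with $S'\in\mathcal{F}(G_0^-)$. The same pigeonhole-with-Lambert argument applied to $A'$ (which is itself an atom containing $g'$, so that $\sum_h\mathsf{v}_h(S')|h|\ge g'$) produces $h_0\in G_0^-$ with $h_0^{kL/|h_0|}\mid S'$. Setting
\[
A \;=\; F\cdot S'\cdot g\cdot (g')^{-1}\cdot h_0^{-kL/|h_0|}\;\in\;\mathcal{F}(G_0),
\]
one has $\sigma(A)=0$, and a mirror-image argument---in which any hypothetical nontrivial factorization $A=C_1C_2$ with $g\mid C_1$ is transported to $A'=\check C_1\cdot C_2$ via $\check C_1=C_1\cdot g'\cdot g^{-1}\cdot h_0^{kL/|h_0|}$ with all multiplicities easily checked nonnegative---contradicts the atomicity of $A'$ and shows $A\in\mathcal A(G_0)$, so $F\in\mathcal{A}(G_0)^+$.
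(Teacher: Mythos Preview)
Your proposal has genuine gaps in both directions.

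\textbf{Forward direction.} The choice of $h_0$ is circular. You fix $h_0$, build $A'=g'g^{-1}F\cdot S\cdot h_0^{kL/|h_0|}$, then assume a factorization $A'=B_1B_2$, and only \emph{afterwards} extract from the atom $U_1\mid B_1$ containing $g'$ an element $h^*$ of high multiplicity. Saying ``run the construction with $h_0=h^*$'' does not make sense, since $h^*$ depends on the factorization of $A'$, which in turn depends on $h_0$. Even if one grants $h_0=h^*$, your pigeonhole only yields $\mathsf v_{h^*}(U_1)\ge L$, not the required $kL/|h_0|$ copies; the ``iteratively reassign atoms of $B_2$'' step is unsubstantiated --- once $B_2$ is reduced to a single atom you must stop, and that remaining atom can still carry arbitrarily many copies of $h_0$ in its negative part (Lemma~\ref{Lambert} bounds only $|V^+|$).

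\textbf{Converse direction.} Your pigeonhole claim is simply false for large $k$. From $\sum_h\mathsf v_h(S')|h|\ge g'$ one gets some $h_0$ with $\mathsf v_{h_0}(S')\ge g'/(|G_0^-|\,|\min G_0^-|)$, and this need not exceed $kL/|h_0|$. Concretely, with $G_0^-=\{-2,-3\}$, $L=6$, $g=36$, $k=100$, $g'=636$, the sequence $A'=636\cdot(-2)^{297}(-3)^{14}$ is an atom, but $\mathsf v_{-2}(S')=297<300=kL/2$ and $\mathsf v_{-3}(S')=14<200=kL/3$, so no single $h_0$ works and your $A$ is not even well defined.

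The paper fixes both issues by not insisting on a single-type padding. In the forward direction it applies the pigeonhole to the \emph{original} negative part $R$ (obtaining $a$ with $\mathsf v_a(R)\ge L$ before any factorization is considered), pads by $a^{kL/|a|}$, and then, given $B_1B_2$, removes as many $a^{L/|a|}$-blocks from $R_1'=B_1^-$ as possible and locates the residual correction of sum $-(k-l)L$ as a subsequence of what remains (using the bound on $g$). In the converse it drops the single-type requirement entirely, finding any subsequence $R_1'\mid S'$ with $\sigma(R_1')=-kL$ via a greedy argument: remove chunks $x^{L/|x|}$ for various $x\in G_0^-$; the process can only stall when every residual multiplicity is below $L/|x|$, at which point the unused mass is at most $\sum_{x\in G_0^-}|x|(L/|x|-1)$, contradicting $|\sigma(S')|\ge g'\ge kL+|G_0^-|\,|\min G_0^-|\,L$.
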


\begin{proof}
We set $T= g'g^{-1}F \in \mathcal{F}(G_0^+)$. Suppose $F \in
\mathcal{A}(G_0)^+$. Let $R \in \mathcal{F}(G_0^-)$ such that $FR
\in \mathcal{A}(G_0)$. Since $\sigma(F) \ge g \ge |G_0^-| \, |\min
G_0^-| \, \lcm(G_0^-)$, there exists some $a \in G_0^-$ such that
$\mathsf{v}_a(R)\ge \lcm (G_0^- )$. Let $R_1= Ra^{k\lcm (G_0^-
)/|a|}$. Then $TR_1 \in \mathcal{B}(G_0)$. Assume to the contrary
that $TR_1$ is not an atom, say $TR_1 = (T'R_1')(T''R_1'')$, where
$g'\mid T'$, $T=T'T''$ and $R_1=R_1'R_1''$. Let $l'\in \mathbb{N}_0$
be maximal such that $a^{l'\lcm (G_0^- )/|a|}\mid R_1'$ and let
$l=\min \{l',k\}$. We note that $a^{-l\lcm (G_0^- )/|a|}R_1'\mid R$.
Moreover, since

\ber\nn |\sigma(a^{-l\lcm (G_0^- )/|a|}R_1')|&\ge& g' - l\lcm (G_0^-
) \ge (k-l)\lcm (G_0^- ) + |G_0^-| \, |\min G_0^-| \,
\lcm(G_0^-)\\\nn &\geq& (k-l)\cdot \lcm(G_0^-)+\sum_{x\in
G_0^-}|x|\left(\frac{\lcm(G_0^-)}{|x|}-1\right),\eer there exists a
subsequence $R_2'\mid a^{-l\lcm (G_0^- )/|a|}R_1'$ such that
$\sigma(R_2')= -(k-l)\lcm (G_0^- )$. We set $R_0 = R_2'^{-1}
a^{-l\lcm (G_0^- )/|a|}R_1'$. Then $\sigma(R_0) = \sigma(R_1') +
k\lcm (G_0^- )$. Thus $\sigma(gg'^{-1}T'R_0)=0$, yet
$gg'^{-1}T'R_0\mid FR$, contradicting that $TR_1$ is not an atom.

Suppose $T \in \mathcal{A}(G_0)^+$. Let $R'\in \mathcal{F}(G_0^-)$
be such that $TR'\in \mathcal{A}(G_0)$. Since
$$-\sigma(R_1)=\sigma(T)\ge g'\ge k\cdot \lcm
(G_0^- )+ |G_0^-| \, |\min G_0^-| \, \lcm(G_0^-)\geq k\cdot
\lcm(G_0^-)+\sum_{x\in
G_0^-}|x|\left(\frac{\lcm(G_0^-)}{|x|}-1\right),$$ there exists a
subsequence $R_1'\mid R'$ with $\sigma(R_1') = -k\cdot \lcm (G_0^-
)$. Let $R= R_1'^{-1}R'$. Then $FR$ is a zero-sum sequence. Assume
$FR$ is not an atom, say $FR= (F'R_2')(F''R_2'')$, where $g \mid
F'$, $F=F'F''$ and $R=R_2'R_2''$. Then $g'g^{-1}F'R_2'R_1'\mid TR'$
and it is a zero-sum sequence, contradicting that $FR$ is not an
atom.
\end{proof}

Let $G_0 \subset \mathbb{Z} \setminus \{0\}$ be a condensed set
such that $G_0^-$ is finite and nonempty. In view of Lemma \ref{rel_lemt}, we
introduce the following relation on $G_0^+$. For $g,h \in G_0^+$, we
say that $g$ is equivalent to $h$ if $g= h$ or if $g,h \ge |G_0^-| \,
|\min G_0^-| \, \lcm(G_0^-)$ and $g \equiv h \mod \lcm (G_0^- )$.
This relation is an equivalence relation and it partitions $G_0^+$
into finitely many---namely, less than $|G_0^-| \, |\min G_0^-| \,
\lcm(G_0^-) + \lcm (G_0^- )$---equivalence classes; we denote the
equivalence class of $g$ by $\kappa(g)$ and also use $\kappa$ to
denote the extension of this map to $\mathcal{F}(G_0^+)$.

We note that $\kappa(\mathcal{A}(G_0)^+)$ is a finite set, since it
consists of sequences over the finite set $\kappa(G_0^+)$ and the
length of each sequence is at most $|\min G_0^-|$ by Lemma
\ref{rel_lembas}. Moreover, it is a generating set of the monoid
$\kappa(\mathcal{B}(G_0)^+)$.

In order to study factorizations, we  extend $\kappa$ to
$\mathsf{Z}(G_0)$ via
\[
\kappa(A_1 \cdot \ldots \cdot A_l)= \kappa(A_1^+)\cdot \ldots \cdot \kappa(A_l^+).
\]
This is an element of $\mathcal{F}(\kappa(\mathcal{A}(G_0)^+))$,
i.e.,
$\mathsf{Z}^{\kappa(\mathcal{A}(G_0)^+)}(\kappa(\mathcal{B}(G_0)^+))$;
for brevity, we denote this factorization monoid by $\mathsf{Z}^{\kappa}$.
Likewise, for $F \in \kappa (\mathcal{B}(G_0)^+)$, we denote $\mathsf{Z}^{\kappa(\mathcal{A}(G_0)^+)}(F)$ by
$\mathsf{Z}^{\kappa}(F)$;
$\pi^{\kappa(\mathcal{A}(G_0)^+)}$ by $\pi^{\kappa}$; and
$\rho^{\kappa(\mathcal{A}(G_0)^+)}$ by $\rho^{\kappa}$. The
homomorphism $\kappa \colon \mathsf{Z}(G_0)\to \mathsf{Z}^{\kappa}$
is epimorphic.

We note that, for $B \in \mathcal{B}(G_0)$, we have that
$\kappa(\mathsf{Z}(B)) \subset (\pi^{\kappa})^{-1}(\kappa(B^+))$, and in general, this is a proper inclusion.
However, we have, for each $F \in \mathcal{B}(G_0)^+$, by Lemma \ref{rel_lemt},
\begin{equation}
\label{rel_eq_1} (\pi^{\kappa})^{-1}(\kappa(F))\;=  \bigcup_{B \in
\mathcal{B}(G_0),\,  B^+=F} \kappa(\mathsf{Z}(B)) ,
\end{equation}
whenever $G_0 \subset \mathbb{Z} \setminus \{0\}$ is condensed with $G_0^-$ finite and nonempty.

\begin{lemma}
\label{rel_lem_eq} Let $G_0 \subset \mathbb{Z}\setminus \{0\}$ be a
condensed set such that $G_0^-$ is finite and nonempty.
\begin{enumerate}
\item For each $B \in \mathcal{B}(G_0)$, we have $\rho(B) \le
      \rho^{\kappa}(\kappa(B^+))$. In particular, $\rho(G_0)\le
      \rho^{\kappa}(\kappa(\mathcal{B}(G_0)^+))$.

\item If $G_0$ is infinite, then $\rho(G_0) = \rho^{\kappa}(\kappa(\mathcal{B}(G_0)^+))$.
\end{enumerate}
\end{lemma}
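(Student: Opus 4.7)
For part 1, the key observation is that the homomorphism $\kappa \colon \mathsf{Z}(G_0) \to \mathsf{Z}^{\kappa}$ preserves length: each atom $A \in \mathcal{A}(G_0)$ is sent to the single generator $\kappa(A^+) \in \kappa(\mathcal{A}(G_0)^+)$, so $|\kappa(z)|=|z|$ for every $z \in \mathsf{Z}(G_0)$. Since $\pi^{\kappa}(\kappa(z)) = \kappa(\pi(z)^+) = \kappa(B^+)$ whenever $z \in \mathsf{Z}(B)$, the restriction of $\kappa$ sends $\mathsf{Z}(B)$ into $\mathsf{Z}^{\kappa}(\kappa(B^+))$, and hence $\mathsf{L}(B) \subseteq \mathsf{L}^{\kappa}(\kappa(B^+))$. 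This at once gives $\rho(B) \le \rho^{\kappa}(\kappa(B^+))$, and on taking the supremum over $B \in \mathcal{B}(G_0)$ the ``in particular'' clause follows.

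For part 2, the plan is to establish the reverse inequality under the infiniteness hypothesis. First, apply Lemma \ref{rel_lem_taurat} to the monoid $\kappa(\mathcal{B}(G_0)^+)$ with its finite generating set $\kappa(\mathcal{A}(G_0)^+)$: this yields some $F \in \mathcal{B}(G_0)^+$ together with factorizations $z_1, z_2 \in \mathsf{Z}^{\kappa}(\kappa(F))$ satisfying $|z_1|/|z_2| = \rho^{\kappa}(\kappa(\mathcal{B}(G_0)^+))$. By \eqref{rel_eq_1}, these lift to factorizations $y_1 \in \mathsf{Z}(B_1)$ and $y_2 \in \mathsf{Z}(B_2)$ with $B_1^+ = B_2^+ = F$ and $|y_i| = |z_i|$. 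The objective is then to produce a single $B^{*} \in \mathcal B(G_0)$ that admits both a length-$|z_1|$ and a length-$|z_2|$ factorization, from which $\rho(B^*) \ge \rho^{\kappa}$ follows.

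The strategy is to exploit Lemma \ref{rel_lemt} together with the infiniteness of $G_0^+$. First pre-shift $F$ so that every element of $\supp(F)$ lies above the threshold of Lemma \ref{rel_lemt}; then each atom occurring in $y_1$ or $y_2$ becomes shiftable. The discrepancy between $B_1$ and $B_2$ is concentrated in their negative parts, which satisfy $\sigma(B_1^-) = \sigma(B_2^-) = -\sigma(F)$ but may differ as sequences in $\mathcal F(G_0^-)$. Writing $\Delta_a = \mathsf v_a(B_2^-) - \mathsf v_a(B_1^-)$, the aim is to realize shift-induced negatives $S_1, S_2 \in \mathcal{F}(G_0^-)$ such that $\mathsf v_a(S_1) - \mathsf v_a(S_2) = \Delta_a$ for each $a \in G_0^-$; the compatibility $\sigma(S_1) = \sigma(S_2)$ holds automatically because $\sum_a |a| \Delta_a = \sigma(B_1^-) - \sigma(B_2^-) = 0$. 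Performing simultaneous shifts of $y_1$ and $y_2$ to a common enlarged positive part $F^*$ then yields shifted factorizations $y_i^*$ that together factor a single element $B^* \in \mathcal{B}(G_0)$ with $(B^*)^+ = F^*$ and $|y_i^*| = |z_i|$.

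The main obstacle is precisely this coordination step, because each atomic shift (governed by Lemma \ref{rel_lemt}) adds a rigid block of copies of a single chosen $a \in G_0^-$, subject to the constraint $\mathsf v_a(R) \ge \lcm(G_0^-)$ in the atom being shifted; consequently $S_1$ and $S_2$ are sums of per-atom contributions of constrained structure. The essential use of $G_0^+$ being infinite is to secure an unbounded supply of arbitrarily large elements within each required $\kappa$-class, so that the number of shift operations and the flexibility in the per-atom choices of multiplier $a$ and shift magnitude suffice to realize the required pair $(S_1, S_2)$. Combining with part 1 then yields the claimed equality $\rho(G_0) = \rho^{\kappa}(\kappa(\mathcal{B}(G_0)^+))$.
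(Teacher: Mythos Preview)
Your argument for part 1 is correct and essentially identical to the paper's.

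Your approach to part 2, however, has a genuine flaw: it cannot succeed as stated. You aim to construct a single $B^{*}\in\mathcal B(G_0)$ admitting factorizations of lengths $|z_1|$ and $|z_2|$, where $|z_1|/|z_2|=\rho^{\kappa}(\kappa(\mathcal B(G_0)^+))$. If such a $B^{*}$ existed, then $\rho(B^{*})\ge |z_1|/|z_2|=\rho^{\kappa}$, and combining with part 1 would force $\rho(B^{*})=\rho(G_0)$; in other words, the elasticity of $\mathcal B(G_0)$ would be \emph{accepted}. But this is false in general: Corollary~\ref{STSL_cor} exhibits condensed sets $G_0\subset\Z\setminus\{0\}$ with $G_0^-$ finite and $G_0$ infinite for which $\rho(G_0)$ is \emph{not} accepted. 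So no amount of shift-coordination can produce the $B^{*}$ you describe; the ``main obstacle'' you identify is not a technical nuisance but an actual obstruction.

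The paper's argument sidesteps this by a limiting construction. Having lifted $x_{\kappa},y_{\kappa}$ to $x\in\mathsf Z(B_x)$ and $y\in\mathsf Z(B_y)$ with $B_x^+=B_y^+$, it does \emph{not} try to merge $B_x$ and $B_y$. Instead, for each $n$ it invokes Lemma~\ref{ex_of_atom} (this is where the infiniteness of $G_0^+$ enters) to find a single atom $U_n$ with $(B_x^n)^-\mid U_n$, sets $D_n=B_y^nU_n$, and observes that $B_x^n\mid D_n$. This yields
\[
\min\mathsf L(D_n)\le n|y_{\kappa}|+1\quad\text{and}\quad \max\mathsf L(D_n)\ge n|x_{\kappa}|+1,
\]
so $\rho(G_0)\ge (n|x_{\kappa}|+1)/(n|y_{\kappa}|+1)$ for every $n$, and letting $n\to\infty$ gives $\rho(G_0)\ge |x_{\kappa}|/|y_{\kappa}|=\rho^{\kappa}$. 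The point is that each $D_n$ only approximates the target ratio; equality is obtained in the limit, which is exactly what is needed when the elasticity is not accepted.
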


\begin{proof}
1. Let $B \in \mathcal{B}(G_0) \setminus \{1\}$,  $x,y\in
\mathsf{Z}(B)$ with $|x|=\max \mathsf{L}(B)$ and $|y|= \min
\mathsf{L}(B)$. Since $\kappa(x),\kappa(y)\in
\mathsf{Z}^{\kappa}(\kappa(B^+))$, we have that $\rho(B)=|x|/|y|=
|\kappa(x)|/|\kappa(y)|\le \rho^{\kappa}(\kappa(B^+))$. The
additional claim is clear.

2. By part 1, it remains to show that $\rho(G_0)\ge
\rho^{\kappa}(\kappa(\mathcal{B}(G_0)^+))$.

By Proposition \ref{rel_lem_taurat} and since
$\kappa(\mathcal{A}(G_0)^+)$ is finite, we know that
$\rho^{\kappa}(\kappa(\mathcal{B}(G_0)^+))$ is accepted. Let
$B_{\kappa}\in \kappa(\mathcal{B}(G_0)^+)$ be such that
$\rho^{\kappa}(B_{\kappa})=
\rho^{\kappa}(\kappa(\mathcal{B}(G_0)^+))$, and let $x_{\kappa},
y_{\kappa}\in \mathsf{Z}^{\kappa}(B_{\kappa})$ be such that
$|x_{\kappa}|/|y_{\kappa}| = \rho^{\kappa}(B_{\kappa})$. By
\eqref{rel_eq_1}, we know that there exist $B_x,B_y \in
\mathcal{B}(G_0)$ with $B_x^+ = B_y^+ $,  $x \in
\mathsf{Z}(B_x)$ with $\kappa(x)=x_{\kappa}$, and $y \in
\mathsf{Z}(B_y)$ with $\kappa(y)=y_{\kappa}$; in particular, we have
$\kappa(B_x^+)= \kappa(B_y^+)=B_{\kappa}$.

Let $n \in \mathbb{N}$. Since $G_0^+$ is infinite, Lemma
\ref{ex_of_atom} yields some $U_n \in \mathcal{A}(G_0)$ with
$(B_x^n)^- \mid U_n$ and $U_n^-=(B_x^n)^-$. We set $D_n= B_y^nU_n$ and note that, since
$(B_x^n)^+ = (B_y^n)^+$ and $(B_x^n)^-|U_n^-$, the sequence $B_x^n$
is a proper subsequence of $D_n$. Thus,
\[\min \mathsf{L}(D_n)\le |y^n|+1 = n|y_{\kappa}|+1 \quad\quad \text{and} \quad\quad \max \mathsf{L}(D_n)\ge |x^n|+1 = n|x_{\kappa}|+1.\]
So we get
\[\rho(D_n)\ge \frac{n|x_{\kappa}|+1}{n|y_{\kappa}|+1}.
\]
Thus, for each $n \in \mathbb{N}$,
\[\rho(G_0)\ge \frac{n|x_{\kappa}|+1}{n|y_{\kappa}|+1},\]
and letting $n\rightarrow \infty$, we have
\[
\rho(G_0) \ge \frac{|x_{\kappa}|}{|y_{\kappa}|}=
\rho^{\kappa}(\kappa(\mathcal{B}(G_0)^+)) \,. \qedhere
\]
\end{proof}

\begin{proof}[{\bf Proof of Proposition \ref{rel_prop}}]
Since $\rho(G_0)= \rho(G_0 \setminus \{0\})$, we may assume that $0
\notin G_0$.

If $G_0$ is finite, then $\mathcal{B}(G_0)$ is finitely generated \cite[Theorem 3.4.2.1]{Ge-HK06a},
and thus the elasticity  is rational by Lemma \ref{rel_lem_taurat}
(applied with $S= \mathcal{A}(H_{\red})$). Suppose $G_0$ is
infinite. By Lemma \ref{rel_lem_eq}, we have that $\rho(G_0)=
\rho^{\kappa}(\kappa(\mathcal{B}(G_0)^+))$, and by Lemma
\ref{rel_lem_taurat}, we know that
$\rho^{\kappa}(\kappa(\mathcal{B}(G_0)^+))$ is rational.
\end{proof}

\bigskip
\begin{proof}[{\bf Proof of Theorem \ref{main-theorem-I}}]~

\smallskip
(a) \,$\Rightarrow$\, (b) \ Without restriction, we may suppose that
$G_P^-$ is finite. Let $u \in \mathcal A (H_{\red})$. We have to
show that $\mathsf t (H, u) < \infty$. If $u$ is prime, then
$\mathsf t (H, u) = 0$. Suppose that $u$ is not prime. Let $a \in
H$ and $a'=aH^{\times}$ be such that $u \mid a'$.
Let $z = v_1 \cdot \ldots \cdot v_n \in \mathsf Z
(a)$.  There is a minimal subset $\Omega \subset [1, n]$, say
$\Omega = [1,k]$, such that $u \t v_1 \cdot \ldots \cdot v_k$ and $k
\le |\varphi_{\red}(u)|$. We consider any factorization of $v_1 \cdot \ldots \cdot
v_k$ containing $u$, say $v_1 \cdot \ldots \cdot v_k = u_1 \cdot
\ldots \cdot u_l$, where $u = u_1, \ldots , u_l \in \mathcal A (H_{\red})$.

For $i \in [1,k]$ and $j \in [1, l]$, we set $V_i = \boldsymbol \beta
(v_i)$ and $U_j = \boldsymbol \beta (u_j)$. Then $U_1, \ldots , U_l,
V_1 , \ldots , V_k \in \mathcal A (G_P)$. Since $u$ is not a prime
and $\Omega$ is minimal, it follows that $0 \nmid V_1 \cdot \ldots
\cdot V_k$. Hence, for every $j \in [1,l]$, $U_j$ contains an element
from $G_P^+$, and Lemma \ref{Lambert} implies that
\[
l \le |(U_1 \cdot \ldots \cdot U_l)^+| = |(V_1 \cdot \ldots \cdot
V_k)^+| \le k \ |\min G_P^-|  \le |\varphi_{\red}(u)|\ |\min G_P^-| \,.
\]
Setting $z' = u_1 \cdot \ldots \cdot u_l v_{k+1} \cdot \ldots \cdot
v_n$, we infer that $\mathsf d (z, z') \le \max \{k, l \} \le |\varphi_{\red}(u)| \
|\min G_P^-|$, and hence $\mathsf t (H, u) \le |\varphi_{\red}(u)| \ |\min G_P^-|$.

\smallskip
(a) \,$\Rightarrow$\, (c) \ Without restriction, we may suppose that
$G_P^-$ is finite. By Lemma \ref{3.3}, it suffices to show that
$\mathsf c (G_P) < \infty$. We set $M =\bigl( |\min G_P| + |G_P^-|^2
\bigr) \ |\min G_P|$, and assert that $\mathsf c (A) \le M$ for all
$A \in \mathcal B (G_P)$. To do so, we proceed by induction on $\max
\mathsf L (A)$. If $A \in \mathcal B (G_P)$ with $\max \mathsf L (A)
\le M$, then $\mathsf c (A) \le \max \mathsf L (A) \le M$. Let $A
\in \mathcal B (G_P)$, let $z, \overline z \in \mathsf Z (A)$ with $|z|
\le |\overline z|$, and suppose that $\mathsf c (B) \le M$ for all $B
\in \mathcal B (G_P)$ with $\max \mathsf L (B) < \max \mathsf L
(A)$. By Lemma \ref{3.4}, there is a $U \in \mathcal A (G_P)$ and a
factorization $\widehat z \in \mathsf Z (A) \cap U \mathsf Z (G_P)$
such that $U \t \overline z$ and $\mathsf d (z, \widehat z) \le M$,
say $\widehat z = U \widehat y$ and $\overline z = U \overline y$
with $\widehat y, \overline y \in \mathsf Z (B)$ and $B = U^{-1}A$.
Since $\max \mathsf L (B) < \max \mathsf L (A)$, there is an
$M$-chain $\widehat y = y_0, \ldots, y_k = \overline y$ of
factorizations of $B$, and hence $z, \widehat z=Uy_0, Uy_1, \ldots, Uy_k=U\overline y = \overline z$ is an $M$-chain of factorizations
concatenating $z$ and $\overline z$.

\smallskip
(a) \,$\Rightarrow$\, (e) Without restriction, we may suppose that
$G_P^-$ is finite. The claim follows by Proposition \ref{rel_prop}
and Lemma \ref{3.3}.

\smallskip
(c) \,$\Rightarrow$\, (d) \ and (e) \,$\Rightarrow$\, (f) \ hold for
all atomic monoids (\cite[Proposition 1.4.2 and Theorem
1.6.3]{Ge-HK06a}).

\smallskip
(b) \,$\Rightarrow$\, (a), (d) \,$\Rightarrow$\, (a), and  (f)
\,$\Rightarrow$\, (a) \ Assume to the contrary that $G_P^+$ and
$G_P^-$ are both  infinite. We show that $\mathcal B (G_P)$ is not
locally tame, which implies that $H$ is not locally tame (\cite[Theorem 3.4.10.6]{Ge-HK06a}). Along the way, we show that
$\rho_2 (G_P) = \infty$ and that $\Delta (G_P)$ is infinite, which
by Lemma \ref{3.3} implies the according statements for $H$.

We set $a = \max G_P^-$ and $b = \min G_P^+$.
Using the notation of Lemma \ref{lem_gap}, let $U = V_{a,b}= a^{\alpha}
b^{\beta} \in \mathcal A (G_P)$. We pick an arbitrary $N \in \N_{\ge 2}$ and
show that $\mathsf t (G_P, U) \ge N$, which implies the assertion.

We intend to apply Lemma \ref{lem_gap} with $v=1$.
Thus, let $D = |a|(b+|a|)\gcd(a,b) $, let $b_1 \in G_P^+$ be such that
  \[\frac{b_1}{\lcm(a,b)} \ge N + D, \]
  and let $a_2 \in G_P^-$ be such that $|a_2| \ge (b_1+b)|a|$.
Let $\alpha_1,\alpha_2,\beta_1,\beta_2 \in \N$  be such that
$V_{a,b_1} = a^{\alpha_1} b_1^{\beta_1}$ and $V_{a_2,b} = a_2^{\alpha_2} b^{\beta_2}$ are elements of $\mathcal{A}(G_P)$.

We note that all conditions of Lemma \ref{lem_gap} with $v=1$ are fulfilled.
Since $\alpha \le b \le \alpha_1$ and $\beta \le |a| \le \beta_2$, we have $U \t V_{a,b_1} V_{a_2,b}$, and
therefore $ \mathsf{Z}(V_{a,b_1}V_{a_2,b})\cap U \mathsf{Z}(G_P) \neq \emptyset$.
Let $z \in \mathsf{Z}(V_{a,b_1}V_{a_2,b})\setminus \{V_{a,b_1} \cdot V_{a_2,b_1} \}$, which exists in view of $U|V_{a,b_1}V_{a_2,b}$.
By Lemma \ref{lem_gap}, we get that $t(z)\neq 0$, and thus that
\[|z|\ge  \frac{b_1}{\lcm(a,b)} - D \ge N .\]
This shows that $\max \Delta \bigl( \mathsf L (V_{a,b_1}V_{a_2,b}) \bigr) \ge N-2$, $\mathsf t(G_p,U)\geq N$ and
\[
\rho_2 (G_P) \ge \max  \mathsf L (V_{a,b_1}V_{a_2,b}) \ge N  \,.
\]

\smallskip
(a) \,$\Rightarrow$\, (g) \ This follows from Lemma \ref{lem_rhok}.

\smallskip
(g) \,$\Rightarrow$\, (f) \ We have $\rho_2 (H) \le M + \rho_1 (H) =
                           M+1$, where $M$ is as given by $(g)$.

\smallskip
(a) \,$\Rightarrow$\, (h) \ If $(a)$ holds, then $(d)$ and $(g)$
hold. Thus all assumptions of \cite[Theorem 4.2]{Ga-Ge09b} are
fulfilled, and $(h)$ follows.

\smallskip
(h) \,$\Rightarrow$\, (f) \ We have $\rho_2 (H) = \sup \mathcal V_2 (H)
                    < \infty$.
\end{proof}

\section{Arithmetical Properties stronger than the Finiteness of $G_P^+$ or $G_P^-$} \label{5}

Let $H$ be a Krull monoid and  $G_P \subset G$ as always (see
Theorem \ref{main-theorem-II} below). In this section, we discuss
arithmetical properties which are finite if $G_P$ is finite or $\min
\{ |G_P^+|, |G_P^-| \} = 1$, and whose finiteness implies that
$G_P^+$ or $G_P^-$ is finite. However, it will turn out that none of
the implications can be reversed (with one possible exception for $(c)\Rightarrow (b4)$, which remains open), and that the finiteness of these
properties cannot be characterized by the size of $G_P^+$ and
$G_P^-$ but also depends on the structure of these sets. We start
with some definitions and then formulate the main result.

\begin{definition}
Let $H$ be an atomic monoid and $\pi \colon \mathsf Z(H) \to
H_{\red}$ the factorization homomorphism.

\smallskip

\begin{enumerate}

\item For $z \in \mathsf Z(H)$, we denote by \ $\delta (z)$ \ the smallest
      $N \in \N_0$ with the following property:
      if $k \in \N$ is such that $k$ and $|z|$ are adjacent lengths of $\mathsf L \bigl( \pi(z) \bigr)$, then
      \[
      \mathsf{d}(z, \mathsf{Z}_k(a) )\le N \,.
      \]
      Globally, we define
      \[
      \delta (H) = \sup \{\, \delta (z) \mid z \in \mathsf Z (H) \} \in
      \N_0 \cup \{\infty\} \,,
      \]
      and we call \ $\delta (H)$ \ the \ {\it successive distance} \ of $H$.

\smallskip
\item We say that the \ {\it Structure Theorem for Sets of Lengths}
      holds
      (for the monoid $H$) \ if $H$ is atomic and there exist some $M \in
      \N_0$ and a finite, nonempty set $\Delta^* \subset \N$ such that, for
      every $a \in H$, the set of lengths $\mathsf L (a)$ is an {\rm AAMP}
      with some difference $d \in \Delta^*$ and bound $M$. In that case, we
      say more precisely that the Structure Theorem for Sets of Lengths
      holds with set $\Delta^*$ and bound $M$.
\end{enumerate}
\end{definition}

\medskip
\begin{theorem}
\label{main-theorem-II} Let $H$ be a Krull monoid and $\varphi
\colon H\to \mathcal{F}(P)$ a cofinal divisor homomorphism into a
free monoid such that the class group $G = \mathcal{C}(\varphi)$ is
an infinite cyclic group that we identify with $\mathbb Z$. We
denote by $G_P \subset G$  the set of classes containing prime
divisors and consider the following conditions{\rm \,:}
\smallskip
\begin{enumerate}
\item[(a)] $G_P$ \ is finite or \ $\min \{ |G_P^+|, |G_P^-| \} = 1$.

\medskip

\item[(b1)] The Structure Theorem for Sets of Lengths holds for $H$ with set $\Delta(G_P)$.

\item[(b2)] The successive distance $\delta (H)$ is finite.

\item[(b3)] The monotone catenary degree $\mathsf c_{\mon} (H)$ is
            finite.

\item[(b4)] There is an $M \in \N$ such that, for all $a \in H$ and for each two adjacent lengths \ $k, \, l \in
           \mathsf L (a)
           \cap [\min \mathsf L (a) + M, \, \max \mathsf L (a) - M]$, \ we
           have
           $\mathsf d \bigl( \mathsf Z_k (a), \mathsf Z_l (a) \bigr) \le M$.

\medskip
\item[(c)] $G_P^+$ \ or \ $G_P^-$ is finite.
\end{enumerate}
Then we have
\begin{enumerate}
\item Condition $(a)$ implies each of the conditions $(b1)$ to $(b4)$.

\item Each of the conditions $(b1)$ to $(b4)$ implies $(c)$.

\item  $(b2) \Rightarrow (b3) \Rightarrow (b4)$.
\end{enumerate}
\end{theorem}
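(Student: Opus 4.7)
My plan is to prove the three parts in the order (3), (2), (1). Part (3) is essentially an unwinding of definitions. For $(b3)\Rightarrow(b4)$, I would observe that a monotone $\mathsf c_{\mon}(H)$-chain connecting $z\in\mathsf Z_k(a)$ and $z'\in\mathsf Z_l(a)$, with $k<l$ adjacent in $\mathsf L(a)$, cannot visit any length strictly between $k$ and $l$, so some consecutive pair in the chain realizes a step from length $k$ to length $l$ of distance at most $\mathsf c_{\mon}(H)$, bounding $\mathsf d(\mathsf Z_k(a),\mathsf Z_l(a))$. For $(b2)\Rightarrow(b3)$, given $z,z'\in\mathsf Z(a)$ with $|z|\le|z'|$, I would iterate $\delta(H)\le N$ to construct a non-decreasing chain of adjacent-length steps from $z$ up to some $y^*$ of length $|z'|$, and close the constant-length gap from $y^*$ to $z'$ using standard equal-catenary-degree arguments for the block monoid $\mathcal B(G_P)$ supplied by Lemma \ref{3.3}.

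For Part (2), $(b1)\Rightarrow(c)$ is immediate: the Structure Theorem demands that $\Delta^*$ be finite, so in particular $\Delta(G_P)$ is finite, and Theorem \ref{main-theorem-I} (equivalence of (a) and (d)) transferred via Lemma \ref{3.3} yields $(c)$. For $(b4)\Rightarrow(c)$ (which, via Part (3), also handles $(b2),(b3)\Rightarrow(c)$), I argue by contraposition. Assume both $G_P^+$ and $G_P^-$ are infinite, fix $a\in G_P^-$ and $b\in G_P^+$, and for a given target $N$, invoke Lemma \ref{lem_gap} with $v$ large, $b_1\in G_P^+$ and $a_2\in G_P^-$ sufficiently extreme. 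For $A_v=(V_{a,b_1}V_{a_2,b})^v$, the lemma forces factorization lengths into narrow windows around the values $\frac{b_1}{\lcm(a,b)}t(z)$, so adjacent lengths $k<l$ of $\mathsf L(A_v)$ satisfy $l-k\ge \frac{b_1}{\lcm(a,b)}-2D$, and by \eqref{E:Dist} any two factorizations of these lengths have distance at least $2+(l-k)$. For $v$ large, these adjacent pairs lie inside the interval $[\min\mathsf L(A_v)+M,\max\mathsf L(A_v)-M]$, contradicting $(b4)$.

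Part (1) is the main content and I would split cases. If $G_P$ is finite, then $\mathcal B(G_P)$ is finitely generated by \cite[Theorem 3.4.2]{Ge-HK06a}, and $(b1)$--$(b4)$ for $\mathcal B(G_P)$---hence for $H$ via Lemma \ref{3.3}---follow from the standard finite-generation results of \cite{Ge-HK06a} together with \cite[Theorem 4.2]{Ga-Ge09b} for the AAMP part. If $\min\{|G_P^+|,|G_P^-|\}=1$ with $G_P$ infinite, I may assume $G_P^-=\{-m\}$. Every atom $U\in\mathcal A(G_P)$ is then determined by $U^+$ (the $-m$-multiplicity being forced by $\sigma(U^+)/m$) and satisfies $|U^+|\le m$ by Lemma \ref{Lambert}. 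I would then apply the $\kappa$-reduction of Lemma \ref{rel_lemt} to identify the factorization structure of $\mathcal B(G_P)$, up to a bounded perturbation, with that of a finitely generated factorization monoid on the finite set $\kappa(\mathcal A(G_P)^+)$. The finite-generation conclusions then pull back to yield finiteness of $\delta$ and $\mathsf c_{\mon}$ as well as the Structure Theorem with $\Delta^*=\Delta(G_P)$.

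The hardest step will be establishing the Structure Theorem in Case 2 of Part (1). Proposition \ref{rel_prop} already illustrates the $\kappa$-reduction, but only for the elasticity, where extremal lengths suffice; passing to the full AAMP structure requires a uniform bound $M$ across all $B\in\mathcal B(G_P)$ and a verification that every difference in $\Delta(G_P)$ is already realized at the $\kappa$-quotient. Quantifying the perturbation caused by shifting a class representative within its $\kappa$-class via Lemma \ref{rel_lemt}, and then feeding the resulting controlled error into the AAMP framework of \cite[Theorem 4.2]{Ga-Ge09b}, will be the delicate technical point of the argument.
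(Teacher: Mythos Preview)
Your plan has two substantive problems.

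\textbf{Part (1), the case $\min\{|G_P^+|,|G_P^-|\}=1$.} You propose to attack this via the $\kappa$-reduction of Lemma~\ref{rel_lemt}, and you identify this as the hardest step in the whole proof. In the paper it is actually the easiest: if $G_P^-=\{-m\}$, then deleting all copies of $-m$ gives a cofinal divisor homomorphism $\mathcal B(G_P)\to\mathcal F(G_P\setminus\{-m\})$ whose class group is a subgroup of $\mathbb Z/m\mathbb Z$ (Lemma~\ref{transfer-to-finite}). Composing the block homomorphisms yields a transfer homomorphism $\theta\colon H\to\mathcal B(G_0)$ with $G_0\subset\mathbb Z/m\mathbb Z$ finite and $\mathsf c(H,\theta)\le 2$ (Proposition~\ref{nice_prop}). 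Now $\mathcal B(G_0)$ is finitely generated, so $(b1)$--$(b3)$ follow immediately from the standard results for finitely generated monoids and transfer back via Lemma~\ref{3.2}. The $\kappa$-machinery, which was built to handle elasticity when $|G_P^-|\ge 2$, is not needed here and would indeed be delicate to push through to the full Structure Theorem.

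\textbf{Part (3), $(b2)\Rightarrow(b3)$.} Your sketch builds a monotone chain from $z$ to some $y^*$ with $|y^*|=|z'|$, then says you will ``close the constant-length gap from $y^*$ to $z'$ using standard equal-catenary-degree arguments for $\mathcal B(G_P)$ supplied by Lemma~\ref{3.3}.'' There is no such standard argument: Lemma~\ref{3.3} only transfers invariants between $H$ and $\mathcal B(G_P)$; it does not bound the equal-length catenary degree of $\mathcal B(G_P)$, and finiteness of $\delta(H)$ alone does not give this. The paper avoids the equal-length problem entirely. Since $\delta(H)<\infty$ forces $\Delta(H)$ finite and hence (say) $G_P^-$ finite by Theorem~\ref{main-theorem-I}, one can invoke Lemma~\ref{3.4}: given $z,\overline z$ with $|z|\le|\overline z|$, find an atom $U\mid\overline z$ and a $\widehat z$ containing $U$ with $\mathsf d(z,\widehat z)$ bounded; then use $\delta(H)<\infty$ (Lemma~\ref{monotone-delta2}) to adjust $\widehat z$ to some $z'$ still containing $U$ with $|z|\le|z'|\le|\overline z|$; peel off $U$ and induct on $\max\mathsf L$ (Proposition~\ref{6.4}). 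The induction on the element, rather than a direct chain-build, is what makes this work.

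Your treatment of Part~(2) and of $(b3)\Rightarrow(b4)$ is fine and matches the paper (the paper uses $v=3$ in Lemma~\ref{lem_gap} rather than ``$v$ large,'' but your variant is workable once you note that $b_1$ is chosen after $v$).
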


\medskip
We briefly discuss the newly introduced arithmetical properties and
point out the trivial implications in the above result. The
successive distance of $H$ was introduced by Foroutan in
\cite{Fo06a} in order to study the monotone catenary degree.  For
Krull monoids with finite class group, an explicit upper bound for
the successive distance was recently given in \cite[Theorem
6.5]{Fr-Sc10a}. Note that, by definition, $\delta (H) < \infty$
implies that $\Delta (H)$ is finite. The significance of the
Structure Theorem for Sets of Lengths will be discussed at the
beginning of Section \ref{6}. Note that, if it holds for a monoid
$H$, then $H$ is a \BF-monoid with finite set of distances $\Delta
(H)$. Moreover, if $G_P = \Z$, then the Structure Theorem badly
fails: indeed, then every finite subset $L \subset \N_{\ge 2}$
occurs as a set of lengths by Kainrath's Theorem \cite[Theorem
7.4.1]{Ge-HK06a}; for recent progress in this direction see
\cite{Ch-Sc10a}. The implications $(b2) \Rightarrow (b4)$ and $(b3)
\Rightarrow (b4)$ follow from the definitions. A condition implying
$(b1)$ as well as $(b4)$ is given in Proposition \ref{4.2}. The
bound $M$ in $(b4)$ reflects the fact that, in many settings,
factorizations $z$ of an element $a \in H$ show more unusual
phenomena if their length $|z|$ is close either to $\max \mathsf L
(a)$ or to $\min \mathsf L (a)$ (the reader may want to consult
\cite[Theorem 4.9.2]{Ge-HK06a}, \cite[Theorem 3.1]{Fo-Ha06b},
\cite[Theorem 3.1]{Fo-Ha06a} and the associated examples showing the
relevance of the bound $M$).

In Sections \ref{6} and \ref{7}, we obtain results  showing that,
even under the more restrictive assumption that $\varphi$ is a
divisor theory, the Conditions $(b1)$ to $(b4)$ do not imply $(a)$
(Proposition \ref{STSL_prop1}), and $(c)$ does not imply $(b1)$ to
$(b3)$ (Theorem \ref{STSL_thm}, Proposition \ref{STSL_prop1},
Proposition \ref{STSL_prop2} and Proposition \ref{7.1}). Proposition \ref{STSL_prop2} shows
that $(b3)$ does not imply $(b2)$. Moreover, $(b1)$, $(b2)$ and
$(b3)$ may hold as well as may fail even if $\min \{ |G_P^+|,
|G_P^-| \} = 2$. Most of the observed phenomena (around the
non-reversibility of implications) have not been pointed out before
in any $v$-noetherian monoid, and in particular not in a Krull
monoid. Finally, by Theorem \ref{main-theorem-II}, a Krull monoid
$H$ satisfies strong arithmetical properties both when
$G_P$ is finite as well as when $\min \{ |G_P^+|, |G_P^-| \} =
1$. Note that an arithmetical difference between these two cases was
pointed out in Proposition \ref{tame-charact}.

The remainder  of this section is devoted to the proof of Theorem
\ref{main-theorem-II}, which heavily uses Theorem
\ref{main-theorem-I}. We start with the necessary  preparations. To
show that $(a)$ implies each of the Conditions $(b1)$ to $(b4)$, we
will construct transfer homomorphisms to finitely generated monoids.

\begin{lemma} \label{transfer-to-finite}
Let $G_0 \subset \mathbb{Z}$ be a condensed set with $\min \{
|G_0^+|,\,|G_0^-|\}=1$, say $G_0^- = \{-n\}$. The map
\[
\varphi \colon
\begin{cases}
\mathcal{B}(G_0)& \to \mathcal{F}(G_0 \setminus \{-n\})\\
B & \mapsto (-n)^{-\mathsf{v}_{-n}(B)}B
\end{cases}
\]
is a cofinal divisor homomorphism. Its class group
$\mathcal{C}(\varphi)$ is isomorphic to a subgroup of $\mathbb{Z}/n
\mathbb{Z}$, and the set of classes containing prime divisors
corresponds to $\{b+ n\mathbb{Z}\mid b \in G_0 \setminus \{-n\}\}$.
In particular, the class group of the Krull monoid
$\mathcal{B}(G_0)$ is a finite cyclic group.
\end{lemma}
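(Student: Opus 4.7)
The plan is to verify in sequence that $\varphi$ is a monoid homomorphism, a divisor homomorphism, and cofinal; then to identify $\mathcal{C}(\varphi)$ explicitly via the ``sum modulo $n$'' map; and finally to extract the statement about the Krull class group.

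That $\varphi$ is a monoid homomorphism is immediate from additivity of the valuation $\mathsf{v}_{-n}$. For the divisor property, suppose $B, C \in \mathcal{B}(G_0)$ satisfy $\varphi(B) \mid \varphi(C)$ in $\mathcal{F}(G_0 \setminus \{-n\})$, so $\mathsf{v}_g(B) \le \mathsf{v}_g(C)$ for every $g \ne -n$. Since $G_0^- = \{-n\}$, the zero-sum conditions $\sigma(B) = \sigma(C) = 0$ yield $n\,\mathsf{v}_{-n}(B) = \sum_{g \in G_0^+} g\,\mathsf{v}_g(B)$ and similarly for $C$, so the componentwise inequalities on $G_0^+$ force $\mathsf{v}_{-n}(B) \le \mathsf{v}_{-n}(C)$. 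Thus $B \mid C$ in $\mathcal{F}(G_0)$, and since the cofactor $CB^{-1}$ inherits sum zero, also in $\mathcal{B}(G_0)$. Cofinality transfers from the known cofinality of $\mathcal{B}(G_0) \hookrightarrow \mathcal{F}(G_0)$ provided by Lemma~\ref{3.3}: given $F \in \mathcal{F}(G_0 \setminus \{-n\})$, regarded inside $\mathcal{F}(G_0)$, I would pick $B \in \mathcal{B}(G_0)$ with $F \mid B$; since $F$ has no $-n$ factors to strip, $F = \varphi(F) \mid \varphi(B)$.

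To identify $\mathcal{C}(\varphi)$ I would consider the group homomorphism
\[
\widetilde{\sigma}\colon \mathsf{q}(\mathcal{F}(G_0 \setminus \{-n\})) \longrightarrow \mathbb{Z}/n\mathbb{Z}, \qquad F \longmapsto \sigma(F) + n\mathbb{Z},
\]
whose image is the subgroup $\langle g + n\mathbb{Z} \mid g \in G_0 \setminus \{-n\}\rangle$ of $\mathbb{Z}/n\mathbb{Z}$. The key claim is that $\ker \widetilde{\sigma} = \mathsf{q}(\varphi(\mathcal{B}(G_0)))$. The inclusion $\supseteq$ is immediate from $\sigma(\varphi(B)) = n\,\mathsf{v}_{-n}(B)$ for each $B \in \mathcal{B}(G_0)$. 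For the reverse, given $F \in \ker \widetilde{\sigma}$, I write $F = F_+ F_-^{-1}$ with $F_\pm \in \mathcal{F}(G_0 \setminus \{-n\})$, so that $\sigma(F_+) \equiv \sigma(F_-) \equiv r \pmod n$ for some common residue $r$. I then produce a correction sequence $K \in \mathcal{F}(G_0 \setminus \{-n\})$ with $\sigma(K) \equiv -r \pmod n$ and set $B_\pm := F_\pm K\,(-n)^{(\sigma(F_\pm) + \sigma(K))/n} \in \mathcal{B}(G_0)$; then $F = \varphi(B_+)\varphi(B_-)^{-1}$ lies in $\mathsf{q}(\varphi(\mathcal{B}(G_0)))$. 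Under the resulting isomorphism, the class of $g \in G_0 \setminus \{-n\}$ corresponds to $g + n\mathbb{Z}$, giving the stated description of the classes containing prime divisors.

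Since $\mathcal{C}(\varphi)$ is thus a subgroup of the finite cyclic group $\mathbb{Z}/n\mathbb{Z}$, it is itself finite cyclic; the same conclusion for the class group of the Krull monoid $\mathcal{B}(G_0)$---defined via its divisor theory---follows because the divisor-theoretic class group is controlled as a subquotient of the class group of any cofinal divisor homomorphism into a free monoid. The trickiest step I anticipate is the construction of the correction sequence $K$: it hinges on the observation that the set of residues attained by $\sigma$ on $\mathcal{F}(G_0 \setminus \{-n\})$ is a subsemigroup of the finite group $\mathbb{Z}/n\mathbb{Z}$ containing $0$, hence automatically a subgroup, so $r$ lying in this subgroup guarantees $-r$ does as well.
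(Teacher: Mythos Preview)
Your proof is correct and follows essentially the same approach as the paper: verify the divisor homomorphism by recovering $\mathsf{v}_{-n}$ from the zero-sum condition, then identify $\mathcal{C}(\varphi)$ via the sum-modulo-$n$ map, and finally invoke the standard subquotient relation between the divisor-theoretic class group and $\mathcal{C}(\varphi)$. The only noteworthy difference is in the construction showing $\ker\widetilde{\sigma}\subset \mathsf{q}(\varphi(\mathcal{B}(G_0)))$: the paper writes $F_1=F_2\cdot(F_1F_2^{\,n-1})\cdot(F_2^{\,n})^{-1}$ and observes directly that $F_1F_2^{\,n-1}$ and $F_2^{\,n}$ lie in $\varphi(\mathcal{B}(G_0))$, which sidesteps your subsemigroup-is-subgroup argument entirely; your route via a correction sequence $K$ is equally valid but slightly less direct.
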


\begin{proof}
Clearly,  $\varphi$ is a cofinal monoid homomorphism. In order to
show that $\varphi$ is a divisor homomorphism, let $A,B \in
\mathcal{B}(G_0)$ be such that $\varphi(A)\mid \varphi(B)$. We have to
verify that $A \mid B$, and for that it suffices to check that
$\mathsf{v}_{-n}(A)\le \mathsf{v}_{-n}(B)$. For each $C \in
\mathcal{B}(G_0)$, we have $\mathsf{v}_{-n}(C)= \sigma(C^+)/n$ and
$\sigma(C^+)=\sigma(\varphi(C))$. Since $\varphi(A)\mid \varphi(B)$,
we have $\sigma(\varphi(A))\le \sigma(\varphi(B))$, and thus
$\mathsf{v}_{-n}(A)\le \mathsf{v}_{-n}(B)$ follows.

Now, we show that, for $F_1, F_2 \in \mathcal{F}(G_0 \setminus
\{-n\})$, we have $F_1  \in F_2
\mathsf{q}(\varphi(\mathcal{B}(G_0)))$ if and only if
$\sigma(F_1)\equiv \sigma(F_2) \mod n $. This establishes the
results regarding $\mathcal{C}(\varphi)$ and the set of classes
containing prime divisors.

First, suppose that \ $\sigma(F_1) \equiv \sigma(F_2) \mod n$. We
note that $F_iF_j^{n-1}(-n)^{(\sigma(F_i) + (n-1)\sigma(F_j))/n}\in
\mathcal{B}(G_0)$, for $i,j \in \{1,2\}$. Thus, $F_j^n$ and
$F_iF_j^{n-1}$ are elements of $ \varphi(\mathcal{B}(G_0))$ for $i,j
\in \{1,2\}$. Since $F_1= F_2 (F_1F_2^{n-1})(F_2^{-n})$, the claim
follows. Since \ $\sigma(\varphi(C))\equiv 0 \mod n$ \ for each $C
\in \mathcal{B}(G_0)$, the converse claim follows.

By \cite[Theorem 2.4.7]{Ge-HK06a}, the class group of
$\mathcal{B}(G_0)$ is an epimorphic image of a subgroup of
$\mathcal{C}(\varphi)$, and thus it is a finite cyclic group.
\end{proof}

The following example shows that $\mathcal{C}(\varphi)$ can be a
proper subgroup of $\mathbb{Z}/n \mathbb{Z}$ and that
$\mathcal{C}(\varphi)$ can be distinct from the class group of
$\mathcal{B}(G_0)$. However, if $[G_0]= \mathbb{Z}$,  then
$\mathcal{C}(\varphi)=\mathbb{Z}/n\mathbb{Z}$; and, applying
\cite[Theorem 3.1]{Sc09f}, there is a simple and explicit method to
determine the class group of $\mathcal{B}(G_0)$ from
$\mathcal{C}(\varphi)$ as well as the subset of classes containing
prime divisors (note that $\mathcal{C}(\varphi)$ is a torsion
group).

\begin{example}
Let $d_1,d_2 \in \mathbb{N}_{\ge 2}$, $n=d_1d_2$ and $G_0= \{-n,
d_1\}$. Then $G_0$ fulfils all assumptions of Lemma
\ref{transfer-to-finite}, and with $\varphi$ as in Lemma
\ref{transfer-to-finite}, we get that $\mathcal{C}(\varphi)= \langle
d_1 + n \mathbb{Z}\rangle \subsetneq \mathbb{Z}/n \mathbb{Z}$.
However, $\mathcal{B}(G_0)$ is factorial, and thus its class group
is trivial.
\end{example}

\begin{proposition}
\label{nice_prop} Let $H$ be a Krull monoid and $\varphi \colon H\to
\mathcal{F}(P)$ a cofinal divisor homomorphism into a free monoid
such that the class group $G= \mathcal{C}(\varphi)$ is an  infinite
cyclic group that we identify with $\mathbb Z$. Let $G_P \subset G$
denote the set of classes containing prime divisors. Suppose that
$G_P$ is finite or that $\min \{|G_P^+|, |G_P^-|\}=1$. Then there
exists  a transfer homomorphism $\theta \colon H \to H_0$ into a
finitely generated monoid $H_0$ such that $\mathsf c(H, \theta)\le
2$. Moreover, the following statements hold.
\begin{enumerate}
  \item $\mathcal{L}(H)= \mathcal{L}(H_0)$, in particular, the Structure Theorem for Sets of Lengths holds for $H$ with $\Delta (H)=\Delta(H_0)$ and some bound $M$, and $\rho(H)= \rho(H_0)$ is finite and accepted.
  \item $\delta(H)= \delta(H_0) < \infty$.
  \item $\mathsf c_{\mon} (H) \le \max \{\mathsf c_{\mon} (H_0),2\} < \infty$.
\end{enumerate}
\end{proposition}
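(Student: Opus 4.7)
The plan is to realize $\theta$ as a (possibly two-step) block homomorphism landing in the block monoid of a \emph{finite} abelian group, so that $H_0$ is a finitely generated Krull monoid, and then to transfer each arithmetical conclusion from $H_0$ back to $H$ via Lemma \ref{3.2}.

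For the construction of $\theta$, I would split into two cases. If $G_P$ is finite, take $\theta = \boldsymbol{\beta} \colon H \to \mathcal{B}(G_P) =: H_0$; by Lemma \ref{3.3}.1 this is a transfer homomorphism with $\mathsf{c}(H,\theta) \le 2$, and $H_0$ is finitely generated since $G_P$ is finite (cf.\ \cite[Theorem 3.4.2]{Ge-HK06a}). If instead $\min\{|G_P^+|,|G_P^-|\} = 1$, say $G_P^- = \{-n\}$, then Lemma \ref{transfer-to-finite} shows that $\mathcal{B}(G_P)$ is itself a Krull monoid whose class group $G'$ is finite cyclic. Applying Lemma \ref{3.3} to $\mathcal{B}(G_P)$ produces a block homomorphism $\boldsymbol{\beta}'\colon \mathcal{B}(G_P) \to \mathcal{B}(G'_{P'}) =: H_0$ with $\mathsf{c}(\mathcal{B}(G_P),\boldsymbol{\beta}')\le 2$, and $H_0$ is finitely generated because $G'$ is finite. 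Setting $\theta = \boldsymbol{\beta}' \circ \boldsymbol{\beta}$, Lemma \ref{3.2}.4 gives
\[
\mathsf{c}(H,\theta) \;\le\; \max\bigl\{\mathsf{c}(H,\boldsymbol{\beta}),\, \mathsf{c}(\mathcal{B}(G_P),\boldsymbol{\beta}')\bigr\} \;\le\; 2.
\]

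For the ``moreover'' clauses, Lemma \ref{3.2}.1 immediately gives $\mathcal{L}(H) = \mathcal{L}(H_0)$, hence $\Delta(H) = \Delta(H_0)$ and $\rho(H) = \rho(H_0)$. Since $H_0$ is a finitely generated reduced Krull monoid, $\rho(H_0)$ is finite and accepted, and the Structure Theorem for Sets of Lengths with some bound $M$ holds for $H_0$ by standard results on finitely generated monoids; via $\mathcal{L}(H) = \mathcal{L}(H_0)$ it transfers verbatim to $H$. For the monotone catenary degree, Lemma \ref{3.2}.2 yields
\[
\mathsf{c}_{\mon}(H) \;\le\; \max\bigl\{\mathsf{c}_{\mon}(H_0),\, \mathsf{c}(H,\theta)\bigr\} \;\le\; \max\bigl\{\mathsf{c}_{\mon}(H_0),\,2\bigr\},
\]
and $\mathsf{c}_{\mon}(H_0) < \infty$ since $H_0$ is a finitely generated Krull monoid.

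It remains to argue $\delta(H) = \delta(H_0) < \infty$. Finiteness of $\delta(H_0)$ follows from known bounds for Krull monoids with finite class group (cf.~\cite[Theorem 6.5]{Fr-Sc10a}). For the equality, I fix $z \in \mathsf{Z}(H)$ with $a = \pi(z)$, set $\overline{z} = \overline{\theta}(z)$, and observe $\mathsf{L}(a) = \mathsf{L}(\theta(a))$; for any $k$ adjacent to $|z| = |\overline{z}|$, one inequality of $\mathsf{d}(z,\mathsf{Z}_k(a)) = \mathsf{d}(\overline{z},\mathsf{Z}_k(\theta(a)))$ is clear from the fact that $\overline{\theta}$ never increases distances between factorizations, while the other uses \cite[Proposition 3.2.3.3.(c)]{Ge-HK06a} to lift a nearest factorization of the prescribed length from $H_0$ to $H$ without changing length or distance. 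Combined with surjectivity of $\overline{\theta}$ on each factorization fibre (Lemma \ref{3.2}.1), this yields $\delta(H) = \delta(H_0)$. The main obstacle is the second case: we need to land in a \emph{finitely generated} monoid even though $G_P$ itself is infinite, and Lemma \ref{transfer-to-finite} combined with Lemma \ref{3.2}.4 is exactly what allows a two-step transfer to the finite-class-group setting while preserving the bound $\mathsf{c}(H,\theta) \le 2$.
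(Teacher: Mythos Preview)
Your proof is correct and follows essentially the same route as the paper: the two-case construction of $\theta$ (direct block homomorphism when $G_P$ is finite, composition $\boldsymbol{\beta}'\circ\boldsymbol{\beta}$ via Lemma~\ref{transfer-to-finite} when $\min\{|G_P^+|,|G_P^-|\}=1$), the use of Lemma~\ref{3.2}.4 to bound $\mathsf c(H,\theta)$, and the transfer of each arithmetical invariant from $H_0$ via Lemma~\ref{3.2} all match the paper's argument. The only cosmetic difference is that you spell out the proof of $\delta(H)=\delta(H_0)$ by hand, whereas the paper simply quotes this equality from Lemma~\ref{3.2}.2 (the clause $\delta(B)=\delta(H)$), and the paper cites \cite[Theorem 3.1.4]{Ge-HK06a} and \cite[Theorem 5.1]{Fo06a} for the finiteness of $\delta$ and $\mathsf c_{\mon}$ in finitely generated monoids rather than the references you chose.
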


\begin{proof}
First we show the existence of the required transfer homomorphism.
For this, we recall that a monoid of zero-sum sequences over a finite
set is finitely generated (\cite[Theorem 3.4.2]{Ge-HK06a}). If $G_P$
is finite, then $\boldsymbol \beta \colon H \to \mathcal B (G_P)$
has the desired properties by Lemma \ref{3.3}. Now suppose that $\min
\{|G_P^+|, |G_P^-|\} = 1$, say $G_P^-= \{-n\}$, and set  $G_0 = \{b
+ n \Z \mid b \in G_P^+ \} \subset \mathbb{Z}/n \mathbb{Z}$. Using
Lemmas \ref{3.3} and \ref{transfer-to-finite}, we have block
homomorphisms $\boldsymbol \beta \colon H \to \mathcal B (G_P)$ and
$\boldsymbol \beta' \colon \mathcal B (G_P) \to \mathcal B (G_0)$.
By Lemma \ref{3.2}, the composition $\theta = \boldsymbol \beta'
\circ \boldsymbol \beta \colon H \to \mathcal B (G_0)$ still has the
required properties.

Again, by  Lemmas \ref{3.2} and \ref{3.3}, it suffices to verify the
additional statements for finitely generated monoids: we refer to
\cite[Theorem 4.4.11]{Ge-HK06a} for the Structure Theorem, to
\cite[Theorem 3.1.4]{Ge-HK06a} for the elasticity and the successive
distance, and to \cite[Theorem 5.1]{Fo06a} for the monotone catenary
degree.
\end{proof}

\begin{lemma} \label{monotone-delta1}
Let $H$ be an  atomic monoid, $a \in H$ and $z, \,z' \in \mathsf Z
(a)$ and $l = \bigl| |z| - |z'| \bigr|$. Then there exists some $z''
\in \mathsf Z (a)$ such that $|z''| = |z'|$ and \ $\mathsf d (z,
z'') \le l \delta (H)$.
\end{lemma}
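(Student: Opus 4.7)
The plan is to iterate the defining property of $\delta(H)$ along the chain of adjacent lengths in $\mathsf{L}(a)$ that lie between $|z|$ and $|z'|$, and then apply the triangle inequality for $\mathsf{d}$ on $\mathsf{Z}(H)$.

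Without loss of generality, assume $|z|\le|z'|$; the other case is symmetric (swap the roles of $z$ and $z'$ at the end, then apply the definition of $\delta$ to produce a factorization of the required length close to the obtained one, or simply reverse the direction of the chain). First I would list the lengths of $\mathsf{L}(a)$ lying in the interval $[|z|,|z'|]$ as
\[
|z| = k_0 < k_1 < \ldots < k_m = |z'|,
\]
where, by construction, $k_{i-1}$ and $k_i$ are adjacent elements of $\mathsf{L}(a)$ for each $i\in[1,m]$. Since these are strictly increasing integers in $[|z|,|z'|]$, we have $m \le |z'|-|z| = l$.

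Next I would construct inductively a sequence $z = z_0, z_1, \ldots, z_m \in \mathsf{Z}(a)$ with $|z_i| = k_i$ and $\mathsf{d}(z_{i-1}, z_i) \le \delta(H)$. Given $z_{i-1} \in \mathsf{Z}_{k_{i-1}}(a)$, the integer $k_i$ is adjacent to $|z_{i-1}| = k_{i-1}$ in $\mathsf{L}(\pi(z_{i-1})) = \mathsf{L}(a)$, so the definition of $\delta(z_{i-1})$ supplies a factorization $z_i \in \mathsf{Z}_{k_i}(a)$ with $\mathsf{d}(z_{i-1}, z_i) \le \delta(z_{i-1}) \le \delta(H)$.

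Setting $z'' = z_m$, we have $|z''| = k_m = |z'|$, and the triangle inequality for the distance $\mathsf{d}$ on $\mathsf{Z}(H)$ (which is standard and recorded in \cite[Section 1.2]{Ge-HK06a}) yields
\[
\mathsf{d}(z, z'') \;\le\; \sum_{i=1}^{m} \mathsf{d}(z_{i-1}, z_i) \;\le\; m\,\delta(H) \;\le\; l\,\delta(H),
\]
as desired. There is no real obstacle here: the argument is essentially a telescoping application of the definition of $\delta$, and the bound $m\le l$ is forced by the fact that the $k_i$ are distinct integers in an interval of length $l$.
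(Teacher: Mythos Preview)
Your proof is correct and is precisely the standard argument underlying \cite[Lemma 3.1.3]{Ge-HK06a}, which is all the paper invokes as its proof. The only cosmetic point is that the ``without loss of generality'' clause is unnecessary: since adjacency in $\mathsf L(a)$ is symmetric, the inductive step via $\delta(z_{i-1})$ works equally well whether the chain $k_0,\ldots,k_m$ is increasing or decreasing, so no separate treatment of the case $|z|>|z'|$ is needed.
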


\begin{proof}
See \cite[Lemma 3.1.3]{Ge-HK06a}.
\end{proof}

\begin{lemma} \label{monotone-delta2}
Let $H$ be an atomic monoid with  $\delta (H) < \infty$. Let $M \in
\N$, $a \in H$, $u \in \mathcal A (H_{\red})$ and $z, \widehat z,
\overline z \in \mathsf Z (a)$ be such that
\[
|z| \le |\overline z|, \ u \t \overline z, \ u \t \widehat z \quad
\text{and} \quad \mathsf d (z, \widehat z) \le M \,.
\]
Then there is a $z' \in \mathsf Z (a) \cap u \mathsf Z (H)$ such
that $|z| \le |z'| \le |\overline z|$ and $\mathsf d (z, z') \le M +
\Bigl( M + \max
 \Delta (H) \Bigr) \delta
(H)$.
\end{lemma}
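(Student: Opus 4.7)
The plan is to construct $z'$ by modifying $\widehat z$ (not $\overline z$), so that $u$ still divides it and its length lies in $[|z|, |\overline z|]$; since $\mathsf d(z, \widehat z) \le M$ is already small, the bulk of the work is to adjust the length. As $u \t \widehat z$ and $u \t \overline z$, I write $\widehat z = u \widehat y$ and $\overline z = u \overline y$ with $\widehat y, \overline y \in \mathsf Z(u^{-1}a)$. If $|z| \le |\widehat z| \le |\overline z|$, then $z' = \widehat z$ is immediate.

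Otherwise I select a target length $k^* \in \mathsf L(u^{-1}a) \cap [|z|-1, |\overline z|-1]$ and apply Lemma \ref{monotone-delta1} inside $\mathsf Z(u^{-1}a)$ to obtain $y' \in \mathsf Z(u^{-1}a)$ with $|y'| = k^*$ and $\mathsf d(\widehat y, y') \le (|k^* - |\widehat y||)\delta(H)$. Setting $z' = uy'$ gives $u \t z'$, $|z'| = k^* + 1 \in [|z|, |\overline z|]$, and by $\mathsf d(u\widehat y, uy') = \mathsf d(\widehat y, y')$ together with the triangle inequality,
\[
\mathsf d(z, z') \le \mathsf d(z, \widehat z) + \mathsf d(\widehat y, y') \le M + \bigl(|k^* - |\widehat y||\bigr)\delta(H).
\]
So it remains to choose $k^*$ and show $|k^* - |\widehat y|| \le M + \max \Delta(H)$ in the two residual cases. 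In the case $|\widehat z| > |\overline z|$, I take $k^* = |\overline y|$; since $|\widehat z| - |\overline z| \le |\widehat z| - |z| \le \mathsf d(z, \widehat z) \le M$, the length difference is at most $M$. In the case $|\widehat z| < |z|$, I take $k^* = \min\{\ell \in \mathsf L(u^{-1}a) : \ell \ge |z|-1\}$, which exists and satisfies $k^* \le |\overline y|$ because $|\overline y| \ge |z|-1$ already lies in $\mathsf L(u^{-1}a)$.

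The key observation is that $k^* - (|z|-1) \le \max \Delta(H)$. Indeed, either $|z|-1 \in \mathsf L(u^{-1}a)$ (gap zero), or else $|\widehat y| = |\widehat z|-1 < |z|-1$ witnesses that $\mathsf L(u^{-1}a)$ contains a length strictly below $|z|-1$; the largest such length $k^-$ is then adjacent to $k^*$ in $\mathsf L(u^{-1}a)$, so $k^* - k^- \le \max \Delta(u^{-1}a) \le \max \Delta(H)$, yielding $k^* - (|z|-1) < \max \Delta(H)$. Combining with $|z| - |\widehat z| \le M$, I conclude
\[
k^* - |\widehat y| = \bigl(k^* - (|z|-1)\bigr) + \bigl(|z| - |\widehat z|\bigr) \le M + \max \Delta(H),
\]
which gives the desired bound $\mathsf d(z, z') \le M + (M + \max \Delta(H))\delta(H)$.

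The only real obstacle is this second case: choosing $k^*$ as the smallest admissible length at least $|z|-1$ and using the adjacency definition of the set of distances to bound the gap $k^* - (|z|-1)$ by $\max \Delta(H)$. The rest is triangle inequality and Lemma \ref{monotone-delta1}.
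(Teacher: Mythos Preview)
Your proof is correct and follows essentially the same approach as the paper's: the same case split on whether $|\widehat z|$ lies in $[|z|,|\overline z|]$, the same choice of target length in each residual case ($k^* = |\overline y|$ when $|\widehat z| > |\overline z|$, and the least length $\ge |z|-1$ in $\mathsf L(u^{-1}a)$ when $|\widehat z| < |z|$), and the same appeal to Lemma~\ref{monotone-delta1} followed by the triangle inequality. The only cosmetic difference is that you spell out the adjacency argument for the bound $k^* - (|z|-1) \le \max\Delta(H)$, which the paper leaves implicit; also note that, strictly speaking, one should write $b = v^{-1}a$ for a representative $v \in H$ with $vH^\times = u$, since $u \in \mathcal A(H_{\red})$.
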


\begin{proof}
Let $v \in H$ be such that $vH^{\times}=u$.
We set $b = v^{-1}a$, $\overline z = u \overline y$ and $\widehat z
= u \widehat y$, where $\overline y, \widehat y \in \mathsf Z (b)$. If
$|z| \le |\widehat z| \le |\overline z|$, then  $z' = \widehat z$
fulfills the requirements. If not, then either $|\widehat z| < |z|$
or $|\overline z| < |\widehat z|$, and we decide these two cases separately.

\smallskip
\noindent {\sc Case 1:} \, $|\widehat z| < |z|$.

Since $|\widehat y| = |\widehat z|-1 \in \mathsf L (b)$ and
$|\overline y| = |\overline z| - 1 \in \mathsf L (b)$, there is a
\[
k \in \mathsf L (b) \cap [|z|-1, |\overline z|-1] \quad \text{with}
\quad k \le |z|-1+\max \Delta (H) \,.
\]
Let $y'' \in \mathsf Z (b)$ with $|y''| = k$. Then
\[
\begin{aligned}
|y''| - |\widehat y| & = k - |\widehat z|+ 1 \le |z|-1+\max \Delta
(H) - |\widehat z|+ 1 \\
 & \le \mathsf d ( z, \widehat z) + \max \Delta (H) \le M + \max
 \Delta (H) \,.
\end{aligned}
\]
Thus, by Lemma \ref{monotone-delta1}, there is a $y' \in \mathsf Z
(b)$ with $|y'| = |y''|$ and $\mathsf d ( \widehat y, y') \le \Bigl(
M + \max
 \Delta (H) \Bigr) \delta
(H)$. Then $z' = u y' \in \mathsf Z (a) \cap u \mathsf Z (H)$ with
$|z'| = 1 + k \in [|z|, |\overline z|]$ and
\[
\mathsf d (z, z') \le \mathsf d (z, \widehat z) + \mathsf d (u
\widehat y, u y') \le M + \Bigl( M + \max
 \Delta (H) \Bigr) \delta
(H) \,.
\]

\smallskip
\noindent {\sc Case 2:} \, $|\overline z| < |\widehat z|$.

By Lemma \ref{monotone-delta1}, there is a $y' \in \mathsf Z (b)$
with $|y'| = |\overline y|$ and
\[
\begin{aligned}
\mathsf d (\widehat y, y') & \le \Bigl( |\widehat y| - |\overline y|
\Bigr) \delta (H) = \Bigl( |\widehat z| - |\overline z| \Bigr)
\delta (H) \\
 & \le \Bigl( |\widehat z| - |z| \Bigr)\delta (H) \le \mathsf d
 (\widehat z, z) \delta (H) \le M \delta (H) \,.
\end{aligned}
\]
Then $z' = uy' \in \mathsf Z (a) \cap u \mathsf Z (H)$ with $|z'| =
|\overline z|$ and
\[
\mathsf d (z, z') \le \mathsf d (z, \widehat z) + \mathsf d (u
\widehat y, u y') \le M + M\delta (H) \,. \qedhere
\]
\end{proof}

\begin{proposition} \label{6.4}
Let $H$ be a Krull monoid and $\varphi \colon H\to \mathcal{F}(P)$ a
cofinal divisor homomorphism into a free monoid with  infinite
cyclic class group $\mathcal C(\varphi)$. If the successive distance
$\delta (H)$ is finite, then the monotone catenary degree $\mathsf
c_{\mon} (H)$ is finite.
\end{proposition}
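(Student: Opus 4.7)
The plan is to reduce to the associated block monoid $\mathcal{B}(G_P)$ and then to combine Lemma \ref{3.4} (which produces a factorization near a given one containing any prescribed atom) with Lemma \ref{monotone-delta2} (which upgrades closeness to \emph{monotone} closeness at the price of $\delta(H)$-many extra distance units) inside an induction on $\max \mathsf{L}$.

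First, by Lemmas \ref{3.2} and \ref{3.3}, the block homomorphism $\boldsymbol{\beta}\colon H \to \mathcal{B}(G_P)$ is a transfer homomorphism with $\mathsf{c}(H,\boldsymbol{\beta})\le 2$ that satisfies $\mathsf{c}_{\mon}(H)\le\max\{\mathsf{c}_{\mon}(\mathcal{B}(G_P)),2\}$ and $\delta(\mathcal{B}(G_P))=\delta(H)$. Since $\delta(H)<\infty$ forces $\Delta(H)$ to be finite, Theorem \ref{main-theorem-I} then gives that $G_P^+$ or $G_P^-$ is finite; after relabelling via the symmetry $G_P\leftrightarrow -G_P$, I may assume $G_P^-$ is finite (it is nonempty since $G_P$ is condensed with $|G_P|\ge 2$). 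Thus it suffices to produce a finite constant $M$ such that in $\mathcal{B}(G_P)$ any two factorizations of a common element are connected by a monotone $M$-chain.

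Set $M_0=(|\min G_P|+|G_P^-|^2)|\min G_P|$ and $M = M_0+(M_0+\max\Delta(H))\delta(H)$, and proceed by induction on $\max\mathsf{L}(A)$. The case $\max\mathsf{L}(A)=0$ is trivial. For the inductive step, let $z,z'\in\mathsf{Z}(A)$ with $|z|\le|z'|$. Lemma \ref{3.4} yields an atom $U\in\mathcal{A}(G_P)$ with $U\mid z'$ together with $\widehat z\in\mathsf{Z}(A)\cap U\mathsf{Z}(G_P)$ satisfying $\mathsf{d}(z,\widehat z)\le M_0$. Applying Lemma \ref{monotone-delta2} with $a=A$, $u=U$, and the triple $(z,\widehat z,z')$ then produces $z^*\in\mathsf{Z}(A)\cap U\mathsf{Z}(G_P)$ with $|z|\le|z^*|\le|z'|$ and $\mathsf{d}(z,z^*)\le M$.

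Write $z^*=Uy^*$ and $z'=Uy'$ with $y^*,y'\in\mathsf{Z}(U^{-1}A)$ and $|y^*|\le|y'|$. Since $Uw$ is a factorization of $A$ whenever $w$ is a factorization of $U^{-1}A$, one has $\max\mathsf{L}(U^{-1}A)\le\max\mathsf{L}(A)-1$, so the inductive hypothesis delivers a monotone (nondecreasing) $M$-chain $y^*=y_0,y_1,\ldots,y_k=y'$ in $\mathsf{Z}(U^{-1}A)$. Multiplying throughout by $U$ gives the monotone $M$-chain $z^*=Uy_0,\ldots,Uy_k=z'$, and prepending the step $z\to z^*$ (monotone since $|z|\le|z^*|$, of distance at most $M$) produces the required monotone $M$-chain from $z$ to $z'$. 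The main obstacle is that Lemma \ref{3.4} alone provides no length control on $\widehat z$, so the chain $z\to\widehat z$ could violate monotonicity; Lemma \ref{monotone-delta2}, which is precisely where the hypothesis $\delta(H)<\infty$ is used, removes this obstruction.
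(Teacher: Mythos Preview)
Your proof is correct and follows essentially the same approach as the paper: reduce to $\mathcal{B}(G_P)$ via the block homomorphism, use $\delta(H)<\infty$ together with Theorem \ref{main-theorem-I} to arrange $G_P^-$ finite, and then induct on $\max\mathsf{L}(A)$ by combining Lemma \ref{3.4} with Lemma \ref{monotone-delta2} to produce a nearby factorization containing a prescribed atom and lying in the correct length interval. The constants and the structure of the induction match the paper exactly (your $M_0$ and $M$ are the paper's $M$ and $M^*$).
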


\begin{proof}
We set $G = \mathcal C(\varphi)$, identify $G$ with $\Z$ and denote
by $G_P \subset G$  the set of classes containing prime divisors.
Suppose that $\delta (H) < \infty$. Lemma \ref{3.3} shows $\delta
(H) = \delta (G_P)$ and that it suffices to verify that $\mathsf
c_{\mon} (G_P) < \infty$. Note that $\Delta (G_P)$ is finite (since
$\delta(G_P)<\infty$), and thus by Theorem \ref{main-theorem-I} we
get that (say) $G_P^-$ is finite.

We set $M = \bigl( |\min G_P| + |G_P^-|^2
 \bigr) \ |\min G_P | $ and assert that
\[
 \mathsf c_{\mon} (G_P) \le M + \Bigl( M + \max
 \Delta (H) \Bigr) \delta
(H) = M^{\ast} \,.
\]
For this, we have to show that $\mathsf c_{\mon} (A) \le M^{\ast}$
for all $A \in \mathcal B (G_P)$, and we proceed by induction on
$\max \mathsf L (A)$.

If $A \in \mathcal B (G_P)$ with $\max \mathsf L (A) = 1$, then $A
\in \mathcal A (G_P)$ and $\mathsf c_{\mon} (A) = 0$. Now let $A \in
\mathcal B (G_P)$ with $\max \mathsf L (A) > 1$ and suppose that
$\mathsf c_{\mon} (B) \le M^{\ast}$ for all $B \in \mathcal B (G_P)$
with $\max \mathsf L (B) < \max \mathsf L (A)$.

\smallskip
We pick $z, \overline z \in \mathsf Z (A)$ with $|z| \le |\overline
z|$ and must find a monotone $M^{\ast}$-chain of factorizations
from $z$ to $\overline z$.

By Lemma \ref{3.4} there is a $U
\t \overline z$ with $U \in \mathcal A (G_P)$ and a
$\widehat z \in \mathsf Z (A) \cap U \mathsf Z (G_P)$ such that $\mathsf d (z, \widehat z) \le M$. By Lemma
\ref{monotone-delta2}, there is a $z' \in \mathsf Z (A) \cap U
\mathsf Z (G_P)$ such that $|z| \le |z'| \le |\overline z|$ and
$\mathsf d (z, z') \le M^{\ast}$. Now we set
\[
B = U^{-1}A,\quad \overline z = U \overline y \quad \text{and} \quad z' =
U y',
\]
where $\overline y, y' \in \mathsf Z (B)$. Since $\max \mathsf L (B)
< \max \mathsf L (A)$, the induction hypothesis gives a monotone
$M^*$-chain $y' = y_1, \ldots, y_k = \overline y$ of factorizations
of $B$ from $y'$ to $\overline y$. Therefore
\[
z, z' = Uy'=Uy_1, Uy_2, \ldots , Uy_k = U \overline y = \overline z
\]
is a monotone $M^{\ast}$-chain of factorizations of $A$ from $z$ to
$\overline z$.
\end{proof}

\bigskip
\begin{proof}[{\bf Proof of Theorem \ref{main-theorem-II}}]
3.  The implication $(b3) \Rightarrow (b4)$ follows since, for $a
\in H$ and each two adjacent lengths \ $k, \, l \in \mathsf L (a)$,
we have, by definition, $\mathsf d \bigl( \mathsf Z_k (a), \mathsf
Z_l (a) \bigr) \le \mathsf c_{\mon} (H)$. The implication $(b2)
\Rightarrow (b3)$ is Proposition \ref{6.4}.

\smallskip
1. By Proposition \ref{nice_prop}, we know that $(a)$ implies $(b1)$, $(b2)$, and $(b3)$; and,  by part 3, we know that
$(b3)$ implies $(b4)$.

\smallskip
2. By definition, each of $(b1)$, $(b2)$ and $(b3)$ implies the
finiteness of $\Delta (H)$. Thus, Theorem \ref{main-theorem-I}
implies the assertion. It remains to show that $(b4)$ implies $(c)$.

Suppose that $(b4)$ holds with some $M \in \mathbb{N}$ and assume
to the contrary that $(c)$ does not hold, i.e., $G_P^+$ and $G_P^-$
are both infinite. We proceed similarly to the proof of Theorem
\ref{main-theorem-I}, part $(b) \Rightarrow (a)$.

We set $a = \max G_P^-$ and $b = \min G_P^+$ and let $\alpha \in
[1,b]$  and $\beta \in [1, |a|]$ be such that $V_{a,b}= a^{\alpha}b^{\beta}
\in \mathcal{A}(G_P)$. We intend to apply Lemma \ref{lem_gap} with
$v=3$. Thus, let $D = 3|a|(b+|a|)\gcd(a,b)$, let $b_1 \in G_P^+$
with
\[\frac{b_1}{\lcm(a,b)} \ge  2 D + M, \]
and let $a_2 \in G_P^-$ with $|a_2| \ge (3 b_1 +b)|a| $.
Let $\alpha_1,\alpha_2,\beta_2,\beta_2 \in \mathbb{N}$ be such that
$V_{a,b_1} = a^{\alpha_1}b_1^{\beta_1}$ and  $V_{a_2,b} = a_2^{\alpha_2}b^{\beta_2}$ are elements of $\mathcal{A}(G_P)$.

First, we assert that there exist $z_0, z_1, z_2, z_3 \in \mathsf{Z}((V_{a,b_1}V_{a_2,b})^3)$ with, where $t(\cdot)$ is defined as in Lemma \ref{lem_gap},
\[t(z_0)< t(z_1)< t(z_2)< t(z_3).\]
We note that $V_{a,b}\t V_{a,b_1}V_{a_2,b}$ (by the same reasoning used in the proof of Theorem \ref{main-theorem-I}), and thus there exists some $y \in \mathsf{Z}(V_{a,b_1}V_{a_2,b})$ with $t(y)\neq 0$.
For $i \in [0,3]$, we  set $z_i= y^i(V_{a,b_1}\cdot V_{a_2,b})^{3-i}$.
Then we have $t(z_i)= i t(y)$, establishing the claim.

Let $z_0', z_1', z_2', z_3' \in \mathsf{Z}((V_{a,b_1}V_{a_2,b})^3)$ be such that $t(z_0')< t(z_1')< t(z_2')< t(z_3')$ and such that
there exists no $z \in \mathsf{Z}((V_{a,b_1}V_{a_2,b})^3)$ with
$t(z_1')< t(z) <  t(z_2')$.

By Lemma \ref{lem_gap}, we get, for $i\in [0,2]$,
that
\[|z_{i+1}'|-|z_i'| \ge  \frac{b_1}{\lcm(a,b)} \bigl( t(z_{i+1}') - t(z_i')\bigr)  -  2D  \ge M .
\]
Since
$\min \mathsf{L}((V_{a,b_1}V_{a_2,b})^3) \le |z_0'| < |z_1'| < |z_2'|< |z_3'| \le \max \mathsf{L}((V_{a,b_1}V_{a_2,b})^3)$,
we get  that
\[|z_1'|,|z_2'| \in  \bigl[\min\mathsf{L}\bigl((V_{a,b_1}V_{a_2,b})^3\bigr) + M,  \max \mathsf{L}\bigl((V_{a,b_1}V_{a_2,b})^3\bigr)-M \bigr].\]

Let
\[
k = \max \left( \mathsf{L}\bigl((V_{a,b_1}V_{a_2,b})^3\bigr)  \cap    \left[\frac{b_1}{\lcm(a,b)}
\ t(z_1') - D ,\frac{b_1}{\lcm(a,b)} \  t(z_1') + D \right]  \right)
\]
and
\[
l = \min \left( \mathsf{L}\bigl((V_{a,b_1}V_{a_2,b})^3\bigr)  \cap  \left[\frac{b_1}{\lcm(a,b)} \
t(z_2') - D ,\frac{b_1}{\lcm(a,b)} \  t(z_2') + D \right]  \right)\ ;
\]
note that, by Lemma \ref{lem_gap}, $|z_1'|$ and $|z_2'|$ are elements of the former and the latter set, respectively,
and also note that the two intervals are disjoint. In particular, we have $|z_1'|\le k < l \le |z_2'|$.
Since there exists no  $z \in \mathsf{Z}((V_{a,b_1}V_{a_2,b})^3)$ with
$t(z_1')< t(z) <  t(z_2')$, it follows by Lemma \ref{lem_gap} that $k$ and $l$ are adjacent lengths.
Since $k - l \ge \frac{b_1}{\lcm(a,b)}   -  2D  \ge M  $ and by \eqref{E:Dist},
 we have $\mathsf d \bigl( \mathsf Z_k (a), \mathsf Z_l (a) \bigr) \ge M + 2$, a contradiction to the assumption that $(b4)$ holds with $M$.
\end{proof}

\section{The Structure Theorem for Sets of Lengths} \label{6}

The Structure Theorem for Sets of Lengths is a central finiteness
result in factorization theory.  Apart from Krull monoids---which
will be discussed below---the Structure Theorem holds, among others,
for weakly Krull domains with finite $v$-class group and for Mori
domains $A$ with complete integral closure $\widehat A = R$ for
which the conductor $\mathfrak f = (A \DP R) \ne \{0\}$ and
$\mathcal C (R)$ and $R/ \mathfrak f$ are both finite (see
\cite[Section 4.7]{Ge-HK06a} for an overview, and \cite{Ge-Gr09b,
Ge-Ka10a} for recent progress). Moreover, it was recently shown that
the Structure Theorem is sharp for Krull monoids with finite class
group \cite{Sc09a}.

Let $H$ be a Krull monoid and  $G_P \subset G$ as always. By Theorem
\ref{main-theorem-II}, it suffices to consider the situation when
$G_P^+$ is finite and $2 \le |G_P^-| < \infty$. Essentially, all
results so far which establish the Structure Theorem  for some class
of monoids  use the machinery of pattern ideals and tame generating
sets (presented in detail in \cite[Section 4.3]{Ge-HK06a}). First,
we repeat these concepts and outline their significance for the
Structure Theorem.  However, Proposition \ref{4.6}  shows that in
our situation this approach is not applicable in general. The main
result of this section, Theorem \ref{STSL_thm}, provides a full
characterization of when the Structure Theorem holds. Although the
setting is special, it shows that, in Theorem \ref{main-theorem-II},
condition $(b1)$ does not imply condition $(a)$, and it
provides---together with Proposition \ref{4.6}---the first example
of any Krull monoid for which the Structure Theorem holds without
tame generation of  pattern ideals. Furthermore, note by Lemma
\ref{lem_char} that, for  the sets $G_P$ considered in Theorem
\ref{main-theorem-II}, there actually exists a Krull monoid such
that $G_P$ is the set of classes containing prime divisors with
respect to a divisor theory of $H$.

Likewise, all previous examples of monoids $H$ with finite monotone
catenary degree $\mathsf c_{\mon}(H)$ have been achieved by using
that $\delta(H)$ is finite. However, in Proposition
\ref{STSL_prop2}, we give the first example of a monoid $H$ with
$\mathsf c_{\mon}(H)<\infty$ but $\delta(H)=\infty$.

\begin{definition} \label{def-tamely-gen}
Let $H$ be  an atomic monoid, let $\mathfrak a \subset H$ and let $A \subset
\Z$ be a finite nonempty subset.

\begin{enumerate}
\item We say that a subset \ $L \subset \Z$ \  {\it contains the
      pattern \ $A$} \ if there exists some $y \in \Z$ such that $y
      +A \subset L$. We denote by \ $\Phi(A)  = \Phi_H (A)$ \ the set of all $a \in H$ for which \ $\mathsf L(a)$
      contains the pattern $A$.

\smallskip

\item Now $\mathfrak a$ is called a \ {\it
      pattern ideal} \ if $\mathfrak a = \Phi (B)$ for some finite nonempty subset
      $B \subset \Z$.

\smallskip

\item A subset $E \subset H$ is called a \ {\it tame generating
      set} \ of $\mathfrak a$ \ if $E \subset \mathfrak a$ and there
      exists some $N \in \N$ with the following property:
      for every $a \in \mathfrak a$, \ there exists some \ $e \in E$ \ such that
      \[
      e \t a\,, \quad \sup \mathsf L(e) \le N \quad \text{and} \quad \mathsf t(a, \mathsf Z(e)) \le N\,.
      \]
      In this case, we call $E$ a \ {\it tame generating set with bound \ $N$}, and we say that
      $\mathfrak a$ \ is \ {\it tamely generated}.
\end{enumerate}
\end{definition}

The significance of tamely generated pattern ideals stems from the
following result.

\medskip
\begin{proposition} \label{4.2}
Let $H$ be a \BF-monoid with finite nonempty set of distances
$\Delta (H)$ and suppose that  all pattern ideals of $H$ are tamely
generated. Then there exists a constant $M \in \mathbb N_0$ such
that  the following properties are satisfied{\rm \,:}
\begin{enumerate}
\item[(a)] The Structure Theorem for Sets of Lengths holds with
           $\Delta (H)$ and bound $M$.

\smallskip
\item[(b)] For all $a \in H$ and for each two adjacent lengths \ $k, \, l \in
           \mathsf L (a)
           \cap [\min \mathsf L (a) + M, \, \max \mathsf L (a) - M]$, \ we
           have
           $\mathsf d \bigl( \mathsf Z_k (a), \mathsf Z_l (a) \bigr) \le M$.
\end{enumerate}
\end{proposition}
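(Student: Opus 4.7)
The plan is to apply the pattern-ideal machinery of \cite[Section 4.3]{Ge-HK06a}: isolate a finite family $\mathcal A$ of ``forbidden'' finite patterns in $\N_0$ and invoke tame generation uniformly across $\mathcal A$ to produce a single bound $N$, from which $M$ is manufactured.

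Set $d^* = \lcm \Delta(H)$ and construct $\mathcal A \subset \mathcal F([0, 2d^*])$ so that a finite set $L \subset \N_0$ is an AAMP with some difference in $\Delta(H)$ and bound $M$ if and only if, after discarding initial and final segments of length $M$, $L$ contains no pattern in $\mathcal A$. The family $\mathcal A$ includes pairs $\{0, d\}$ with $d \in [1, 2d^*] \setminus \Delta(H)$, blocks exhibiting a gap of length exceeding $d^*$ followed by a continuation, and local configurations incompatible with every periodicity by some $d \in \Delta(H)$. Since $\mathcal A$ is finite and each $\Phi_H(A)$ is tamely generated by hypothesis with some bound $N_A$, the quantity $N := \max_{A \in \mathcal A} N_A$ is finite.

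The concentration step is the heart of the argument: for each $a \in H$ and each occurrence $y + A \subset \mathsf L(a)$ with $A \in \mathcal A$, tame generation yields a divisor $e \t a$ with $\sup \mathsf L(e) \le N$ and $\mathsf t(a, \mathsf Z(e)) \le N$. Writing $a = e a'$ and combining $\mathsf L(a) \supset \mathsf L(e) + \mathsf L(a')$ with the extremality of $\min \mathsf L(a)$ and $\max \mathsf L(a)$, $y$ is forced to lie within bounded distance $M_0$ (depending only on $N$ and $\max \Delta(H)$) of one of the two extremes of $\mathsf L(a)$; otherwise, translating $y + A$ by the range of $\mathsf L(e)$ would produce lengths outside $\mathsf L(a)$. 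Choosing $M$ slightly larger than $M_0$ excludes every forbidden pattern from the middle segment $\mathsf L(a) \cap [\min \mathsf L(a) + M, \max \mathsf L(a) - M]$, which is therefore an AAMP with difference in $\Delta(H)$, establishing (a). Statement (b) follows by refining the same construction: for adjacent lengths $k < l$ in the middle, apply tame generation to $\{0, l - k\}$ to obtain a common divisor $e$ of $a$ and use the tame-degree bound twice to replace arbitrary factorizations $z \in \mathsf Z_k(a)$ and $z' \in \mathsf Z_l(a)$ by factorizations $\tilde z, \tilde z'$ divisible by $e$ at distance $\le N$ from the originals. The middle-interval hypothesis together with the bound $\sup \mathsf L(e) \le N$ allows one to further adjust $\tilde z, \tilde z'$ to have lengths exactly $k, l$ at constant additional cost (using a successive-distance argument internal to $\mathsf L(e^{-1}a)$, whose finiteness follows from the tame generation of $\Phi_H(\{0,d\})$ for each $d \in \Delta(H)$), producing a bound $\mathsf d(z, z') = O(N)$ and hence (b) after enlarging $M$.

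\textbf{Main obstacle.} The most delicate step is the concentration argument: rigorously confining where a forbidden pattern witnessed by a tame divisor can sit inside $\mathsf L(a)$. In particular, insisting that the AAMP difference lies in $\Delta(H)$ itself, rather than in some larger set closed under taking divisors of $d^*$, forces a careful (and possibly inductive) enumeration of $\mathcal A$, since a local configuration inconsistent with one $d \in \Delta(H)$ may still be consistent with another $d' \in \Delta(H)$; the forbidden family must therefore rule out \emph{all} admissible periodicities in $\Delta(H)$ simultaneously.
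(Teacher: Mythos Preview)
The paper does not prove this proposition from scratch; it simply cites \cite[Theorem 4.3.11]{Ge-HK06a} for (a) and \cite[Proposition 5.4]{Ge-Ka10a} for (b). So you are attempting to reconstruct arguments that the authors deliberately outsource. Your sketch for (a) does capture the architecture of the machinery in \cite[Section 4.3]{Ge-HK06a}: build a finite family of forbidden patterns, use tame generation to localize each occurrence near the endpoints of $\mathsf L(a)$, and conclude that the middle portion is an AAMP. The ``main obstacle'' you flag---forcing the difference to lie in $\Delta(H)$ itself rather than some coarser set---is real and is precisely where the work in \cite{Ge-HK06a} sits; your outline does not resolve it, but you correctly identify it.

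Your sketch for (b), however, has a genuine gap. After using tame generation to produce $\tilde z,\tilde z'$ divisible by a common $e$, you need to correct their lengths back to $k$ and $l$. You propose a ``successive-distance argument internal to $\mathsf L(e^{-1}a)$, whose finiteness follows from the tame generation of $\Phi_H(\{0,d\})$''. This inference is not justified: tame generation of pattern ideals does not, by itself, yield a bound on $\delta(H)$ or on $\delta(e^{-1}a)$. Indeed, later in the paper (Proposition \ref{STSL_prop2}) one sees monoids for which the Structure Theorem holds and condition (b4) of Theorem \ref{main-theorem-II} holds, yet $\delta(H)=\infty$; so a successive-distance bound cannot be a free consequence of the hypotheses you are invoking. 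The argument behind \cite[Proposition 5.4]{Ge-Ka10a} must avoid any global $\delta$-type bound and instead exploit directly that $k,l$ lie in the structured middle portion of $\mathsf L(a)$ established in part (a); your outline does not indicate how to do this.
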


\begin{proof}
The first statement follows from \cite[Theorem 4.3.11]{Ge-HK06a} and
the second from \cite[Proposition 5.4]{Ge-Ka10a}.
\end{proof}

\medskip
\begin{proposition} \label{4.6}
Let $H$ be a Krull monoid and $\varphi \colon H\to \mathcal{F}(P)$ a
cofinal divisor homomorphism into a free monoid such that the class
group $G= \mathcal{C}(\varphi)$ is an  infinite cyclic group that we
identify with $\mathbb Z$. Let $G_P \subset G$ denote the set of
classes containing prime divisors. Suppose that
\begin{itemize}
\item $G_P^+$ is infinite and

\item there are \ $a_1, \, a_2 \in G_P^-$ \ and \ $b \in G_P^+$  \ such
that
      \[
       a_1 \frac{\gcd (a_2,b)}{\gcd (a_1, a_2, b)} \equiv a_2 \frac{\gcd (a_1,b)}{\gcd (a_1, a_2, b)}  \mod b     \quad {but} \quad  a_1 \frac{\gcd (a_2,b)}{\gcd (a_1, a_2, b)} \neq a_2 \frac{\gcd (a_1,b)}{\gcd (a_1, a_2, b)}  \,.
      \]
\end{itemize}
Then both $H$ and $\mathcal B (G_P)$ have a pattern ideal which is
not tamely generated.
\end{proposition}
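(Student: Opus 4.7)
The plan is to exhibit a pattern ideal of $\mathcal B(G_P)$ that fails to be tamely generated; via the block homomorphism $\boldsymbol\beta\colon H\to \mathcal B(G_P)$ (Lemma \ref{3.3}) together with the transfer properties in Lemma \ref{3.2}, the analogous failure will then transport back to $H$, since a tame generating set for $\Phi_H(A)$ would project under $\overline{\boldsymbol\beta}$ to a tame generating set for $\Phi_{\mathcal B(G_P)}(A)$ with comparable bound (here using $\mathsf c(H,\boldsymbol\beta)\le 2$).

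First I would extract the arithmetic content of the hypothesis. Setting $d=\gcd(a_1,a_2,b)$ and $d_i = \gcd(a_{3-i},b)/d$ for $i \in \{1,2\}$, the stated congruence says that $k:=(a_1 d_1 - a_2 d_2)/b$ is a nonzero integer, while $a_1 d_1 \ne a_2 d_2$. This provides a nontrivial balancing relation between $d_1$ copies of $a_1$, $d_2$ copies of $a_2$, and $|k|$ copies of $b$; promoting one side appropriately, it produces an atom $W \in \mathcal A(G_P)$ with $\supp(W) \subset \{a_1,a_2,b\}$ and both $\mathsf v_{a_1}(W), \mathsf v_{a_2}(W) \ge 1$. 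The length and composition of $W$ (and of $V_{a_1,b}, V_{a_2,b}$) are absolute constants, independent of parameters chosen below.

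Next, using that $G_P^+$ is infinite, pick $b_n \in G_P^+$ with $b_n \to \infty$ (passing to a subsequence so that $b_n/\gcd(b_n,|a_i|)\to \infty$). Consider $V_n := V_{a_1,b_n} \in \mathcal A(G_P)$ and build, in the spirit of Lemma \ref{lem_gap}, an element
\[
B_n \;=\; V_n \cdot V_{a_2,b_n}^{\,u_n} \cdot W^{\,m_n} \cdot V_{a_1,b}^{\,r_n} \cdot V_{a_2,b}^{\,s_n}\;\in\;\mathcal B(G_P),
\]
with exponents chosen so that $B_n$ admits a short factorization (as written) and a long factorization obtained by "unpacking" $V_n$ via the trading relation encoded in $W$, using $V_{a_1,b}, V_{a_2,b}$ as ballast. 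Then $\mathsf L(B_n)$ contains two lengths differing by a quantity that grows linearly in $b_n$, so for any fixed finite pattern $A \subset \mathbb Z$ of our choosing (say $A = \{0,1,\ldots,P\}$), once $n$ is large we have $B_n \in \Phi_{\mathcal B(G_P)}(A)$.

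Finally, to show $\Phi_{\mathcal B(G_P)}(A)$ is not tamely generated, suppose for contradiction that a tame generating set $E$ with bound $N$ exists. Then for each $n$ we get a divisor $E_n \mid B_n$ with $E_n \in E \cap \Phi(A)$ and $\sup \mathsf L(E_n) \le N$. I would argue that every divisor $E_n$ of $B_n$ lying in $\Phi(A)$ must in fact contain $V_n$: a divisor omitting $V_n$ has support within the fixed finite set $\{a_1,a_2,b\}$ enlarged by at most one "$b_n$"-atom of bounded use, and a Lemma \ref{lem_gap}-style length analysis then bounds the diameter of its set of lengths by a constant depending only on $a_1,a_2,b$ and $W$ (i.e., not on $n$); in particular, for $P$ large enough this set of lengths cannot contain the pattern $A$. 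Hence $V_n \mid E_n$, forcing $|E_n| \ge |V_n| \to \infty$, contradicting the uniform bound $N$. The main obstacle is this last step---the quantitative argument that divisors not containing $V_n$ have bounded length-diameter---which will require a careful adaptation of the Lemma \ref{lem_gap} technology to the three-generator setting given by $a_1, a_2, b$, using the congruence hypothesis to control which zero-sum atoms are available.
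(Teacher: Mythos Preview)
Your proposal has a genuine gap and misses the key structural observation that makes the paper's argument clean.

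The crucial point you are not using is that the congruence hypothesis is \emph{precisely} the condition that $\mathcal B(\{a_1,a_2,b\})$ is half-factorial (via Lemma~\ref{transfer-to-finite}, this reduces to half-factoriality of a two-element subset of $\mathbb Z/b\mathbb Z$, which is characterized by exactly this congruence). Once you know this, the argument simplifies dramatically: the paper takes, for large $N\in G_P^+$, a single carefully chosen atom $U_N$ containing $N$ with $\mathsf v_{a_1}(U_N)=M_1$ large, and sets $A_N=U_N\,U_2^{M_1}$ with $U_2=V_{a_2,b}$. Swapping blocks of $a_1$'s in $U_N$ for $a_2$'s yields many atoms $U_{N,k}$ and factorizations $z_{N,k}$ of $A_N$ whose lengths form a long arithmetic progression with difference $d=|a_1\alpha_1-a_2\alpha_2|$, so $A_N\in\Phi(\{0,d\})$. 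Now any divisor $E_N\mid A_N$ lying in $\Phi(\{0,d\})$ must contain $N$, since a divisor supported on $\{a_1,a_2,b\}$ is half-factorial and hence has $|\mathsf L|=1$. This replaces your ``length-diameter'' step (which you flag as the main obstacle) by a one-line observation, and there is no need for a Lemma~\ref{lem_gap}-style quantitative analysis.

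Your endgame is also incorrect as stated. From $V_n\mid E_n$ you conclude $|E_n|\ge |V_n|\to\infty$ and claim this contradicts the bound $N$; but $N$ bounds $\sup\mathsf L(e)$ and $\mathsf t(a,\mathsf Z(e))$, not the sequence length $|e|$. An atom dividing $E_n$ can be arbitrarily long without forcing $\sup\mathsf L(E_n)$ to be large. The paper instead contradicts the bound on $\mathsf t(A_N,\mathsf Z(E_N))$: using half-factoriality once more, any factorization of $A_N$ containing an atom $U_N'$ with $N^\gamma\mid U_N'$ has its remaining part supported on $\{a_1,a_2,b\}$, hence all such factorizations have the \emph{same} length; therefore some $z_{N,k}$ is at distance $\ge \tfrac12(\max\mathsf L(A_N)-\min\mathsf L(A_N))$ from every factorization in $U_N'\mathsf Z(G_P)$, and this grows with $N$. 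Your more elaborate element $B_n$ (with two large positive terms $b_n$ entering via both $V_{a_1,b_n}$ and $V_{a_2,b_n}$) would make both the divisor analysis and the tame-degree estimate substantially harder, with no apparent gain.
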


\begin{proof}
By \cite[Proposition 3.14]{Ge-Gr09b}, it suffices to show that
$\mathcal B (G_P)$ has a pattern ideal which is not tamely
generated.

First we show that $\mathcal B (\{a_1, a_2, b\})$  is
half-factorial. By Lemma \ref{transfer-to-finite}, it suffices to
show that $\mathcal B ( \{ a_1 + b \Z, a_2 + b \Z \})$ is
half-factorial. By \cite[Proposition 5]{Ge90d}, this follows by
(indeed, it is equivalent to) the congruence that $a_1$, $a_2$, and
$b$ fulfil by assumption.

We set $\alpha_1 = b/\gcd(a_1,b)$, $\beta_1= |a_1|/\gcd (a_1,b)$,
$\alpha_2 = b/\gcd( a_2, b)$, $\beta_2 = |a_2| /\gcd (a_2, b)$ and
observe that, by rearranging our assumption $a_1 \frac{\gcd (a_2,b)}{\gcd (a_1, a_2, b)} \neq a_2 \frac{\gcd (a_1,b)}{\gcd (a_1, a_2, b)}$,  we have $d=a_1 \alpha_1- a_2
\alpha_2\neq 0$, say $d>0$. Noting that $\alpha_1a_1=\lcm(a,b)$ and $\alpha_2a_2=\lcm(a_2,b)$, we can consider the two atoms
\[
U_1 = a_1^{\alpha_1} b^{\beta_1} \quad \text{and} \quad U_2 =
{a_2}^{\alpha_2} b^{\beta_2} \in \mathcal A (G_P) \,.
\]
Since $G_P^+$ is infinite, it contains arbitrarily large elements.
Let $N \in G_P^+\setminus \{b\}$.
We define
\[
\gamma = \min \{\mathsf v_N (U) \mid U \in \mathcal A ( \{a_1, a_2,
b, N\}) \ \text{with} \ N \t U \} \,.
\]
Since $N^{|a_1|} a_1^N \in \mathcal B (G_P)$, it follows that $\gamma
\in [1,  |a_1|]$. Now we pick an atom $U_N \in \mathcal A ( \{ a_1,
a_2, b, N \})$ with $\gamma = \mathsf v_N (U_N)$ for which $\mathsf
v_{b} (U_N)$ is minimal, say
\[
U_N = N^{\gamma} b^{\beta} a_1^{M_1} {a_2}^{M_2} \in \mathcal A
(G_P), \quad \text{where} \quad \beta, \gamma, M_1, M_2 \in \N_0 \
\text{depend on}\  N \,.
\]
If $M_2 \ge |a_1|$, then $U'_N=U_N a_1^{|a_2|} {a_2}^{a_1}$ has sum zero,
and by the minimality of $\mathsf v_N (U_N)$ and $\mathsf
v_{b} (U_N)$, it is an atom (as each atom must have at least one positive element). Thus, we may additionally choose $U_N$ such that $M_2 < |a_1|$, which implies (recall $a_2<0$)
\begin{equation}
\label{eq_M1}
M_1 = \frac{1}{|a_1|} \Bigl( \gamma N + \beta b + a_2M_2 \Bigr)  \ge
\frac{1}{|a_1|} \Bigl( \gamma N + a_2|a_1| \Bigr) \ge \frac{N}{|a_1|}+a_2 \,.
\end{equation}
In view of this inequality, we may suppose that $N$ is sufficiently large to guarantee that $M_1\ge |a_2| \alpha_1 \alpha_2 $.
Note that, since $U_N$ is an atom and $M_1\ge |a_2| \alpha_1 \alpha_2 \geq \alpha_1$, we have $\beta < \beta_1$. We consider the element
\[
A_N = U_N {U_2}^{M_1} \in \mathcal B (G_P) \,.
\]
Let $k \in \Bigl[ 0, \bigl\lfloor \frac{M_1}{|a_2| \alpha_1 \alpha_2}
\bigr\rfloor \Bigr]$. Then we have
\[
U_{N,k} =  N^{\gamma} b^{\beta} a_1^{M_1 + (a_2 \alpha_1 \alpha_2)k}
{a_2}^{M_2 + (|a_1| \alpha_1 \alpha_2)k} \in \mathcal B (G_P) \,,
\]
and by the minimality of $\gamma$ and $\beta$, it follows that $U_{N,k} \in
\mathcal A (G_P)$. Clearly, we get
\[
z_{N,k} = U_{N,k} U_1^{-a_2 \alpha_2 k} {U_2}^{M_1 + a_1 \alpha_1 k}
\in \mathsf Z (A_N) \,.
\]
This shows that
\be\label{spiffyspaff}
\mathsf L (A_N) \supset   \left\{ M_1 + 1 + d k \Bigm| k \in \left[ 0, \left\lfloor \frac{M_1}{|a_2| \alpha_1
\alpha_2} \right\rfloor \right] \right\} \,.
\ee
Thus, we have $A_N \in \Phi ( \{0, d \})$ for each sufficiently large $N \in G_P^+$.

\medskip
Let $E_N \in \Phi ( \{0, d \})$ with $E_N \t A_N$. Since $\{a_1,
a_2, b \}$, is half-factorial, it follows that $N \t E_N$. By the
definition of $\gamma$, there is a  $U_N' \in \mathcal
A (G_P)$ with $N^{\gamma} \t U_N' \t E_N$. Note that \cite[Lemma
1.6.5.6]{Ge-HK06a} shows that $\mathsf t (A_N, U_N') \le \mathsf t
(A_N, \mathsf Z (E_N) )$.

Let $A_N = U_N' W_N$ with $W_N \in \mathcal B (G_P)$. Then $\supp
(W_N) = \{a_1, a_2, b \}$ and hence $|\mathsf L (W_N)| = 1$. Thus
all factorizations in $\mathsf Z (A_N) \cap U_N' \mathsf Z (G_P)$
have the same length. We pick some factorization $z_N \in \mathsf Z
(A_N)  \cap U_N' \mathsf Z (G_P)$. Clearly, there is a factorization
$z_N^* \in \mathsf Z (A_N)$ such that (in view of \eqref{spiffyspaff})
\[
\begin{aligned}
\bigl| |z_N| - |z_N^*| \bigr|  \ge \frac{\max \mathsf L (A_N) - \min
\mathsf L (A_N)}{2}  \ge \frac{d}{2} \left\lfloor \frac{M_1}{|a_2|
\alpha_1 \alpha_2} \right\rfloor \,.
\end{aligned}
\]
This implies that
\[
\begin{aligned}
\mathsf t (A_N, \mathsf Z (E_N) ) \ge \mathsf t (A_N, U_N') & \ge
\min \{ \mathsf d (z_N^*, y_N) \mid y_N
\in \mathsf Z (A_N) \cap U_N' \mathsf Z (G_P) \} \\
 & \ge \min \{ \bigl| |z_N^*| - |y_N| \bigr| \mid y_N
\in \mathsf Z (A_N) \cap U_N' \mathsf Z (G_P) \} \\
 & \ge \bigl| |z_N| - |z_N^*| \bigr| \ge \frac{d}{2} \left\lfloor \frac{M_1}{|a_2| \alpha_1 \alpha_2} \right\rfloor \,.
\end{aligned}
\]
Since $N$ can be arbitrarily large and by \eqref{eq_M1}, we get that $\Phi ( \{0, d \})$ is not tamely generated.
\end{proof}

We will frequently make use of the following  simple observation.
Let $G$ be an abelian group and $G_1 \subset G_0 \subset G$
subsets. Then $\mathcal B (G_1) \subset \mathcal B (G_0)$ is a
divisor-closed submonoid, and hence $\mathcal{L}(G_1) \subset
\mathcal{L}(G_0)$. Therefore, if the Structure Theorem holds for
$\mathcal B (G_0)$, then it holds for $\mathcal B (G_1)$. In
particular, if condition $(b)$ holds, then
the Structure Theorem holds for all $\mathcal B (G_0)$ with $G_0
\subset G_P$, and if $(b)$ fails, then the Structure Theorem fails
for all $\mathcal B (G_0)$ with $G_P \subset G_0$---where $G_P$ is as below.

\medskip
\begin{theorem} \label{STSL_thm}
Let $H$ be a Krull monoid and $\varphi \colon H\to \mathcal{F}(P)$ a
cofinal divisor homomorphism into a free monoid such that the class
group $G= \mathcal{C}(\varphi)$ is an  infinite cyclic group that we
identify with $\mathbb Z$. Let $G_P \subset G$ denote the set of
classes containing prime divisors. Suppose that $1 \in G_P^+$ and
$G_P^- = \{-d,-1\}$ for some $d \in \N$. Then the following
statements are equivalent{\rm \,:}
\begin{enumerate}
\item[(a)] The Structure Theorem for Sets of Lengths holds for $H$.

\smallskip
\item[(b)] $G_P^+ \setminus d \mathbb{Z}$ \ is finite  or  a subset of \ $1+ d\mathbb{Z}$.
\end{enumerate}
\end{theorem}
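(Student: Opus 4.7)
The proof is carried out in the block monoid $\mathcal B(G_P)$ via Lemma \ref{3.3}, which preserves sets of lengths. I may assume $G_P^+$ is infinite throughout, since otherwise $G_P$ is finite and Proposition \ref{nice_prop} gives (a) while (b) is trivial. For $d \ge 2$ the two alternatives in (b) are mutually exclusive, so I treat them as separate subcases.

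For (b)$\,\Rightarrow\,$(a), the plan in each subcase is to exhibit a transfer homomorphism $\theta \colon \mathcal B(G_P) \to H_0$ with $H_0$ finitely generated; then Lemma \ref{3.2} gives $\mathcal L(\mathcal B(G_P)) = \mathcal L(H_0)$, and the Structure Theorem for $H_0$ (which holds by \cite[Theorem 4.4.11]{Ge-HK06a}) descends to $\mathcal B(G_P)$. The naive reduction modulo $d$ is \emph{not} a transfer homomorphism when $|G_P^-| = 2$ (it fails (T2) because single atoms generically map to products of atoms), so the construction must be handcrafted. In the subcase $G_P^+ \subset 1 + d\mathbb Z$, every atom other than $V_0 = 1 \cdot (-1)$ has its positive support in the coset $1 + d\mathbb Z$, and I plan to define $\theta$ atom-by-atom by retaining the mod-$d$ signature and absorbing the arithmetic excess $g - 1 = d k_g$ into extra copies of $V = 1^d \cdot (-d)$ and $V_0$. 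In the dual subcase $G_P^+ \setminus d\mathbb Z$ finite, the infinite family $G_P^+ \cap d\mathbb Z$ pairs naturally with $-d$ (all in the zero residue class modulo $d$), and I plan to adapt the construction of Lemma \ref{transfer-to-finite} to this two-element-$G_0^-$ setting, obtaining a cofinal divisor homomorphism with finite cyclic class group and only finitely many classes containing prime divisors; the associated block homomorphism is the desired $\theta$.

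For (a)$\,\Rightarrow\,$(b), I argue contrapositively. Assuming $G_P^+ \setminus d\mathbb Z$ is infinite and some $h \in G_P^+$ satisfies $h \not\equiv 1 \pmod d$, I plan to build a family
\[
B_n \;=\; \bigl(V_{-1,\, b_1^{(n)}} \cdot V_{-d,\, h}\bigr)^{v_n} \,\in\, \mathcal B(G_P),
\]
where $(b_1^{(n)})$ is an unbounded sequence in $G_P^+ \setminus d\mathbb Z$ and $v_n$ is chosen to enable Lemma \ref{lem_gap} with $a = -1$, $b = h$, $a_2 = -d$, $b_1 = b_1^{(n)}$. That lemma locates the lengths in $\mathsf L(B_n)$ within clusters of radius $D$ around the values $\tfrac{b_1^{(n)}}{\lcm(1,\,h)}\,t$ for $t \in \mathbb N_0$. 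By choosing the $b_1^{(n)}$ to realize distinct residues modulo $d$ (possible under the negation of (b), thanks to the presence of $h$ with non-trivial residue), the admissible cluster positions drift with $n$, so no single finite $\Delta^*$ together with a bound $M$ can present every $\mathsf L(B_n)$ as an AAMP with difference in $\Delta^*$ and bound $M$.

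The hardest step will be the contrapositive direction: turning the two-sided estimate of Lemma \ref{lem_gap} into a uniform refutation of every possible AAMP description. The error $D$ in that lemma is independent of $b_1$, so the inter-cluster gaps grow like $b_1^{(n)}/\lcm(1,h)$ while $D$ stays bounded, which is the arithmetic source of the obstruction; the delicate point is to combine this gap growth with the mod-$d$ residue drift of the cluster positions to rule out every choice of finite difference set $\Delta^*$, not merely specific differences.
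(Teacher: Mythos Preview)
Your contrapositive direction has a fatal parameter mismatch. Lemma~\ref{lem_gap} requires $|a_2| \ge (v b_1 + b)|a|$, i.e.\ the \emph{negative} element $a_2$ must be large compared to the positive element $b_1$. With $G_P^- = \{-1,-d\}$ you have $|a_2| \le d$ fixed, while you want $b_1 = b_1^{(n)} \to \infty$; the hypothesis is never satisfied, so the lemma gives nothing. (That lemma was built for the situation of Theorem~\ref{main-theorem-I}, where \emph{both} $G_P^+$ and $G_P^-$ are infinite.) The paper's mechanism for $(a)\Rightarrow(b)$ is entirely different: it constructs, for each large $k$, an explicit $B_k$ and shows that every factorization of length at most $\min\mathsf L(B_k)+ck$ has length in $\min\mathsf L(B_k)+(d-1)\mathbb Z$, while another factorization has length outside that coset (Lemmas~\ref{STSL_lem1} and~\ref{STSL_lem2}). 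A general criterion (Lemma~\ref{STSL_lemt}) then rules out any AAMP description. The obstruction is a residue-class phenomenon near $\min\mathsf L$, not a gap-growth phenomenon of the kind Lemma~\ref{lem_gap} detects.

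Your plan for $(b)\Rightarrow(a)$ also cannot work as stated. A transfer homomorphism must send atoms to atoms (up to units); the operation you describe---``absorbing the arithmetic excess $g-1=dk_g$ into extra copies of $V=1^d(-d)$ and $V_0$''---sends a single atom to a product of several atoms, so it violates~(T2). Likewise, the adaptation of Lemma~\ref{transfer-to-finite} to $G_P^-=\{-1,-d\}$ breaks down because deleting the negative part is no longer injective on $\mathcal B(G_P)$: a given positive part can be completed by many distinct $(-1)^i(-d)^j$, so the resulting map is not a divisor homomorphism. The paper avoids transfer homomorphisms here altogether. In the case $G_P^+\setminus d\mathbb Z$ finite, it uses the (non-transfer) map $kd \mapsto d^k$ to show that each $\mathsf L(B)$ is a \emph{translate} of some $\mathsf L(B')$ with $B'\in\mathcal B(F_0\cup\{d\})$ for a finite set $F_0$ (Proposition~\ref{STSL_prop1}); the point is that this map induces a bijection $\mathsf Z(B)\to\mathsf Z(\psi(B))$ with a length shift depending only on $B$, not on the factorization. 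In the case $G_P^+\setminus d\mathbb Z\subset 1+d\mathbb Z$, the paper classifies all atoms directly and proves that every set of lengths is an arithmetic progression with difference $d-1$ (Proposition~\ref{STSL_prop2}).
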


\medskip
The remainder  of this section is devoted to the proof of Theorem
\ref{STSL_thm}.

\begin{lemma} \label{STSL_lemt}
Let $H$ be an atomic monoid. Suppose that there
exists some $e \in \mathbb{N}$ such that, for each $N \in \mathbb{N}$,
there exists some $a \in H$ such that  $\mathsf{L}(a)\cap [\min
\mathsf{L}(a),\min \mathsf{L}(a)+N]\subset \min \mathsf{L}(a)+e
\mathbb{Z}$, yet $\mathsf{L}(a)\not\subset \min \mathsf{L}(a)+ e
\mathbb{Z}$. Then the Structure Theorem does not hold for $H$.
\end{lemma}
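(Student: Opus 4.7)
The plan is a proof by contradiction. Suppose the Structure Theorem holds for $H$ with bound $M_0 \in \mathbb{N}_0$ and finite set of differences $\Delta^* \subset \mathbb{N}$, and set $D_0 = \max \Delta^*$. Choose $N \geq 2 M_0 + D_0$ and use the hypothesis to pick $a \in H$ with $\mathsf{L}(a) \cap [y_{\min}, y_{\min} + N] \subset y_{\min} + e \mathbb{Z}$ and $\mathsf{L}(a) \not\subset y_{\min} + e \mathbb{Z}$, where $y_{\min} = \min \mathsf{L}(a)$. Writing $\mathsf{L}(a) = y + (L' \cup L^* \cup L'')$ as an AAMP with difference $d \in \Delta^*$, period $\mathcal{D}$, and bound $M_0$, the hypothesis $L' \subset [-M_0,-1]$ together with $\min L^* = 0$ gives $y - M_0 \leq y_{\min} \leq y$.

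The core of the argument is to show that $\mathcal{D} \subset e \mathbb{Z}$ and $e \mid d$. Since $y - y_{\min} \in [0, M_0]$, the inclusion $y + (L^* \cap [0, N - M_0]) \subset [y_{\min}, y_{\min} + N]$ holds, and hypothesis (i) places every such element in $y_{\min} + e \mathbb{Z}$. Provided $\max L^* \geq d$, one has $\mathcal{D} \subset L^* \cap [0, d] \subset L^* \cap [0, N - M_0]$: extracting $0 \in \mathcal{D}$ yields $y \equiv y_{\min} \pmod{e}$, and then extracting $d \in \mathcal{D}$ yields $e \mid d$, whence all of $\mathcal{D}$ lies in $e \mathbb{Z}$. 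The AAMP inclusion $\mathsf{L}(a) \subset y + \mathcal{D} + d \mathbb{Z} \subset y + e \mathbb{Z} = y_{\min} + e \mathbb{Z}$ then contradicts (ii).

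The degenerate case $\max L^* < d$ must be handled separately: there $L^* \subset \mathcal{D} \cap [0, \max L^*]$, so the diameter of $\mathsf{L}(a)$ satisfies $\max \mathsf{L}(a) - y_{\min} \leq \max L^* + 2 M_0 < D_0 + 2 M_0 \leq N$; yet (ii) combined with (i) forces some element of $\mathsf{L}(a)$ strictly above $y_{\min} + N$, a contradiction. The main technical point, and perhaps the only subtle one, is calibrating $N$ so that the initial window $[y_{\min}, y_{\min} + N]$ simultaneously (a) captures a full period $\mathcal{D}$ of the core $y + L^*$ on which hypothesis (i) offers control, and (b) makes the degenerate case $\max L^* < d$ incompatible with (ii); the choice $N \geq 2 M_0 + D_0$ achieves both, after which the coset-chasing step forcing $\mathcal{D}, d \in e \mathbb{Z}$ is straightforward.
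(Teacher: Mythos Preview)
Your proof is correct and follows essentially the same strategy as the paper's: assume the Structure Theorem holds, choose $N$ large relative to the bound and the differences, and use the initial-window hypothesis to force $\mathcal{D} + d\mathbb{Z} \subset e\mathbb{Z}$, contradicting $\mathsf{L}(a) \not\subset y_{\min} + e\mathbb{Z}$.

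The only noteworthy difference is in the calibration of constants. The paper takes $D = 2\lcm(\Delta^*)$ and $N \geq 2M + D$; this guarantees $\max L^* \geq D \geq 2d$ outright (since $l_2 - l_1 \geq N$ forces the core $L^*$ to be long), so the degenerate case never arises and the window $[y, y+D-1]$ automatically contains, for each $d' \in \mathcal D$, two points $y+d'+kd$ and $y+d'+(k\pm 1)d$ from which $e \mid d$ and $e \mid (y - y_{\min} + d')$ are read off. You instead take the smaller $D_0 = \max \Delta^*$ and compensate by treating $\max L^* < d$ as a separate (easy) case, then in the main case use only that $\mathcal{D} \subset L^* \cap [0,d]$ lands in the controlled window. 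Your route is marginally more economical in the main case (one period suffices rather than two), at the cost of the extra case split; the paper's route is more uniform. One small remark on presentation: the word ``whence'' after extracting $0$ and $d$ could be misread as claiming that $e \mid d$ alone forces $\mathcal{D} \subset e\mathbb{Z}$; in fact you are using that the full inclusion $y + \mathcal{D} \subset y_{\min} + e\mathbb{Z}$ was already established, which together with $y \equiv y_{\min} \pmod e$ gives $\mathcal{D} \subset e\mathbb{Z}$ directly.
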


\begin{proof}
We assume to the contrary that there exists some finite  nonempty
set $\Delta^{\ast}\subset \mathbb{N}$ and some $M\in \mathbb{N}$
such that, for each $b \in H$, the set $\mathsf{L}(b)$ is an AAMP with
difference  $d\in \Delta^{\ast}$ and bound $M$.

Let $D= 2\ \lcm ( \Delta^{\ast})$. Let $N \ge 2M + D$ and let $a \in H$ with the properties from the statement of the lemma. Let $l_1 =\min \mathsf{L}(a)$
and $l_2= \max \mathsf{L}(a)$. Note that $l_2 \ge l_1+N$ (by the property assumed for $a$).
By assumption, we get that $\mathsf{L}(a)$ is an AAMP, i.e.,
\[
\mathsf  L(a) = y + (L' \cup L^* \cup L'') \, \subset \, y +
\mathcal D + d \Z
\]
where $d \in \Delta^{\ast}$, $\{0,d\}\subset \mathcal{D}\subset [0,d]$, $L^*$ is finite nonempty with $\min L^* = 0$ and
$L^* =(\mathcal D + d \Z) \cap [0, \max L^*]$,
$L' \subset [-M, -1]$ \ and \ $L'' \subset \max L^* + [1,M]$, and $y \in \mathbb N$.

Since $$l_2\geq l_1+N\geq l_1+2M+D\geq l_1-\min L'+M+D,$$ it follows that $[l_1 - \min L', l_1- \min L'+D-1]\cap \mathsf{L}(a)\subset L^{\ast}$, and thus
\[
[l_1 - \min L', l_1- \min L'+D-1]\cap \mathsf{L}(a)= [l_1 - \min L', l_1- \min L'+D-1]\cap (y + \mathcal{D}+d\mathbb{Z}).
\]
On the other hand, by the property assume for $a$, and since $N\geq 2M+D\geq -\min L'+D$, we have
\[
[l_1 - \min L', l_1- \min L'+D-1]\cap \mathsf{L}(a)\subset l_1+e\mathbb{Z}.
\]
Thus
\[A=[ - \min L', - \min L'+D-1]\cap (y-l_1 + \mathcal{D}+d\mathbb{Z}) \subset e\mathbb{Z}.\]
Since $D \ge 2d$, it follows that for each $d'\in \mathcal{D}$ there exists some $k\in \mathbb{Z}$ and $\epsilon \in \{-1,1\}$
such that $y-l_1+d'+kd,y-l_1+d'+(k+\epsilon) d \in A $. Thus $e \mid d$ and, furthermore, $e \mid y-l_1+d'$.
Consequently,  $y + \mathcal{D}+d\mathbb{Z} \subset l_1+e \mathbb{Z}$. This yields a contradiction, since $\mathsf{L}(a) \subset y + \mathcal{D}+d\mathbb{Z}$, yet $\mathsf{L}(a)\not \subset l_1 +e \mathbb{Z}$ by hypothesis.
\end{proof}

\begin{lemma}
\label{STSL_lem1} Let $d \in \mathbb{N}$, $e \in [2,d-1]$ with
$\gcd(e,d)>1$ and $G_0 \subset \Z$. If $\{-d,-1,1\}\subset G_0$ and
$G_0^+ \cap (e+d\mathbb{Z})$ is infinite, then the Structure Theorem
does not hold for $\mathcal B(G_0)$.
\end{lemma}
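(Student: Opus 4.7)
The plan is to invoke Lemma~\ref{STSL_lemt} with the integer $d-1$ in the role of its parameter~$e$; thus for each $N \in \mathbb{N}$ I must exhibit an element $A \in \mathcal{B}(G_0)$ whose set of lengths lies inside $\min\mathsf{L}(A) + (d-1)\mathbb{Z}$ on the initial interval $[\min\mathsf{L}(A), \min\mathsf{L}(A) + N]$, yet is not globally contained in that arithmetic progression.  Using the infinitude of $G_0^+ \cap (e + d\mathbb{Z})$, choose $b$ in this set with $b$ as large as needed, set $m := (b-e)/d$, and take
\[
A_b \;:=\; V^b\,V'_b \;\in\; \mathcal{B}(G_0), \qquad\text{where } V := 1^d(-d) \text{ and } V'_b := b \cdot (-1)^b
\]
are atoms of $\mathcal{B}(G_0)$.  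Then $\supp(A_b) = \{1, -1, -d, b\}$ and $\mathsf v_b(A_b) = 1$, so every factorization of $A_b$ uses exactly one atom $U$ with $\mathsf v_b(U) = 1$; and because $W := 1 \cdot (-1)$ is an atom, no such $U$ can contain both $1$ and $-1$.

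A short case analysis in the sub-cases ``$\mathsf v_1(U) = 0$'' and ``$\mathsf v_{-1}(U) = 0$'' shows that the $\mathsf v_b = 1$ atoms of support in $\{1, -1, -d, b\}$ are exactly
\[
Y_b^{(k)} := b \cdot (-1)^{e + kd}(-d)^{m-k} \text{ for } k \in [0, m] \qquad\text{and}\qquad Z_b := b \cdot 1^{d-e}(-d)^{m+1};
\]
note that $Y_b^{(m)} = V'_b$, while the higher candidates $b \cdot 1^{\ell d - e}(-d)^{m+\ell}$ with $\ell \ge 2$ contain $V$ and are discarded.  Once $U$ is chosen, the remaining content has support in $\{-d, -1, 1\}$ and factors uniquely using $V$'s and $W$'s; hence every factorization of $A_b$ is determined by its single $b$-atom.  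A direct length computation yields $1 + b + (m-k)(d-1)$ for $Y_b^{(k)}$ and $2b - m$ for $Z_b$, whence
\[
\mathsf L(A_b) \;=\; \bigl\{\,b + 1 + j(d-1) : j \in [0, m]\,\bigr\} \;\cup\; \{\,2b - m\,\}.
\]

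What remains is a congruence check: $(2b - m) - (b + 1) = (e - 1) + m(d - 1) \equiv e - 1 \pmod{d - 1}$, and since $e \in [2, d-1]$ forces $e - 1 \in [1, d-2]$, we have $2b - m \notin (b+1) + (d-1)\mathbb{Z}$, so $\mathsf L(A_b) \not\subset \min\mathsf L(A_b) + (d-1)\mathbb{Z}$.  On the other hand, for $b$ chosen so that $m(d-1) + (e-1) > N$, the outlier $2b - m$ is pushed strictly beyond $b + 1 + N$, and every length in $\mathsf L(A_b) \cap [b+1, b+1+N]$ has the form $b + 1 + j(d-1)$ and thus lies in $\min\mathsf L(A_b) + (d-1)\mathbb{Z}$.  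Lemma~\ref{STSL_lemt} then rules out the Structure Theorem for $\mathcal{B}(G_0)$.  The main technical obstacle is the atom classification stated above; once it is in hand, the length formulas and the congruence argument are short.  I note in passing that the hypothesis $\gcd(e,d) > 1$ is not actively used in this argument, though it is the setting in which the lemma is invoked within the proof of Theorem~\ref{STSL_thm}.
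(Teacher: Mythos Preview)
Your proof is correct and takes a genuinely different route from the paper's.  The paper constructs
\[
B \;=\; (e+dk)^f(-d)^{u+fk}(-1)^{d(u+fk)}1^{d(u+fk)},
\]
where $f$ is the multiplicative order of $e$ modulo $d$ and $ef = du$; the hypothesis $\gcd(e,d)>1$ is used precisely to bound $f \le d/2$, which in turn controls the estimates needed to show that every short factorization of $B$ must use exactly $f$ atoms containing $e+dk$.  The paper never computes $\mathsf L(B)$ explicitly; it only shows that the short lengths lie in an arithmetic progression with difference $d-1$ while exhibiting one length off that progression.

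Your construction $A_b = (1^d(-d))^b \cdot (b(-1)^b)$ uses a \emph{single} copy of $b$, which makes the atom classification finite and explicit: the unique $b$-atom is one of the $Y_b^{(k)}$ or $Z_b$, and the complementary part factors uniquely into copies of $1^d(-d)$ and $1(-1)$.  This yields the entire set of lengths in closed form, after which the congruence check is immediate.  Your observation that $\gcd(e,d)>1$ is never used is correct; your argument works for every $e \in [2,d-1]$, so it simultaneously covers the case $\gcd(e,d)=1$ (which the paper handles separately in Lemma~\ref{STSL_lem2} under an additional hypothesis, though that hypothesis is automatically met here since $1 \in G_0^+$ and $1 \notin (e+d\mathbb Z) \cup d\mathbb Z$).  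The trade-off is that the paper's construction, with several copies of the large element, is the template reused in the proof of Lemma~\ref{STSL_lem2}, whereas your single-copy approach does not extend there without modification.
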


\begin{proof}
We may assume $d \ge 4$, since otherwise there exists no $e
\in [2,d-1]$ with $\gcd(e,d)>1$.
Let $k \in \mathbb{N}$ such that $e+dk \in G_0$; by assumption, we
know that arbitrarily large $k$ with this property exist, and we thus
may impose that $k \ge 10$. Let $f \in \mathbb{N}$ be minimal such that $ef
\in d \mathbb{N}$, say $ef= du$. Since $\gcd(e,d)>1$, we see that $f \in [2, d/2]$ and $u
\le e/2 \leq d/2$. We consider the sequence
\[B= (e+dk)^f(-d)^{u+fk}(-1)^{d(u+fk)}1^{d(u+fk)}.\]
Since $ef=du$, we have $B \in \mathcal{B}(G_0)$. First, we consider two
specific factorizations of $B$. Then, we investigate the length of
all factorizations of $B$ of small length. Let
\[z_1 = ((e+dk)^f(-d)^{u+fk})\cdot ((-1)1)^{d(u+fk)}\]
and
\[z_2 = ((e+dk)(-1)^{e+dk})^f\cdot ((-d)1^d)^{u+fk}.\]
We note that $z_1, z_2 \in \mathsf{Z}(B)$ and that $|z_1|=
1+d(u+fk)$ and $|z_2|= f+(u+fk)$. Since $f-1 \notin
(d-1)\mathbb{Z}$ (as $f\in [2,d/2]$), we have $|z_1|-|z_2|\notin (d-1)\mathbb{Z}$.

We claim that there exists an absolute positive constant $c$ such
that, for each
 $z \in \mathsf{Z}(B)$ with
\[|z| \le |z_2| + c(d-1)k,\]
we have
\[|z| - |z_2| \in (d-1)\mathbb{N}_0.\]
By Lemma \ref{STSL_lemt} and since $k$ can be arbitrarily large, this implies that
the Structure Theorem does not hold. Thus, it suffices to establish
this claim. For definiteness, we set $c=1/6$ (it is apparent from the
subsequent argument that it only has to be less than $1/2$). Let
\[z= A_1 \cdot \ldots \cdot A_s U_1 \cdot \ldots \cdot U_t\in \mathsf{Z}(B)\]
with $A_i, U_j\in \mathcal{A}(G_0)$, and $(e+dk)\mid A_i$ and
$(e+dk)\nmid U_j$ for all $i,j$. We proceed to show that
$\mathsf{v}_{e+dk}(A_i)= 1$ for each $i$, i.e., $s=f$. Clearly,
$\mathsf{v}_{(-1)1}(z)\le |z|$, and thus we have
\[
\begin{split}
\mathsf{v}_{-1}(\pi(A_1 \cdot \ldots \cdot A_s))& \ge d(u+fk)-|z| \\
& \ge d(u + fk) - (f+ u+ fk + c(d-1)k) \\
& = (f-2)(e+dk) + 2(e+dk) - ( f+ u+fk + c(d-1)k ) \\
& \ge  (f-2)(e+dk) + dk - ( d/2 + d + dk/2 + cdk) \\
& >  (f-2)(e+dk) + d(k - 3/2 - k/2 - ck).
\end{split}
\]
Since $c=1/6$ and $k\ge 10$, we have $k(1/2 - c)- 3/2 \ge 1$. So
we have
\be\label{ladyloo}\mathsf{v}_{-1}(\pi(A_1 \cdot \ldots \cdot A_s))\ge (f-2)(e+dk) + d.\ee
If $s\le  f-1$, then, since
$\mathsf{v}_{-1}(A_i)\le e+dk$ for each $i$, we conclude from \eqref{ladyloo} that
$\mathsf{v}_{-1}(A_i)\ge d$ for each $i$, implying (since $\supp(A_i^-)\subset \{-1,-d\}$) that
$\mathsf{v}_{e+dk}(A_i)= 1$ for each $i$, contradicting $s\leq f-1$. Thus
$s=f$. We have  $U_j \in \{(-1)1, ((-d)1^d)\}$ for each $j$. Thus
\[
z=A_1 \cdot \ldots \cdot A_f ((-1)1)^a ((-d)1^d)^b
\]
where $a = d(u+fk) - \mathsf{v}_{-1}(\pi(A_1 \cdot \ldots \cdot
A_f)) $ and $b = u+fk - \mathsf{v}_{-d}(\pi(A_1 \cdot \ldots \cdot
A_f))$. We have
\[|z|= f + (u+fk)(d+1)- (\mathsf{v}_{-1}(\pi(A_1 \cdot \ldots \cdot A_f)) + \mathsf{v}_{-d}(\pi(A_1 \cdot \ldots \cdot A_f)))\]
and, since
\[ d\cdot \mathsf{v}_{-d}(\pi(A_1 \cdot \ldots \cdot A_f)) + \mathsf{v}_{-1}(\pi(A_1 \cdot \ldots \cdot A_f))= (u+fk)d,\]
this implies
\[|z|= f + u + fk + (d-1)\mathsf{v}_{-d}(\pi(A_1 \cdot \ldots \cdot A_f)),\]
establishing $|z|-|z_2| \in (d-1)\mathbb{N}_0$.
\end{proof}

\begin{lemma} \label{STSL_lem2}
Let $d \in \mathbb{N}$, $e \in [1,d-1]$ with
$\gcd(e,d)=1$ and $G_0 \subset \Z$. If $\{-d,-1,1\}\subset G_0$,
$G_0^+ \cap (e+d\mathbb{Z})$ is infinite and $G_0^+ \setminus
((e+d\mathbb{Z}) \cup d \mathbb{Z})$ is nonempty, then the Structure
Theorem does not hold for $\mathcal{B}(G_0)$.
\end{lemma}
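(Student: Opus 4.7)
The plan is to mimic the proof of Lemma \ref{STSL_lem1}, using the element $g$ guaranteed by the hypothesis to supply, together with $a = e+dk$, the role that the small $f \in [2,d/2]$ played in the coprime-failing case. Write $g = r + dk'$ with $r \in [1,d-1]\setminus\{e\}$, $k' \in \mathbb N_0$. As $\gcd(e,d)=1$, there is a unique $\alpha \in [1,d-1]$ with $\alpha e + r \equiv 0 \pmod d$, and $\alpha = d-1$ would force $r \equiv e \pmod d$, so $\alpha \in [1,d-2]$. For each large $k$ with $a := e+dk \in G_0^+$ (possible by the infinitude of $G_0^+\cap(e+d\mathbb Z)$), let $u = (\alpha a+g)/d$ and
\[
B_k := a^\alpha \, g \, (-d)^u \, (-1)^{du} \, 1^{du} \in \mathcal{B}(G_0).
\]
Exhibit two canonical factorizations
\[
z_1 := [a^\alpha g (-d)^u] \cdot [(-1)\cdot 1]^{du}, \qquad z_2 := [a(-1)^a]^\alpha \cdot [g(-1)^g] \cdot [(-d)1^d]^u,
\]
so that $|z_2| = \alpha+1+u$, $|z_1| = 1+du$, and $|z_1| - |z_2| = (d-1)u - \alpha \notin (d-1)\mathbb Z$ since $\alpha \in [1,d-2]$. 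The bracketed block in $z_1$ is an atom: $\gcd(a,d)=1$ rules out proper $\{a,-d\}$-subzerosums, the unique occurrence of $g$ forces any subzerosum using $g$ to also use all of $a^\alpha$ and $(-d)^u$, and the minimality of $\alpha$ prevents smaller atoms containing $g$.

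The key claim is that there exists $c > 0$ (and a corresponding $k_0 \in \mathbb N$) such that every factorization $z$ of $B_k$ with $|z| \le |z_2| + c(d-1)k$ (for $k \ge k_0$) satisfies $|z| - |z_2| \in (d-1)\mathbb N_0$. Granted this, since $c(d-1)k \to \infty$ as $k\to\infty$, for each $N \in \mathbb N$ one picks $k$ with $c(d-1)k \ge N$, and Lemma~\ref{STSL_lemt} applied with $e = d-1$ immediately gives that the Structure Theorem fails for $\mathcal{B}(G_0)$.

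To prove the claim, decompose $z = A_1\cdots A_s\cdot U_1^{t_1}U_2^{t_2}$, where $U_1 = (-1)\cdot 1$ and $U_2 = (-d)1^d$ are the only atoms supported in $\{-d,-1,1\}$, and $A_i$'s are the atoms containing $a$ or $g$. An adaptation of the $\mathsf{v}_{-1}$-counting argument of Lemma~\ref{STSL_lem1}, together with Lambert's bound, yields $s = \alpha+1$ with each $A_i$ containing exactly one of $a,g$, and further shows via a subzerosum analysis that each $A_i$ must be a ``minus'' atom, namely $a\cdot (-d)^{u_i}(-1)^{a-u_id}$ with $u_i \in [0,k]$ or $g\cdot (-d)^{u_g}(-1)^{g-u_gd}$ with $u_g \in [0,k']$. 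A direct length computation from this forced structure then produces
\[
|z| - |z_2| = (d-1)\Bigl(u_g + \sum_{i=1}^{\alpha}u_i\Bigr)\in (d-1)\mathbb{N}_0.
\]

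The main obstacle is ruling out the ``plus'' atoms, especially the $g$-plus atom $g(-d)^{k'+1}1^{d-r}$. Its length cost upon substitution is the fixed constant $(d-1)k'+(r-1)$, independent of $k$, so unlike the $a$-plus atom (whose cost $(d-1)k+(e-1)$ grows with $k$), it is not automatically excluded by requiring $c(d-1)k$ to be small. To exclude it one needs a sharper $\mathsf{v}_{-1}$-bound: combining $\sum_i\mathsf{v}_{-1}(A_i)\ge du - |z|$ with the sum-zero identity $\mathsf{v}_{-1}(A_i) - \mathsf{v}_1(A_i) = \mathsf{v}_a(A_i)a + \mathsf{v}_g(A_i)g - d\mathsf{v}_{-d}(A_i)$ and the $\{a,-d,-1\}$-subzerosum argument from Lemma~\ref{STSL_lem1}, one forces $\mathsf{v}_{-1}(A_i) \ge d$ for each $i$, which in turn forces $\mathsf{v}_1(A_i) = 0$ and pins down the minus-atom structure above. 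This combinatorial refinement is the step where the argument differs substantively from Lemma~\ref{STSL_lem1}.
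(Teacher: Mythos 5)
Your construction is precisely the paper's: your $B_k$, $z_1$, $z_2$ coincide with the ones in the paper's proof (your $\alpha$, $g$, $a$, $u$ are the paper's $x$, $f+d\ell$, $e+dk$, $u+xk+\ell$), and the plan---show that every factorization of length at most $|z_2|+c(d-1)k$ has length $\equiv|z_2| \bmod (d-1)$, then invoke Lemma \ref{STSL_lemt}---is the paper's plan. The genuine gap is in the last step, exactly at the obstacle you yourself flag. Your proposed fix (a sharper count forcing $\mathsf{v}_{-1}(A_i)\ge d$ for \emph{every} $i$, hence $\mathsf{v}_1(A_i)=0$) cannot work, because the ``$g$-plus atom'' $V=g\,1^{d-r}(-d)^{k'+1}$ really does occur in factorizations inside your window. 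Indeed,
\[
z^{*}=\bigl(g\,1^{d-r}(-d)^{k'+1}\bigr)\cdot\bigl(a(-1)^{a}\bigr)^{\alpha}\cdot\bigl((-1)1\bigr)^{du-\alpha a}\cdot\bigl((-d)1^{d}\bigr)^{u-k'-1}
\]
is a factorization of $B_k$ (the exponents are nonnegative, since $du-\alpha a=g$ and $u-k'-1=(\alpha a+r)/d-1\ge 0$, and the multiplicities of $1$, $-1$, $-d$ all check out), and $|z^{*}|-|z_2|=(r-1)+(d-1)k'$ is a constant independent of $k$. So for large $k$ the factorization $z^{*}$ satisfies $|z^{*}|\le|z_2|+c(d-1)k$, yet $|z^{*}|-|z_2|\notin(d-1)\mathbb{Z}$ whenever $r\neq 1$. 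Thus the key claim you set out to prove is false as stated for any admissible $g$ with $r\ne 1$---and $r\ne 1$ is unavoidable precisely when $e=1$, which is the case actually needed in Proposition \ref{STSL_prop}. No refinement of the $\mathsf{v}_{-1}$-count can rescue it, because $z^{*}$ is a counterexample to the conclusion, not merely to an intermediate estimate; for instance, for $d=3$, $e=1$, $g=2$, $a=1+3k$ one checks that $\mathsf{L}(B_k)$ is the full interval $[k+3,\,3k+4]$, so this $B_k$ cannot witness failure of the Structure Theorem at all.

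Two further remarks. First, the case $g=1$ (possible when $e\neq1$, since $1\in G_0$) also breaks your asserted structure ``$s=\alpha+1$ with each $A_i$ containing exactly one of $a,g$'', because then $g$ is indistinguishable from the other $1$'s; the paper treats this by a separate case $f+\ell d=1$, and your write-up would need the same. Second, to be fair, the paper's own argument is extremely terse at the very spot you worry about (``If $s=x+1$, the claim is obvious'') and does not address atoms containing a big positive element together with $+1$'s either; so you have located a real difficulty, but your resolution does not close it. As it stands, the proposal does not prove the lemma: one must either modify the witness element so that atoms like $V$ are blocked (or contribute lengths that are also $\equiv 0 \bmod (d-1)$), or argue along genuinely different lines.
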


\begin{proof}
We may assume $d \ge 3$, as the hypotheses are null otherwise.
Since $G_0^+ \setminus
((e+d\mathbb{Z}) \cup d \mathbb{Z})$ is nonempty, let $f \in [1,d-1]\setminus \{e\}$ and $\ell \in \mathbb{N}_0$ be such
that $f+d\ell\in G_0^+$. Since $\{-d,-1,1\}\subset G_0$,
$G_0^+ \cap (e+d\mathbb{Z})$ is infinite, let $k \in \mathbb{N}$ be such that $e+dk \in
G_0^+$ and $e+dk \ge f + d \ell$. Since $\gcd(e,d)=1$, let $x \in [1,d-1]$ be the integer such that
$f+xe\in d \mathbb{Z}$, say $f+xe=ud$. Since  $f\in [1,d-1]\setminus \{e\}$, we have $x \neq d-1$ and $u\leq d-1$.

We proceed similarly to Lemma \ref{STSL_lem2}. We consider the
following element of $\mathcal{B}(G_0)$:
\[B = (f+d\ell)(e+dk)^x (-d)^{u+xk+\ell}(-1)^{d(u+xk+\ell)}1^{d(u+xk+\ell)}.\]
Again, we first consider two specific factorizations of $B$, namely
\[z_1 = ((f+d\ell)(e+dk)^x (-d)^{u+xk+\ell})\cdot ((-1)1)^{d(u+xk+\ell)}\]
and
\[z_2 = ((f+d\ell)(-1)^{f+d\ell})\cdot((e+dk)(-1)^{e+dk})^x\cdot ((-d)1^d)^{u+xk+\ell}.\]
The respective lengths of these factorizations are $1+ d(u+xk+\ell)$
and $1+x+ (u+xk+\ell)$. Thus, $|z_1|-|z_2| \notin (d-1)\mathbb{Z}$.

As in Lemma \ref{STSL_lem1}, we show that there exists a positive $c$, now depending on $d$ (but
not on $k$), such that, for each
 $z \in \mathsf{Z}(B)$ with
\[|z| \le |z_2| + c(d-1)k,\]
we have
\[|z| - |z_2| \in (d-1)\mathbb{N}_0,\]
which again completes the proof  by Lemma \ref{STSL_lemt}.
We set $c= 1/(d-1)$ (this choice is not optimal). Let
\[z= A_1 \cdot \ldots \cdot A_s((-1)1)^a ((-d)1^d)^b\]
where $A_i \notin \{(-1)1, (-d)1^d\}$. We proceed to show that $|A_i^+|=1$ for each $i$. From the definition of $B$, we have $s \le x+1$. Again,
$\mathsf{v}_{(-1)1}(z)\le |z|$, and thus
\[
\begin{split}
\mathsf{v}_{-1}(\pi(A_1 \cdot \ldots \cdot A_s))& \ge d(u+xk+\ell)-|z|\\
&  \ge d(u+xk+\ell) - (1+x+ (u+xk+\ell) + c(d-1)k) \\
& = (x-1)(e+dk) + (f+d\ell) + (e+dk)  - ( 1 + x + (u+xk+\ell) + c(d-1)k) \\
& \ge (x-1)(e+dk) + (f+d\ell) + (e+dk)  - (d-1 + (d-1+(d-2)k+\ell) + c(d-1)k)\\
& \ge (x-1)(e+dk)+ d + 2k  - 3d - c(d-1)k.
\end{split}\]
Since $c= 1/(d-1)$, we have, for $k \ge 3d$,
\[\mathsf{v}_{-1}(\pi(A_1 \cdot \ldots \cdot A_s))\ge (x-1)(e+dk)+ d.\]
If $s=x+1$, the claim is obvious. Thus, assume $s\le x$. Since
$\mathsf{v}_{-1}(A_i)\le e+dk$ for each $i$ (recall that $e+dk \ge
f+d\ell$), we get that $\mathsf{v}_{-1}(A_i)\ge d$ for each $i$,
establishing the claim (since $\supp(A_i^-)\subset \{-1,-d\}$).

Thus
\[z=A_1 \cdot \ldots \cdot A_s ((-1)1)^a ((-d)1^d)^b,\]
where $a =  d(u+xk+\ell) - \mathsf{v}_{-1}(\pi(A_1 \cdot \ldots
\cdot A_s)) $ and $b = (u+xk+\ell) - \mathsf{v}_{-d}(\pi(A_1 \cdot
\ldots \cdot A_s))$. We have
\[|z|= s +  (d+1)(u+xk+\ell) - (\mathsf{v}_{-1}(\pi(A_1\cdot \ldots \cdot A_s)) + \mathsf{v}_{-d}(\pi(A_1\cdot \ldots \cdot A_s))).\]
We note that if $f+\ell d\neq 1$, then $s=1+x$, and if $f+\ell d=
1$, then $s=x$. Moreover, if the former holds true, then
\[ d\cdot  \mathsf{v}_{-d}(\pi(A_1\cdot \ldots \cdot A_f)) + \mathsf{v}_{-1}(\pi(A_1\cdot \ldots \cdot A_f))= d(u+xk+\ell),\]
whereas if the latter holds true, then
\[ d \cdot \mathsf{v}_{-d}(\pi(A_1\cdot \ldots \cdot A_f)) + \mathsf{v}_{-1}(\pi(A_1\cdot \ldots \cdot A_f))= d(u+xk+\ell)-1.\]
In both cases, this implies
\[|z|= 1+x +   (u+xk+\ell) +(d-1) \mathsf{v}_{-d}(\pi(A_1\cdot \ldots \cdot A_s)).\]
establishing $|z|-|z_2| \in (d-1)\mathbb{N}_0$, as claimed.
\end{proof}

\begin{proposition} \label{STSL_prop}
Let $\{-1,1\} \subset G_0 \subset \mathbb{Z}$ with $G_0^-$ finite
such that the Structure Theorem holds for $\mathcal B (G_0)$. For
each $-d\in G_0^-$, at least one of the following statements
holds{\rm \,:}
\begin{enumerate}
  \item[(a)] $|G_0^+ \setminus d \mathbb{Z}| < \infty$.
  \item[(b)] $G_0^+ \setminus d \mathbb{Z} \subset 1 + d \mathbb{Z}$.
\end{enumerate}
\end{proposition}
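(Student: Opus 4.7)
The plan is to argue by contrapositive: assuming both (a) and (b) fail for the given $-d \in G_0^-$, I will exhibit a subset $G_1 \subseteq G_0$ for which the Structure Theorem fails. The principal tool is the observation recorded just before Theorem \ref{STSL_thm}: if $G_1 \subseteq G_0$ then $\mathcal B(G_1) \subseteq \mathcal B(G_0)$ is a divisor-closed submonoid, hence $\mathcal L(G_1) \subseteq \mathcal L(G_0)$, so failure of the Structure Theorem for $\mathcal B(G_1)$ propagates to $\mathcal B(G_0)$. One immediately checks that $d = 1$ forces (a) to hold, so $d \ge 2$. The failure of (a) supplies, via pigeonhole on the $d-1$ nonzero residue classes mod $d$, some residue $e^* \in [1, d-1]$ with $G_0^+ \cap (e^* + d\Z)$ infinite; the failure of (b) supplies some $g \in G_0^+$ whose residue $r := g \bmod d$ lies in $[2, d-1]$.

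The bulk of the proof is a three-way case split on $e^*$, in each case producing a subset $G_1 \subseteq G_0$ that falls under Lemma \ref{STSL_lem1} or Lemma \ref{STSL_lem2}. If $\gcd(e^*, d) > 1$ (necessarily $d \ge 4$), set $G_1 = \{-d, -1, 1\} \cup (G_0^+ \cap (e^* + d\Z))$ and invoke Lemma \ref{STSL_lem1} with $e = e^*$. If $e^* = 1$, take $G_1 = \{-d, -1, 1, g\} \cup (G_0^+ \cap (1 + d\Z))$ and apply Lemma \ref{STSL_lem2} with $e = 1$, using $g = r + d\ell$ (for the appropriate $\ell$) as the required element in $G_0^+ \setminus ((1 + d\Z) \cup d\Z)$, which is legitimate since $f := r \in [2, d-1] \setminus \{1\}$. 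If $e^* \in [2, d-1]$ with $\gcd(e^*, d) = 1$, take $G_1 = \{-d, -1, 1\} \cup (G_0^+ \cap (e^* + d\Z))$ and apply Lemma \ref{STSL_lem2} with $e = e^*$, using $1 = 1 + d \cdot 0$ as the extra element; the requisite $1 \in G_0^+ \setminus ((e^* + d\Z) \cup d\Z)$ holds because $d \ge 2$ and $e^* \ne 1$.

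The main obstacle is the last of these cases: when $\gcd(e^*, d) = 1$ and $e^* \ne 1$, Lemma \ref{STSL_lem1} is unavailable (it demands $\gcd(e^*, d) > 1$) and the obvious attempt to apply Lemma \ref{STSL_lem2} with $g$ as the extra element can fail, because $g$ may lie in the same residue class $e^* + d\Z$ as the infinite family. The key observation is that the standing hypothesis $1 \in G_0^+$ provides, for free, an element of $G_0^+$ in the residue class $1 + d\Z$, which is distinct from both $d\Z$ and $e^* + d\Z$ once $e^* \ne 1$ and $d \ge 2$; this element $1$ then serves as the extra element required by Lemma \ref{STSL_lem2}, closing the argument.
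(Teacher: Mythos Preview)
Your proof is correct and follows essentially the same approach as the paper's: both arguments reduce to Lemmas~\ref{STSL_lem1} and~\ref{STSL_lem2}, using Lemma~\ref{STSL_lem1} when $\gcd(e^*,d)>1$, Lemma~\ref{STSL_lem2} with the extra element $1\in G_0^+$ when $\gcd(e^*,d)=1$ and $e^*\ne 1$, and Lemma~\ref{STSL_lem2} with an element of residue $r\in[2,d-1]$ when $e^*=1$. The only differences are organizational --- you argue by contrapositive with an explicit three-way case split and pass to auxiliary subsets $G_1\subseteq G_0$, whereas the paper argues directly on $G_0$ by successively eliminating possibilities for the set $E$ of residues with infinite intersection --- but the logical content is identical.
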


\begin{proof}
The claim is trivial for $d\le 2$. Suppose $d \ge 3$. Let $E\subset
[0,d-1]$ be such that $G_0^{+}\cap (e+d \mathbb{Z})$ is infinite for each
$e \in E$. If there exists some $e\in E \setminus \{0\}$ with $\gcd(e,d)> 1$,  Lemma \ref{STSL_lem1}
yields a contradiction. Thus, $\gcd(e,d)=1$ for each $e \in E
\setminus \{0\}$. By Lemma \ref{STSL_lem2} we get that if
$\gcd(e,d)=1$, then $e=1$ (note that $1\in G_0^+$), and moreover, in
this case we have $G_0^+ \subset ((1+ d \mathbb{Z}) \cup
d\mathbb{Z})$.
\end{proof}

Now, we show that the Structure Theorem indeed holds for  monoids of
zero-sum sequences over  sets of the form considered in Theorem
\ref{STSL_thm} not covered by the above results. Moreover, we
investigate the finiteness of the successive distance for these
sets. Again, note that the set $F_0 \cup d\mathbb{N}$ in the result
below  does not fulfil condition $(a)$ of Theorem
\ref{main-theorem-II}, yet by Lemma \ref{lem_char} it can occur as
the subset of classes containing prime divisors of a Krull monoid,
even with respect to a divisor theory, showing that the conditions
$(b1)$, $(b2)$, and $(b3)$ do not imply $(a)$, not even combined.

\begin{proposition} \label{STSL_prop1}
Let $d \in \mathbb{N}_{\ge 2}$ and $F_0\subset \mathbb{Z}$ with
$F_0^- = \{-d,-1\}$.
\begin{enumerate}
  \item The Structure Theorem holds for $\mathcal{B}(F_0 \cup
d\mathbb{N})$ if and only if it holds for $\mathcal{B}(F_0\cup \{d\})$.
More precisely, for each $L\in \mathcal{L}(F_0 \cup
d\mathbb{N})$, there exists some $y \in \mathbb{N}_0$ such that $-y
+ L \in \mathcal{L}(F_0 \cup \{d\})$.
\item  $\delta(F_0 \cup
d\mathbb{N}) = \delta(F_0 \cup \{d\})$
\item There is a map $\psi: \mathcal{B}(F_0 \cup
d\mathbb{N})\rightarrow \mathcal{B}(F_0 \cup
\{d\})$ such that, for each $B \in \mathcal{B}(F_0 \cup
d\mathbb{N})$ and adjacent lengths $k$ and $l$ of $\mathsf L (B)$, we have $\dd(\mathsf{Z}_k(B),\mathsf{Z}_l(B))\leq \dd(\mathsf{Z}_k(\psi(B)),\mathsf{Z}_l(\psi(B)))$ with $k$ and $l$ adjacent lengths of $\mathsf L(\psi(B))$.
\end{enumerate}
In particular, if $F_0$ is finite, then the Structure Theorem holds
for $\mathcal{B}(F_0 \cup d\mathbb{N})$, and $\delta(F_0 \cup
d\mathbb{N})$ and $\mathsf c_{\mon}(F_0 \cup d\mathbb{N})$ are
finite.
\end{proposition}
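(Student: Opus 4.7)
My strategy is to exhibit a length-shift correspondence between the two monoids via the natural homomorphism $\psi \colon \mathcal{B}(F_0 \cup d\mathbb{N}) \to \mathcal{B}(F_0 \cup \{d\})$ obtained by extending the substitution $kd \mapsto d^k$ (for each $kd \in d\mathbb{N}\setminus (F_0 \cup \{d\})$, so in particular $k \geq 2$) and fixing the other generators; this is well-defined on zero-sum sequences since $\psi$ preserves sums. Setting $y_B = \sum_{kd \in d\mathbb{N}\setminus (F_0 \cup \{d\})}(k-1)\mathsf v_{kd}(B)$, I would establish the central identity
\[
\mathsf L\bigl(\psi(B)\bigr) \;=\; y_B + \mathsf L(B) \qquad \text{for every } B \in \mathcal{B}(F_0 \cup d\mathbb{N}).
\]
This immediately yields Part~(1) since AAMP-structure is preserved under integer translation. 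For Part~(2), the regrouping underlying the identity preserves factorization distances, giving $\delta(F_0 \cup d\mathbb{N}) \leq \delta(F_0 \cup \{d\})$; the reverse inequality is free since $\mathcal{B}(F_0 \cup \{d\})$ embeds as a divisor-closed submonoid of $\mathcal{B}(F_0 \cup d\mathbb{N})$. Part~(3) then follows by applying the same regrouping at the level of $\mathsf Z_k$ and $\mathsf Z_l$.

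The crux is a classification of atoms. I would first prove that any atom $U \in \mathcal{A}(F_0 \cup d\mathbb{N})$ containing some $kd \in d\mathbb{N}\setminus (F_0 \cup \{d\})$ has the restricted form $(kd)(-d)^{\alpha}(-1)^{(k-\alpha)d}$ for some $\alpha \in [0,k]$ (in particular, $U$ contains exactly one such $kd$, with multiplicity one, and no $F_0^+$ element). The argument is sub-atom analysis: if $U$ contained any further positive element $k'd \in d\mathbb{N}\setminus (F_0 \cup \{d\})$ or $e \in F_0^+$, then the proper sub-atoms $(k'd)(-d)^a(-1)^{(k'-a)d}$ and $(e)(-d)^a(-1)^b$ (with $ad+b=e$) would force $\mathsf v_{-d}(U)\cdot d + \mathsf v_{-1}(U)$ to be strictly smaller than the positive sum $\sigma(U^+)$, contradicting the zero-sum identity $\sigma(U^+) = \mathsf v_{-d}(U)\cdot d + \mathsf v_{-1}(U)$. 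An analogous sub-atom analysis shows that every atom of $\mathcal{A}(F_0 \cup \{d\})$ involving $d$ is either $d(-d)$ or $d(-1)^d$. Consequently $\psi(U) = d^k(-d)^{\alpha}(-1)^{(k-\alpha)d}$ admits the unique factorization $(d(-d))^{\alpha}(d(-1)^d)^{k-\alpha}$ of length exactly $k$ in $\mathcal{B}(F_0 \cup \{d\})$.

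The ``$\supseteq$''-direction of the length identity then follows by pushing each $B$-factorization through $\psi$ and refactoring each image atom as above, which raises total length by exactly $y_B$. The ``$\subseteq$''-direction uses the target-side atom classification: in any $\psi(B)$-factorization the $d$-containing atoms are only $d(-d)$ and $d(-1)^d$, and these may be regrouped in groups of $k$ via a choice of $\alpha_i \in [0,k]$ with $\sum_i \alpha_i \leq \mathsf v_{-d}(B)$ and $\sum_i (k-\alpha_i)d \leq \mathsf v_{-1}(B)$ (always achievable by direct counting) to form atoms $(kd)(-d)^{\alpha_i}(-1)^{(k-\alpha_i)d}$ of $\mathcal{A}(F_0 \cup d\mathbb{N})$, with the other atoms preserved. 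The ``in particular''-statement then follows: when $F_0$ is finite, so is $F_0 \cup \{d\}$, hence $\mathcal{B}(F_0 \cup \{d\})$ is finitely generated by \cite[Theorem 3.4.2]{Ge-HK06a}; such monoids satisfy the Structure Theorem with finite $\delta$ by \cite[Theorems 3.1.4, 4.4.11]{Ge-HK06a}, and these transfer via Parts~(1)-(2), while $\mathsf c_{\mon}(F_0 \cup d\mathbb{N}) < \infty$ follows from Proposition~\ref{6.4}. The main obstacle I anticipate is controlling the regrouping for Parts~(2) and~(3): choosing the $\alpha_i$'s so as to simultaneously minimize factorization distances while respecting the negative-element budgets requires carefully tracking which $d$-atoms of $\psi(B)$'s factorization get paired with which $(kd)$ of $B$.
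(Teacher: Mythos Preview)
Your proposal is correct and follows essentially the same approach as the paper: the substitution $\psi$ sending $kd\mapsto d^k$, the atom classification (any atom containing some $kd$ has $A^+=kd$, so its image has the unique factorization $(d(-d))^{\alpha}(d(-1)^d)^{k-\alpha}$), the resulting length-shift identity $\mathsf L(\psi(B))=y_B+\mathsf L(B)$, and the transfer to finitely generated monoids via Proposition~\ref{6.4} for the ``in particular'' clause. The only minor difference is that the paper applies $kd\mapsto d^k$ to \emph{all} $kd\in d\mathbb N$ (not just those outside $F_0$), but this is inessential. Your anticipated obstacle for Parts~2 and~3 is slightly overcomplicated: rather than optimising the $\alpha_i$'s against the negative budgets, the paper simply identifies the sub-product of $d$-atoms common to $\overline\psi(z)$ and the target factorization $x'$, keeps that piece fixed when lifting back to $B$, and repartitions only the remaining $d$-atoms---this automatically yields $\mathsf d(z,x)\le\mathsf d(\overline\psi(z),x')$.
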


\begin{proof}
Let $G_0=F_0 \cup d \mathbb{N}$ and $G_1= F_0 \cup \{d\}$.

\smallskip
1. Since $G_1 \subset G_0$, one implication is clear and
it remains to show that if the Structure Theorem holds for
$\mathcal{B}(G_1)$, then it holds for $\mathcal{B}(G_0)$. Indeed,
the more precise assertion we establish shows that the Structure
Theorem holds with the same bound and the same set of differences.

Let $\psi \colon \mathcal{F}(G_0) \to \mathcal{F}(G_1)$ denote the
monoid homomorphism defined via $\psi(g)=g$ for $g \notin d
\mathbb{N}$ and $\psi(kd)=d^k$ for $kd\in d\mathbb{N}$. We note that
$\sigma(S)= \sigma(\psi(S))$ for each $S \in \mathcal{F}(G_0)$; thus
$\psi$ yields a homomorphism, and indeed an epimorphism, from
$\mathcal{B}(G_0)$ to $\mathcal{B}(G_1)$.

Moreover, we observe that if $A \in \mathcal{A}(G_0)$ with $kd \mid
A$, for some $k \in \mathbb{N}$, then $A^+ = kd$. This implies that,
for such an atom, $\psi(A)=d^{k}(-1)^{d\ell}(-d)^{k - \ell}$ and
$(d(-1)^d)^{\ell} \cdot (d(-d))^{k -\ell}\in \mathsf{Z}(\psi(A))$ is
the unique factorization of $\psi(A)$. We denote this factorization
by $\overline{\psi}(A)$ and we note that $|\overline{\psi}(A)| =
\sigma(A^+)/d$. Setting $\overline{\psi}(A) = A$ for each atom not
of this form, i.e., $A \in \mathcal{A}(G_0)$ with $\supp(A)\cap
d\mathbb{N}=\emptyset$, and extending this map to $\mathsf{Z}(G_0)$,
we get a homomorphism, indeed an epimorphism, $\overline{\psi}
\colon \mathsf{Z}(G_0) \to \mathsf{Z}(G_1)$.

Since $\pi(\overline{\psi}(z))=\psi(\pi(z))$, we see that
$\overline{\psi}(\mathsf{Z}(B))\subset \mathsf{Z}(\psi(B))$ for each
$B \in \mathcal{B}(G_0)$. Moreover, for $B \in \mathcal{B}(G_0)$ and
$z \in \mathsf{Z}(B)$, we have, denoting $F=\prod_{g \in
d\mathbb{N}}g^{\mathsf{v}_g(B)}$, that $|\overline{\psi}(z)|= |z| +
(\sigma(F)/d - |F|)$. In particular, the value of $|\overline{\psi}(z)|- |z|$ is
the same for each $z \in \mathsf{Z}(B)$.

Thus, to establish our claim on sets of lengths, it suffices to show
that $\overline{\psi}(\mathsf{Z}(B))= \mathsf{Z}(\psi(B))$ for each
$B \in \mathcal{B}(G_0)$. Let $B \in \mathcal{B}(G_0)$ and again let
$F = \prod_{g \in d\mathbb{N}}g^{\mathsf{v}_g(B)}=\prod_{i=1}^{|F|}(k_id)$, where $k_i\in \mathbb N$. Let $z'\in
\mathsf{Z}(\psi(B))$. There exists a unique decomposition
$z'=z_1'z_2'$ such that $z_1'$ is minimal with $d^{\sigma(F)/d}\mid
\pi(z_1')$ (note that $\mathsf{v}_d(\psi(B))=\sigma(F)/d$). We have
$|z_1'|= \sigma(F)/d$. Write  $z_1'= \prod_{i=1}^{|F|}y'_i$ such that each factor $y'_i\in \ZZ(\psi(B))$ contains exactly $|y'_i|=k_i$ atoms.
Then letting $A_i = (k_id)d^{-k_i}\pi(y'_i)$, we have  $A_i \in
\mathcal{A}(G_0)$, and so $z=A_1\cdot \ldots \cdot A_{|F|}z_2'$ is a
factorization of $B$ with $\overline{\psi}(z) =
\psi(A_1)\cdot \ldots \cdot \psi(A_{|F|})z_2'=y'_1\cdot \ldots \cdot
y'_sz_2'= z'$, establishing our claim.

\smallskip
2. Since $\delta(G_1)\le \delta(G_0)$ is obvious, we only have to
show that $\delta(G_0)\le \delta(G_1)$. We show the following
slightly stronger result. Let $B \in \mathcal{B}(G_0)$ and $z \in
\mathsf{Z}(B)$. Then $\delta(z)\le \delta(\overline{\psi}(z))$.

Let $F$ and $z= z_1z_2$ be defined as above, and let
$z_1=\prod_{i=1}^{|F|}A_i$ and let  $A_i^+
= k_id$, where $k_i\in \mathbb{N}$. Moreover, let $z' = \overline{\psi}(z)$ and let $z' =
z_1'z_2'$ with $z_1' = \overline{\psi}(z_1)$ and
$z_2'=\overline{\psi}(z_2)=z_2$. Additionally, let $y_i'=
\overline{\psi}(A_i)$ for each $i$. Let $j \in \mathbb{Z}$ be such that
$|z|$ and $|z|+j$ are adjacent lengths of $\mathsf L (B)$. By the
already established result for sets of lengths, it follows that
$|\overline{\psi}(z)|$ and $|\overline{\psi}(z)|+j$ are adjacent
lengths of $\mathsf L (\psi(B))$. Thus, by definition, there exists
some factorization $x'\in \mathsf{Z}(\psi(B))$ with $|x'|=
|\overline{\psi}(z)|+j$ and $d(x', \overline{\psi}(z))\le
\delta(\overline{\psi}(z))$. Let $x'=x_1'x_2'$ with $x_1'$ minimal such that  $d^{\sigma(F)/d}\mid \pi(x_1')$. We note that
\be\label{kio1}\dd(z',x')= \dd(z_1',x_1') + \dd(z_2',x_2').\ee Thus, by re-indexing appropriately, we find a  \be\label{kio2}t\leq \dd(z_1',x_1')\ee such that $\prod_{i=t+1}^{|F|}y_i'\mid
x_1'$.

Let $x_1''= x'_1\left(\prod_{i=t+1}^{|F|}y_i'\right)^{-1}$.
As we argued at the end of part 1, there exists, for $i\leq t$, factorizations
 $y''_i\in \ZZ(\psi(B))$, each containing exactly $|y'_i|=k_i$ atoms, such that  $\prod_{i=1}^t y_i''=x_1''$. For $i \leq t$,
let $A_i'= d^{-k_i}(k_id)\pi(y_i'')$, and for $i\in [t+1,|F|]$, let $A'_i=A_i$. Then, with $x_1=\prod_{i=1}^{|F|}A'_i$ and $x_2=x_2'$, we
have that  $x=x_1x_2$ is a factorization of $B$, and since
$\overline{\psi}(x) = x_1''(\prod_{i=t+1}^{|F|}y_i')x_2'=x_1'x_2'$, we get that $|x|-|z|=|\overline{\psi}(x)|-|\overline{\psi}(z)|=|x'|-|\overline{\psi}(z)|= j$. Finally, using \eqref{kio1} and \eqref{kio2}, we have $$\dd(z,x)\le \dd(z_1,x_1)+\dd(z_2,x_2)   \le t+ \dd(z_2,x_2)\le
\dd(z_1',x_1') + \dd(z_2',x_2')= \dd(z',x'),$$ establishing the
claim.

\smallskip

3. The argument is just a variation on the proof of parts 1 and 2.

\smallskip

We now address the additional statements. Suppose that $F_0$ is
finite. By Proposition \ref{nice_prop}, we know that the Structure
Theorem holds for $\mathcal{B}(F_0 \cup \{d\})$ and that
$\delta(F_0\cup \{d\})$ is finite. Thus, by parts 1 and 2, we get
that the Structure Theorem holds for $\mathcal{B}(F_0 \cup  d
\mathbb{N})$ and that $\delta(F_0\cup d \mathbb{N})$ is finite.
Since $\delta(F_0\cup d \mathbb{N})$ is finite, Proposition
\ref{6.4} implies that $\mathsf c_{\mon}(F_0 \cup d \mathbb{N})$ is
finite.
\end{proof}

The systems of sets of lengths of  $\mathcal{B}(F_0 \cup
d\mathbb{N})$ and $\mathcal{B}(F_0\cup \{d\})$ are very closely
related, but they are different in general. For finite $F_0$, the
elasticity of $\mathcal{B}(F_0\cup \{d\})$ is accepted (Proposition
\ref{nice_prop}), yet we see in Corollary \ref{STSL_cor} that this
is in general not the case for $\mathcal{B}(F_0 \cup d\mathbb{N})$.

\begin{proposition} \label{STSL_prop2}
Let $d \in \mathbb{N}_{\ge 2}$ and $G_0 = \{-d,
-1\} \cup (1 + d\mathbb{N}_0) \cup d \mathbb{N}_0$.
\begin{enumerate}
\item The Structure Theorem holds for $\mathcal B (G_0)$. More precisely, each $L \in
      \mathcal{L}(G_0)$ is an arithmetical progression with difference
      $d-1$.

\smallskip
\item For each $B \in \mathcal{B}(G_0)$ and adjacent lengths $k$ and $l$ of $\mathsf L (B)$,
      we have $\dd(\mathsf{Z}_k(B),\mathsf{Z}_l(B))=d+1$.

\smallskip
\item $\delta(G_0)= \infty$.

\smallskip
\item $\mathsf c_{\mon}(G_0) = d+1$.
\end{enumerate}
\end{proposition}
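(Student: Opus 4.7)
The proof will treat the four parts in order, based on a classification of the atoms of $\mathcal{B}(G_0)$ and a small set of exchange relations. By Lemma~\ref{Lambert} any $U\in \mathcal{A}(G_0)$ has $|U^+|\le d$; a subsequence analysis then shows that the atoms split into three families: the single-positive atoms $A^{(p)}_j = p\cdot (-1)^{1+dj}(-d)^{k-j}$ for $p=1+dk\in G_0^+$, $j\in[0,k]$ (and the analogues $p\cdot (-1)^{dj}(-d)^{k-j}$ for $p=dk\in G_0^+$); the multi-positive atoms $[1^{d-c}p_{i_1}\cdots p_{i_c}(-d)^{1+\sum k_{i_l}}]$ with each $p_{i_l}\equiv 1\pmod d$ and $c\in[0,d]$ (including $[1^d(-d)]$ for $c=0$); and the elementary atoms $[1(-1)]$, $[d(-d)]$, $[d(-1)^d]$, $[(-d)\cdot 1^d]$. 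The fundamental exchange is
\[
  [d(-d)]\cdot [1(-1)]^d \;\leftrightarrow\; [d(-1)^d]\cdot [(-d)\cdot 1^d],
\]
with length jump $d-1$ and mutual distance $d+1$; its analogue $A_{j+1}^{(p)}\cdot [1(-1)]^d \leftrightarrow A_j^{(p)}\cdot [1^d(-d)]$ exists for every $p=1+dk$, and there are length-preserving exchanges such as $A_0^{(p)}\cdot [1^d(-d)] = [1^{d-1}p(-d)^{1+k}]\cdot [1(-1)]$ that swap a single-positive plus $[1^d(-d)]$ for a size-$1$ multi-positive plus $[1(-1)]$.

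For Parts 1 and 2 I will show that any two factorizations of a fixed $B\in\mathcal{B}(G_0)$ are connected by a finite sequence of these exchanges, each either preserving the length or changing it by exactly $d-1$. This forces $\mathsf{L}(B)$ to be an arithmetic progression with difference $d-1$, and the explicit length-changing exchange provides the upper bound $d+1$ for the distance between factorizations of adjacent lengths; the matching lower bound is immediate from \eqref{E:Dist}, since $z \ne z'$ with $\pi(z)=\pi(z')$ and $\bigl||z|-|z'|\bigr|=d-1$ forces $\mathsf{d}(z,z')\ge 2+(d-1)=d+1$. Part 4 is then a corollary: Part 2 gives $\mathsf{c}_{\mon}(G_0)\ge d+1$, while stringing together the exchanges above (whose pairwise distances are at most $d+1$) produces monotone chains realizing the equality.

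The heart of the proposition, and the main obstacle, is Part 3. The plan is to exhibit, for each $N$, a ``rigid'' element $B_N\in\mathcal{B}(G_0)$ and a factorization $z_N\in \mathsf{Z}(B_N)$ such that every factorization of $B_N$ whose length is adjacent to $|z_N|$ lies at distance at least $N$ from $z_N$. The candidate construction uses $N$ ``blocks'' of $d$ distinct large positive elements $p_{i,1},\dots, p_{i,d}\equiv 1\pmod d$ ($i\in[1,N]$), with the supply of $1$'s, $(-1)$'s, and $(-d)$'s in $B_N$ rationed so that $z_N$ is forced to contain each block inside a single large multi-positive atom $[p_{i,1}\cdots p_{i,d}(-d)^{1+dk}]$, while every adjacent-length factorization can be obtained only by breaking these rigid atoms across all $N$ blocks at once. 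The delicate step will be ruling out ``shortcut'' refactorizations through the length-preserving exchanges catalogued above---in particular, ensuring that no other factorization at length $|z_N|$ lies closer to an adjacent-length factorization than $z_N$ does---and it is exactly this exclusion, powered by the atom classification, that produces an unbounded $\delta$.
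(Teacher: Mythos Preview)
Your atom classification and exchange catalogue give a workable skeleton for Parts~1 and~2, and the paper proceeds similarly: it introduces $t_d(z)$, the number of atoms in $z$ with $|A^+|=d$, shows $|z|=|B^+|-(d-1)t_d(z)$ (so lengths lie in an arithmetic progression of difference $d-1$), and exhibits an explicit $(d+1)$-exchange decreasing $t_d$ by one whenever $|z|<\max\mathsf L(B)$. The lower bound in Part~2 via \eqref{E:Dist} is exactly as you say. But Parts~3 and~4 each have a real gap.

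\textbf{Part 3.} Your $N$-block construction is both more complicated than necessary and, as described, unlikely to work: whenever $|z_N|<\max\mathsf L(B_N)$, the very exchange you use for Part~1 produces a factorization at the adjacent higher length at distance exactly $d+1$ from $z_N$, so you cannot force all adjacent-length factorizations to be far from $z_N$ in that direction. The paper sidesteps the whole difficulty by reversing roles. Take $B=(1+kd)^{d}\,d^{\,1+kd}\,(-d)^{1+kd}\,(-1)^{d(1+kd)}$; then $\mathsf L(B)=\{2+kd,\;1+d+kd\}$ and, crucially, the shorter length has a \emph{unique} factorization $z=\bigl((1+kd)^d(-d)^{1+kd}\bigr)\cdot\bigl(d(-1)^d\bigr)^{1+kd}$ (the single multi-positive atom consumes all the $(-d)$'s, forcing the rest). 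Now bound $\delta(z')$ for $z'=\bigl((1+kd)(-1)^{1+kd}\bigr)^d\cdot\bigl(d(-d)\bigr)^{1+kd}$ at the \emph{longer} length: since $z$ is the only factorization at the adjacent length and $z'$ shares no atom with $z$, one gets $\delta(z')\ge\mathsf d(z',z)=|z'|=1+d+kd\to\infty$. The trick is that uniqueness at one length collapses the set-distance to a single pairwise distance, so no ``ruling out shortcuts'' is needed.

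\textbf{Part 4.} The upper bound $\mathsf c_{\mon}(G_0)\le d+1$ is not a corollary of what precedes. Your length-increasing exchange lets you climb monotonically from any $z$ to \emph{some} factorization at any larger length in $(d+1)$-steps, but to reach a \emph{prescribed} $z'$ you must then connect two factorizations of the same length by a short chain \emph{within that length}. The paper proves this separately: any two equal-length factorizations are joined by a $2$-chain inside $\mathsf Z_{|z|}(B)$. This is non-trivial---it proceeds by induction on $|z|$, handles the extreme cases $t_d(z)=0$ and $t_d(z)=|z|$ by transferring to auxiliary block monoids (over $\Z/d\Z$, respectively via the relative Davenport constant of Section~\ref{7}), and then uses a swap argument to push a common minimal multi-positive atom into both factorizations so the induction applies. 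Your list of length-preserving exchanges is a start, but you have not argued that they connect arbitrary equal-length factorizations.
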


\begin{proof}
Before we start the argument for the individual parts, we start with
some general remarks. We begin by investigating $\mathcal{A}(G_0)$. Let $A \in
\mathcal{A}(G_0)$. If $kd \mid A$ for some $k \in \mathbb{N}_0$,
then $A=(kd)(-1)^{dl}(-d)^{k-l}$ for some $l \in [0,k]$. In
particular,  we thus have two atoms containing $d$, namely $U_1=
d(-1)^d$  and  $U_d = d(-d)$. Suppose $\supp(A) \cap d \mathbb{N}_0
= \emptyset$. Then $A^+ = \prod_{i=1}^{|A^+|}(1+k_id)$ with $k_i\in
\mathbb{N}_0$. It follows that $|A^+|\in \{1,d\}$. Moreover, if
$|A^+|=d$, then $-1 \nmid A$ and therefore
$A=A^+(-d)^{\sigma(A^+)/d}$. Thus, either $|A^+|=1$ or else  $|A^+|=d$ and
$A=A^+(-d)^{\sigma(A^+)/d}$. Conversely, each zero-sum sequence
$B\in \mathcal{B}(G_0\setminus \{0\})$ with  $B^+ =\prod_{i=1}^d
(1+k_id)$, $k_i \in \mathbb{N}_0$ and $-1 \notin \supp(B^-)$ is
an atom.

Let $B \in \mathcal{B}(G_0 \setminus \{0\})$ and let $z \in
\mathsf{Z}(B)$. In view of the considerations just made, there
exists a unique decomposition $z=z_1z_d$ such that for each $A\mid
z_1$ we have $|A^+|=1$ and for each $A \mid z_d$ we have $|A^+|=d$.
We denote $|z_d|$ by $t_d(z)$. Since $|B^+|=|z_1|+d |z_d|$, it
follows that
\begin{equation}
\label{STSL_eq2} |z|=|z_1|+|z_d|= |B^+| - (d-1)|z_d| = |B^+| -
(d-1)t_d(z),
\end{equation}
i.e., $|z|$ is determined by $B^+$ and $t_d(z)$.

By Proposition \ref{STSL_prop1} and since $0$ is a prime, it
suffices to consider the set $G_1 = \{-d, -1\} \cup (1 +
d\mathbb{N}_0) \cup \{d\}$ for the proof of parts 1 and 3.

\bigskip
1. Let $B \in \mathcal{B}(G_1)$. Let $z \in \mathsf{Z}(B)$ and let
$z=z_1z_d$ be defined as above. We observe that, since
$\mathsf{v}_{-1}(A)\ge 1$ for each $A$ that neither fulfils
$|A^+|=d$ nor equals $U_d$, we have
\begin{equation}
\label{STSL_eq} t_d(z)\ge \frac{\bigl(|B^+| - \mathsf{v}_{d}(B)\bigr) -
\mathsf{v}_{-1}(B)}{d}
\end{equation}
By \eqref{STSL_eq2}, we get that $\mathsf{L}(B)$ is
contained in an arithmetical progression with difference $(d-1)$.
In view of this, it suffices to establish the following claim.

\noindent \textbf{Claim 1:} If $|z|< \max \mathsf{L}(B)$, then there
exists some $z'\in \mathsf{Z}(B)$ with $|z'|= |z|+(d-1)$ and $\dd(z,z')=d+1$; in particular, $t_d(z')= t_d(z)-1$. Moreover, $\dd(z,z')\leq d+1$.

To prove this, we first investigate the case $|z|= \max
\mathsf{L}(B)$.

\noindent \textbf{Claim 2:} If $t_d(z)=0$ or $\mathsf{v}_{-1}(A)\le
1$ for each $A \mid z$, then $|z|= \max \mathsf{L}(B)$.

\noindent \emph{Proof of Claim 2.} If $t_d(z)=0$, the claim is clear
by \eqref{STSL_eq2}. Thus, assume $\mathsf{v}_{-1}(A)\le 1$ for each
$A \mid z$. In view of the characterization of atoms, it follows
that $z_1=z_1'U_d^{\mathsf{v}_d(B)}$ and $\mathsf{v}_{-1}(A)=1$ for
each atom $A \mid z_1'$. In particular, we have $|z_1|=
\mathsf{v}_{-1}(B)+ \mathsf{v}_{d}(B)$. In view of $d\cdot t_d(z)= |B^+|-
|z_1|$, this implies
\[ t_d(z)= \frac{\bigl(|B^+| - \mathsf{v}_{d}(B)\bigr) - \mathsf{v}_{-1}(B)}{d}.\]
Thus equality holds in \eqref{STSL_eq}, which by \eqref{STSL_eq2}
implies that $|z|$ is maximal.

\noindent \emph{Proof of Claim 1.} Suppose $|z|< \max
\mathsf{L}(B)$. By Claim 2, we know that $t_d(z)> 0$ and that there
exists some atom $C \mid z$ such that $\mathsf{v}_{-1}(C)> 1$. In
view of the characterization of atoms given above, we have
$\mathsf{v}_{-1}(C) \ge d$ and $|C^+|=1$. Since $t_d(z)>0$, there
exists some atom $A_d\mid z$ with $|A_d^+| = d$. Let $z= A_dCz_0$.
We consider the zero-sum sequences $B_1=(-d)^{-1}A_d(-1)^d$ and
$B_2= (-1)^{-d}C(-d)$. Clearly, $\pi(B_1B_2z_0)=B$. We note that
$B_2$ is an atom as $|B_2^+|=1$. Yet, since $|B_1^+|=d$ but
$\mathsf{v}_{-1}(B_1)\ge 1$, we get that $B_1$ is not an atom; more
precisely, $\max\mathsf{L}(B_1)=d$. Thus, replacing the two atoms $A_d$ and $C$ in $z$ by the atom $B_2$ and any factorization of length $d$ of $B_1$ completes the claim.

\smallskip
2. By Proposition \ref{STSL_prop1}.3 and since $0$ is a prime, it suffices to consider $G_1$ for finding an upper bound on $\dd(\mathsf{Z}_k(B),\mathsf{Z}_l(B))$. Thus, by Claim 2, we get that $\dd(\mathsf{Z}_k(B),\mathsf{Z}_l(B))\le d+1$. The converse
inequality follows by \eqref{E:Dist} in view of Proposition \ref{STSL_prop1}.3 and part 1.

\smallskip
3. We consider $B=(1+kd)^{d}d^{1+kd}(-d)^{1+kd}(-1)^{d(1+
kd)}$. We note that $\mathsf{L}(B)= \{2+kd, 1+d+kd\}$ and
$z=\bigl((1+kd)^{d}(-d)^{1+kd}\bigr)\cdot (d(-1)^d)^{1+kd}$ is its only
factorization of length $2+kd$. The factorization
$z'=\bigl((1+kd)(-1)^{1+kd}\bigr)^d\cdot \bigl(d(-d)\bigr)^{1+kd}$ has length $1+d+kd$
and $\dd(z',z)= |z'|= 1+d+kd$, implying that $\delta(B)\ge 1+d+kd$,
and the claim follows by letting $k\rightarrow \infty$.

\smallskip
4. By part 2 and since $0$ is prime, it is sufficient to show that, for any two
factorizations $z,\,y\in \ZZ(G_0\setminus\{0\})$ with $\pi(z)=\pi(y)$, we have
that: if $|z|=|y|$, then $z$ and $y$ can be concatenated by a
monotone $2$-chain. Clearly, in this case monotone means that each
factorization in this chain has length $|z|$, i.e., we claim that
$z$ and $y$ can be concatenated by a $2$-chain in
$\mathsf{Z}_{|z|}(\pi(z))$. We proceed by induction on $|z|$. Let
$z,\,y\in \ZZ(G_0)$ with $\pi(z)=\pi(y)$ and suppose that $|z|=|y|$.
If $|z|=1$, the statement is trivial. Thus, assume $|z|\ge 2$ and
that the statement is true for factorizations of length at most
$|z|-1$. We make the following claim.

\noindent \textbf{Claim 3:} There exist $z',\,y'\in \ZZ(\pi(z))$
with $|z'|=|y'|=|z|$ such that
 $z$ and $z'$, as well as $y$ and $y'$, can be concatenated by a $2$-chain in $\mathsf{Z}_{|z|}(\pi(z))$ and $\gcd\{z',y'\}\neq 1$.

We assume this claim is true and complete the argument. Let $z'$ and
$y'$ be factorizations with the claimed properties and let  $U \in
\mathcal{A}(G_0)$ with  $U \mid \gcd \{z',y'\}$. We set $z'' =
U^{-1}z'$ and $y'' = U^{-1}y'$. By induction hypothesis, there
exists a $2$-chain $z''=z_0'',z_1'', \dots, z_s''=y''$ in
$\mathsf{Z}_{|z''|}(\pi(U^{-1}z'))$. We note that $U\cdot z_i'' \in
\mathsf{Z}_{|z|}(\pi(z))$ for each $i \in [0,s]$. Thus, $z'$ and
$y'$ can be concatenated by a $2$-chain in
$\mathsf{Z}_{|z|}(\pi(z))$. Combining these three chains, the result
follows.

\noindent \emph{Proof of Claim 3.} If $0 \mid z$, then $0 \mid y$
and the claim is trivial. Thus, assume $0 \nmid z$.

Let $z=z_1z_d$ and $y=y_1y_d$ be as defined at the beginning of the
proof and recall that $|z|= |y|$ is equivalent to $t_d(z)=t_d(y)$.

Before starting the actual argument, we make three subclaims.

\noindent \textbf{Claim 3.1:} Let $h \mid \pi(z_1)$ and $g \mid
\pi(z_d)$ with $g,h \in 1+ d\mathbb{N}_0$ and $h \le g$. Then
there exists a factorization $x$ of $\pi(z)$ such that, with
$x=x_1x_d$ as above, $\pi(x_1)^+= \pi(z_1)^+gh^{-1}$ and $\pi(x_d)^+ =
\pi(z_d)^+hg^{-1}$ and $\mathsf{d}(z,x)\le 2$; in particular,
$|x|=|z|$.

To see this, let $A_h \mid z_1$ and $A_g\mid z_d$ with $h \mid
A_h$ and $g \mid A_g$. We set $A_h'= hA_g g^{-1}(-d)^{-(g-h)/d}$ and
$A_g'= g A_h h^{-1}(-d)^{(g-h)/d}$. Note that this process is
well-defined and that $A_g'$ and $A_h'$ are atoms by the above characterization of
atoms. Let $x=zA_g'A_h'A_g^{-1}A_h^{-1}$. Noting that $x_1=
A_g'A_h^{-1}z_1$ and $x_d= A_h'A_g^{-1}z_d$, the claim is
established.

\noindent \textbf{Claim 3.2:} Suppose that $t_d(z)=0$. Then $z$ and
$y$ can be concatenated by a $2$-chain in
$\mathsf{Z}_{|z|}(\pi(z))$.

Informally, each atom in $z$ and $y$ contains exactly one positive element, hence distinct atoms containing the same positive element
can only differ in the negative part. Successively exchanging $(-1)^d$ for $-d$ and vice versa, for suitable pairs of atoms, we can construct such a chain.

To give a formal argument, we use the independent material of Section 7 which follows. Note that, in this case, $|z|=|y|=|\pi(z)^+|$ and $\mathcal A(\mathcal E(G_P^-))=\{(-d,-d), (-1,-1), ((-1)^d,-d), (-d,(-1)^d)\}$. Thus $G'\cong \Z$ with $G_0'=\{0,1,-1\}$, where $G'$ and $G_0'$ are as defined before Theorem \ref{prop-chain}, whence $\mathsf D(\mathcal S(G_P^-),\mathcal E(G_P^-))=2$ by \eqref{teent}. Hence Theorem \ref{prop-chain}
shows that there is a $2$-chain concatenating $z$ and $y$.

\noindent \textbf{Claim 3.3:} Suppose that $t_d(z)=|z|$. Then $z$
and $y$ can be concatenated by a $2$-chain in
$\mathsf{Z}_{|z|}(\pi(z))$.

Informally, since in this case $\supp(\pi(z))=\{-d\}$, we can apply an argument similar to the one in Claim 3.1, without additional condition on the relative size of $g$ and $h$.

To get a formal argument, note that in this case $\pi(z) \in \mathcal{B}(G_0 \setminus \{-1\})$. By Lemma
\ref{transfer-to-finite}, we get that the block monoid associated to
$\mathcal{B}(G_0 \setminus \{-1\})$ is $\mathcal{B}(\{0 + d
\mathbb{Z},1 + d\mathbb{Z}\})\subset \mathcal B(\Z/d\Z)$. However, $\mathcal{B}(\{0 + d
\mathbb{Z},1 + d\mathbb{Z}\})$ is factorial, and thus its catenary degree is $0$; also note that the former monoid is
thus half-factorial. Since the catenary degree in the fibers of the
block homomorphism is $2$ (see Lemma \ref{3.3}), the claim follows.

Now, we give the actual proof of Claim 3. In view of Claim 3.2, we
may assume that $t_d(z)> 0$. Hence, let $S \mid \pi(z)$ be a subsequence
with $\supp(S) \subset 1 +d \mathbb{N}_0$ and $|S|=d$. Moreover,
assume that $\sigma(S)$ is minimal among all such subsequences of
$\pi(z)$. We assert that there exists some $x' \in
\mathsf{Z}_{|z|}(\pi(z))$ such that $S \mid \pi(x_d')$ and $z$ and
$x'$ can be concatenated by a $2$-chain in
$\mathsf{Z}_{|z|}(\pi(z))$. Let $x' \in \mathsf{Z}_{|z|}(\pi(z))$ be
a factorization such that  $z$ and $x'$ can be concatenated by a
$2$-chain in $\mathsf{Z}_{|z|}(\pi(z))$  and such that $S'=
\gcd\{\pi(x_d'), S\}$ is maximal. We show that $S'=S$. Assume to the
contrary that $S'\neq S$. Let $h \mid  \pi(x_1')$ with $hS'\mid
S$. We observe that there exists some $g \mid S'^{-1}\pi(x_d')$ with $g \in 1  + d \mathbb{N}_0$ and $g \ge h$; otherwise, the
sequence $gh^{-1}S$ would contradict the minimality of $\sigma(S)$.

We apply Claim 3.1 to $x'$ (with these elements $g$ and $h$) and
denote the resulting factorization by $x''$. Since it can be
concatenated to $z$ by a $2$-chain in $\mathsf{Z}_{|z|}(\pi(z))$ and
yet $hS'\mid \gcd\{\pi(x_d''), S\}$, its existence contradicts the
maximality of $S'$ for $x'$. Thus $S'=S$.

Since $S \mid \pi(x_d')$, we have that $U= S(-d)^{\sigma(S)/d}\mid
\pi(x'_d)$. Let $z_d'\in \mathsf{Z}(\pi(x'_d))$ with $U \mid z_d'$.
Since $t_d(\pi(x'_d))= |x'_d|$, Claim 3.3 applied to $x'_d$ yields
that $x_d'$ and $z_d'$  can be concatenated by a $2$-chain in
$\mathsf{Z}_{|x_d'|}(\pi(x_d'))$. We set $z'= z_d'x_1'$ and
observe that $x'$ and $z'$, and thus $z$ and $z'$, can be
concatenated by a $2$-chain in  $\mathsf{Z}_{|z|}(\pi(z))$ and $U \mid z'$

In the same way, noting that $S$ depends only on $\pi(z)$ and not on
$z$, we get a factorization $y' \in \mathsf{Z}_{|z|}(\pi(z))$ with
$U \mid y'$  such that $y$ and $y'$ can be   concatenated by a
$2$-chain in  $\mathsf{Z}_{|z|}(\pi(z))$. Since $U \mid \gcd\{z',
y'\}$, the claim is established.
\end{proof}

\medskip
\begin{proof}[{\bf Proof of Theorem \ref{STSL_thm}}]
By Lemma \ref{3.3}, it suffices to consider $\mathcal B (G_P)$. The
case $d=1$ is trivial. Suppose $d \ge 2$. One direction is merely
Proposition \ref{STSL_prop}. The other one follows, for the first
type of set, by Proposition \ref{STSL_prop1}, and for the second type
of set, by Proposition \ref{STSL_prop2}.
\end{proof}

By \cite{An-Ch-Sm94c}, it is known that Krull monoids with infinite
cyclic class group can have finite, non-accepted elasticity. The
following result shows that, even if the Structure Theorem holds,
the elasticity is not necessarily accepted.

\begin{corollary} \label{STSL_cor}
Let all assumptions be as in Theorem \ref{STSL_thm}.  Suppose that
the Structure Theorem holds for $H$. Then exactly one of the
following two statements holds{\rm \,:}
\begin{enumerate}
  \item[(a)] $H$ is half-factorial or $G_P$ is finite.
  \item[(b)] $\rho(H)=d$ and the elasticity is not accepted.
\end{enumerate}
\end{corollary}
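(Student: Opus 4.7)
The plan is to prove both the mutual exclusivity and the exhaustiveness of $(a)$ and $(b)$. Mutual exclusivity follows since $(a)$ yields an accepted elasticity ($\rho(H)=1$ trivially if $H$ is half-factorial, and by Proposition \ref{nice_prop} if $G_P$ is finite), whereas $(b)$ states the elasticity is not accepted. For exhaustiveness, I assume $H$ is not half-factorial and $G_P$ is infinite, and must derive $(b)$. The case $d=1$ is dispatched at once: Lemma \ref{transfer-to-finite} yields a transfer homomorphism from $\mathcal B(G_P)$ to the factorial monoid $\mathcal F(\{0\})=\mathcal B(\Z/\Z)$, forcing $H$ to be half-factorial, contrary to hypothesis. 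Hence $d\ge 2$, and the task reduces to showing $\rho(H)=d$ and non-acceptance.

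For the upper bound $\rho(H)\le d$, Lemma \ref{3.3} reduces the problem to $\rho(G_P)$. For nontrivial $B\in\mathcal B(G_P)$ with factorizations of lengths $k\le l$, Lemma \ref{Lambert} gives $|U^+|\le|\min G_P^-|=d$ for every atom $U$, so $|B^+|\le dk$; and each nontrivial atom contains at least one positive element (a sequence of negatives has strictly negative sum), so $|B^+|\ge l$. Thus $\rho(B)=l/k\le d$. For the lower bound, Theorem \ref{STSL_thm} produces two subcases. If $G_P^+\cap d\N$ is infinite, I pick $Kd\in G_P^+$ with $K$ arbitrarily large and consider $B_K=1^{dK}(Kd)(-1)^{Kd}(-d)^K$; the only available atoms are $1(-1)$, $1^d(-d)$, and the family $(Kd)(-1)^{d(K-y)}(-d)^y$ for $y\in[0,K]$, which yield factorizations of lengths $K+1$ and $dK+1$, so $\rho(B_K)\ge(dK+1)/(K+1)\to d$. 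Otherwise $G_P^+\cap d\N$ is finite, and Theorem \ref{STSL_thm}$(b)$ forces $G_P^+\cap(1+d\Z)$ to be infinite; for arbitrarily large $k$ with $1+kd\in G_P^+$, the element $B_k=(1+kd)^{d(kd+1)}(-d)^{kd(1+kd)}(-1)^{d(1+kd)}$ admits a factorization of length $d(kd+1)$ (using only $1$-positive atoms $(1+kd)(-1)(-d)^k$) and one of length $d(k+1)$ (combining $kd$ atoms $(1+kd)^d(-d)^{1+kd}$ with $d$ atoms $(1+kd)(-1)^{1+kd}$), giving $\rho(B_k)\ge(kd+1)/(k+1)\to d$.

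The main obstacle is proving non-acceptance. Suppose toward contradiction that $\rho(B)=d$ for some nontrivial $B$; writing $k=\min\mathsf L(B)$ and $l=\max\mathsf L(B)=dk$, the equalities in the chain $l\le|B^+|\le dk$ force every atom in a minimum factorization to have exactly $d$ positives and every atom in a maximum factorization to have exactly $1$ positive. Fix such factorizations $\prod_i U_i$ and $\prod_a V_a$, with $U_i=a_{i,1}\cdots a_{i,d}(-1)^{x'_i}(-d)^{y'_i}$ and $V_a=a(-1)^{x_a}(-d)^{y_a}$. Atomicity of $U_i$ requires in particular that no subsequence $a_{i,j}(-1)^{x''}(-d)^{y''}$ with $x''+dy''=a_{i,j}$ and $(x'',y'')\le(x'_i,y'_i)$ is a zero-sum; since $a_{i,j}<\sum_{j'}a_{i,j'}=x'_i+dy'_i$ for $d\ge 2$, the minimum attainable $x''$ (at $y''=\min(y'_i,\lfloor a_{i,j}/d\rfloor)$) must exceed $x'_i$, which forces $a_{i,j}\bmod d\ge x'_i+1$ for every $j$. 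The congruence $x_a\equiv a\pmod d$ together with $x_a\ge 0$ gives $x_a\ge a\bmod d$, so $x_{a_{i,j}}\ge x'_i+1$. Summing over $j$ and then over $i$ yields
\[
\mathsf v_{-1}(B)\;=\;\sum_a x_a\;\ge\;\sum_i d(x'_i+1)\;=\;d\,\mathsf v_{-1}(B)+dk,
\]
so $(d-1)\mathsf v_{-1}(B)+dk\le 0$, which is impossible for $d\ge 2$ and $k\ge 1$. Therefore $\rho(B)<d$ for every nontrivial $B$, so $\rho(H)=d$ is a supremum that is not attained, and $(b)$ holds.
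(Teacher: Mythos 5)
Your proof is correct, and while its skeleton (mutual exclusivity via Proposition \ref{nice_prop}, reduction to $\mathcal B(G_P)$ by Lemma \ref{3.3}, the upper bound $\rho(B)\le d$ from Lemma \ref{Lambert}) coincides with the paper's, the two decisive steps are genuinely different. For the lower bound $\rho(G_P)\ge d$ the paper does not build explicit sequences: it notes that $1$ and $1^d$ lie in $\mathcal A(G_P)^+$ (coming from the atoms $1(-1)$ and $1^d(-d)$), so $\rho^{\kappa}(\kappa(1^d))\ge d$, and invokes Lemma \ref{rel_lem_eq}.2; this only needs $G_P^+$ infinite, whereas your two families $1^{dK}(Kd)(-1)^{Kd}(-d)^K$ and $(1+kd)^{d(kd+1)}(-1)^{d(1+kd)}(-d)^{kd(1+kd)}$ use the dichotomy of Theorem \ref{STSL_thm}(b) -- available here and more elementary, at the cost of a case split. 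For non-acceptance, the paper shows that $\rho(B)=d$ forces every atom of a minimal factorization to have $d$ positive terms, deduces $\mathsf v_{-1}(B)=0$ by quoting from the proof of Proposition \ref{STSL_prop2} that an atom containing $-1$ has a single positive term, and then contradicts $\rho(\mathcal B(G_P\setminus\{-1\}))\le\rho(\mathbb Z/d\mathbb Z)=d/2<d$ via Lemma \ref{transfer-to-finite}; you instead double-count $\mathsf v_{-1}(B)$ against the two extremal factorizations, using that an atom with exactly $d\ge 2$ positives $a_{i,1}\cdots a_{i,d}$ carries at most $(a_{i,j}\bmod d)-1$ copies of $-1$ (your compressed justification is sound: the alternative $y_i'<\lfloor a_{i,j}/d\rfloor$ is excluded by $a_{i,j}<\sigma(U_i^+)$), while an atom with a single positive $a$ carries at least $a\bmod d$ copies. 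Your route is self-contained, avoids the citation to the finite cyclic case, and in fact sidesteps a delicate point: the quoted fact ``$(-1)\mid A$ implies $|A^+|=1$'' is established in Proposition \ref{STSL_prop2} only for $G_0=\{-d,-1\}\cup(1+d\N_0)\cup d\N_0$ and fails verbatim for general $G_P$ admitted by Theorem \ref{STSL_thm} (for $d=3$ and $2\in G_P^+$, the sequence $2^2(-1)(-3)$ is an atom), whereas your count never needs $\mathsf v_{-1}(B)=0$ at all. The only blemish is that your inequality $|B^+|\ge l$ tacitly assumes $0\nmid B$ (the atom $0$ has no positive term); since $0$ is a prime this is harmless -- either pass to $G_P\setminus\{0\}$ or carry the $\mathsf v_0$-terms as the paper does, noting that in the equality case $\rho(B)=d$ one gets $\mathsf v_0(B)=0$ anyway.
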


\begin{proof}
Half-factorial monoids obviously have accepted elasticity and
monoids with $G_P$ finite also have accepted elasticity (Proposition
\ref{nice_prop}). Thus, we assume that $H$ is not half-factorial and
that $G_P$ is infinite, and show that under these assumptions
$\rho(H)=d$ and the elasticity is not accepted. Note that since $H$
is not half-factorial, we have $d \ge 2$.

We recall that if $A \in \mathcal{A}(G_P)$ with $(-1)\mid A$,
then $|A^+|=1$ (as explained in the proof of Proposition \ref{STSL_prop2}).

Let $B \in \mathcal{B}(G_P)$. We show that $\rho(B)< d$.
Assume to the contrary $\rho(B)\ge d$.
That is, there exist $z,z'\in\mathsf{Z}(B)$ such that
$|z'|/|z| \ge d$. By Lemma \ref{Lambert}, we know that $|A^+|\le d$ for each $A\in \mathcal{A}(G_P)$.
Thus, we get $|z| \ge \mathsf{v}_0(z) + |B^+|/d$, whereas clearly $|z'| \le \mathsf{v}_0(z') + |B^+|$.

Consequently, we have $\rho(B)\le d$, and $\rho(B)=d$ is equivalent to the following:
$|A^+|=d$ for each atom $A \mid z$ and $|A'^+|=1$ for each atom $A'
\mid z'$. It follows that $\mathsf{v}_{-1}(B)=0$, i.e., $B \in
\mathcal{B}(G_P\setminus \{-1\})$. By \cite{An-Ch-Sm94c}, or Lemma
\ref{transfer-to-finite} and \cite[Proposition 6.3.1]{Ge-HK06a}, we
get that $\rho (\mathcal{B}(G_P\setminus \{-1\}))\le
\rho(\mathbb{Z}/d \mathbb{Z}) = d/2 < d$, a contradiction.

It remains to show that $\rho(G_P) \ge d$. We may assume that $0
\notin G_P$. We note the existence of the two atoms $1(-1)$ and
$1^d(-d)$ in $\mathcal{A}(G_P)$. Thus, $1$ and $1^d$ are elements of
$\mathcal{A}(G_P)^+$. Thus, $\rho^{\kappa}(1^d)\ge d$, and the claim
follows by Lemma \ref{rel_lem_eq}.
\end{proof}

Our proofs that the Structure Theorem does not hold rely on the
existence of a single exceptional factorization, yet  the following
example illustrates that sets of lengths can deviate by more than a
single element (or a globally bounded number of elements) from being
an AAMP.

\begin{example}
Let $d, k,l \in \mathbb{N}$ and $e \in [1,d-1]$, and set $B = (e+ k d)( -e +
\ell d)1^{(k+\ell)d}(-1)^{(k+\ell)d}(-d)^{k +\ell}$.
Then
\[ \begin{split} \mathsf{L}(B)=  \{1+ k+ \ell +  (k+\ell)(d-1)\} &  \cup \{1 + e +k+ \ell + i(d-1) \mid i \in [ k, k+\ell-1]\} \\
&  \cup   \{ 2 -e + k + \ell+   i(d-1)   \mid i \in [\ell, \ell+ k]\} \\
& \cup \{ 2 + k + \ell + i(d-1) \mid i \in [0, k +\ell -1]  \}.
 \end{split} \]
\end{example}

\section{Chains of factorizations} \label{7}

In a  large class of monoids and domains satisfying natural
(algebraic) finiteness conditions,  the catenary degree is finite
(see \cite{Ge-HK06a} for an overview and \cite{C-G-L-P-R06,
Ge-Ha08b, C-G-L09, Ka10a} for some recent work). However, the
understanding of the structure of the concatenating chains is still
very limited. Whereas, on the one hand, the finiteness of the
monotone catenary degree  is a rare phenomenon (inside the class of
objects having finite catenary degree), the following two positive
phenomena have been observed. First, in a large class of monoids, all
problems with the monotonicity of concatenating chains occur only at
the beginning and the end of concatenating chains (\cite[Theorem
1.1]{Fo-Ge05}, \cite[Theorem 3.1]{Fo-Ha06b}). Second, in various
settings, there is a large subset consisting of `big' elements
having extremely nice concatenating chains (see \cite[Theorem
4.3]{Ge97d}, \cite[Theorems 7.6.9 and 9.4.11]{Ge-HK06a}).

Let $H$ be a Krull monoid with infinite cyclic class group and  $G_P \subset G$ as always. By Theorem
\ref{main-theorem-II}, it suffices to consider the situation where
$G_P^+$ is infinite and $2 \le |G_P^-| < \infty$. Our first result
points out that, in general, the monotone catenary degree is infinite.
In contrast to this, the main result (Corollary \ref{final-cor})
shows that there is a constant $M^*$ such that, for a large class of
elements $a$, any two factorizations $z$ and $y$ of $a$ with  $y$
having maximal length can be concatenated by a monotone
$M^*$-chain of factorizations and thus, for those factorizations $z$ and $y$ of $a$ neither of which need be of maximal length, there is an $M^*$-chain between $z$ and $y$ which `changes direction' at most once.

\begin{proposition} \label{7.1}
Let $H$ be a Krull monoid and $\varphi \colon H\to \mathcal{F}(P)$ a
cofinal divisor homomorphism into a free monoid such that the class
group $G= \mathcal{C}(\varphi)$ is an  infinite cyclic group that we
identify with $\mathbb Z$. Let $G_P \subset G$ denote the set of
classes containing prime divisors. Suppose that $-d_1,-d_2,d_1d_2\in
G_P$, where $3\leq d_1<d_2$, $\gcd(d_1,d_2)=1$ and $d_1-1\nmid
d_2-1$, and that $G_P$ contains infinitely many positive integers
congruent to $d_1+d_2$ modulo $d_1d_2$. Let $d=\gcd(d_1-1,d_2-1)$.
Then, for every $M,\,N\geq 0$, there exists $a \in H$ and $z,\,z'\in
\ZZ(a)$ such that \ber \label{z'-size}&&|z'|=|z|+d\leq
|z|+d_1-2,\\&&\label{notnearedges} |z|\in [\min \mathsf L (a) + N,
\, \max \mathsf L (a) - N], \mbox{ and}\\
\label{distance-thingy}&&\mathsf d \Bigl(z ,
\bigcup_{i=1}^{|Z|+d_1-2}\ZZ_i(a)\setminus \{z \} \Bigr) > M.\eer In
particular, $\mathsf c_{\mon} (H) = \infty$ and $\delta(H)=\infty$
\end{proposition}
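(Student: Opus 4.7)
The plan is to construct, for each prescribed $M, N \ge 0$, an explicit element $a \in H$ together with factorizations $z, z' \in \mathsf{Z}(a)$ witnessing \eqref{z'-size}--\eqref{distance-thingy}; the ``in particular'' conclusions then follow routinely. The construction adapts Propositions \ref{STSL_prop1} and \ref{STSL_prop2} to the two-negative-element setting. First I would fix $b_1 \in G_P^+$ of the form $b_1 = d_1 + d_2 + k d_1 d_2$ with $k$ very large (permitted by the hypothesis), so that in the divisor-closed submonoid $\mathcal{B}(\{-d_1, -d_2, d_1 d_2, b_1\}) \subset \mathcal{B}(G_P)$ the relevant atoms are the ``long'' atoms $V_i := (-d_i)^{b_1} b_1^{d_i}$ (atoms because $\gcd(d_i, b_1) = 1$), the ``middle'' atoms $U_1 := (d_1 d_2)(-d_1)^{d_2}$ and $U_2 := (d_1 d_2)(-d_2)^{d_1}$, and the ``short'' atoms $A^{(t)} := b_1 (-d_1)^{1 + t d_2}(-d_2)^{1 + (k-t) d_1}$ for $t \in [0, k]$. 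I would then take $a \in H$ with $\boldsymbol{\beta}(a) = B := V_1^v V_2^v U_1^{w_1} U_2^{w_2}$ for positive integers $v, w_1, w_2$ to be fixed in terms of $M, N, k$ (existence via Lemma \ref{lem_char}; by Lemma \ref{3.2} it suffices to argue for $B$).

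Next I would parameterize each factorization of $B$ by $(u_1, u_2, c_1, c_2, (t_j)_{j=1}^e) \in \mathbb{N}_0^4 \times [0,k]^e$ recording the multiplicities of $V_1, V_2, U_1, U_2$ together with the $t$-indices of the $A^{(t)}$-atoms. Matching the multiplicities of $b_1, -d_1, -d_2, d_1 d_2$ in $B$ yields $e = d_1(v - u_1) + d_2(v - u_2)$, $\gamma := w_1 - c_1 = c_2 - w_2$, and $T := \sum_j t_j = (v - u_1)(1 + k d_1) - (v - u_2) + \gamma$, while the length of the factorization works out to $C + (v - u_1)(d_1 - 1) + (v - u_2)(d_2 - 1)$ for a constant $C$ depending only on $B$. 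Hence $\mathsf{L}(a)$ lies in an arithmetic progression of difference $d = \gcd(d_1 - 1, d_2 - 1)$, which for large $v$ spans a wide range. I would then choose $z$ of length in $[\min \mathsf{L}(a) + N, \max \mathsf{L}(a) - N]$ and choose $z'$ of length $|z| + d$; existence of $z'$ follows from the AP structure together with a Bezout representation $d = p(d_1 - 1) + q(d_2 - 1)$, which furnishes a concrete shift of $(u_1, u_2)$.

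The hard part will be the isolation: every factorization $z''$ of $a$ with $|z''| \le |z| + d_1 - 2$ and $z'' \ne z$ must satisfy $\mathsf d(z, z'') > M$. The hypothesis $d_1 - 1 \nmid d_2 - 1$ gives $d < d_1 - 1$, so the admissible length changes in $[-(|z| - 1), d_1 - 2]$ correspond to a bounded set of Bezout-step directions $(\Delta u_1, \Delta u_2)$. For each such nontrivial direction, maintaining $T \in [0, e]$ forces $\gamma$ to shift by $\Omega(k)$ (since $T$ contains the term $-\Delta u_1 \cdot (1 + k d_1)$), so the $U_i$-multiplicities $c_1, c_2$ shift by $\Omega(k)$; and since the residual freedom in the $t_j$'s cannot compensate for $c_1, c_2$-discrepancies, one obtains $\mathsf d(z, z'') \ge \Omega(k)$. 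Taking $k$ large will make this exceed $M$. The main obstacle is establishing this $\Omega(k)$ bound uniformly over all admissible $z''$, which requires careful case-analysis across the full parametrization. Finally, the ``in particular'' conclusions are routine: $\delta(z) \ge \mathsf d(z, \mathsf Z_{|z'|}(a)) > M$ yields $\delta(H) = \infty$, and every monotone chain $z = z_0, \ldots, z_s = z'$ in $\mathsf{Z}(a)$ has each $|z_i| \in \{|z|, |z| + d\}$ by the AP structure, forcing some step to have distance $> M$ and hence $\mathsf c_{\mon}(H) = \infty$.
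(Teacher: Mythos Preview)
Your setup is essentially the same as the paper's: both work in the divisor-closed submonoid $\mathcal B(\{-d_1,-d_2,d_1d_2,L\})$ with a large $L\equiv d_1+d_2\pmod{d_1d_2}$, classify the atoms as the two ``long'' atoms $V_i=L^{d_i}(-d_i)^L$ (the paper's $A_i$), the two ``middle'' atoms $U_i=(d_1d_2)(-d_i)^{d_{3-i}}$ (the paper's $B_i$), and the single-$L$ atoms, and observe that the length of any factorization is $C-(u_1)(d_1-1)-(u_2)(d_2-1)$ for a constant $C$, where $u_i$ is the $V_i$-multiplicity.

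The difference lies in the execution of the isolation step. The paper makes the concrete choice $B=A_1^{d_2N}A_2^{d_1N}B_1^{LN}B_2^{LN}$ and $z$ equal to that very product. It then argues: if $\mathsf d(z,y)\le M$, then passing from $z$ to $y$ changes at most $M$ atoms, and since every non-$L$ atom in $z$ contributes at most $d_2$ copies of $-d_1$, the $L$-containing part of $y$ can absorb at most $Md_2<L$ extra copies of $-d_1$, so the $A_1$-multiplicity $m_1$ of $y$ satisfies $m_1\le d_2N$; symmetrically $m_2\le d_1N$. The length formula then gives $|y|\ge|z|$ with equality forcing $y=z$, and otherwise $|y|\ge|z|+d_1-1$. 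The factorizations $A_2^{2d_1N}B_1^{2LN}$ and $A_1^{2d_2N}B_2^{2LN}$ witness \eqref{notnearedges}.

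Your plan instead leaves $z$ unspecified within a length window and proposes a case analysis over $(\Delta u_1,\Delta u_2)$ using the constraint $T\in[0,ek]$ to force $|\gamma|$ large. Two issues: first, the set of admissible $(\Delta u_1,\Delta u_2)$ with $-\Delta u_1(d_1-1)-\Delta u_2(d_2-1)\in[-M,d_1-2]$ is \emph{not} bounded (take $\Delta u_1=n(d_2-1)/d$, $\Delta u_2=-n(d_1-1)/d$), so the ``bounded set of Bezout-step directions'' claim needs extra input, namely exactly the $-d_i$-budget bound the paper uses. Second, with your symmetric exponent choice $V_1^vV_2^v$, if $w_1,w_2$ are small relative to $b_1$ then $(u_1,u_2)=(v,v)$ is forced in every factorization, so $z$ sits at $\min\mathsf L$ and \eqref{notnearedges} fails; the paper's asymmetric exponents $(d_2N,d_1N)$ together with $w_1=w_2=LN$ are chosen precisely so that $z$ is an interior length. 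Adopting the paper's specific $z$ and its $-d_i$-budget argument closes both gaps cleanly.
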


\begin{proof}
That $\mathsf c_{\mon} (H)=\delta(H) = \infty$ follows from
\eqref{z'-size} and \eqref{distance-thingy}, so we need only show
\eqref{z'-size}, \eqref{notnearedges} and \eqref{distance-thingy}
hold. By Lemma \ref{3.3}, it suffices to prove the assertions for
$\mathcal B (G_P)$. We may also w.l.o.g. assume $$N\geq d_2-1\und
M\geq d_1,$$ as the theorem holding for large values of $M$ and $N$
implies it holding for all smaller values.

\smallskip
In view of the hypotheses, there exists $L\in G_P$ with \ber
\label{that}L&>& d_2M\geq d_1d_2,\\ \label{L-congruences} L\equiv
d_1\mod d_2&\und& L\equiv d_2 \mod d_1.\eer Let $B\in
\B(\{d_1d_2,-d_1,-d_2,L\})\subset \B(G_P)$ be the sequence
\[
B = L^{2d_1d_2N}(-d_2)^{2d_1LN}(-d_1)^{2d_2LN}(d_1d_2)^{2LN} \,.
\]
 Let $$A_1=L^{d_1}(-d_1)^L \und
A_{2}=L^{d_2}(-d_2)^L.$$ Since $\gcd(d_1,d_2)=1$, it follows, in
view of \eqref{L-congruences} and by reducing modulo $d_1$ and
$d_2$, respectively, that $A_1$ and $A_{2}$ are both atoms. Also
define
\[
B_1=(d_1d_2)(-d_1)^{d_2}\und B_{2}= (d_1d_2)(-d_2)^{d_1} \,,
\]
which, since they both contain exactly one positive integer, must
also be atoms. In view of \eqref{L-congruences}, define
\[
A_0=L(-d_2)^{\frac{L-d_1}{d_2}}(-d_1),
\]
which is an atom for the same reasons as those for the $B_i$.

Let  $z\in \mathsf Z (B)$ be given by
\[
z=A_1^{d_2N}  A_{2}^{d_1N}  B_1^{LN}  B_{2}^{LN} \,.
\]
Since $d=\gcd(d_1-1,d_2-1)$, it follows that there exists an integer
$l\in [1,d_2-1]$ such that $$l(d_2-d_1)\equiv -d\mod d_2-1.$$ Let
\be\label{lprime} l' = \frac{l(d_2-d_1)+d}{d_2-1}\in \mathbb{N}.\ee
Then, since $d=\gcd(d_1-1,d_2-1)\leq d_1-1$, it follows that $1\leq
l'\leq l\leq d_2-1$. Note that we have the
identities $$\pi(A_1^{d_2}B_2^L)=\pi(A_2^{d_1}B_1^L)\und
\pi(A_2B_1)=\pi(A_0^{d_2}B_2).$$ Thus, by considering the definition
of $z$ and recalling that $N\geq d_2-1\geq l\geq l'$, we see that
\[
z' = A_1^{d_2N-ld_2}  A_{2}^{d_1N+ld_1-l'}  A_0^{l'd_2}
B_1^{LN+lL-l'}  B_{2}^{LN-lL+l'}
\]
 is another factorization $z'\in
\ZZ(B)$ besides $z$.

Note that $|z'|-|z|=-l(d_2-d_1)+l'(d_2-1)=d$. Moreover, since
$d_1-1\nmid d_2-1$, $d_1<d_2$ and $\gcd(d_1-1,d_2-1)=d$, it follows
that $d<d_1-1$. Thus \eqref{z'-size} holds. Also, the factorizations
$$A_2^{2d_1N}  B_{1}^{2LN}\in \mathsf Z (B)\und
A_{1}^{2d_2N}  B_{2}^{2LN}\in \mathsf Z (B)$$ show that
\[
\min \mathsf L (B)+N \leq \min \mathsf L (B)+(d_2-d_1)N \leq |z|\leq
\max \mathsf L(B)-(d_2-d_1)N \leq \max \mathsf L(B)-N \,,
\]
whence \eqref{notnearedges} holds. It remains to establish
\eqref{distance-thingy}. We begin with the following claim.

\noindent
{\bf Claim 1:} If $A|B$ is an atom with $d_1d_2\in \supp(A)$, then $d_1d_2$ is the only positive element dividing $A$ and $\vp_{d_1d_2}(A)=1$.

Suppose instead that $a \t A (d_1d_2)^{-1}$ with $a\in
\{L,d_1d_2\}$. Then we must have $\vp_{-d_2}(A)<d_1$ and
$\vp_{-d_2}<d_1$, else $(d_1d_2)(-d_1)^{d_2}$ or
$(d_1d_2)(-d_2)^{d_1}$  would be a proper, nontrivial zero-sum
subsequence dividing $A$, contradicting that $A$ is an atom. But now
(in view of \eqref{that}) $$2d_1d_2>-\sigma (A^-)=\sigma (A^+)\geq
a+d_1d_2\geq \min\{L,d_1d_2\}+d_1d_2=2d_1d_2,$$ a contradiction. So
Claim 1 is established.

\bigskip

In view of Claim 1, we see that, in any factorization $y$ of $B$, there
will always be $2LN$ atoms $A$ having $A (d_1d_2)^{-1}$ consisting
entirely of negative terms. Thus the length of any factorization of
$B$ is determined entirely by the number of atoms containing an $L$.
Moreover, by considering sums modulo $d_i$, we find (in view of
\eqref{L-congruences} and $\gcd(d_1,d_2)=1$) that
$(d_1d_2)(-d_1)^{d_2}$ and $(d_1d_2)(-d_2)^{d_1}$ are the only atoms
dividing $B$ which contain $d_1d_2$. As a result, we in fact have
the factorization of $B$ completely determined by how the $2d_1d_2N$
terms equal to $L$ are factored (that is, if $y_L|y$ is the
subfactorization consisting of all atoms containing an $L$, then
$\pi(y_L^{-1}y)$ has a unique factorization, which will always have
length $2LN$). We continue with the next claim.

\noindent
{\bf Claim 2:} If $A|B$ is an atom with $L,-d_1,-d_2\in \supp(A)$, then $\vp_L(A)=1$.

Suppose instead that $L^2|A$. In view of \eqref{L-congruences} and \eqref{that}, both
$\frac{L-d_1}{d_2}$ and $\frac{L-d_2}{d_1}$ are positive integers.
Consequently,  we must have $\vp_{-d_1}(A)<\frac{L-d_2}{d_1}$ and
$\vp_{-d_2}<\frac{L-d_1}{d_2}$, else $$L(-d_1)^{(L-d_2)/d_1}(-d_2)
\; \mbox{ or }\; L(-d_2)^{(L-d_1)/d_2}(-d_1)$$  would be a proper,
nontrivial zero-sum subsequence dividing $A$, contradicting that $A$
is an atom. But now $$2L-d_1-d_2>-\sigma (A^-)=\sigma (A^+)\geq
2L,$$ a contradiction. So Claim 2 is established.

\bigskip

In view of \eqref{L-congruences}, $\gcd(d_1,d_2)=1$ and Claims 1 and
2, we see that if $A|B$ is an atom with $L\in \supp(A)$, then
either\begin{itemize}
\item[(a)] $A=A_1$ and $\vp_{-d_2}(A)=0$,
\item[(b)] $A=A_{2}$ and $\vp_{-d_1}(A)=0$, or
\item[(c)] $\vp_L(A)=1$ and $\vp_{d_1d_2}(A)=0$.\end{itemize}

Let $y\in \mathsf Z(B)$ be a factorization with $\dd(z,y)\leq M$ and
let $y_L|y$  and $z_L|z$ be the corresponding sub-factorizations
consisting of all atoms which contain an $L$. In view of the
definition of $z$, since $\dd(z,y)\leq M$ and $L>d_2M$ (by
\eqref{that}), and since $(d_1d_2)(-d_1)^{d_2}$ is the only atom
containing a $-d_1$ in $z_L^{-1}z$, it follows that
$$\vp_{-d_1}(\pi(y_L))\leq
\vp_{-d_1}(\pi(z_L))+Md_2=d_2NL+Md_2<d_2NL+L;$$ thus the
multiplicity $m_1$ of the atom $A_1$ in $y$ is at most $d_2N$ (since
each such atom $A_1$ requires $L$ terms equal to $-d_1$). Likewise,
$$\vp_{-d_2}(\pi(y_L))\leq \vp_{-d_2}(\pi(z_L))+Md_1=
d_1NL+Md_1<d_1NL+L,$$ whence the multiplicity $m_{2}$ of the
atom $A_{2}$ in $y$ is at most $d_1N$.

Let $m_0$ be the number of atoms dividing $y$ containing exactly one
term $L$. Hence, since all atoms containing an $L$ must be of one of
the three previously described forms, it follows that
\be\label{tist}d_1m_1+d_2m_{2}+m_0=\vp_{L}(B)=2d_1d_2N.\ee  Let
$m'_0$, $m'_1$ and $m'_2$ be analogously defined for $z$ instead of
$y$. Then $m'_0=0$, $m'_1=d_2N$ and $m'_2=d_1N$. In view of
\eqref{tist} and the comments after Claim 1, and since $m_1\leq
d_2N=m'_1$ and $m_{2}\leq d_1N=m'_2$, it follows that $$|y|=
|z|+(m'_1-m_1)(d_1-1)+(m'_2-m_2)(d_2-1)\geq |z|;$$ moreover, unless
$m_1=m'_1$ and $m_{2}=m'_2$, then $|y|\geq |z|+d_1-1$. On the other
hand, if $m_1=m'_1=d_2N$ and $m_{2}=m'_2=d_1N$, then $m_0=0$ (in
view of \eqref{tist}), whence $z_L=y_L$ (recalling that all atoms
containing an $L$ must be of one of the three previously described
forms), from which $z=y$ follows by the comments after the proof of
Claim 1. Consequently, we conclude that $\dd(z,y)\leq M$ implies
either $y=z$ or $|y|\geq |z|+d_1-1$, which establishes
\eqref{distance-thingy}, completing the proof.
\end{proof}

The following lemma helps describe when an atom can contain more than one positive term.

\begin{lemma}\label{breakapart-lemma}
Let $G_0\subset \Z$ be a condensed set such that  $G_0^-$ is
finite and nonempty. Let $M=|\min G_0|$, let $U\in \mathcal A(G_0)$ and let $R|U^-$ be
the subsequence consisting of all negative integers with
multiplicity at least $M-1$ in $U$. Suppose there is some $L\in \Sigma(U^+)\setminus \{\sigma(U^+)\}$ such that
\be\label{hyplem}|U^+|\geq 2,\quad L \geq (M-1)^2, \und
\sigma(U^+)\geq L+(M-1)^2.\ee Then the following statements hold:
\begin{enumerate}
\item There is some $a\in \supp(U)\cap G_0^-$ with $\vp_{a}(U)\geq M-1$,
      i.e., $R$ is nontrivial.

\smallskip
\item For any such $a\in \supp(R)$, we have $(-L+a\Z)\cap
\Sigma(U^-)=\emptyset$.

\smallskip
\item There exists a subsequence $R'|U^-$ with $R|R'$ such that
      $L\notin \langle \supp(R')\rangle=n\Z$ and $|{R'}^{-1}U^-|\leq n-2$; in particular, $\supp(R)\subset \supp(R')\subset n\Z$ does not generate $\Z$.
\end{enumerate}
\end{lemma}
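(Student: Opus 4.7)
The plan is to prove the three parts in order, leveraging the atom property of $U$ throughout. Since $|U^+|\geq 2$ and $L\in \Sigma(U^+)\setminus \{\sigma(U^+)\}$, there is a proper nontrivial $T\mid U^+$ with $\sigma(T)=L$, and atomicity forces $-L\notin \Sigma(U^-)$ (any $S\mid U^-$ with $\sigma(S)=-L$ would yield a proper nontrivial zero-sum subsequence $TS$ of $U$); symmetrically $-L'\notin \Sigma(U^-)$ for $L':=\sigma(U^+)-L\geq (M-1)^2$. For Part 1, I would argue by contraposition: assuming $\vp_a(U^-)\leq M-2$ for every $a\in \supp(U^-)$, I construct $S\mid U^-$ with $\sigma(S)=-L$ via a greedy partial-sum descent through an ordering of $U^-$, whose terms lie in $[-M,-1]$ so that consecutive jumps have size at most $M$; the buffers $L,L'\geq (M-1)^2$ ensure $-L$ is well-interior to $[\sigma(U^-),0]$, and the low-multiplicity assumption prevents partial sums from being confined to a residue class that skips over $-L$, yielding the required contradiction.

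For Part 2, fix $a\in \supp(R)$, so $\vp_a(U^-)\geq M-1$, and suppose for contradiction $\sigma(S)=-L+ka$ for some $k\in \Z$ and $S\mid U^-$. The direct observation is that $Sa^{-k}$ is a valid subsequence of $U^-$ with $\sigma(Sa^{-k})=-L$ whenever $\vp_a(S)-k\in [0,\vp_a(U^-)]$, contradicting the preliminary. Among all such $S$, I would choose one with $|k|$ minimal; the invariant $k-\vp_a(S)$ is preserved under the shifts $S\mapsto Sa^j$, so minimality forces the extremal configurations $\vp_a(S)=0$ (when $k>0$) or $\vp_a(S)=\vp_a(U^-)$ (when $k<0$). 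In these stuck cases I plan to derive the contradiction by combining the atom property applied to the complementary positive split $T'\mid U^+$ with $\sigma(T')=L'$ (giving $-L'\notin \Sigma(U^-)$), the corresponding analysis for $S^c=U^-S^{-1}$ (which is stuck dually with respect to $L'$, since $\vp_a(S^c)=\vp_a(U^-)-\vp_a(S)$ flips the extremality and $\sigma(S^c)=-L'-ka$), and a finer swap argument exploiting multiples of $a$ in $\supp(S)$ to further reduce $|k|$ beyond mere $a$-shifts.

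For Part 3, I build $R'$ greedily. Let $n_0=\gcd(\supp(R))$. By Part 2, $\Sigma(U^-)\cap (-L+a\Z)=\emptyset$ for each $a\in \supp(R)$; in particular, single-term subsums give $a\nmid b+L$ for every $a\in \supp(R)$ and $b\in \supp(U^-)$. If $n_0\nmid L$, take $R'$ to consist of all $U^-$-terms divisible by $n_0$ and set $n=n_0$; otherwise, iteratively adjust $R'$ by including or excluding carefully chosen terms to enlarge the generator $n$ to a proper divisor of $n_0$ not dividing $L$. The bound $|R'^{-1}U^-|\leq n-2$ comes from a residue-class pigeonhole modulo $n$ on the leftover: each leftover residue must avoid both $0\bmod n$ (absorbed into $R'$ by construction) and $-L\bmod n$ (excluded by Part 2 extended to the generated subgroup), leaving at most $n-2$ admissible residues and bounding the leftover length accordingly. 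The main obstacle is the delicate stuck-case analysis in Part 2, where the minimality argument alone is insufficient and additional structural observations on the relationship between $\supp(S)$ and multiples of $a$ are required to close the final case.
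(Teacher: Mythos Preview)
Your argument for Part 3 has a genuine gap. The residue-class pigeonhole does not yield the bound $|R'^{-1}U^-| \le n-2$: even if every leftover term avoided the residues $0$ and $-L$ modulo $n$, that restricts only the \emph{set of residues} occupied, not the \emph{number of terms}, since a single residue class can be hit with arbitrary multiplicity. (There is also a problem one step earlier: from Part 2 you know $b \not\equiv -L \pmod a$ for each $a \in \supp(R)$, but since $n \mid a$ rather than $a \mid n$, this does not imply $b \not\equiv -L \pmod n$.) The paper's proof of Part 3 is not elementary: it fixes a single $a \in \supp(R)$, passes to $\Z/a\Z$, observes via Part 2 that $\phi_a(-L) \notin \Sigma(\phi_a(U^-))$, and then invokes the Partition Theorem of \cite{Gr05b} (applied to the sequence $\phi_a(a^{-\vp_a(U^-)}U^-)\,0^{y}$ with $y$ summands) to extract the subgroup $n\Z/a\Z$ containing $\supp(R')$ together with the bound on the exceptional part. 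There is no evident elementary substitute for this step.

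For Part 2, you correctly isolate the stuck cases but do not close them; the ``finer swap argument exploiting multiples of $a$'' is a hope rather than a proof. The paper sidesteps the impasse by a different choice of $S$: rather than minimizing $|k|$, it takes $S \mid U^-$ that is \emph{zero-sum free modulo $a$} with $\sigma(S) \equiv -L \pmod a$. Such $S$ automatically avoids the term $a$ and has $|S| \le |a|-1$, which forces $|\sigma(S)| \le (M-1)^2 \le L$ a priori---precisely the control your minimal-$|k|$ selection lacks. One then writes $S^{-1}U^- = S_0 S_1 \cdots S_t \, a^{\vp_a(U^-)}$ with $S_0$ zero-sum free and each $S_i$ ($i\ge 1$) a minimal zero-sum modulo $a$, locates a prefix $S S_1 \cdots S_{t'}$ with absolute sum in $[L - |a|(M-1),\,L]$, and appends at most $M-1 \le \vp_a(U^-)$ copies of $a$ to reach $-L$ exactly, contradicting atomicity. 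Finally, for Part 1 the paper uses a one-line pigeonhole from $\sigma(U^+) \ge (M-1)^2 > (M-2)|G_0^-|$; your greedy-descent plan is unnecessary and, as stated, does not obviously succeed (for instance, if $\supp(U^-) \subset d\Z$ with $d \nmid L$, no partial sum equals $-L$ regardless of multiplicities, so the contrapositive fails).
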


\begin{proof}
1. Let $U_L|U^+$ be a proper subsequence with sum equal to $L$. Note
that $|G_0^-|\leq M$. Thus $\sigma(U^+)\geq L\geq(M-1)^2> (M-2)|G_0^-|$, whence the
pigeonhole principle implies that there is some $a\in \supp(U)\cap
G_0^-$ with $\vp_{a}(U)\geq M-1$.
\medskip

2. Let $a|U^-$ with $\vp_{a}(U)\geq M-1$ and let $\phi_a \colon \Z\rightarrow \Z/a\Z$ denote the natural
homomorphism. We say that a sequence $T$ is a zero-sum sequence (zero-sum free, resp.) modulo $a$ if  $\phi_a(T)\in \mathcal{F}(\Z/a\Z)$ has the respective property.
 Suppose $(-L+a\Z)\cap \Sigma(U^-)$ is nonempty and let $S$ be a
zero-sum free modulo $a$ subsequence  $S|U^-$ (possibly trivial)
with $\sigma(S)\equiv -L\mod a$. Note that any zero-sum free modulo
$a$ subsequence $T|U^-$ has length at most $\mathsf D(\Z/a\Z)-1=|a|-1$ \cite[Theorem 5.1.10]{Ge-HK06a},
and thus \be\label{zsf-bound}|\sigma(T)|\leq (|a|-1)\cdot |\min \bigl(
(\supp(U)\cap G_0^-)\setminus \{a\} \bigr)|\leq (M-1)^2\leq L;\ee in
particular, $|\sigma(S)|\leq (M-1)^2\leq L$.

Now factor $S^{-1}U^-=S_0S_1 \cdot \ldots \cdot
S_ta^{\vp_{a}(U^-)}$, where $S_0$ is zero-sum free modulo $a$
and each $S_i$, for $i\geq 1$, is an atom modulo $a$. In view of
$|\sigma(S_0)|\leq (M-1)^2$ (from \eqref{zsf-bound}) and the
hypothesis $\sigma(U^+)\geq L+(M-1)^2$, we have
\be\label{tlot}|\sigma(SS_1\cdot\ldots\cdot S_t a^{\vp_{a}(U^-)})|=|\sigma(S_0^{-1}U^-)|\geq L.\ee If $|\sigma(SS_1\cdot \ldots
\cdot S_t)|\leq L$, then it follows, in view of \eqref{tlot} and the definitions of $S$ and the $S_i$, that we can append on to $SS_1\cdot\ldots\cdot S_t$ a sufficient number of terms equal to $a$ so as to obtain a subsequence $B_L|S_0^{-1}U^-$
with $SS_1\cdot\ldots\cdot S_t|B_L$ and $\sigma(B_L)=-L$, and now $U_LB_L|U$ is a proper,
nontrivial zero-sum subsequence, contradicting that $U$ is an atom.
Therefore $|\sigma(SS_1\cdot \ldots \cdot S_t)|> L$, and let $t'<t$
be the maximal non-negative integer such that $|\sigma(SS_1\cdot
\ldots \cdot S_{t'})|\leq  L$, which exists in view of
$|\sigma(S)|\leq (M-1)^2\leq L$. By its maximality, we have
\be\label{luckstack}|\sigma(S_1\cdot \ldots \cdot S_{t'})|>
L-|\sigma(S)|-|\sigma(S_{t'+1})|\geq L-|\sigma(S)|-|a|M,\ee where the
second inequality follows by recalling that  $S_{t'+1}$ is an atom
modulo $a$ and thus has length at most $\mathsf D(\Z/a\Z)=|a|$. From the
definitions of all respective quantities, both the left and right
hand side of \eqref{luckstack} is divisible by $a$, whence
\[
|\sigma(S_1\cdot \ldots \cdot S_{t'})|\geq L-|\sigma(S)|-|a|(M-1) \,.
\]
But now we see, in view of $\vp_{a}(U)\geq M-1$ and the definition of $t'$, that we can append on to $SS_1\cdot\ldots\cdot S_{t'}$ a sufficient number of terms equal to $a$ so as to obtain a subsequence $B_L|S_0^{-1}U^-$ with $SS_1\cdot\ldots\cdot S_{t'}|B_L$ and
$\sigma(B_L)=-L$, once again contradicting that $U$ is an atom. So
we conclude that $(-L+a\Z)\cap \Sigma(U^-)$ is empty.

\medskip
3. In view of part 2, we see that \be\label{lizzard}-L\notin \langle
a\rangle +\Sigma(U^-).\ee Now, if $|a^{-\vp_a(U^-)}U^-|\leq |a|-2$,
then $\supp(R)=\{a\}$ (recall $|a|\leq M$ and $\vp_g(R)\geq M-1$ for
all $g\in \supp(R)$) and the final part of the lemma holds with
$R'=R$ in view of \eqref{lizzard}. Therefore we may assume
$y=|a^{-\vp_a(U^-)}U^-|\geq |a|-1$. Note that \eqref{lizzard}
implies that \[\phi_a(-L)\notin
\Sigma_y(\phi_a(a^{-\vp_a(U^-)}U^-)0^y)=\Sigma(\phi_a(U^-))\neq
\Z/a\Z.\] As a result, applying the Partition Theorem (see
\cite[Theorem 3]{Gr05b}) to $\phi_a(a^{-\vp_a(U^-)}U^-)0^y$, now
yields part 3 (to be more precise, we apply that result with
sequences $S = S' = \phi_a(a^{-\vp_a(U^-)}U^-)0^y$ and number of
summands $n=y$; also note that the resulting coset from the
Partition Theorem must be a subgroup in view of the high
multiplicity of $0$ and that $R|R'$ since $\vp_g(R)\geq M-1>|a|-2$
for all $g\in \supp(R)$).
\end{proof}

Before stating the next result, we need to first introduce some
notions. Let $G_0\subset \Z\setminus \{0\}$ be a condensed set
such that $G_0^-$ is finite and nonempty, and let $B\in \B(G_0)$. If
$z=A_1\cdot\ldots\cdot A_n\in \ZZ(B)$, with $A_i\in \mathcal
A(G_0)$, then we let
$$z^+=A_1^+\cdot\ldots\cdot A_n^+\in  \Fc(\mathcal
A(G_0)^+)$$ and $\ZZ(B)^+ = \{z^+ \mid z \in \ZZ(B) \}$. We can then define a partial order on
$\ZZ(B)^+$ by declaring, for $z^+,\, y^+ \in \ZZ(B)^+$, that $z^+ \le y^+$ when $z^+=A_1^+\cdot\ldots\cdot A_n^+\in
\ZZ(B)^+$, where $A_i\in \mathcal A(G_0)$,
\ber\nn y=(B_{1,1}\cdot
\ldots \cdot B_{1,k_1})\cdot (B_{2,1}\cdot \ldots \cdot
B_{2,k_2})\cdot \ldots \cdot (B_{n,1}\cdot\ldots \cdot
B_{n,k_n})\\ \;\mbox{ with }\;B_{j,i}\in \mathcal A(G_0)\;\mbox{
and }\;A_j^+ = B_{j,1}^+ \cdot \ldots \cdot B_{j,k_j}^+\; \mbox{ for
} \; j\in [1,n]\;\mbox{ and }\; i\in [1,k_j]. \nn \eer
We then define $\Upsilon(B)$ to be all those factorizations $z\in \ZZ(B)$
for which $z^+\in \ZZ(B)^+$ is maximal with respect to this partial
order.

Note that, if $z,\,y\in \ZZ(B)$ with $z^+\lneqq y^+$, then $|z|<|y|$.
Thus $\Upsilon(B)$ includes all factorizations $z\in \ZZ(B)$
of maximal length $|z|=\max \mathsf L(B)$, and  equality holds,
namely \be\label{Upsilon-equivalence}\Upsilon(B)=\{z\in \ZZ(B)\mid
|z|=|B^+|\},\ee when $\max \mathsf L(B)=|B^+|$.
If $H$ is a Krull monoid, $\varphi \colon H \to \mathcal F (P)$ a
cofinal divisor homomorphism and $a \in H$, then we define
\[
\Upsilon (a) = \{ z \in \mathsf Z (a) \mid \overline{\boldsymbol
\beta} (z) \in \Upsilon ( \boldsymbol \beta (a)) \} \,.
\]

For a pair of monoids $H\subset D$, we recall the definition of the
\emph{relative Davenport constant}, originally introduced in \cite
{Ge97c} and denoted $\mathsf D(H,D)$, which is the minimum $N\in
\mathbb N\cup \{\infty\}$ such that if $z\in \ZZ(D)=\mathcal
F(\mathcal A(D))$ with $\pi(z)\in H$, then there exists $z'|z$ with
$\pi(z')\in H$ and $|z'|\leq N$.

Next, we introduce two new monoids associated to $\mathcal F(G_0)$.
We assume that $\emptyset \neq G_0\subset \Z \setminus \{0\}$, yet here we do not assume that $G_0$ is condensed.
Consider the free monoid $\mathcal F(G_0)\times \mathcal F(G_0)$ and
let $$\mathcal E(G_0)=\{(S_1,S_2)\in \mathcal F(G_0)\times \mathcal
F(G_0)\mid \sigma(S_1)=\sigma(S_2)\}\subset \mathcal F(G_0)\times
\mathcal F(G_0)$$ the subset of pairs of sequences with equal sum and $$\mathcal S(G_0)=\{(S_1,S_2)\in \mathcal
F(G_0)\times \mathcal F(G_0)\mid S_1=S_2\}\subset \mathcal E(G_0)\subset
\mathcal F(G_0)\times \mathcal F(G_0)$$ the subset of symmetric pairs.
Note both $\mathcal E(G_0)$
and $\mathcal S(G_0)$ are monoids; furthermore, $\mathcal S(G_0)$ is
saturated  and cofinal in $\mathcal E(G_0)$, and $\mathcal E(G_0)$ is saturated
and cofinal in $\mathcal F(G_0)\times \mathcal F(G_0)$. Thus, if we let $G'$
denote the class group of the inclusion  $\mathcal
S(G_0)\hookrightarrow \mathcal E(G_0)$ and let $$G_0'=\{[u]\in G'\mid
u\in \mathcal A(\mathcal E(G_0))\}\subset G',$$ then \cite[Lemma
4.4]{Ge97c} shows that (recall that, due to the cofinality,
the definition of the class group in that paper is equivalent to the present one)
\be\label{teent}\mathsf D(\mathcal
S(G_0),\mathcal E(G_0))=\mathsf D(G_0').\ee Note that, if $(S_1,S_2)\in
\mathcal A(\mathcal E(G_0))$, then $S_1(-S_2)\in \mathcal A(G_0\cup
-G_0)$, whence $|S_1|+|S_2|\leq \mathsf D (G_0\cup -G_0)$; by
\cite[Theorem 3.4.2.1]{Ge-HK06a}, we know that, for a finite subset
$P$ of an abelian group, we have both $\mathsf D(P)$ and $\mathcal
A(P)$ finite. Consequently, if $G_0$ is finite, then $\mathsf
D(G_0\cup -G_0)$ is finite, whence $\mathcal A(\mathcal E(G_0))$ is
finite, which in turn implies $G_0'$, and hence also $\mathsf
D(G_0')$, is finite. Therefore, in view of \eqref{teent}, we conclude
that \be\label{relative-D-is-finite} \mathsf D(\mathcal
S(G_0),\mathcal E(G_0))<\infty\ee for $G_0$ finite.

\medskip
\begin{theorem}\label{prop-chain}
Let $H$ be a Krull monoid and $\varphi \colon H\to \mathcal{F}(P)$ a
cofinal divisor homomorphism into a free monoid such that the class
group $G= \mathcal{C}(\varphi)$ is an  infinite cyclic group that we
identify with $\mathbb Z$. Let $G_P \subset G$ denote the set of
classes containing prime divisors, and suppose that $G_P^-$ is
finite. Let $a \in H$ and $M=|\min (\supp( \boldsymbol \beta (a)))|$.
\begin{enumerate}
\item   For any factorization $z\in \ZZ(a)$, there exists a factorization $y \in \Upsilon (a)$  and a chain of
        factorizations $z=z_0,\ldots,z_r=y$ of $a$ such that
        \[
        |z|=|z_0|\leq \cdots\leq |z_r|=|y|\und \mathsf d(z_i,z_{i+1})\leq \max\{M\cdot  \mathsf D(\mathcal S(G_P^-),\mathcal E(G_P^-)),2\}<\infty
        \]
        for all $i\in [0,r-1]$; in fact $\overline{\boldsymbol \beta} (z_0)^+ \leq \overline{\boldsymbol \beta} (z_1)^+ \leq
        \ldots \leq  \overline{\boldsymbol \beta} (z_r)^+$, where $\leq$ is the partial order from the definition of
        $\Upsilon( \boldsymbol \beta (a))$.

\smallskip
\item For any two factorizations $z,\,y\in \Upsilon(a)$ with $\overline{\boldsymbol \beta} (z)^+ =
      \overline{\boldsymbol \beta} (y)^+$, there exists a
      chain of factorizations $z=z_0,\ldots,z_r=y$ of $a$ such that
      \[
      \overline{\boldsymbol \beta} (z)^+ = \overline{\boldsymbol \beta} (z_i)^+ = \overline{\boldsymbol \beta} (y)^+\und \mathsf{d}(z_i,z_{i+1})\leq \max \{\mathsf D(\mathcal S(G_P^-),\mathcal E(G_P^-)),2\}<\infty
      \]
      for all $i\in [0,r-1]$; in particular, $|z|=|z_i|=|y|$ for all $i\in [0,r]$.
\end{enumerate}
\end{theorem}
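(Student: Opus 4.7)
The plan is to first establish Part 2, which contains the key combinatorial mechanism, and then leverage the same swap construction for Part 1 with an added refinement layer. By Lemma \ref{3.3}, the block homomorphism $\boldsymbol\beta \colon H \to \mathcal B(G_P)$ is a transfer homomorphism with catenary degree in fibers at most $2$. Hence, if chains of distance bound $D$ are constructed in $\ZZ(\mathcal B(G_P))$, they lift via Lemma \ref{3.2} to chains in $\ZZ(H)$ of distance at most $\max\{D,2\}$ by inserting length-$\leq 2$ sub-chains within each fiber. We therefore assume $H = \mathcal B(G_P)$ and write $B = \boldsymbol\beta(a)$.

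\textbf{Part 2.} Pair the atoms so that $z = A_1 \cdots A_n$ and $y = B_1 \cdots B_n$ with $A_j^+ = B_j^+$ for each $j$, which is possible because $\overline{\boldsymbol\beta}(z)^+ = \overline{\boldsymbol\beta}(y)^+$ as elements of $\mathcal F(\mathcal A(G_P)^+)$. The pairs $(A_j^-, B_j^-) \in \mathcal E(G_P^-)$ multiply to $(B^-, B^-) \in \mathcal S(G_P^-)$. Fix atomic factorizations $(A_j^-, B_j^-) = \prod_i u_{j,i}$ with $u_{j,i} = (C_{j,i}, D_{j,i}) \in \mathcal A(\mathcal E(G_P^-))$; the classes $[u_{j,i}] \in G_0'$ sum to $0$ in $G'$, so by \eqref{teent} a minimal nonempty zero-sum subset $K$ among the non-$\mathcal S$ atoms satisfies $|K| \leq N := \mathsf D(\mathcal S(G_P^-),\mathcal E(G_P^-))$. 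Setting $C_j^* = \prod_{(j,i) \in K} C_{j,i}$ and $D_j^* = \prod_{(j,i) \in K} D_{j,i}$, the swap $A_j^{(1)} := A_j^+ \cdot A_j^- \cdot (C_j^*)^{-1} \cdot D_j^*$ preserves zero-sum and the global product, using $\sigma(C_j^*) = \sigma(D_j^*)$ and $\prod_j C_j^* = \prod_j D_j^*$. Crucially, the hypothesis $z \in \Upsilon(B)$ forces each $A_j^{(1)}$ to be atomic, for a nontrivial factoring of $A_j^{(1)}$ would produce a factorization of $B$ whose $+$-part strictly refines $z^+$, contradicting maximality. Hence $z^{(1)} := \prod_j A_j^{(1)} \in \Upsilon(B)$ with $\overline{\boldsymbol\beta}(z^{(1)})^+ = \overline{\boldsymbol\beta}(z)^+$ and $\dd(z, z^{(1)}) \leq |K| \leq N$. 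Substituting the $\mathcal S$-atoms of $(D_j^*, D_j^*)$ for those in $K$ gives an atomic factorization of $\prod_j (A_j^{(1),-}, B_j^-)$ with strictly fewer non-$\mathcal S$ atoms, so the iteration terminates precisely when $z^{(t)} = y$.

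\textbf{Part 1.} Pick any $y \in \Upsilon(B)$ with $y^+ \geq z^+$ in the partial order; such $y$ exists by walking up any maximal chain in the finite poset $\ZZ(B)^+$ starting from $z^+$. Writing $y = \prod_j \prod_i B_{j,i}$ with $A_j^+ = \prod_i B_{j,i}^+$, apply the swap procedure of Part 2 to the blocked pairs $(\sigma_j^{(t)}, \prod_i B_{j,i}^-) \in \mathcal E(G_P^-)$, where $\sigma_j^{(t)}$ is the combined negative part of the atoms of $z^{(t)}$ currently occupying block $j$. Without $\Upsilon$-maximality of $z$, the zero-sum sequence $\tilde A_j^{(t+1)} = A_j^+ \cdot \sigma_j^{(t+1)}$ at a touched block may fail to be atomic and must then be factored into several atoms of $\mathcal B(G_P)$, strictly refining $z^+$ at that block. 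Since each block contributes at most $M = |\min(\supp(B))|$ atoms before and after by Lemma \ref{Lambert}, and at most $|K| \leq N$ blocks are touched per swap, we obtain $\dd(z^{(t)}, z^{(t+1)}) \leq M \cdot N$; lengths are non-decreasing and $+$-parts ascend in the poset. The measure argument from Part 2 ends the inner loop when $\sigma_j = \prod_i B_{j,i}^-$ for every $j$; each $j$ with $k_j \geq 2$ then forces $\tilde A_j = A_j^+ \prod_i B_{j,i}^-$ to split (it has length $k_j \geq 2$ as a zero-sum sequence), yielding a strictly finer $+$-part. Repeating with updated targets terminates in the finite poset, delivering some $y^* \in \Upsilon(B)$.

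\textbf{Main obstacle.} The most subtle step will be showing in Part 2 that the swapped atoms $A_j^{(1)}$ cannot silently decompose: this is precisely where the $\Upsilon$-maximality of $z$ enters, via a direct contradiction with the partial order on $\ZZ(B)^+$. In Part 1, the extra challenge is to ensure that the outer iteration produces a genuinely monotone (non-decreasing length) chain with uniformly bounded step distance; the bound $M \cdot N$ arises from combining "at most $N$ touched blocks" (from the relative Davenport constant) with "at most $M$ atoms per block" (from Lemma \ref{Lambert}), and termination follows from tracking the non-$\mathcal S$-atom count inside each blocked pair.
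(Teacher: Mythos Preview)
Your approach is essentially the paper's: the same reduction to $\mathcal B(G_P)$ via Lemma~\ref{3.3}, the same swap mechanism built from atoms of $\mathcal E(G_P^-)$ whose classes in $G'$ form a zero-sum, the same use of $\mathsf D(\mathcal S(G_P^-),\mathcal E(G_P^-))$ to bound the number of touched blocks, and the same $M$-atoms-per-block estimate via Lemma~\ref{Lambert}. Your Part~2 is correct and matches the paper; in particular, the observation that $\Upsilon$-maximality forces each $A_j^{(1)}$ to remain atomic is exactly how the paper obtains the sharper bound $N$ (rather than $MN$) in that case.

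There is, however, a gap in your Part~1, in the assertion that ``$+$-parts ascend in the poset'' along the inner loop. After a swap you re-factor the \emph{entire} block $\tilde A_j^{(t+1)} = A_j^+ \cdot \sigma_j^{(t+1)}$ from scratch. If block $j$ had already split at some earlier step $s<t$, then $(z^{(t)})^+$ already carries a nontrivial refinement of $A_j^+$ at that block; your fresh factorization of $\tilde A_j^{(t+1)}$ is only guaranteed to refine $A_j^+$, not the finer partition present at step $t$. Hence $(z^{(t+1)})^+$ can be incomparable to $(z^{(t)})^+$, and $|z^{(t+1)}|$ can even drop below $|z^{(t)}|$. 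Your termination measure (the non-$\mathcal S$-atom count for the pairs $(\sigma_j^{(t)},\prod_i B_{j,i}^-)$) still decreases, but the chain you produce need not be monotone in the partial order, which is part of the conclusion.

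The fix is the paper's organization: perform a single swap $z\to z_1$ and split into two cases. If $z^+ = z_1^+$, then every touched block stayed a single atom (so $|z_j'|=1$ for all $j$), the block structure is preserved, and you continue toward the \emph{same} target $y$ with strictly smaller non-$\mathcal S$-atom count. If some block split, then $z^+ < z_1^+$; you now \emph{restart}, taking the atoms of $z_1$ as the new blocks and choosing a fresh target $y'\in\Upsilon(B)$ with $y'^+ \ge z_1^+$. Since the poset $\ZZ(B)^+$ is finite, this outer restart terminates. With this adjustment your argument is correct and coincides with the paper's.
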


\begin{proof}
We set $B = \boldsymbol \beta (a)$. By Lemma \ref{3.3}, it suffices
to prove the assertion for $\mathcal B (G_P)$ and $B$. As $0$ is a
prime divisor of $\mathcal B(G_P)$, we may w.l.o.g. assume $0\notin
\supp(B)$.

Note $\mathsf D(\mathcal S(G_P^-),\mathcal E(G_P^-))<\infty$ follows from
\eqref{relative-D-is-finite}. Also, for $z_i,\,z_{i+1}\in \ZZ(S)$, we have
$|z_i|\leq |z_{i+1}|$ whenever $z_i^+\leq z_{i+1}^+$, and
$|z_i|=|z_{i+1}|$ whenever $z_i^+=z_{i+1}^+$ (where  $\leq$ is the
partial order from the definition of $\Upsilon(B)$). Let $z\in
\ZZ(B)$ and let $y\in \Upsilon(B)$ with $z^+\leq y^+$. We will
construct a chain of factorizations $z=z_0,\ldots,z_r$ of $B$ such
that $z_i^+\leq z_{i+1}^+$, either $z_r=y$ or $z^+<z_r^+$,  and \ber\label{ippi}\mathsf
d(z_i,z_{i+1})&\leq& M\cdot\mathsf D(\mathcal S(G_P^-),\mathcal E(G_P^-))<\infty\;\mbox{ (when } z_i^+<z_{i+1}^+\mbox{
)} \\ \mathsf d(z_i,z_{i+1})&\leq& \mathsf D(\mathcal S(G_P^-),\mathcal E(G_P^-))<\infty\;\mbox{ (when }z_i^+=z_{i+1}^+\mbox{
)},\label{ippy}\eer for $i\in [0,r-1]$. Since both parts of the
proposition follow by iterative application of this statement, the proof will be complete
once we show the existences of such a chain of factorizations
$z=z_0,\ldots,z_r=y$.

Since $z^+\leq y^+$, we have \ber\nn z&=&A_1\cdot \ldots \cdot A_n\\
\nn y&=&(B_{1,1}\cdot \ldots\cdot B_{1,k_1})\cdot (B_{2,1}\cdot
\ldots \cdot B_{2,k_2})\cdot \ldots \cdot (B_{n,1}\cdot\ldots \cdot
B_{n,k_n})\eer with $A_j,\,B_{j,i}\in \mathcal A(G_0)$ and
$A_j^+=B_{j,1}^+ \cdot \ldots \cdot B_{j,k_j}^+$, for $j\in [1,n]$
and $i\in [1,k_j]$. Then $A_j^+=B_{j,1}^+ \cdot \ldots \cdot B_{j,k_j}^+$ and $\sigma(A_j)=\sigma(B_{j,i})=0$, for all $j$ and $i$. Thus, for $j\in [1,n]$,  let
\[
T_j= (A^-_j , (B^-_{j,1}\cdot \ldots \cdot B^-_{j,k_j}))\in \mathcal E(G_P^-).
\]
For each $j\in [1,n]$, let
$$T_{j,1}\cdot \ldots\cdot T_{j,l_j}\in \ZZ(\mathcal E(G_P^-))$$ be a factorization of $T_j$ with each $T_{j,i}\in \mathcal A(\mathcal E(G_P^-))$. Now let \be\label{tiddybit}T=\prod_{j=1}^n \prod_{i=1}^{l_j}T_{j,i}\in
\ZZ(\mathcal E(G_P^-)).\ee However, since $z,\,y\in \ZZ(B)$
both factor the same element $B$, we in fact have $$\pi(T)\in
\mathcal S(G_P^-).$$ Let $T=T'T''$ where $T'|T$ is the maximal length sub-factorization with all atoms dividing $T'$ from $\mathcal S(G_P^-)$.

If $T''=1$, then
$A_j=\prod_{i=1}^{k_j}B_{j,i}$ for every $j\in [1,n]$. In view of $A_j,B_{j,i} \in \mathcal{A}(G_P)$, we get $k_j=1$ for every $j \in [1,n]$, that is $z=y$, and so there is nothing to
show.
Therefore we may assume $T''$ is nontrivial and proceed by induction
on $|z|$ and then $|T''|$, assuming \eqref{ippi} and \eqref{ippy} hold
for $z'\in \ZZ(B)$ when $z^+< {z'}^+$ or when $z^+={z'}^+$ and
$|R''|<|T''|$, where $R''$ is defined for $z'$ as $T''$ was for $z$.

Let $W=\prod_{j\in J} \prod_{i\in I_j}T_{j,i}$ be a nontrivial subsequence of $T''$, where $J\subset [1,n]$ and $I_j\subset [1,l_j]$ for $j\in J$, such that $\pi(W)\in \mathcal S(G_P^-)$. Note, since $\pi(T')\in \mathcal S(G_P^-)$ (by definition) and since $\pi(T)\in \mathcal S(G_P^-)$ (by \eqref{tiddybit}), we have $\pi(T'')\in \mathcal S(G_P^-)$, whence we may w.l.o.g. assume $|W|\leq \mathsf D(\mathcal S(G_P^-),\mathcal E(G_P^-))$ (in view of the definition of the relative Davenport constant). Write $W=\prod_{j\in J} W_j$ with each $W_j=\prod_{i\in I_j}T_{j,i}\in \ZZ(\mathcal E(G_P^-))$. Moreover, for $j\in J$, let $\pi(W_j)=(X_j,Y_j) \in \mathcal{E}(G_P^-)$.

Define a new factorization $z_1=z_1'\cdot\ldots\cdot z_n'\in
\ZZ(G_P^-)$ by letting $z_j'=A_j$ for $j\notin J$ and letting
$z_j'\in \ZZ(A_jX_j^{-1}Y_j)$ for $j\in J$---by construction $X_j$
is a subsequence of $A_j$, and since $(X_j, Y_j)\in \mathcal
E(G_P^-)$, we have $\sigma(X_j)=\sigma(Y_j)$, and thus
$\sigma(A_jX_j^{-1}Y_j)=\sigma(A_j)=0$ for all $j\in J$, so $z_1$ is
well defined.  Also, since $\pi(W)=\pi(\prod_{j\in J} W_j)\in
\mathcal S(G_P^-)$, it follows (by definition of $\mathcal
S(G_P^-)$) that $$\prod_{j\in J}X_j=\prod_{j\in J}Y_j,$$ and thus
$z_1\in \ZZ(B)$. Moreover, by construction, we have $z^+\leq z_1^+$,
and by Lemma \ref{Lambert}, we have $|B_j|\leq M$ for all $j$. Thus
\be\label{whash}\mathsf d(z,z_1)\leq M|J|\leq M|W|\leq M\cdot
\mathsf D(\mathcal S(G_P^-),\mathcal E(G_P^-)).\ee Additionally, if
$z\in \Upsilon(B)$, then $z^+\leq z_1^+$ implies that
$z^+=z_1^+=y^+$, whence $|z|=|z_1|$ and $|z_j'|=1$ for all $j$, in
which case  the estimate \eqref{whash} improves to $$\mathsf
d(z,z_1)\leq |J|\leq |W|\leq \mathsf D(\mathcal S(G_P^-),\mathcal
E(G_P^-)).$$ Finally, if $z^+=z_1^+$, then, by construction, the
sequence $R=R'R''$---whose role for $z_1$ is analogous to the role
of $T=T'T''$ for $z$---can be defined so that $R''=T''W^{-1}$, in
which case $|R''|<|T''|$. Consequently, applying the induction
hypothesis to $z_1$ completes the proof.
\end{proof}

\medskip
\begin{corollary} \label{final-cor}
Let $H$ be a Krull monoid and $\varphi \colon H\to \mathcal{F}(P)$ a
cofinal divisor homomorphism into a free monoid such that the class
group $G= \mathcal{C}(\varphi)$ is an  infinite cyclic group that we
identify with $\mathbb Z$. Let $G_P \subset G$ denote the set of
classes containing prime divisors, and suppose that $G_P^-$ is
finite.

Let $a \in H$ with $\max \mathsf L(a)=|\boldsymbol \beta (a)^+| +
\mathsf v_0 \bigl( \boldsymbol \beta (a) \bigr)$ and let $M=|\min
(\supp( \boldsymbol \beta (a)))|$. Then, for any factorization $z\in
\ZZ(a)$ and any factorization $y\in \ZZ(a)$ with $|y|=|\max \mathsf
L(a)|$, there exists a chain of factorizations $z=z_0,\ldots,z_r=y$
of $a$ such that $|z|=|z_0|\leq \cdots\leq |z_r|=|y|$ and
\[\mathsf d(z_i,z_{i+1})\leq \max\{M \cdot \mathsf D(\mathcal S(G_P^-),\mathcal E(G_P^-)), 2\} \leq \max\{|\min G_P|\cdot \mathsf D(\mathcal S(G_P^-),\mathcal E(G_P^-)),2\} <\infty\] for all $i\in [0,r-1]$.
\end{corollary}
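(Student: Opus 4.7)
The plan is to deduce this corollary directly from Theorem \ref{prop-chain} by showing that, under the given hypothesis, $\Upsilon(a)$ coincides with the set of factorizations of $a$ of maximal length, so that the claimed $y$ automatically lies in $\Upsilon(a)$ and the bridging chain promised by part~2 of Theorem \ref{prop-chain} stays at maximal length.

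First I would dispose of the trivial cases. Since $0\in G_P$ is a prime divisor of $\mathcal{B}(G_P)$, we can split $\boldsymbol\beta(a) = 0^{\mathsf{v}_0(\boldsymbol\beta(a))}B'$ with $0\notin \supp(B')$, and the $0$-part contributes identically to every factorization of $a$. After this reduction, the hypothesis $\max \mathsf{L}(a)=|\boldsymbol\beta(a)^+|+\mathsf{v}_0(\boldsymbol\beta(a))$ becomes $\max \mathsf{L}_{\mathcal{B}(G_P)}(B')=|{B'}^+|$. If $\supp(B')\cap G_P^-=\emptyset$, then $B'$ is the empty sequence and both $z$ and $y$ are determined by the $0$-prime factorization, so the statement is trivial. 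Otherwise, $M=|\min \supp B'|=|\min\supp\boldsymbol\beta(a)|$ matches the constant of Theorem \ref{prop-chain}, and $\mathsf D(\mathcal S(G_P^-),\mathcal E(G_P^-))<\infty$ by \eqref{relative-D-is-finite}.

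Next, by \eqref{Upsilon-equivalence}, the equality $\max \mathsf{L}(B')=|{B'}^+|$ forces
\[
\Upsilon(B')=\{w\in \ZZ(B')\mid |w|=|{B'}^+|\}=\{w\in \ZZ(B')\mid |w|=\max\mathsf{L}(B')\}.
\]
In particular, the given $y$ (after stripping the $0$-primes) lies in $\Upsilon(B')$, and any two elements $y_0,y\in \Upsilon(B')$ satisfy $\overline{\boldsymbol\beta}(y_0)^+=\overline{\boldsymbol\beta}(y)^+$, since in each of them every atom contributes exactly one positive entry and so $\overline{\boldsymbol\beta}(y_0)^+$ as a sequence in $\mathcal F(\mathcal A(G_P)^+)$ is just the sequence $(g)_{g\mid B'^+}$ of the positive terms of $B'$ viewed as length-one atom-supports.

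Now I would apply Theorem \ref{prop-chain}.1 to $z$ (with the $0$-primes removed, then reattached), obtaining a monotone non-decreasing chain $z=z_0,z_1,\dots,z_s=y_0$ from $z$ to some $y_0\in\Upsilon(a)$ with step-distance bounded by $\max\{M\cdot \mathsf D(\mathcal S(G_P^-),\mathcal E(G_P^-)),2\}$. By the observation above, $y_0$ and $y$ satisfy the hypothesis of Theorem \ref{prop-chain}.2, which produces a chain $y_0=z_s,z_{s+1},\dots,z_r=y$ with $|z_i|=|y|=\max\mathsf L(a)$ for all $i\ge s$ and with step-distance bounded by $\max\{\mathsf D(\mathcal S(G_P^-),\mathcal E(G_P^-)),2\}$. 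Concatenating these two chains, the overall sequence is monotone non-decreasing (the tail is constant at $|y|$), each step satisfies the stated bound, and $M\le |\min G_P|$ gives the final inequality. I do not expect a serious obstacle here; the only point requiring some care is verifying that, when $|y_0|=|y|=|{B'}^+|$, the symmetry $\overline{\boldsymbol\beta}(y_0)^+=\overline{\boldsymbol\beta}(y)^+$ indeed holds so that Theorem \ref{prop-chain}.2 is applicable, but this is immediate from $|A^+|=1$ forced for every atom $A$ dividing $y_0$ or $y$.
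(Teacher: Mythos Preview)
Your proposal is correct and follows the same approach as the paper, which simply says the result follows from Theorem \ref{prop-chain} in view of \eqref{Upsilon-equivalence}. You have correctly filled in the details the paper leaves implicit: that the hypothesis $\max\mathsf L(a)=|\boldsymbol\beta(a)^+|+\mathsf v_0(\boldsymbol\beta(a))$ forces $\Upsilon(a)$ to equal the set of maximal-length factorizations, and that any two maximal-length factorizations have identical $(\cdot)^+$ (each atom carrying a single positive term), so that part~1 of Theorem \ref{prop-chain} reaches some $y_0$ of maximal length and part~2 then connects $y_0$ to the prescribed $y$.
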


\begin{proof}
This follows from directly from Theorem \ref{prop-chain} in view
of \eqref{Upsilon-equivalence}.
\end{proof}

\medskip
We end this section with a result showing that the assumption $\max
\mathsf L(a)=|\boldsymbol \beta (a)^+|  + \mathsf v_0 \bigl(
\boldsymbol \beta (a) \bigr)$ holds for a large class of $a\in H$. We formulate the result
in the setting of zero-sum sequences. Since $\B(G_P)$ is factorial
when $M=|\min G_P|\leq 1$, the assumption $M\geq 2$ below is purely
for avoiding distracting technical points in the statement and
proof.

\medskip
\begin{proposition}
Let $G_0\subset \Z \setminus \{0\}$ be a condensed set with $|G_0| \ge 2$.
Let $B\in \B(G_0)$ be such that, for  $M=|\min
(\supp(B))|$, we have $M\geq 2$ and $\min (\supp(B)^+)\geq M(M^2-1)$.
Then, at least one of the following statements holds{\rm \,:}
\begin{itemize}
\item[(a)]  There exists a subset $A\subset \supp(B^-)$ and a factorization $z\in \ZZ(B)$
such that $\langle \supp(B^+)\rangle \not\subset \langle A\rangle $
(in particular, $\langle A\rangle \neq \Z$) and every atom $U|z$ has
\be\label{thenumber}\vp_x(U)\leq 2M-2\quad \mbox{ for all }\quad
x\in \supp(B)\setminus A.\ee
\item[(b)]\begin{itemize}\item[(i)] $\max \mathsf L(B)=|B^+|$, and

\item[(ii)]  for any factorization $z\in \ZZ(B)$, there exists a chain of factorizations $z=z_0,\ldots,z_r$ of $B$ such that
            $$|z|=|z_0|<\cdots<|z_r|= |B^+|\und \mathsf d(z_i,z_{i+1})\leq M^2$$ for all $i\in [0,r-1]$.
\end{itemize}
\end{itemize}
\end{proposition}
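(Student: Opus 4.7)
The plan is to prove the dichotomy by attempting to construct the length-increasing chain required by (b)(ii), and extracting the obstruction (a) from any failure. First observe that every atom in any factorization of $B$ contributes at least one positive term, so $|z|\le|B^+|$ for all $z\in\ZZ(B)$, with equality iff every atom of $z$ has $|U^+|=1$; consequently (b)(i) follows from (b)(ii) by iteration. Hence it is enough to prove: either (a) holds, or for every $z\in\ZZ(B)$ with $|z|<|B^+|$ there exists $z'\in\ZZ(B)$ with $|z'|>|z|$ and $\dd(z,z')\le M^2$.

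Fix such a $z$, pick an atom $U\mid z$ with $|U^+|\ge 2$, and set $L=\min(\supp(U^+))$. The hypothesis $\min(\supp(B^+))\ge M(M^2-1)$ gives $L\ge(M-1)^2$, and $|U^+|\ge 2$ forces $\sigma(U^+)\ge 2L$, so $\sigma(U^+)-L\ge(M-1)^2$. Thus Lemma~\ref{breakapart-lemma} applies to $U$ and $L$, yielding $R'\mid U^-$ with $\langle\supp(R')\rangle=n\Z$ for some $n\ge 2$ satisfying $n\mid|g|$ for each $g\in\supp(R')\subset\supp(B^-)$, so $n\le M$; moreover $L\notin n\Z$ and $|R'^{-1}U^-|\le n-2\le M-2$. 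Put $A:=\supp(R')\subset\supp(B^-)$. Then $\langle\supp(B^+)\rangle\not\subset\langle A\rangle$ is automatic, since $L\in\supp(B^+)\setminus n\Z$, and for any atom $V\mid B$ and any positive $x\in\supp(B^+)$, Lemma~\ref{Lambert} (applied to $V$ as an atom of $\B(\supp(V))$, using $\supp(V^-)\subset\supp(B^-)$) gives $\vp_x(V)\le|V^+|\le|\min\supp(V^-)|\le M\le 2M-2$. So if (a) fails for the pair $(A,z)$, there must exist an atom $V\mid z$ and some $x\in\supp(B^-)\setminus A$ with $\vp_x(V)\ge 2M-1$; moreover $\vp_x(U)\le M-2$, else $x\in\supp(R)\subset A$.

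Using such $V$ and $x$, the goal is to combine $U$ with $V$ (and possibly with up to $M-2$ further atoms of $z$) and refactor the product into strictly more atoms than were combined. Concretely, seek a nontrivial proper zero-sum subsequence $W_1\mid UV$ distinct from both $U$ and $V$, of the form $W_1=L\cdot S\cdot x^k$ with $S\mid U^-$ and $k\in[1,M]\subset[1,\vp_x(V)]$; the zero-sum condition becomes $\sigma(S)=-L-kx$. A coset analysis modulo $n$ (using $n\le M$, so the residues $\{-L-kx\bmod n:k\in[1,M]\}$ sweep out $\gcd(|x|,n)\cdot(\Z/n\Z)$, together with the fact that each element of $\supp(R')$ appears with multiplicity $\ge M-1$, so $\Sigma(R')\cap n\Z$ is dense enough to realize a suitable target value) produces the required $k$ and $S$. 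The complement $W_2=UV\cdot W_1^{-1}$ satisfies $|W_2^+|=|U^+|+|V^+|-1\ge 2$; if $W_2$ is not atomic, refining $W_1$ and $W_2$ into atoms yields $\ge 3$ atoms in place of $U$ and $V$, and otherwise one re-applies Lemma~\ref{breakapart-lemma} to $W_2$ (borrowing from a further atom of $z$), terminating within $M$ rounds by the distance budget. At most $M$ atoms are combined and at most $M^2$ atoms are produced, so $\dd(z,z')\le M^2$ with $|z'|>|z|$, as required.

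The main obstacle is the refactorization step: one must verify that $\Sigma(R')$ covers enough of $n\Z$ to hit $-L-kx$ for some admissible $k$, and that the iteration on an atomic $W_2$ terminates with a genuine length increase within the distance budget. The hypothesis $\min(\supp(B^+))\ge M(M^2-1)$ persists under passing from $U$ to $W_2$ (since $\supp(W_2^+)\subset\supp(B^+)$), so Lemma~\ref{breakapart-lemma} remains applicable throughout the iteration, and the multiplicity gap between the thresholds $M-1$ (defining $R$ inside $R'$) and $2M-1$ (in (a)) supplies the slack needed to borrow from $V$ without disrupting the structure. Iterating the refactorization at most $|B^+|-|z|$ times either terminates at length $|B^+|$ (yielding the chain in (b)) or stalls at a configuration satisfying (a).
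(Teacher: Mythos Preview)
Your overall strategy is right, and the opening reduction (it suffices to produce, from any $z$ with $|z|<|B^+|$, a strictly longer $z'$ within distance $M^2$) is the same as in the paper. The use of Lemma~\ref{breakapart-lemma} to analyze a fat atom $U$ with $|U^+|\ge 2$ is also correct. Where the argument diverges from the paper, and where it breaks, is in the choice of the obstruction set $A$ and in the refactorization step.

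You define $A=\supp(R')$ from the single atom $U$, so $\langle A\rangle=n\Z$ and $L\notin n\Z$ are automatic; then failure of (a) hands you one element $x\in\supp(B^-)\setminus A$ and one atom $V\mid z$ with $\vp_x(V)\ge 2M-1$. But $x\notin\supp(R')$ does not imply $x\notin n\Z$, and more generally there is no reason the coset $-L+\langle x\rangle\subset\Z/n\Z$ should meet the (very sparse) set $\{\sigma(S)\bmod n:S\mid U^-\}$: recall $|R'^{-1}U^-|\le n-2$, so modulo $n$ the subset sums of $U^-$ lie in a set of size at most $2^{n-2}$. Hence the existence of $W_1=L\cdot S\cdot x^k$ with $\sigma(W_1)=0$ is genuinely in doubt---you yourself flag this as ``the main obstacle''---and without it the whole iteration collapses. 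The termination claim (``within $M$ rounds'') is likewise unsupported: each re-application of Lemma~\ref{breakapart-lemma} to a new atomic $W_2$ produces a new $A$ and a new $x$, and you give no monovariant forcing termination.

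The paper sidesteps this entirely by defining $A$ \emph{globally}: $A$ is the set of all $a\in\supp(B^-)$ for which some atom $V\mid z$ has $\vp_a(V)\ge 2M-1$. Failure of (a) for this $A$ forces $\langle\supp(B^+)\rangle\subset\langle A\rangle$. Listing those $a_i\in A$ with $\vp_{a_i}(U_0)\le M-2$ (at most $|A|-1\le M-1$ of them) and choosing for each an atom $U_i\mid z$ with $\vp_{a_i}(U_i)\ge 2M-1$, one performs a \emph{single coordinated swap} of negative parts among $U_0,U_1,\dots,U_t$ so that the resulting $U_0'$ satisfies $\vp_a(U_0')\ge M-1$ for every $a\in A$. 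Now Lemma~\ref{breakapart-lemma} applied to $U_0'$ would give $\supp(R_{U_0'})\subset n\Z$ with $L\notin n\Z$; but $A\subset\supp(R_{U_0'})$ forces $\langle\supp(B^+)\rangle\subset\langle A\rangle\subset n\Z$, contradicting $L\in\langle\supp(B^+)\rangle$. So $U_0'$ is not an atom, and refactoring $U_0',\dots,U_t'$ gives the desired $z'$ with $\dd(z,z')\le(t+1)M\le M^2$. No iteration, no coset analysis: the global choice of $A$ makes the single swap decisive.
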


\begin{proof}
We assume (a) fails and show that (b) follows. Note, by Lemma \ref{Lambert}, that $\vp_x(U)\leq
M\leq 2M-2$ holds for any atom $U\in \mathcal A(G_0)$ and $x\geq 0$,
whence \eqref{thenumber} can only fail for some $x\in G_0^-$. To
establish (i) and (ii), we need only show that, given an arbitrary
factorization $z\in \ZZ(B)$ with $|z|<|B^+|$, there is another
factorization $z'\in \ZZ(B)$ with $|z|<|z'|$ and $\mathsf
d(z,z')\leq M^2$. We proceed to do so.

Let $z\in \ZZ(B)$ with $|z|<|B^+|$. Then there must exists some atom
$U_0|z$ such that $|U_0^+|\geq 2$. Let $A\subset \supp(B)$ be all
those $a$ for which there exists some atom $V|z$ with $\vp_a(V)\geq
2M-1$. We must have \be\label{rex}\langle \supp(B^+)\rangle\subset \langle A\rangle ,\ee else (a) holds. Let
$a_1,\ldots,a_t\in A$ be those elements such that $\vp_a(U_0)\leq
M-2$, let $a_{t+1},\ldots,a_{|A|}$ be the remaining element of $A$
and, for all $i \in [1, t]$, let $U_i|z$ be an atom with
$\vp_{a_i}(U_i)\geq 2M-1$. Note that $U_i\neq U_0$ for $i\leq t$
since otherwise $$2M-1\leq \vp_{a_i}(U_i)=\vp_{a_i}(U_0)\leq M-2\leq
2M-2,$$ a contradiction. Also, $t<|A|\leq M$ since otherwise
$$2M(M^2-1)\leq 2\min( \supp(B^+))\leq\sigma(U_0^+)=-\sigma(U_0^-)\leq
M(2M-2),$$ a contradiction.

We proceed to describe a procedure to swap only negative integers
between the $U_i$ which results in new blocks
$U'_0,U'_1,\ldots,U'_t\in \B(G_0)$ with $U'_0U'_1 \cdot \ldots \cdot
U'_t = U_0U_1 \cdot \ldots \cdot U_t$,  with ${U'^+_i}=U^+_i$ for all
$i$, and with $U'_0$ not an atom. Once this is done, then, letting
$z_i\in \ZZ(U'_i)$, we can define $z'$ to be
$$z'=z_0z_1\cdot \ldots \cdot z_tU_0^{-1}U_1^{-1} \cdot \ldots \cdot U_t^{-1}z.$$ Then $|z'|>|z|$ in view of $U'_0$ not being an atom, while, in view of $t\leq |A|-1\leq M-1$ and Lemma \ref{Lambert}, we have $$\mathsf d(z,z')\leq \sum_{i=0}^{t}|U_i^+|\leq (t+1)M\leq M^2.$$ Thus the proof of (i) and (ii) will be complete once we show such a process exists.

Observe, for $i\in [1,t]$, that we can exchange $a_i^{c_{i,j}}|U_i$ for $c_{i,j}^{a_i}|U_0$
provided there is some term $c_{i,j}\in \supp(U^-_0)$ with
$\vp_{c_{i,j}}(U_0)\geq a_i$ and $\vp_{a_i}(U_i)\geq c_{i,j}$, and this
will result in two new zero-sum subsequences obtained by only
exchanging negative terms. The idea in general is to repeatedly and
simultaneously perform such swaps for the $a_i$ using disjoint
sequences
\be\label{whiff1}\prod_{i=1}^{t}\bigl(c_{i,1}^{a_i}c_{i,2}^{a_i}
\cdot \ldots \cdot c_{i,r_i}^{a_i}\bigr) \Bigm|U_0a_{t+1}^{-M+1} \cdot \ldots
\cdot a_{|A|}^{-M+1}\ee with
\be\label{whiff2}\sum_{j=1}^{r_i-1}|c_{i,j}|<M-1\quad\mbox{ but }\quad
\sum_{j=1}^{r_i}|c_{i,j}|\geq M-1\ee for all $i \in [1, t]$, and let
$U'_0,U'_1,\ldots,U'_t$ be the resulting zero-sum sequences. Then
$\vp_{a_i}(U'_0)\geq M-1$ for $i\geq t+1$ by construction, and
$\vp_{a_i}(U'_0)\geq \sum_{j=1}^{r_i}|c_{i,j}|\geq M-1$ for $i\leq t$;
consequently, in view of $\min(\supp(B^+))\geq M(M^2-1)\geq (M-1)^2$ and
$|{U'_0}^+|=|U_0^+|\geq 2$, we see that we can apply Lemma
\ref{breakapart-lemma} to $U'_0$, whence \eqref{rex} and
$\vp_a(U'_0)\geq M-1$ for $a\in A$ imply that $U'_0$ cannot be an
atom, and hence the $U'_i$ have the desired properties. Thus it
remains to show that a sequence satisfying \eqref{whiff1} and
\eqref{whiff2} exists and that each $a_i$, for all $i \in [1, t]$,
has sufficient multiplicity in $U_i$.

Note that \eqref{whiff2} and the definition of $a_i\in A$ imply
$$\sum_{j=1}^{r_i}|c_{i,j}|\leq \sum_{j=1}^{r_i-1}|c_{i,j}|+|c_{i,r_i}|\leq
M-2+M\leq \vp_{a_i}(U_i)$$ for all $i \in [1, t]$. Thus the
multiplicity of each $a_i$ in $U_i$ is large enough to perform such
simultaneous swaps. Also,
\be\label{stern}\big|\sigma \bigl(\prod_{i=1}^{t}(c_{i,1}^{a_i}c_{i,2}^{a_i}
\cdot \ldots \cdot c_{i,r_i}^{a_i})\bigr)\big|\leq
\sum_{i=1}^{t}(2M-2)|a_i|.\ee We turn our attention now to showing
\eqref{whiff1} and \eqref{whiff2} hold.

We can continue to remove subsequences
$c_{i,j}^{a_i}|U_0a_{t+1}^{-M+1} \cdot \ldots \cdot a_{|A|}^{-M+1}$
until the multiplicity of every term is less than $M$. But this
means a sequence satisfying \eqref{whiff1} and \eqref{whiff2} can be
found, in view of the estimate \eqref{stern}, provided
\[
|\sigma({U}^-_0)|-(M-1)\sum_{i=t+1}^{|A|}|a_i|-M(M-1)|\supp(B^-)|\geq \sum_{i=1}^{t}(2M-2)|a_i| \,.
\]
However, if this fails, then we have (since $|U_0^+|\geq 2$) \ber\nn 2M(M^2-1)&\leq& 2  \min (\supp(B^+))\leq \sigma(U_0^+)=-\sigma(U_0^-)=|\sigma(U_0^-)| \\
&< & \sum_{i=1}^{t}(2M-2)|a_i|+(M-1)\sum_{i=t+1}^{|A|}|a_i|+M(M-1)|\supp(B^-)|\nn\\
\nn &<& (2M-2)\sum_{i=1}^{|A|}|a_i| + M(M^2-1)\leq
(2M-2)\sum_{i=1}^{M}i+M(M^2-1)\\\nn &=& 2M(M^2-1),\eer a
contradiction, completing the proof.
\end{proof}


\providecommand{\bysame}{\leavevmode\hbox
to3em{\hrulefill}\thinspace}
\providecommand{\MR}{\relax\ifhmode\unskip\space\fi MR }
\providecommand{\MRhref}[2]{%
  \href{http://www.ams.org/mathscinet-getitem?mr=#1}{#2}
} \providecommand{\href}[2]{#2}


\begin{thebibliography}{10}

\bibitem{An-Ch-Sm94c}
D.F. Anderson, S.T. Chapman, and W.W. Smith, \emph{Some
factorization
  properties of {K}rull domains with infinite cyclic divisor class group}, J.
  Pure Appl. Algebra \textbf{96} (1994), 97 -- 112.

\bibitem{B-C-R-S-S10}
P.~Baginski, S.T. Chapman, R.~Rodriguez, G.~Schaeffer, and Y.~She,
\emph{On the
  delta set and catenary degree of {K}rull monoids with infinite cyclic divisor
  class group}, J. Pure Appl. Algebra, to appear.

\bibitem{Ca60}
L.~Carlitz, \emph{A characterization of algebraic number fields with
class
  number two}, Proc. Am. Math. Soc. \textbf{11} (1960), 391 -- 392.

\bibitem{C-G-L09}
S.T. Chapman, P.A. Garc{\'i}a-S{\'a}nchez, and D.~Llena, \emph{The
catenary and
  tame degree of numerical monoids}, Forum Math. \textbf{21} (2009), 117 --
  129.

\bibitem{C-G-L-P-R06}
S.T. Chapman, P.A. Garc{\'i}a-S{\'a}nchez, D.~Llena, V.~Ponomarenko,
and J.C.
  Rosales, \emph{The catenary and tame degree in finitely generated commutative
  cancellative monoids}, Manuscr. Math. \textbf{120} (2006), 253 -- 264.

\bibitem{C-K-D-H10}
S.T. Chapman, N.~Kaplan, J.~Daigle, and R.~Hoyer, \emph{Delta sets
of numerical
  monoids using non-minimal sets of generators}, Commun. Algebra, to appear.

\bibitem{Ch-Kr-Oe00}
S.T. Chapman, U.~Krause, and E.~Oeljeklaus, \emph{Monoids determined
by a
  homogenous linear {D}iophantine equation and the half-factorial property}, J.
  Pure Appl. Algebra \textbf{151} (2000), 107 -- 133.

\bibitem{Ch-Kr-Oe02}
\bysame, \emph{On {D}iophantine monoids and their class groups},
Pacific J.
  Math. \textbf{207} (2002), 125 -- 147.

\bibitem{Ch-Sc10a}
S.T. Chapman and G.J. Schaeffer, \emph{A constructive proof of
{K}ainrath's
  theorem and length-sequence complete subsets of $\mathbb{Z}$}, manuscript.

\bibitem{Fa02}
A.~Facchini, \emph{Direct sum decomposition of modules, semilocal
endomorphism
  rings, and {K}rull monoids}, J. Algebra \textbf{256} (2002), 280 -- 307.

\bibitem{Fa06a}
\bysame, \emph{Krull monoids and their application in module
theory}, Algebras,
  {R}ings and their {R}epresentations (A.~Facchini, K.~Fuller, C.~M. Ringel,
  and C.~Santa-Clara, eds.), World Scientific, 2006, pp.~53 -- 71.

\bibitem{F-H-K-W06}
A.~Facchini, W.~Hassler, L.~Klingler, and R.~Wiegand,
\emph{Direct-sum
  decompositions over one-dimensional {C}ohen-{M}acaulay local rings},
  Multiplicative {I}deal {T}heory in {C}ommutative {A}lgebra (J.W. Brewer,
  S.~Glaz, W.~Heinzer, and B.~Olberding, eds.), Springer, 2006, pp.~153 -- 168.

\bibitem{Fa-He06a}
A.~Facchini and D.~Herbera, \emph{Local morphisms and modules with a
semilocal
  endomorphism ring}, Algebr. Represent. Theory \textbf{9} (2006), 403 -- 422.

\bibitem{Fo06a}
A.~Foroutan, \emph{Monotone chains of factorizations}, Focus on
commutative
  rings research (A.~Badawi, ed.), Nova Sci. Publ., New York, 2006, pp.~107 --
  130.

\bibitem{Fo-Ge05}
A.~Foroutan and A.~Geroldinger, \emph{Monotone chains of
factorizations in
  $\rm{C}$-monoids}, Arithmetical {P}roperties of {C}ommutative {R}ings and
  {M}onoids, Lect. Notes Pure Appl. Math., vol. 241, Chapman \& Hall/CRC, 2005,
  pp.~99 -- 113.

\bibitem{Fo-Ha06b}
A.~Foroutan and W.~Hassler, \emph{Chains of factorizations and
factorizations
  with successive lengths}, Commun. Algebra \textbf{34} (2006), 939 -- 972.

\bibitem{Fo-Ha06a}
\bysame, \emph{Factorization of powers in $\rm{C}$-monoids}, J.
Algebra
  \textbf{304} (2006), 755 -- 781.

\bibitem{Fr-Ge08}
M.~Freeze and A.~Geroldinger, \emph{Unions of sets of lengths},
Funct.
  Approximatio, Comment. Math. \textbf{39} (2008), 149 -- 162.

\bibitem{Fr-Sc10a}
M.~Freeze and W.A. Schmid, \emph{Remarks on a generalization of the
{D}avenport
  constant}, manuscript, --.

\bibitem{Ga-Ge06b}
W.~Gao and A.~Geroldinger, \emph{Zero-sum problems in finite abelian
groups{\rm
  \,:} a survey}, Expo. Math. \textbf{24} (2006), 337 -- 369.

\bibitem{Ga-Ge09b}
\bysame, \emph{On products of $k$ atoms}, Monatsh. Math.
\textbf{156} (2009),
  141 -- 157.

\bibitem{Ge90d}
A.~Geroldinger, \emph{On non-unique factorizations into irreducible
elements.
  {II}}, Number {T}heory, {V}ol. {II} {B}udapest 1987, Colloquia {M}athematica
  {S}ocietatis {J}anos {B}olyai, vol.~51, North Holland, 1990, pp.~723 -- 757.

\bibitem{Ge97d}
\bysame, \emph{The catenary degree and tameness of factorizations in
weakly
  {K}rull domains}, Factorization in {I}ntegral {D}omains, Lect. Notes Pure
  Appl. Math., vol. 189, Marcel Dekker, 1997, pp.~113 -- 153.

\bibitem{Ge97c}
\bysame, \emph{Chains of factorizations and sets of lengths}, J.
Algebra
  \textbf{188} (1997), 331 -- 363.

\bibitem{Ge09a}
\bysame, \emph{Additive group theory and non-unique factorizations},
  Combinatorial {N}umber {T}heory and {A}dditive {G}roup {T}heory
  (A.~Geroldinger and I.~Ruzsa, eds.), Advanced Courses in Mathematics CRM
  Barcelona, Birkh{\"a}user, 2009, pp.~1 -- 86.

\bibitem{Ge-Gr09b}
A.~Geroldinger and D.J. Grynkiewicz, \emph{On the arithmetic of
{K}rull monoids
  with finite {D}avenport constant}, J. Algebra \textbf{321} (2009), 1256 --
  1284.

\bibitem{Ge-HK92a}
A.~Geroldinger and F.~Halter-Koch, \emph{Realization theorems for
semigroups
  with divisor theory}, Semigroup Forum \textbf{44} (1992), 229 -- 237.

\bibitem{Ge-HK06a}
\bysame, \emph{Non-{U}nique {F}actorizations. {A}lgebraic,
{C}ombinatorial and
  {A}nalytic {T}heory}, Pure and Applied Mathematics, vol. 278, Chapman \&
  Hall/CRC, 2006.

\bibitem{Ge-Ha08b}
A.~Geroldinger and W.~Hassler, \emph{Arithmetic of {M}ori domains
and monoids},
  J. Algebra \textbf{319} (2008), 3419 -- 3463.

\bibitem{Ge-Ha08a}
\bysame, \emph{Local tameness of $v$-noetherian monoids}, J. Pure
Appl. Algebra
  \textbf{212} (2008), 1509 -- 1524.

\bibitem{Ge-Ka10a}
A.~Geroldinger and F.~Kainrath, \emph{On the arithmetic of tame
monoids with
  applications to {K}rull monoids and {M}ori domains}, manuscript.

\bibitem{Gi84}
R.~Gilmer, \emph{Commutative {S}emigroup {R}ings}, The University of
Chicago
  Press, 1984.

\bibitem{Gr01}
P.A. Grillet, \emph{Commutative {S}emigroups}, Kluwer Academic
Publishers,
  2001.

\bibitem{Gr05b}
D.J. Grynkiewicz, \emph{On a conjecture of {H}amidoune for
subsequence sums},
  Integers \textbf{5(2)} (2005), Paper A07, 11p.

\bibitem{HK95b}
F.~Halter-Koch, \emph{Elasticity of factorizations in atomic monoids
and
  integral domains}, J. Th{\'e}or. Nombres Bordx. \textbf{7} (1995), 367 --
  385.

\bibitem{HK98}
\bysame, \emph{Ideal {S}ystems. {A}n {I}ntroduction to
{M}ultiplicative {I}deal
  {T}heory}, Marcel Dekker, 1998.

\bibitem{Ha02c}
W.~Hassler, \emph{Factorization properties of {K}rull monoids with
infinite
  class group}, Colloq. Math. \textbf{92} (2002), 229 -- 242.

\bibitem{Ka10a}
F.~Kainrath, \emph{Arithmetic of {M}ori domains and monoids{\rm \,:}
the
  {G}lobal {C}ase}, manuscript.

\bibitem{Ka05a}
\bysame, \emph{Elasticity of finitely generated domains}, Houston J.
Math.
  \textbf{31} (2005), 43 -- 64.

\bibitem{Ki01}
Hwankoo Kim, \emph{The distribution of prime divisors in {K}rull
monoid
  domains}, J. Pure Appl. Algebra \textbf{155} (2001), 203 -- 210.

\bibitem{Ki-Ki-Pa07a}
Hwankoo Kim, Myeong~Og Kim, and Young~Soo Park, \emph{Some
characterizations of
  {K}rull monoids}, Algebra Colloq. \textbf{14} (2007), 469 -- 477.

\bibitem{Ki-Pa01}
Hwankoo Kim and Young~Soo Park, \emph{{K}rull domains of generalized
power
  series}, J. Algebra \textbf{237} (2001), 292 -- 301.

\bibitem{La87a}
J.-L. Lambert, \emph{Une borne pour les g{\'e}n{\'e}rateurs des
solutions
  enti{\`e}res positives d'une {\'e}quation diophantienne lin{\'e}aire.}, C.R.
  Acad. Sci., Paris, Ser. I \textbf{305} (1987), 39 -- 40.

\bibitem{Sc09f}
W.A. Schmid, \emph{Higher-order class groups and block monoids of
{K}rull
  monoids with torsion class group}, manuscript.

\bibitem{Sc09a}
\bysame, \emph{A realization theorem for sets of lengths}, J. Number
Theory
  \textbf{129} (2009), 990 -- 999.

\bibitem{Wi01}
R.~Wiegand, \emph{Direct-sum decompositions over local rings}, J.
Algebra
  \textbf{240} (2001), 83 -- 97.

\end{thebibliography}
\end{document}